\newtheorem{thm}{Theorem}[section]
\newtheorem{lem}[thm]{Lemma}
\newtheorem{prop}[thm]{Proposition}
\newtheorem{cor}[thm]{Corollary}
\theoremstyle{definition}
\newtheorem{defn}[thm]{Definition}
\newtheorem{const}[thm]{Construction}
\newtheorem{ex}[thm]{Example}
\newtheorem{rem}[thm]{Remark}
\newcommand{\ol}{\overline}
\newcommand{\defbold}{\textit}
\newcommand{\inv}{^{-1}}
\newcommand{\CC}{\mathrm{C}}
\newcommand{\tdlc}{t.d.l.c.\@\xspace}
\newcommand{\triv}{\{1\}}
\newcommand{\rist}{\mathrm{rist}}
\newcommand{\Aut}{\mathrm{Aut}}
\newcommand{\Inn}{\mathrm{Inn}}
\newcommand{\Alt}{\mathrm{Alt}}
\newcommand{\Sym}{\mathrm{Sym}}
\newcommand{\Out}{\mathrm{Out}}
\newcommand{\SL}{\mathrm{SL}}
\newcommand{\PSL}{\mathrm{PSL}}
\newcommand{\GL}{\mathrm{GL}}
\newcommand{\PGL}{\mathrm{PGL}}
\newcommand{\GaL}{\Gamma\mathrm{L}}
\newcommand{\PGaL}{\mathrm{P}\Gamma\mathrm{L}}
\newcommand{\propP}[1]{(\mathrm{P}_{#1})}
\newcommand{\Fb}{\mathbb{F}}
\newcommand{\Nb}{\mathbb{N}}
\newcommand{\Qb}{\mathbb{Q}}
\newcommand{\Zb}{\mathbb{Z}}
\newcommand{\mc}[1]{\mathcal{#1}}
\newcommand{\grp}[1]{\langle #1 \rangle}
\begin{document}

\title{Rigid stabilizers and local prosolubility for boundary-transitive actions on trees}

\preauthor{\large}
\DeclareRobustCommand{\authoring}{
\renewcommand{\thefootnote}{\arabic{footnote}}
\begin{center}Colin D. Reid\textsuperscript{1}\footnotetext[1]{Research supported in part by ARC grant FL170100032.}\\ \bigskip
The University of Newcastle \\ School of Information and Physical Sciences \\ Callaghan, NSW 2308, Australia \\ Email: \href{mailto:colinreid29@gmail.com}{colinreid29@gmail.com} 
\end{center}
}
\author{\authoring}
\postauthor{\par}

\maketitle

\begin{abstract}
Let $G$ be a group acting $2$-transitively on the boundary of a locally finite tree, and exclude the situation (which is a genuine exception) where $G$ has both $\PGaL_3(4)$ and $\PGaL_3(5)$ as local actions.  We show that for each half-tree $T_a$, the local action of the rigid stabilizer of $T_a$ at the root of $T_a$ contains the soluble residual of the point stabilizer of the local action of $G$.  In particular, $G$ is locally prosoluble if and only if its local actions have soluble point stabilizers; if $G$ is not locally prosoluble, then it has micro-supported action on the boundary.  We also prove some strong restrictions on the local actions of end stabilizers in $G$.  These results are partly inspired by Radu's classification of groups acting boundary-$2$-transitively on trees with local action containing the alternating group, and partly based on the author's recent classification of finite permutation groups that preserve an equivalence relation, act faithfully on blocks and act transitively on pairs of points from different blocks.
\end{abstract}

\section{Introduction}

\subsection{Background and motivation}

Let $T$ be a locally finite tree (in the combinatorial sense, or its geometric realization; we will use $T$ in either sense as convenient where there is no danger of confusion).  Then $T$ is naturally equipped with a compact space $\partial T$, called the \defbold{boundary} or \defbold{space of ends}, which can be defined in various equivalent ways: for example, it is the set of equivalence classes of geodesic rays in $T$, where two rays are equivalent if they coincide up to removing an initial segment and shifting indices.  (One can equivalently say that two rays belong to the same end if and only if they stay a bounded distance apart.)  For the purposes of this article we will generally assume $T$ is \defbold{thick}, meaning each vertex has degree at least $3$.  We will also make use of the natural partition of the vertices of $T$ into two \defbold{types}\index{type (of vertex)}, $VT = V_0T \sqcup V_1T$, where each vertex only has neighbours of the opposite type.  An \defbold{arc}\index{arc} of $T$ is an ordered pair $(v,w)$ of adjacent vertices of $T$.

Given a group $G$ acting on a set $X$ we will write $G(x)$ for the stabilizer of $x \in X$ in $G$ and $G(x_1,\dots,x_n) = \bigcap^n_{i=1}G(x_i)$.  Suppose $G$ is a closed subgroup of $\Aut(T)$.  We say $G$ is \defbold{boundary-$k$-transitive} if all $k$-tuples of distinct points in $\partial T$ lie in the same $G$-orbit.  The group $\Aut(T)$ itself is boundary-$2$-transitive if and only if $T$ is \defbold{semiregular}, meaning any two vertices of the same type $t$ have the same number $d_t$ of neighbours.  The class $\mc{H}_T$ of closed boundary-$2$-transitive subgroups of $\Aut(T)$ is interesting to study for a number of reasons.

\begin{itemize}

\item Given a closed subgroup $G$ of $\Aut(T)$, $2$-transitivity on the boundary is a natural condition for specifying that $G$ is large from a permutational perspective, which can be expressed in several equivalent ways; see Corollary~\ref{cor:distance_transitivity} below.  For example, it is equivalent to the condition that $G$ is transitive on $\partial T$ and does not fix a vertex or invert an edge.  Another interpretation, if we assume $G$ is type-preserving and regard the tree as a building of type $\widetilde{\mathrm{A}}_1$ in which all lines are apartments, is that $G$ is boundary-$2$-transitive if and only if it is strongly transitive, that is, transitive on pairs consisting of an apartment and a chamber (geometric edge) contained in that apartment.

\item The condition serves as a source of groups in the class $\mathscr{S}$ of nondiscrete, compactly generated topologically simple \tdlc groups.  Specifically, given $G \in \mc{H}_T$, there is a cocompact normal subgroup $G^{(\infty)} \in \mc{H}_T \cap \mathscr{S}$; see \cite[Proposition~3.1.2]{BurgerMozes}.

\item Regarded as a subspace of $\mathrm{Sub}(\Aut(T))$ in the Chabauty topology, the boundary-$2$-transitive groups are known to be relatively well-behaved, as shown by Caprace--Radu in \cite{CapraceRadu}: they form a closed subspace, with closed isomorphism classes, and for every $G \in \mc{H}_T$ there is $H \le G$ that is minimal among closed boundary-$2$-transitive subgroups of $\Aut(T)$.  The groups $H$ that arise this way are a special case of the groups in $\mc{H}_T \cap \mathscr{S}$ from the previous bullet point.  For example, if $T$ is a regular tree of degree $p+1$ for some prime $p$, then the simple group $\Aut(T)^{(\infty)}$ is the type-preserving subgroup of $\Aut(T)$, but $\Aut(T)^{(\infty)}$ properly contains a copy of $\PSL_2(\Qb_p)$ acting on its Bruhat--Tits tree, so $\Aut(T)^{(\infty)}$ is not minimal among closed boundary-$2$-transitive subgroups of $\Aut(T)$.

\item Groups $G \in \mc{H}_T$ give examples of locally compact $k$-transitive groups of homeomorphisms of the Cantor set where $k \in \{2,3\}$; one can then define the topological full group $F(G)$ of the action of $G$ on $\partial T$, consisting of all homeomorphisms $h: \partial T \rightarrow \partial T$ such that for every $\xi \in \partial T$, there is some neighbourhood $U$ of $\xi$ and $g_\xi \in G$ such that $g_\xi$ and $h$ restrict to the same map on $U$.  The group $F(G)$ would then be an example of a highly transitive group of homeomorphisms of the Cantor set.  In some cases the topology of $G$ will extend to a locally compact group topology on $F(G)$, in the same vein as Neretin's groups \cite{Neretin} or their generalization due to Lederle \cite{Lederle}.  For such a group $F(G)$, the derived group $D$ will be simple by \cite[Theorem~4.16]{MatuiSimple}, and one can also show that $D$ is open and compactly generated (work in preparation), so one obtains many examples of groups in $\mathscr{S}$ with highly transitive action on the Cantor set.

\item Given $G \in \mc{H}_T$, and $\xi \in \partial T$, we obtain an action of $G(\xi)$ that is a \defbold{scale group} action in the sense of Willis, meaning a closed action on a locally finite tree that fixes one end and is transitive on the other ends (equivalently, that fixes one end and is vertex-transitive).  Specifically, if $G$ is vertex-transitive we can take $G(\xi)$ acting on $T$ itself, and otherwise we have vertex-transitive actions of $G(\xi)$ on trees $T_{t,\xi}$ with vertex set $V_tT$ ($t \in \{0,1\}$), where $v,w \in V_tT$ are adjacent in $T_{t,\xi}$ if they are distance $2$ apart on a ray of $T$ representing the fixed end.  Scale groups play an important role in the general theory of dynamics of automorphisms of arbitrary \tdlc groups; see \cite{WillisScale}.  Here it seems likely that scale groups arising as end stabilizers of groups in $\mc{H}_T$ will have special properties beyond those of an arbitrary scale group, but little is known in general.

\item Associated to any $G \in \mc{H}_T$ are its \defbold{local actions}\index{local action}, where the local action of type $t$ is the subgroup $F_t$ of $\Sym(\Omega_t)$ induced by the stabilizer of a vertex of type $t$ on the neighbours of that vertex, where $\Omega_t$ is a set of size $d_t$.  (Here we regard the local action as specified up to permutational equivalence.)  Given $G \in \mc{H}_T$, one sees that $F_0$ and $F_1$ are both $2$-transitive.  By general results, $G$ is a cocompact subgroup of a ``universal group'' with the same local actions and the same orbits on vertices, either the Smith universal group $\mathbf{U}(F_0,F_1)$ (\cite[Theorem~6(iii)]{SmithDuke}), or in the case that $G$ is vertex-transitive, the Burger--Mozes universal group $\mathbf{U}(F_0)$ (\cite[Proposition~3.2.2]{BurgerMozes}), which contains $\mathbf{U}(F_0,F_0)$ as its unique subgroup of index $2$\index{U@$\mathbf{U}(F_0,F_1)$}.  (We will recall the construction of $\mathbf{U}(F_0,F_1)$ in Section~\ref{sec:examples} in order to construct a variant of it.)  Conversely, for any pair $(F_0,F_1)$ of finite $2$-transitive permutation groups, then $\mathbf{U}(F_0,F_1) \in \mc{H}_T$.  Universal groups in the sense of Burger--Mozes or Smith are among the best-understood examples of noncompact, nondiscrete \tdlc groups.  Since the finite $2$-transitive permutation groups have been classified, it is plausible to try to classify $\mc{H}_T$ via a local-to-global approach (see also Remark~\ref{rem:Pk} below).  Such a classification has been achieved in certain cases, most notably by Radu (\cite{Radu}) in the case where $d_0,d_1 \ge 6$ and both local actions are symmetric or alternating, and by Burger--Mozes and Radu in the case where both local actions have nonabelian simple point stabilizer.  Given the enormous diversity of totally disconnected locally compact groups, the prospect of classifying a class of such groups specified by a simple permutational condition is tantalizing.

\item The known examples have an interesting diversity of structure of locally normal subgroups, from the perspective of the local structure theory developed in \cite{CRW-Part1}, \cite{CRW-Part2}.  On the one hand, the class includes rank $1$ semisimple algebraic groups over local fields, in which every closed locally normal subgroup is open.  At the other extreme, universal groups $G$ in the sense of Burger--Mozes or Smith have the property that 
\[
G(a) = \rist_G(T_a) \times \rist_G(T_{\ol{a}}),
\]
where $T_a$ is the \defbold{half-tree}\index{half-tree} specified by the arc $a$, consisting of all vertices $v$ that are closer to the terminus of $a$ than they are to the origin of $a$, the reverse arc of $a$ is denoted $\ol{a}$, and $\rist_G(T_a)$ denotes the \defbold{rigid stabilizer}\index{rigid stabilizer} of $T_a$, that is, the pointwise stabilizer of the complement of $T_a$.  The class $\mc{H}_T$ is thus an interesting test case for the local structure theory of \tdlc groups, particularly the local structure of compactly generated almost simple \tdlc groups.  Closely related is the phenomenon of faithful actions on the Cantor set that are \defbold{micro-supported}\index{micro-supported}, that is, for every nonempty open set there is a nontrivial element that fixes all points outside the open set; such actions have proved to be a powerful tool for constructing and understanding \tdlc groups.  Say an action on a thick locally finite tree is micro-supported if it is micro-supported on the boundary, or equivalently, if and only if the rigid stabilizer of every half-tree is nontrivial.  A sufficient condition for producing a micro-supported action on a tree (only assuming properties of $G$ as a topological group) was recently studied by Caprace, Marquis and the author in \cite{CapraceMarquisReid}.  For the known groups in $\mc{H}_T$, the algebraic groups do not have micro-supported action, but most other known examples do have micro-supported action.  While there are several known consequences of having micro-supported action, in this context it is reasonable to hope that \emph{not} having micro-supported action imposes significant restrictions on a group $G \in \mc{H}_T$.

\item Within the context of groups acting on locally finite trees, there are a number of results and conjectures relating the class $\mc{H}_T$ to classes of nonamenable \tdlc groups $G$ with tame unitary representation theory: from the latter perspective, three conditions one can consider, going from strongest to weakest, are CCR, type I, and amenable von Neumann algebra.  The idea goes back at least to Nebbia \cite{Nebbia}, who showed that if a closed subgroup $G \le \Aut(T)$ is both vertex-transitive and CCR, then $G$ acts transitively on $\partial T$, so in fact $G \in \mc{H}_T$; Nebbia also conjectured the converse in the case that $G$ is vertex-transitive.  Nebbia's conjecture has been proved for $G = \mathbf{U}(F_0)$ by Amann \cite{Amann} and Ciobotaru (unpublished), and for the Radu groups by Semal \cite{Semal}.  (The existence of nonamenable locally compact groups with tame unitary representation theory can be contrasted with an earlier theorem of Thoma \cite[Satz~4]{Thoma} that a discrete group is type I if and only if it is virtually abelian.)  In a similar vein, Caprace--Kalantar--Monod and Houdayer--Raum have proved closely related restrictions on a closed subgroup $G$ of $\Aut(T)$ that is nonamenable and preserves no subtree: Caprace--Kalantar--Monod showed (\cite[Corollary~D]{CapraceKalantarMonod}) that if $G$ is type I, then $G \in \mc{H}_T$, while Houdayer--Raum (\cite[Theorem~C]{HR}) showed that if the von Neumann algebra of $G$ is amenable, then $G$ has $2$-transitive local actions.
\end{itemize}

\subsection{Main results}

Throughout the article, for $t \in \{0,1\}$ then $F_t$ will be a $2$-transitive subgroup of $\Sym(\Omega_t)$, where $\Omega_t$ is a finite set of size $d_t \ge 3$.  Write $\mc{H}_{F_0,F_1}$ for the class of closed boundary-$2$-transitive groups acting on the $(d_0,d_1)$-semiregular tree $T$ with local action $F_t$ at vertices of type $t$.  See Figure~\ref{fig:semiregular} for an illustration of a semiregular tree, with vertices of type $0$ marked by white dots and those of type $1$ by black dots.  Write $F_t(\omega)$ for a point stabilizer in $F_t$, which we regard as a transitive permutation group acting on the set $\Omega_t \setminus \{\omega\}$ of $d_t-1$ points.

\begin{figure}
\caption{Part of the $(4,3)$-semiregular tree}
\label{fig:semiregular}
\begin{center}
\begin{tikzpicture}[scale=1, every loop/.style={}, square/.style={regular polygon,regular polygon sides=4}]

\tikzstyle{every node}=[circle,
                        inner sep=0pt, minimum width=5pt]
                     
\foreach \x in {0,1,2,3} {
\foreach \y in {-1,1} {
\foreach \z in {-1,0,1} {
\draw (0,0) to ++ (90*\x:1) node[draw=black, fill=black]{} to ++ (90*\x+45*\y:1) node[draw=black, fill=white]{} to ++ (90*\x+45*\y+30*\z:0.7) node[draw=black, fill=black]{};
\draw (0,0) ++ (90*\x:1) ++ (90*\x+45*\y:2.5) node[rotate=90*\x+45*\y]{$\ldots$};
}
}
};

\draw (0,0) node[draw=black, fill=white]{} to (0,0);
     
\end{tikzpicture}
\end{center}
\end{figure}
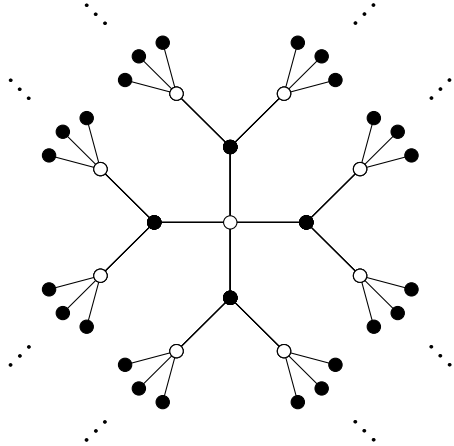

In the present article, we prove restrictions on the local actions of certain important subgroups of $G \in \mc{H}_{F_0,F_1}$, namely: the local action $\Lambda_t(G)$ of $G(\xi)$ at a vertex of type $t$, where $\xi$ is an end of $T$, and the local action $\Theta_t(G)$ at the root of the rigid stabilizer of a half-tree (equivalently, the pointwise stabilizer of the opposite half-tree), where the root is a vertex of type $t$.  Both groups are naturally regarded as subgroups of $F_t(\omega)$, since the local action in each case has a natural fixed point: for $G(\xi)$, the vertex stabilizer also fixes the neighbouring vertex in the direction of $\xi$, and for the rigid stabilizer $\rist_G(T_a)$, then we are considering the local action of $\rist_G(T_a)$ at the terminus of the arc $a$, an action that has the origin of $a$ as a fixed point.  Using the fact that $G$ is contained in $\mathbf{U}(F_0,F_1)$ or $\mathbf{U}(F_0)$ (depending on whether or not $G$ is vertex-transitive), it is easy to see that $G \in \{\mathbf{U}(F_0,F_1),\mathbf{U}(F_0)\}$ if and only if $\Theta_t(G) = F_t(\omega)$ for both $t \in \{0,1\}$.

Previous work has suggested that a key source of complexity for groups in $\mc{H}_{F_0,F_1}$ is the set of quotients of $F_t(\omega)$, particularly soluble quotients.  For example we have \cite[Theorem~F]{Radu}: if $F_t(\omega)$ is a nonabelian simple group for both $t=0,1$, then up to conjugacy the class $\mc{H}_{F_0,F_1}$ consists only of $\mathbf{U}(F_0,F_1)$, together with $\mathbf{U}(F_0)$ if $F_0=F_1$.

We say a topological group is \defbold{locally prosoluble} if it has a prosoluble open subgroup.  In the present context, given $G \le \Aut(T)$ closed and $v_1,\dots,v_n \in VT$, then the open subgroup $G(v_1,\dots,v_n)$ of $G$ is profinite, so it is residually soluble if and only if it is prosoluble, if and only if it is an inverse limit of finite soluble groups.

We make the following basic observations, to be compared with Corollary~\ref{intro:soluble} below.

\begin{lem}
Let $G \in \mc{H}_{F_0,F_1}$.
\begin{enumerate}[(i)]
\item Given an arc $a$ of the tree, then $G(a)$ is prosoluble if and only if both $F_0(\omega)$ and $F_1(\omega)$ are soluble.
\item If $G$ has a residually soluble open subgroup, then $\Theta_0(G)$ and $\Theta_1(G)$ are soluble.
\end{enumerate}
\end{lem}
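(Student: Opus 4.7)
For (i), the plan is to filter $G(a)$ by pointwise fixators of larger and larger balls about the arc. Writing $a = (v, w)$, I would set $L_n \le G$ to be the pointwise fixator of the set of vertices at distance at most $n$ from $\{v, w\}$, so that $L_0 = G(a)$ and $\bigcap_n L_n = \{1\}$. The claim to verify is that each quotient $L_n / L_{n+1}$ embeds into a direct product of local actions at the vertices $u$ at distance $n$ from $\{v, w\}$, and each such local action fixes the parent of $u$ (the neighbour on the geodesic back to $\{v, w\}$) and hence lies in $F_{\tau(u)}(\omega)$, where $\tau(u) \in \{0, 1\}$ denotes the type of $u$. Solubility of both $F_0(\omega)$ and $F_1(\omega)$ would then force each finite quotient $G(a)/L_n$ to be soluble, so $G(a) \cong \varprojlim_n G(a)/L_n$ would be prosoluble.

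For the converse direction in (i), I would use that the map $G(v) \twoheadrightarrow F_t$ (with $t$ the type of $v$) is surjective by definition of the local action, and that the preimage of the point stabilizer $F_t(\omega)$ of the letter indexing the neighbour $w$ of $v$ is precisely $G(v, w) = G(a)$. So $F_t(\omega)$ is a finite continuous image of the prosoluble group $G(a)$, hence soluble. Running the same argument at $w$ handles the other type.

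For (ii), the plan is to realize $\Theta_t(G)$ as a finite quotient of a rigid stabilizer that sits entirely inside the given residually soluble open subgroup $U$. Fix a base vertex $v_0$; since the fixators $G(B(v_0, N))$ form a neighbourhood basis of the identity in $G$, some $N$ satisfies $G(B(v_0, N)) \subseteq U$. For each $t \in \{0, 1\}$, using semiregularity and thickness of $T$, I would pick a vertex $w$ of type $t$ at distance at least $N+1$ from $v_0$, and take $a = (u, w)$ where $u$ is the neighbour of $w$ on the geodesic to $v_0$. Every vertex of the half-tree $T_a$ then lies at distance $\ge N+1$ from $v_0$, so $\rist_G(T_a)$ fixes $B(v_0, N)$ pointwise; hence $\rist_G(T_a) \subseteq G(B(v_0, N)) \subseteq U$ is residually soluble. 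Since $\Theta_t(G)$ is a finite quotient of $\rist_G(T_a)$, it must be soluble.

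The main conceptual step is the transportation argument in (ii): rather than trying to control $\Theta_t(G)$ starting from an arbitrary prescribed arc, one exploits arc-transitivity to move $a$ deep into the tree so that $\rist_G(T_a)$ is forced into $U$ purely by support considerations. The rest of both parts is formal, using only the elementary fact that a finite quotient of a residually soluble group is soluble.
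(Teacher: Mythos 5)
Your proof is correct, and since the paper states this lemma as a ``basic observation'' without supplying a proof, there is nothing to diverge from: the congruence filtration of $G(a)$ by ball fixators in (i), the surjection $G(v,w)\twoheadrightarrow F_t(\omega)$ for the converse, and the transport of the half-tree deep enough that its rigid stabilizer lands inside $U$ in (ii) are exactly the intended arguments. The one point you should tighten is the closing appeal to ``the elementary fact that a finite quotient of a residually soluble group is soluble'': as stated for abstract groups this is false (free groups are residually soluble yet surject onto $\Alt(5)$). It is harmless here, but only because of the topology: $\rist_G(T_a)$ is a closed subgroup of the profinite group $G(B(v_0,N))$, and for profinite groups residual solubility coincides with prosolubility --- the paper records precisely this equivalence in the sentence immediately preceding the lemma --- so $\rist_G(T_a)$ is prosoluble, and $\Theta_t(G)$ is a finite \emph{continuous} quotient of it (the kernel of the local-action map is an open subgroup), hence soluble. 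The same remark applies, trivially, to your use of finite quotients in part (i), where the quotients $G(a)/L_n$ are by open normal subgroups. With that wording adjusted, both parts are complete.
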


The main situation we will be interested in is when at least one of $F_0(\omega)$ and $F_1(\omega)$ is insoluble.  Note that any finite group $F$ has a smallest normal subgroup $N$ such that $F/N$ is soluble, the \defbold{soluble residual} $N = O^{\infty}(F)$ of $F$; the soluble residual is always perfect, and is trivial if and only if $F$ is soluble.  Similarly there is a largest soluble normal subgroup of $F$, the \defbold{soluble radical} $O_\infty(F)$, which is equal to $F$ if and only if $F$ is soluble.

Our first main theorem is that, except in a very special situation, $\Theta_t(G)$ always contains the corresponding soluble residual $O^\infty(F_t(\omega))$.

\begin{thm}\label{intro:locally_prosoluble}
Let $G \in \mc{H}_{F_0,F_1}$.  Then exactly one of the following holds.
\begin{enumerate}[(i)]
\item We have $\Theta_t(G) \ge O^\infty(F_t(\omega))$ for all $t \in \{0,1\}$.
\item The local actions are $F_t = \PGaL_3(q_t)$ acting on the projective plane of order $q^2_t + q_t +1$, such that $\{q_0,q_1\} = \{4,5\}$.  Writing $W_t$ for the socle of $F_t(\omega)$, then $\Lambda_t(G)$ is a soluble group of the form $W_t \rtimes A(q_t)$, where
\[
A(4) = \GaL_1(16) \; \text{ and } A(5) \in \{\SL_2(3) \rtimes C_4, \SL_2(3) \rtimes C_2,  \SL_2(3), C_3 \rtimes C_8  \}.
\]
In this case, $\Theta_t(G) \le O_\infty(F_t(\omega))$ for all $t \in \{0,1\}$.
\end{enumerate}
\end{thm}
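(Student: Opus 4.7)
The plan is to prove (i) by showing that $F_t(\omega)/\Theta_t(G)$ is soluble except in the exceptional case; combined with the normality of $\Theta_t(G)$ in $F_t(\omega)$, this is equivalent to the asserted inclusion $\Theta_t(G) \ge O^\infty(F_t(\omega))$. To verify normality, fix an arc $a=(u,v)$ with $v$ of type $t$ and let $\omega$ correspond to the direction from $v$ toward $u$. Since $G(u,v)$ stabilizes $T_a$ setwise, it normalizes $\rist_G(T_a)$, and the local action $G(v) \twoheadrightarrow F_t$ restricts to a surjection $G(u,v) \twoheadrightarrow F_t(\omega)$. Passing to local actions at $v$ gives that $F_t(\omega)$ normalizes $\Theta_t(G)$.

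The substantive work is to construct a suitable finite permutation group out of the tree data whose structure pins down the quotient $F_t(\omega)/\Theta_t(G)$. The natural candidate arises from the action of $G(u)$ on a finite piece of the tree near $u$ (for $u$ of type $1-t$), combined with end-stabilizer data so that the within-block action on the block indexed by a neighbor $v$ realises exactly the quotient $F_t(\omega)/\Theta_t(G)$ rather than the full group $F_t(\omega)$. Boundary-$2$-transitivity of $G$, together with the abundance of hyperbolic elements in any boundary-$2$-transitive group on a thick tree, then yields transitivity on ordered pairs of points from distinct blocks: given two such pairs, first match the ends of $T$ lying beyond the target points via $2$-transitivity on $\partial T$, then adjust by a hyperbolic translation along the resulting axis to land back in $G(u)$.

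I would then invoke the author's classification of finite permutation groups preserving an equivalence relation, acting faithfully on blocks, and transitively on ordered pairs from distinct blocks, together with the classification of finite $2$-transitive groups. The combination should force $F_t(\omega)/\Theta_t(G)$ to be soluble outside a small list of exceptional configurations, which should match exactly the pairs $\{F_0, F_1\} = \{\PGaL_3(4), \PGaL_3(5)\}$. In the exceptional case the listed forms of $\Lambda_t(G) = W_t \rtimes A(q_t)$ would then follow by explicit examination: $\Lambda_t(G)$ necessarily contains $\Theta_t(G)$, sits inside $F_t(\omega)$, and compatibility between the actions on opposite half-trees narrows the permissible extensions down to the enumerated list involving $\GaL_1(16)$ and small $\SL_2(3)$-extensions.

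The principal obstacle is step two, namely identifying the "right" finite permutation group: one must combine rigid-stabilizer data with a second piece of information (most naturally from the end-stabilizer action) so that the within-block kernel becomes exactly $\Theta_t(G)$, and then rigorously verify the pair-transitivity axiom across distinct blocks purely from $2$-transitivity on $\partial T$. A secondary technical point is checking that the classification output produces the $\PGaL_3(4)/\PGaL_3(5)$ exception exactly, without spurious configurations, and that the listed $A(q_t)$ are all genuinely realised.
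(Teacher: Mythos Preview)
Your proposal has a genuine gap at its core: you conflate the end-local action $\Lambda_t(G)$ with the rigid-stabilizer local action $\Theta_t(G)$, and these are \emph{a priori} different objects. The $2$-by-block-transitive action that arises naturally from boundary-$2$-transitivity (via the action of $G(v)$ on spheres $S(v,n)$ or on $\partial T$) has point stabilizer $\Lambda_t(G)$, not anything built from $\Theta_t(G)$. Your step two asks for a finite $G(u)$-set whose within-block kernel is exactly $\Theta_t(G)$, but the rigid stabilizer $\rist_G(T_a)$ is not normal in $G(u)$ (only in $G(u,v)$), so there is no evident coset space of $G(u)$ or of $F_{1-t}$ with this property, and ``end-stabilizer data'' gives you $\Lambda_t$, not $\Theta_t$.

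The paper's argument has two genuinely separate components that your sketch collapses into one. First, an induction on pointwise stabilizers of balls (Proposition~\ref{prop:Hk_subnormal} through Proposition~\ref{prop:soluble_dichotomy}) shows that either $\Theta_t \ge O^\infty(F_t(\omega))$ for both $t$, or $G$ is locally prosoluble with $\Lambda_t$ soluble; this step uses Lemma~\ref{lem:subdirect} and a delicate back-and-forth between the two types of vertices, and is where the passage from $\Lambda_t$ to $\Theta_t$ actually happens. Only \emph{after} this dichotomy does the $2$-by-block-transitive classification enter, applied to $F_t/\Lambda_t$, to pin down the residual case. Even then, isolating $\{q_0,q_1\}=\{4,5\}$ requires a Goursat-type comparison of the local actions at both ends of an edge (Lemma~\ref{lem:insoluble_line_index}), exploiting the isomorphism $\Lambda^k_t/\Delta^k_t \cong \Lambda^k_{t+k}/\Delta^k_{t+k}$ to force $\PSL_2(q_0)\cong\PSL_2(q_1)$ as abstract groups; this coupling between the two sides is absent from your outline, which treats each $F_t$ separately.
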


We say that $G \in \mc{H}_{F_0,F_1}$ is of \defbold{exceptional locally prosoluble type} if $G$ has a prosoluble open subgroup, but at least one of $F_0(\omega)$ and $F_1(\omega)$ is insoluble.  Theorem~\ref{intro:locally_prosoluble} implies that the exceptional locally prosoluble type can only occur with a single pair of local actions on the $(31,21)$-semiregular tree.  This exceptional case does occur, see Example~\ref{ex:exceptional} below.  We emphasize that for $G \in \mc{H}_T$, having local actions $\PGaL_3(4)$ and $\PGaL_3(5)$ does not by itself imply that $G$ falls under case (ii), for instance the group $\mathbf{U}(\PGaL_3(4),\PGaL_3(5))$ falls under case (i).

The next corollary explains the relationship between case (ii) of Theorem~\ref{intro:locally_prosoluble} and whether or not $G$ is locally prosoluble.

\begin{cor}\label{intro:soluble}
Let $G \in \mc{H}_{F_0,F_1}$.  Then the following are equivalent:
\begin{enumerate}[(i)]
\item $G$ is locally prosoluble, in other words $G$ has a residually soluble open subgroup.
\item One of the following holds:
\begin{enumerate}[(a)]
\item In both $F_0$ and $F_1$, point stabilizers are soluble;
\item Case (ii) of Theorem~\ref{intro:locally_prosoluble} holds.
\end{enumerate}
\end{enumerate}
\end{cor}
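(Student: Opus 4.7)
The strategy is to combine Theorem~\ref{intro:locally_prosoluble} with the preceding lemma to bridge between local solubility of boundary-type actions and the existence of a prosoluble open subgroup.

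\textbf{Forward direction (i)$\Rightarrow$(ii).} Assume $G$ is locally prosoluble. By part (ii) of the preceding lemma, both $\Theta_0(G)$ and $\Theta_1(G)$ are soluble. I would then split on the dichotomy in Theorem~\ref{intro:locally_prosoluble}. If case (i) of that theorem holds, then $O^\infty(F_t(\omega)) \subseteq \Theta_t(G)$ is soluble; but $O^\infty(F_t(\omega))$ is perfect by general theory of the soluble residual, forcing it to be trivial and hence $F_t(\omega)$ to be soluble for both $t$, which is (ii)(a). Otherwise case (ii) of the theorem holds and we get (ii)(b) directly.

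\textbf{Reverse direction (ii)$\Rightarrow$(i).} Case (a) is immediate: if both $F_t(\omega)$ are soluble, then by part (i) of the preceding lemma the arc stabilizer $G(a)$ is prosoluble, and being open in $G$ it witnesses local prosolubility.

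\textbf{Main obstacle.} Case (b) is the hard part, because the point stabilizers are insoluble so $G(a)$ is emphatically \emph{not} prosoluble, and one must produce a strictly smaller prosoluble open subgroup. My candidate is the pointwise ball stabilizer $G(B_r(v))$ for some $r \geq 1$: inside this subgroup the local actions at vertices of distance strictly less than $r$ from $v$ are all trivial, and at each boundary vertex $w$ with inward neighbour $w'$ the rigid stabilizer $\rist_G(T_{(w',w)})$ is contained in $G(B_r(v))$, so the local action of $G(B_r(v))$ at $w$ contains $\Theta_{t(w)}(G)$. The key step is to show the reverse containment, using the constraints from case (ii) of Theorem~\ref{intro:locally_prosoluble} (namely $\Lambda_t(G) = W_t \rtimes A(q_t)$ soluble and $\Theta_t(G) \leq O_\infty(F_t(\omega))$); once this ``rigidity'' is in place, each finite quotient of $G(B_r(v))$ is an iterated extension of soluble pieces and so $G(B_r(v))$ is prosoluble. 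The hard part is exactly this rigidity, since a group in $\mc{H}_{F_0,F_1}$ need not enjoy a Tits-type independence property. Failing a direct argument along those lines, a sufficient alternative is to appeal to the construction of Example~\ref{ex:exceptional}, where a prosoluble open subgroup is explicit, and to argue that every $G$ in case (ii) of Theorem~\ref{intro:locally_prosoluble} lies inside such a group, inheriting the prosoluble open subgroup structure.
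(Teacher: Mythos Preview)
Your forward direction (i)$\Rightarrow$(ii) and the implication (ii)(a)$\Rightarrow$(i) are correct and match the paper.

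The gap is in (ii)(b)$\Rightarrow$(i), and it comes from misidentifying what needs to be shown. You do \emph{not} need the local action of $G_r(v)$ at a boundary vertex $w$ to equal $\Theta_{t(w)}(G)$; you only need it to be \emph{soluble}. That is immediate from facts already available: the local action of $G_m(v)$ at $w\in S(v,m)$ on $S_v(w,1)$ is $\Theta^{m,0}_{t(w)}$, which by Proposition~\ref{prop:Hk_subnormal}(i) is normal in $\Lambda^m_{t(w)}$. In case~(ii) of Theorem~\ref{intro:locally_prosoluble} the group $\Lambda_t(G)$ is soluble, so for $m$ large enough that $\Lambda^m_t=\Lambda_t$ each $\Theta^{m,0}_t$ is soluble, every successive quotient $G_m(v)/G_{m+1}(v)$ embeds in a finite product of soluble groups, and hence $G_k(v)$ is prosoluble for large $k$. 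No independence property and no ``reverse containment'' is required.

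The paper handles this step by a one-line appeal to Proposition~\ref{prop:soluble_dichotomy}: since $\Theta_t\le O_\infty(F_t(\omega))$ is soluble while some $O^\infty(F_t(\omega))$ is nontrivial and perfect, case~(i) of that dichotomy fails, so its case~(ii) holds and $G$ is locally pro-$\mc{C}$, hence locally prosoluble. The mechanism underlying that proposition is Lemma~\ref{lem:soluble_bridge}, which produces the prosoluble open subgroup by a normality argument rather than by the direct filtration argument above; either route works.

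Your fallback via Example~\ref{ex:exceptional} does not: that example constructs one specific $\propP{2}$-closed group, and there is no result asserting that every $G$ satisfying case~(ii) of Theorem~\ref{intro:locally_prosoluble} embeds in it.
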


We have the following consequence for micro-supported actions.

\begin{cor}\label{intro:TMS}
Let $G \in \mc{H}_T$.  Suppose that $G$ does not have a residually soluble compact open subgroup, and let $\xi$ be an end of $T$.  Then $G$ and $G(\xi)$ have micro-supported action on $\partial T$.  In fact, given an arc $a$ directed away from $\xi$, then $\rist_G(T_a)$ is a TMS subgroup of both $G$ and $G(\xi)$ in the sense of \cite{CapraceMarquisReid}.
\end{cor}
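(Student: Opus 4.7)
The plan is to combine Theorem~\ref{intro:locally_prosoluble} and Corollary~\ref{intro:soluble} with a short half-tree chase. By Corollary~\ref{intro:soluble}, the failure of local prosolubility rules out both~(ii)(a) and~(ii)(b) of that corollary, so some $F_t(\omega)$ is insoluble and case~(ii) of Theorem~\ref{intro:locally_prosoluble} does not apply. Hence case~(i) of that theorem holds: $\Theta_t(G) \supseteq O^\infty(F_t(\omega))$ for both $t \in \{0,1\}$, and at least one type $t_*$ satisfies $O^\infty(F_{t_*}(\omega)) \neq 1$. For every arc $a$ whose terminus has type $t_*$, the local action of $\rist_G(T_a)$ at the terminus is therefore nontrivial, and in particular $\rist_G(T_a) \neq 1$.

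To extend this to an arbitrary arc $a$, I pick inside $T_a$ an arc $a' = (v,w)$ with $w$ of type $t_*$; since $T$ is thick, such an $a'$ is obtained by walking at most two steps into $T_a$ from the terminus of $a$. The edge of $a'$ separates $T$ into $T_{a'}$ and a component containing all of $T_{\ol{a}}$, so $T_{a'} \subseteq T_a$ and $\rist_G(T_{a'}) \leq \rist_G(T_a)$; nontriviality of the former forces that of the latter. Equivalently, the rigid stabilizer of every nonempty basic clopen subset $\partial T_a \subseteq \partial T$ is nontrivial, which is the micro-support of $G$ acting on $\partial T$.

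For $G(\xi)$, the key observation is that whenever $a$ is directed away from $\xi$, then $\rist_G(T_a)$ fixes $T_{\ol{a}}$ pointwise and thus fixes a tail of every ray representing $\xi$, so $\rist_G(T_a) \leq G(\xi)$. Given an arbitrary nonempty basic clopen $\partial T_b \subseteq \partial T$: if $b$ points away from $\xi$, then $\rist_G(T_b) \leq G(\xi)$ already witnesses micro-support inside $\partial T_b$; if $b = (u,v)$ points towards $\xi$, so that $\xi \in \partial T_b$, then by thickness $v$ has some neighbour $w$ off the ray to $\xi$, and the arc $a = (v,w)$ points away from $\xi$ with $T_a \subseteq T_b$, yielding a nontrivial element of $G(\xi)$ supported in $\partial T_a \subseteq \partial T_b$. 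Thus $G(\xi)$ has micro-supported action on $\partial T$.

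The TMS assertion of \cite{CapraceMarquisReid} for $\rist_G(T_a)$ inside $G$ and inside $G(\xi)$ is then a routine unpacking of that definition: $\rist_G(T_a)$ is a nontrivial compact subgroup equal to the pointwise stabilizer of the clopen complement of $\partial T_a$ in $\partial T$, its conjugates in the ambient group are the rigid stabilizers of the half-trees $T_{ga}$, and by the micro-support just established these supports refine every nonempty clopen subset of $\partial T$, using the sub-half-tree argument above to handle clopens containing $\xi$ in the $G(\xi)$ case. I expect the only really subtle point to be the matching of this geometric data to the formal TMS definition of \cite{CapraceMarquisReid}; the micro-support portion reduces entirely to Theorem~\ref{intro:locally_prosoluble}(i) together with the thickness of $T$.
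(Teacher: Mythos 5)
The first half of your argument — reducing to case (i) of Theorem~\ref{intro:locally_prosoluble} via Corollary~\ref{intro:soluble}, and deducing that every half-tree has nontrivial rigid stabilizer by locating an arc of the favourable type inside it — is correct and is essentially the paper's route to micro-support. The handling of $G(\xi)$ via arcs pointing away from $\xi$ is also fine.

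The gap is in the TMS assertion, which you defer to ``a routine unpacking of that definition.'' It is not: in \cite{CapraceMarquisReid} the TMS condition is a group-theoretic condition on a compact subgroup and its conjugates, not a restatement of micro-support, and the paper does not verify it by hand but by invoking \cite[Proposition~4.6]{CapraceMarquisReid}. That criterion has hypotheses you never check: that $G$ and $G(\xi)$ are compactly generated and preserve no proper subtree (which the paper gets from both groups having at most two orbits on $VT$), and — crucially — that $\rist_G(T_a)$ fixes only finitely many arcs of $T_a$. This last condition is strictly stronger than the nontriviality of all rigid stabilizers that you establish: micro-support alone does not rule out, say, $\rist_G(T_a)$ fixing an entire ray of $T_a$ pointwise. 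The paper closes this by observing that $\Theta_t(G)$, being a nontrivial \emph{normal} subgroup of the \emph{transitive} group $F_t(\omega)$, acts without fixed points on $\Omega_t\setminus\{\omega\}$; hence $\rist_G(T_a)$ moves every vertex of $T_a$ at distance at least $1$ (or $2$, depending on the type of the terminus of $a$) from the root, and therefore fixes only finitely many arcs. Your proposal uses only the nontriviality of $\Theta_{t_*}(G)$, never its fixed-point-freeness, so the decisive hypothesis of the cited criterion is not established and the TMS conclusion does not follow from what you have written.
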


If $G$ is of exceptional locally prosoluble type, our methods do not determine whether or not $G$ must have micro-supported action on $\partial T$.  It would be interesting to find $G \in \mc{H}_{\PGaL_3(5),\PGaL_3(4)}$ with trivial rigid stabilizers of half-trees, if such a group exists.

We also generalize \cite[Theorem~F]{Radu}.

\begin{cor}\label{intro:perfect}
Let $G \in \mc{H}_{F_0,F_1}$, acting on the tree $T$.  Suppose that the point stabilizers $F_0(\omega)$ and $F_1(\omega)$ are both perfect.  Then up to conjugacy in $\Aut(T)$, $G$ is either $\mathbf{U}(F_0,F_1)$ or $\mathbf{U}(F_0)$.
\end{cor}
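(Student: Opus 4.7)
The plan is to combine Theorem~\ref{intro:locally_prosoluble} with the characterization noted just before that theorem: up to conjugacy in $\Aut(T)$, the universal groups $\mathbf{U}(F_0,F_1)$ and $\mathbf{U}(F_0)$ are exactly those $G \in \mc{H}_{F_0,F_1}$ satisfying $\Theta_t(G) = F_t(\omega)$ for both $t \in \{0,1\}$. So the whole task reduces to verifying, under the hypothesis that $F_0(\omega)$ and $F_1(\omega)$ are perfect, that $\Theta_t(G)$ fills up all of $F_t(\omega)$ for each $t$.

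By Theorem~\ref{intro:locally_prosoluble}, either (i) $\Theta_t(G) \ge O^\infty(F_t(\omega))$ for every $t$, or (ii) $(F_0,F_1)$ is one of the exceptional pairs with local actions $\PGaL_3(q_t)$ on the projective plane, where $\{q_0,q_1\} = \{4,5\}$. In case (i) I would finish immediately: for a finite group $F$, the soluble residual $O^\infty(F)$ equals $F^{(\infty)} = \bigcap_k F^{(k)}$, which collapses to $F$ itself whenever $F$ is perfect. Hence $O^\infty(F_t(\omega)) = F_t(\omega)$, and combining $F_t(\omega) = O^\infty(F_t(\omega)) \le \Theta_t(G)$ with the trivial containment $\Theta_t(G) \le F_t(\omega)$ gives the required equality for both $t$.

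The remaining work is to rule out case (ii) under the perfectness hypothesis, and this is the only step requiring any finite-group computation. The claim is that in neither $\PGaL_3(4)$ nor $\PGaL_3(5)$ is the stabilizer of a projective point perfect. For $\PGaL_3(4)$, the full group surjects onto the field-automorphism group $C_2$ of $\Fb_4$, and a point stabilizer still maps onto this quotient (Frobenius acts on any $\Fb_4$-rational point), so it has nontrivial abelianization. For $\PGaL_3(5) = \PGL_3(5)$ (since $5$ is prime and there are no diagonal automorphisms beyond $\PGL$ in $\PGaL$), the point stabilizer is the image in $\PGL_3$ of a maximal parabolic of shape $\Fb_5^2 \rtimes \GL_2(5)$, and the determinant of the $\GL_2$ Levi descends to a nontrivial homomorphism onto $\Fb_5^\times \cong C_4$ on the projective point stabilizer; so it too is imperfect. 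Therefore case (ii) is incompatible with the hypothesis, case (i) holds, and the corollary follows.

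The only real obstacle is the case (ii) elimination, which is a short structural check of the parabolic subgroups in $\PGaL_3(4)$ and $\PGaL_3(5)$; everything else is a direct unpacking of Theorem~\ref{intro:locally_prosoluble} and the universal-group characterization via $\Theta_t$.
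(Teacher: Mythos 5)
Your proposal is correct and follows essentially the same route as the paper: both reduce the corollary to showing $\Theta_t(G) = F_t(\omega)$ via Theorem~\ref{intro:locally_prosoluble} together with perfectness of the point stabilizers (which forces $O^\infty(F_t(\omega)) = F_t(\omega)$ and excludes the exceptional $\PGaL_3(4)/\PGaL_3(5)$ case, whose point stabilizers are visibly imperfect); the only difference is that the paper's proof also spells out the implication ``$\Theta_t(G)=F_t(\omega)$ for both $t$ implies $G$ equals the ambient universal group $U$'' (via $G\le U$, $G(a)=U(a)$ and transitivity on $U$-orbits of arcs), which you instead cite from the remark in the introduction. One small inaccuracy in your case~(ii) check: for $\PGL_3(5)$ the determinant of the $\GL_2$ Levi does not literally descend to the projective point stabilizer (scalars $\lambda I$ map to $\lambda^2\neq 1$), though a suitable twist such as $a^{-2}\det(A)$ does, and the abelianization of the parabolic modulo scalars is indeed $C_4$, so the imperfectness conclusion stands.
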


The second main result is that for many pairs $(F_0,F_1)$ of local actions, given $G \in \mc{H}_{F_0,F_1}$ then the local action of type $t$ of an end stabilizer of $G$ is necessarily $F_t(\omega)$ (which is the largest it can be given the local actions of $G$).

\begin{thm}\label{intro:end_stabilizer}
Let $G \in \mc{H}_{F_0,F_1}$, let $t \in \{0,1\}$ and write $O_t = O^{\infty}(F_t(\omega))$.  Suppose $\Lambda_t(G)$ is properly contained in $F_t(\omega)$, but that $G$ is not of exceptional locally prosoluble type.  Then either $O_{1-t}=\triv$, or $F_{1-t}$ is of affine type with $O_{1-t} \cong \SL_2(5)$.  Moreover, one of the following holds:
\begin{enumerate}[(i)]
\item We have $\PSL_{n+1}(q) \unlhd F_t \le \PGaL_{n+1}(q)$ for some $n \ge 2$, with $F_t$ acting on the projective space $P_n(q)$;
\item The socle of $F_t$ is of rank $1$ simple Lie type;
\item We have $(F_t,F_t(\omega),\Lambda_t(G)) = (\mathrm{M}_{11},\mathrm{M}_{10},\Alt(6))$.
\end{enumerate}
\end{thm}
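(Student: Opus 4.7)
The plan is to first establish the chain of inclusions
\[
O_t \le \Theta_t(G) \le \Lambda_t(G) \subsetneq F_t(\omega).
\]
For the middle inclusion I would choose an arc $a$ so that $\xi$ lies in the opposite half-tree $T_{\overline{a}}$; then $\rist_G(T_a)$ fixes $\xi$, so $\rist_G(T_a) \le G(\xi)$, and passing to the local action at the terminus of $a$ gives $\Theta_t(G) \le \Lambda_t(G)$. The leftmost inclusion is Theorem~\ref{intro:locally_prosoluble}, available precisely because $G$ is not of exceptional locally prosoluble type. An immediate consequence is that $F_t(\omega)$ is not perfect, for otherwise $O_t = F_t(\omega)$ and the chain would force $\Lambda_t(G) = F_t(\omega)$, contradicting the hypothesis.

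Next, I would analyse the local block structure at a type-$t$ vertex $v$. The action of $G(v)$ on the set of vertices at distance $2$ from $v$ preserves the partition into blocks indexed by the $d_t$ neighbours of $v$; modding out the pointwise stabilizer of the ball of radius $2$, the resulting finite permutation group acts on each block as $F_{1-t}(\omega)$ (hence faithfully) and on the set of blocks as $F_t$. Boundary-$2$-transitivity of $G$ readily yields transitivity on ordered pairs of distance-$2$ vertices lying in distinct blocks. These are exactly the hypotheses of the author's classification of finite permutation groups that preserve an equivalence relation, act faithfully on blocks, and act transitively on inter-block pairs; that classification therefore restricts the pair $(F_t, F_{1-t}(\omega))$ to a short list of admissible configurations.

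Finally, I would combine this list with the constraint that $F_t(\omega)$ is non-perfect and that $\Lambda_t(G) \ge O_t$ is properly contained in $F_t(\omega)$, and walk through the known finite $2$-transitive groups. Alternating groups $\Alt(n)$ with $n \ge 6$ and the Mathieu series $M_{12}, M_{22}, M_{23}, M_{24}$ are eliminated immediately because their point stabilizers are perfect and hence force $\Lambda_t(G) = F_t(\omega)$. The remaining sporadic groups $\mathrm{HS}$ and $\mathrm{Co}_3$, and the affine $2$-transitive groups outside the $\SL_2(5)$ exception, are excluded by matching the non-perfectness of $F_t(\omega)$ against the block-action constraints produced by the classification, leaving only cases (i)--(iii) for $F_t$. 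The same case analysis simultaneously pins down $F_{1-t}$: either $F_{1-t}(\omega)$ is soluble (so $O_{1-t} = \triv$), or $F_{1-t}$ is of affine type with $O_{1-t} \cong \SL_2(5)$. The main obstacle will be this last step: the delicate matching of non-perfect point-stabilizer structures against the block-classification output, particularly in the affine case where the soluble radical of $F_{1-t}$ is large and extra care is required to tease out the exceptional $\SL_2(5)$ configuration.
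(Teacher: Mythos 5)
Your opening chain $O_t \le \Theta_t(G) \le \Lambda_t(G) \lneq F_t(\omega)$ is correct given Theorem~\ref{intro:locally_prosoluble}, as is the remark that $F_t(\omega)$ cannot be perfect. However, the way you invoke the classification of $2$-by-block-transitive groups is not right. The finite group you produce, namely $G(v)$ modulo the pointwise stabilizer of the ball of radius $2$ acting on $S(v,2)$, acts on the set of blocks $S(v,1)$ with a large kernel (a subgroup of a product of copies of $F_{1-t}(\omega)$), so it is not block-faithful and Theorem~\ref{thm:2bbtrans} does not apply to it; moreover that theorem classifies pairs consisting of a group and a point stabilizer, not pairs consisting of a block action and a within-block action, so even a repaired version would not output constraints on the pair $(F_t,F_{1-t}(\omega))$. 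The object the paper feeds into the classification is $F_t$ itself acting on the coset space $F_t/\Lambda_t(G)$ with block stabilizer $F_t(\omega)$ (Corollary~\ref{cor:tree_2bbtrans}); since $\Lambda_t(G)\lneq F_t(\omega)$ the blocks are nontrivial, and Theorem~\ref{thm:2bbtrans} (via Lemma~\ref{lem:lambda_restriction}) then restricts $F_t$ to cases (i)--(iii) together with a few exceptional actions that are eliminated using $\Lambda_t(G)\ge O_t$. This is also what disposes of $F_t = \Sym(d)$ and of affine $F_t$, which your ``perfect point stabilizer'' test does not.

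The more serious gap is that your proposal contains no mechanism producing the conclusion about $F_{1-t}$: nothing in the case analysis you describe links the failure of $\Lambda_t(G)=F_t(\omega)$ to the structure of the \emph{other} local action, and the sentence claiming that ``the same case analysis simultaneously pins down $F_{1-t}$'' is an assertion, not an argument. The paper's mechanism is Lemma~\ref{lem:line_index}: $|\Lambda^k_t:\Lambda^{k+1}_t|=\delta^k_{t+k}$, the number of orbits of the normal subgroup $\Delta^k_{t+k}$ of $\Lambda^k_{t+k}$ on a set of $d_{t+k}-1$ points. Since $\Delta^k_{1-t}\ge\Theta_{1-t}\ge O_{1-t}$ (again by Theorem~\ref{intro:locally_prosoluble}), for odd $k$ this index divides $\delta^*(F_{1-t})$, the orbit count of $O^\infty(F_{1-t}(\omega))$; Lemma~\ref{lem:soluble_residual_trans} shows $\delta^*(F_{1-t})=1$ unless $F_{1-t}(\omega)$ is soluble or $F_{1-t}$ is of exceptional $\SL_2(5)$ affine type, and the even-$k$ drops are handled separately using the fact that $F_t$ is almost simple. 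Some argument of this kind is indispensable: without it the dichotomy ``$O_{1-t}=\triv$ or $F_{1-t}$ affine with $O_{1-t}\cong\SL_2(5)$'' does not follow from anything you have established.
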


The cases in Theorem~\ref{intro:end_stabilizer} are not the strongest possible statements; see Theorem~\ref{thm:end_stabilizer} for a more precise statement of the exceptional cases that are still outstanding.  Even outside the exceptional locally prosoluble case, it is not true in general that $\Lambda_t(G) = F_t(\omega)$; see Example~\ref{ex:ldc_end}.

However, if we assume both local actions are the same (which includes, for example, the case that $G$ is vertex-transitive), the conclusion is more definitive:

\begin{thm}\label{intro:end_stabilizer:same}
Let $G \in \mc{H}_{F_0,F_1}$, where $F_0=F_1$ as permutation groups.  Then $\Lambda_t(G) = F_t(\omega)$ for $t=0,1$.
\end{thm}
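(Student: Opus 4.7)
The plan is to apply Theorem~\ref{intro:end_stabilizer} symmetrically in $t=0,1$ and use the hypothesis $F_0=F_1$ to close off every escape route. First observe that the exceptional locally prosoluble type of Theorem~\ref{intro:locally_prosoluble} is automatically excluded: it requires $\{q_0,q_1\}=\{4,5\}$ with $F_t=\PGaL_3(q_t)$ and hence $F_0\neq F_1$. So Theorem~\ref{intro:end_stabilizer} applies with no caveat in either direction.

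Suppose for contradiction that $\Lambda_t(G)$ is a proper subgroup of $F_t(\omega)$ for some $t$. Theorem~\ref{intro:end_stabilizer} then gives the dichotomy ``$O_{1-t}=\triv$, or $F_{1-t}$ is of affine type with $O_{1-t}\cong\SL_2(5)$'', and forces $F_t$ into one of the three almost simple classes (i) (projective linear with $n\ge 2$), (ii) (socle of rank-$1$ simple Lie type), or (iii) ($F_t=\mathrm{M}_{11}$). Finite $2$-transitive groups split into the disjoint classes affine and almost simple, so under $F_0=F_1$ the alternative ``$F_{1-t}$ affine'' would force $F_t$ to be affine, which is incompatible with any of (i)--(iii). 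Hence $O_{1-t}=\triv$, and by the symmetry $O_t=O_{1-t}$ coming from $F_0=F_1$, also $O_t=\triv$: that is, $F_t(\omega)$ is soluble.

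Solubility of $F_t(\omega)$ already rules out case (iii), in which $F_t(\omega)=\mathrm{M}_{10}$ is insoluble. In case (i) the parabolic point stabilizer in $\PSL_{n+1}(q)$ acting on $P_n(q)$ has a Levi factor involving $\PSL_n(q)$, so solubility forces $n=2$ and $q\in\{2,3\}$; case (ii) has a soluble Borel as point stabilizer throughout. For these residual small projective-linear and rank-$1$ Lie type cases, the intro-level statement of Theorem~\ref{intro:end_stabilizer} is not enough on its own, and the plan is to invoke the finer body-of-paper refinement to produce additional constraints on $\Lambda_t(G)$ that are parameterised by $F_{1-t}$. Under $F_0=F_1$ those constraints become self-referential and should force $\Lambda_t(G)=F_t(\omega)$, contradicting the assumption.

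The main obstacle is precisely this final step: the almost-simple/affine dichotomy cleanly eliminates every case in which $F_t(\omega)$ is insoluble, but the rank-$1$ Lie type branch has soluble point stabilisers throughout, so intro-level information alone does not close the argument. Exploiting the symmetry $F_0=F_1$ as an internal constraint on the refined local data of the end stabiliser is where the genuine work lies.
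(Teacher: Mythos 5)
Your reduction is sound as far as it goes, and there is no circularity in invoking Theorem~\ref{intro:end_stabilizer}/Theorem~\ref{thm:end_stabilizer}, since the paper proves those independently of this statement. Excluding the exceptional locally prosoluble type, using the affine versus almost-simple dichotomy to force $O_0=O_1=\triv$, and narrowing to the cases where $F_t(\omega)$ is soluble (socle $\PSL_3(q)$ with $q\in\{2,3\}$, or rank $1$ Lie type) are all correct; this is a legitimate alternative to the paper's handling of the insoluble cases, which goes through the case list of Lemma~\ref{lem:lambda_restriction} and Corollary~\ref{cor:insoluble_line_index}. But the proof is incomplete: the step you yourself flag as ``where the genuine work lies'' is precisely the part that remains to be done, and it genuinely does not follow from the intro-level statement. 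The paper closes these cases as follows. For rank $1$ Lie type, $d_0-1=d_1-1$ is a power of the defining characteristic $p$, so by Lemma~\ref{lem:line_index} the index $|F_t(\omega):\Lambda_t|$ is a product of divisors of $d_{1-t}-1$ and hence a power of $p$; on the other hand Theorem~\ref{thm:2bbtrans}(c) forces that index to divide the order of the multiplicative group of the field and $e_{F_t}$, hence to be coprime to $p$; so it equals $1$. For socle $\PSL_3(q)$ the paper invokes Lemma~\ref{lem:leftover_psl}, whose cases (i)--(iii) all require $q_0\neq q_1$ or $F_1$ not of projective-linear form; with $F_0=F_1$ only case (iv) survives, giving $\Lambda^k_t=F_t(\omega)$ for all $k$. (The paper in fact treats all $q$ this way, not only $q\le 3$.)

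If you prefer to finish along your own lines, the refined Theorem~\ref{thm:end_stabilizer} does suffice, but the derivation has to be written out: in its case (ii) one gets $\lambda_t=\nu_{1-t}$, which is a power of $p$ by its case (c) when $F_{1-t}=F_t$ is of rank $1$ Lie type, while also dividing $\gcd(q_t-1,e_{F_t})$, forcing $\lambda_t=1$; and for $F_t=\PGaL_3(p)$ with $p\in\{2,3\}$, none of the cases (a)--(e) for $F_{1-t}$ is compatible with $F_{1-t}=F_t$ (case (e) explicitly assumes $F_t\not\cong F_{1-t}$, and these groups are insoluble, not affine, not of rank $1$ Lie type, and not $\PGaL_2(8)$ on $28$ points), again forcing $\lambda_t=1$. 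As submitted, however, the argument stops short of the conclusion in exactly the cases where the content of the theorem lies.
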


The main theorems are all based on a combination of two lines of argument.  The first broadly follows a strategy used by Radu, an induction argument on pointwise stabilizers of balls in the tree, albeit in greater generality.  This argument by itself, culminating in Proposition~\ref{prop:soluble_dichotomy}, requires relatively little information about the classification of finite $2$-transitive permutation groups, but is not sufficient to prove Theorem~\ref{intro:locally_prosoluble}.

The second aspect is based on the following observation.  We say a permutation group $F \le \Sym(\Omega)$ is \defbold{$2$-by-block-transitive} if $F$ preserves some equivalence relation $\sim$ with at least two blocks and is transitive on pairs $(\omega,\omega')$ belonging to different blocks.

\begin{lem}[See Corollary~\ref{cor:tree_2bbtrans}]
Let $T$ be a thick locally finite tree, let $G \in \mc{H}_{F_0,F_1}$ and let $t \in \{0,1\}$.  Then $F_t$ has $2$-by-block-transitive action on $F_t/\Lambda_t(G)$, with block stabilizer $F_t(\omega)$.
\end{lem}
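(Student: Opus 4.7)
The plan is to fix a vertex $v$ of type $t$, identify $\Omega_t$ with the set of neighbours of $v$ so that $F_t$ is realised as the local action of $G(v)$ at $v$, and fix an end $\xi$ whose ray from $v$ begins with the edge $v \to v_\omega$ for some $\omega \in \Omega_t$. Then $\Lambda_t(G)$ arises as the image of $G(v,\xi)$ in $F_t$, and visibly $\Lambda_t(G) \le F_t(\omega)$. The block structure falls out at once: the natural $F_t$-equivariant map $F_t/\Lambda_t(G) \to F_t/F_t(\omega) \cong \Omega_t$ exhibits a nontrivial block system (as $|\Omega_t|\ge 3$) whose fibre over $\omega$ has setwise stabilizer $F_t(\omega)$.

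For the $2$-by-block-transitivity I would first use the $2$-transitivity of $F_t$ on $\Omega_t$ to reduce the problem to showing that, for any distinct $\omega_1,\omega_2\in\Omega_t$, the subgroup $F_t(\omega_1,\omega_2)$ acts transitively on the product of the blocks over $\omega_1$ and over $\omega_2$. For each $i \in \{1,2\}$, the boundary-transitivity of $G$ together with the standard fact (recalled in the introduction) that end-stabilizers in boundary-$2$-transitive groups are vertex-transitive on each type, implies that $G(v,v_{\omega_i})$ is transitive on the set $\mc{E}(v,v_{\omega_i})$ of ends whose ray from $v$ begins with the edge $v\to v_{\omega_i}$; the image in $F_t(\omega_i)$ then identifies the block over $\omega_i$ as an $F_t(\omega_i)$-equivariant quotient of $\mc{E}(v,v_{\omega_i})$. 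So it suffices to prove that $G(v,v_{\omega_1},v_{\omega_2})$ acts transitively on $\mc{E}(v,v_{\omega_1})\times\mc{E}(v,v_{\omega_2})$, a set naturally in bijection with the oriented lines of $T$ passing through the path $v_{\omega_1}-v-v_{\omega_2}$.

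The crucial step is to produce the required element. Given two such oriented lines $L, L'$, boundary-$2$-transitivity of $G$ (that is, transitivity on ordered pairs of distinct ends) supplies $g\in G$ with $gL = L'$ as oriented lines. Writing $L' = [\eta_1',\eta_2']$ with the appropriate orientation, $gv$ is a type-$t$ vertex of $L'$. I would then invoke the transitivity of $G(\eta_1',\eta_2')$ on type-$t$ vertices of $L'$ to choose $h\in G(\eta_1',\eta_2')$ with $h(gv) = v$. The product $hg$ fixes $v,\eta_1',\eta_2'$, hence also the unique neighbours $v_{\omega_1}, v_{\omega_2}$ of $v$ lying on $L'$, and maps $L$ to $L'$ as required.

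The main obstacle is the transitivity claim for $G(\eta_1',\eta_2')$ on type-$t$ vertices of $L'$. This is a folklore consequence of boundary-$2$-transitivity of $G$, viewed as strong transitivity on the $\widetilde{\mathrm{A}}_1$-building structure on $T$: the setwise stabilizer of $L'$ in $G$ is edge-transitive on $L'$, and the orientation-preserving subgroup $G(\eta_1',\eta_2')$ has index at most $2$ in it, giving transitivity on type-$t$ vertices after an easy case analysis (in the non-type-preserving case one passes to the type-preserving index-$2$ subgroup of $G$, which is itself boundary-$2$-transitive). Granted this, the remainder is bookkeeping: translation between end-parametrizations and coset-parametrizations of blocks, together with the fact that transitivity of a group action passes to any equivariant quotient.
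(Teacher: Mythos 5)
Your argument is correct in outline, but it takes a genuinely different route from the paper, and in doing so it quietly re-proves material the paper has already packaged up. The paper's proof is one line: since $\Lambda^k_t(G)$ is the image of $G(x,y)$ in $F_t = G(x)/G_1(x)$, the coset space $F_t/\Lambda^k_t(G)$ is the set of $G_1(x)$-orbits on $S(x,k)$, and the $2$-by-block-transitivity of $G(x)$ on $S(x,k)$ relative to $\sim_{x,k}$ (implication (iv)$\Rightarrow$(ii) of Corollary~\ref{cor:distance_transitivity}) descends to the quotient group $F_t$. You instead work directly with ends, reducing to transitivity of $G(v,v_{\omega_1},v_{\omega_2})$ on $\mc{E}(v,v_{\omega_1})\times\mc{E}(v,v_{\omega_2})$ — which is precisely statement (iii) of Corollary~\ref{cor:distance_transitivity} — and deriving that from boundary-$2$-transitivity plus transitivity of two-end stabilizers on the vertices of their line. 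This is legitimate and more geometric, but your ``folklore'' step is exactly where the paper's real work sits: transitivity of $G(\eta_1',\eta_2')$ on type-$t$ vertices of $L'$ amounts to producing a translation of length $m\in\{1,2\}$ in $G$ whose axis is the prescribed line $L'$, which is the content of Proposition~\ref{prop:boundary-trans}(i)--(ii) together with a conjugation/compactness step to pass from finite subpaths to the whole line. Appealing to strong transitivity of the building is not a shortcut, since that equivalence is only asserted, not proved, in the introduction. Two smaller points: your claim that $gv$ has type $t$ fails when $g$ swaps types, but the clean fix is Proposition~\ref{prop:boundary-trans}(i) — either $G$ is type-preserving, or $m=1$ and $G(\eta_1',\eta_2')$ contains an odd-length translation, hence is transitive on \emph{all} vertices of $L'$ — so passing to $G^+$ is unnecessary; and identifying $\Lambda_t(G)$ with the image of $G(v,\xi)$ rather than with $\bigcap_k\Lambda^k_t(G)$ needs a short compactness argument, which the paper's route through the finite spheres avoids while also yielding the statement for every $\Lambda^k_t(G)$ at once.
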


We can thus appeal to a classification of finite $2$-by-block-transitive permutation groups acting faithfully on blocks, which is obtained in the article \cite{Reidkblock}, to greatly limit the possibilities for the group $\Lambda_t(G)$ and to complete the proof of Theorem~\ref{intro:locally_prosoluble}.

\begin{rem}\label{rem:Pk}
In principle, for a thick locally finite tree $T$, one could classify groups in $\mc{H}_T$ in terms of $k$-local actions, as follows.  Given a locally finite tree $T$, some $k \ge 0$ and $G \le \Aut(T)$, the \defbold{$k$-local action} of $G$ at $v \in VT$ is the automorphism group induced by the vertex stabilizer $G(v)$ on the tree $B(v,k)$ spanned by vertices at distance at most $k$ from $v$.  One can form the \defbold{$\propP{k}$-closure $G^{\propP{k}}$} of $G$ (also called the $k$-closure), which consists of all automorphisms $\alpha$ of $T$ such that for all vertices $v \in VT$, there is $g \in G$ such that $\alpha w = g_v w$ for every vertex and arc $w$ of $B(v,k)$.

Given $G \le \Aut(T)$, we claim that $G \in \mc{H}_T$ if and only if $G^{\propP{k}} \in \mc{H}_T$ for all $k$.  The $\propP{k}$-closures of $G$ form a descending chain as $k \rightarrow +\infty$, whose intersection is the closure of $G$ in $\Aut(T)$.  We see that $G^{\propP{1}}$ has the same vertex orbits as $G$, so we may suppose any two vertices of the same type are in the same $G$-orbit.  By Corollary~\ref{cor:distance_transitivity} below we have $G \in \mc{H}_T$ if and only if $G$ is transitive on each set $V_{a,b,k}$, where $V_{a,b,k}$ consists of ordered pairs $(x,y)$ of vertices where $x$ and $y$ belongs to the $G$-orbits $a$ and $b$ respectively and $d(x,y)=k$.  The claim follows by observing that $G$ is transitive on $V_{a,b,k}$ if and only if $G^{\propP{2k}}$ is transitive on $V_{a,b,k}$.  A similar argument shows that $G$ is boundary-$3$-transitive if and only if $G^{\propP{k}}$ is boundary-$3$-transitive for all $k$.  Thus a suitably detailed classification of $\propP{k}$-closed groups would in particular classify the multiply transitive closed subgroups of $\Aut(T)$ acting on $\partial T$.

However, for $k \ge 2$ the theory of $\propP{k}$-closed groups is still at a relatively early stage of development.  The main articles developing the theory are \cite{BanksElderWillis} and \cite{Tornier}; in the latter, Tornier provides a universal construction for a special case, namely when $G$ is $\propP{k}$-closed, vertex-transitive and contains an involution that inverts an edge, then $G$ must take the form $\mathbf{U}_k(F^{(k)})$, where $F^{(k)}$ is a subgroup of $\Aut(B(v,k))$ satisfying a certain compatibility condition (C).  For examples of groups $\mathbf{U}_k(F^{(k)})$ that are boundary-$2$-transitive, we would also impose the requirement that $F^{(k)}$ be $2$-by-block-transitive on the leaves of $B(v,k)$.  However, even assuming a given $2$-transitive permutation group for the action on $B(v,1)$, there is not yet a robust strategy for classifying subgroups of $\Aut(B(v,k))$ satisfying (C).  The importance of edge inversions for the arguments in \cite{Tornier} also suggests difficulties in generalizing to the case when $G$ has type-preserving action on the tree.
\end{rem}

\paragraph{Structure of article}

The remainder of the article is divided into four sections.  In Section 2 we establish some key characteristics of the class $\mc{H}_T$, including the relationship with $k$-by-block-transitive actions.  Section 3 collects together the relevant preliminaries on finite groups, including the classification of finite block-faithful $2$-by-block-transitive permutation groups for the intended application to groups acting on trees.  The proofs of the main theorems are in Section 4.  In Section 4.1 we perform the induction argument on pointwise stabilizers of balls in the tree; combined with the preliminaries, this actually proves Theorem~\ref{intro:locally_prosoluble} except for a special case, which is when one of the local actions has socle $\PSL_3(q_t)$ for some prime power $q_t > 3$ and the other local action is either of the same form (with a possibly different $q_t$) or has soluble point stabilizers.  In Section 4.2 we obtain some more restrictions on the local actions of stabilizers of line segments in $G$, and then in Section 4.3 we prove the main theorems, including the more detailed version of Theorem~\ref{intro:end_stabilizer}.  Finally, Section 5 presents a construction with some examples, to show that many of the exceptional cases do occur.

\paragraph{Acknowledgement} I thank the anonymous reviewer, whose suggestions have led to significant improvements in presentation.  I also thank Florian Lehner for explaining to me a construction which proved useful in obtaining examples; Sven Raum for clarifications on the representation-theoretic context; and Pierre-Emmanuel Caprace and Stephan Tornier for other helpful comments related to this article.

\section{General results about boundary-$k$-transitive actions on trees}

\subsection{Notation for permutation groups and groups acting on trees}

For general concepts in permutation group theory, the reader may wish to consult \cite{Cameron} and \cite{DixonMortimer}.

\begin{defn}\label{def:permutation}
Let $G$ be a group.  A \defbold{$G$-set} is a set $X$ equipped with an action of $G$ on $X$ by permutations. Given a $G$-set $X$ and $x_1,x_2,\dots, x_n \in X$, write $G(x_1)$\index{G@$G(x)$} for the stabilizer of $x_1$ in $G$ and $G(x_1,\dots,x_n) = \bigcap^n_{i=1}G(x_i)$.  (We use this notation, instead of the more usual $G_{x_1}$ and $G_{(x_1,\dots,x_n)}$, to avoid an overload of subscripts later.)  Let $\Omega$ be a set equipped with an equivalence relation $\sim$; we consider the equivalence classes as \defbold{blocks} of $\Omega$.  Given $\omega \in \Omega$, write $[\omega]$ for the $\sim$-class of $\omega$.  If $\sim$ is $G$-invariant, the quotient map
\[
\Omega \rightarrow \Omega/\sim \; \omega \mapsto [\omega]
\]
is $G$-equivariant with respect to the natural action of $G$ on $\Omega/\sim$.

Given a $G$-equivariant surjective map $\pi: \Omega \rightarrow X$ between $G$-spaces we say $X$ is a \defbold{factor} of $\Omega$ and $\Omega$ is an \defbold{extension} of $X$; if $\pi$ is invertible, we say $\Omega$ and $X$ are \defbold{equivalent} (as $G$-sets) and write $\Omega \cong_G X$.  Note that every factor of $\Omega$ is equivalent to one of the form $\Omega/\sim$ for $\sim$ a $G$-invariant equivalence relation: namely, if $\pi: \Omega \rightarrow X$ is a $G$-equivariant map then $\Omega/\sim_\pi \cong_G X$, where we set $\omega \sim_\pi \omega'$ if and only if $\pi(\omega) = \pi(\omega')$.

Now suppose $G$ acts transitively on $X$.  A \defbold{standard extension}\index{standard extension} of $X$ is a $G$-set $\Omega$ of the following form $X \times B$:
\begin{enumerate}[(a)]
\item $\Omega = X \times B$ for some set $B$;
\item The map $X \times B \rightarrow X; \; (x,b) \mapsto x$ is $G$-equivariant;
\item For all $x,y \in X$, there exists $g_{x,y} \in G$ such that $g_{x,y}(x,b) = (y,b)$ for all $b \in B$.
\end{enumerate}
\end{defn}

\begin{rem}\label{rem:standard_extension}
We recall the basic fact that isomorphism types of transitive $G$-sets are in one-to-one correspondence with conjugacy classes of subgroups of $G$.  In particular, a transitive $G$-set $X$ is equivalent to the coset space $G/G(x) := \{gG(x) \mid g \in G\}$ for every $x \in X$.  Given two coset spaces $G/H$ and $G/K$ such that $H \ge K$, we observe that $G/K$ is an extension of $G/H$ on which $G$ also acts transitively.  Up to equivalence we can realize $G/K$ as a standard extension of $X = G/H$, as follows:
\begin{enumerate}[(i)]
\item Let $B = H/K$ and let $\Omega = X \times B$.
\item For each left coset $gH$ of $H$ in $G$, choose a representative $c_{gH} \in gH$, with $c_H=1$.
\item Define
\[
\pi: \Omega \rightarrow G/K \; \; (gH,hK) \mapsto c_{gH}hK;
\]
we give $G$ the unique action $\Omega$ with respect to which $\pi$ is $G$-equivariant (such an action exists because $\pi$ is a bijection).
\end{enumerate}
It is clear that conditions (a) and (b) of a standard extension are satisfied.  For (c), given $g_1H,g_2H \in G/H$, let $c = c_{g_2H}c^{-1}_{g_1H}$.  Then for all $h \in H$ we have
\[
c(g_1H,hK) = \pi\inv(cc_{g_1H}hK) = \pi\inv(c_{g_2H}hK) = (g_2H,hK),
\]
in other words, $c(g_1H,b) = (g_2H,b)$ for all $b \in B$.

Given a standard extension $X \times B$ of $X$ and $x \in X$, we can define a \defbold{block restriction map} $\beta_x: G \rightarrow \Sym(B)$ via the formula
\[
\forall g \in G, b \in B: g(x,b) = (gx,\beta_x(g)b).
\]
The restriction of $\beta_x$ to $G(x)$ is a homomorphism.  Applied to $G$ itself, $\beta_x$ depends on the choice of $x$, however because of condition (c) of a standard extension, the image $\beta_x(G)$ does not depend on $x$, and moreover $\beta_x(G) = \beta_x(G(x))$, since
\[
\forall y \in X: \beta_x(g_{x,y}G(x)) = \beta_x(G(x)) = \beta_y(g_{x,y}G(x)g_{y,x}) = \beta_y(G(y)).
\]
The \defbold{block action} is then the subgroup $\beta_x(G(x))$ of $\Sym(B)$.  We note that $G$ acts transitively on $X \times B$ if and only if the action on $X$ and the block action are both transitive.
\end{rem}

\begin{defn}
Let $\Omega$ be a set equipped with an equivalence relation $\sim$.  For $k \ge 1$, define the set $\Omega^{[k]}$ of \defbold{distant $k$-tuples} to consist of those $k$-tuples $(\omega_1,\omega_2,\dots,\omega_k)$ such that no two entries lie in the same block.  We then say $G \le \Sym(\Omega)$ is \defbold{$k$-by-block-transitive}\index{k@$k$-by-block-transitive}\index{2@$2$-by-block-transitive} if $G$ preserves $\sim$, there are at least $k$ blocks, and $G$ acts transitively on $\Omega^{[k]}$.
\end{defn}

\begin{rem}\
\begin{enumerate}
\item If a group action is $k$-by-block-transitive, it is also $k'$-by-block-transitive for $k' < k$; in particular, every $k$-by-block-transitive action for $k \ge 1$ is transitive.
\item If $G \le \Sym(\Omega)$ is $k$-by-block-transitive for some $k \ge 2$ with respect to the equivalence relation $\sim$, then $\sim$ is the unique coarsest $G$-invariant equivalence relation other than the universal relation: see \cite[Lemma~2.3]{Reidkblock}.  In particular, $\sim$ is uniquely determined by the action of $G$ on $\Omega$.
\item Suppose we have a $G$-equivariant surjection $\pi: \Omega \rightarrow X$ of $G$-sets, such that $G$ is $k$-transitive on $X$ with $|X| \ge k \ge 2$.  Then $\sim_\pi$ is a $G$-invariant nonuniversal equivalence relation $\sim_\pi$ on $\Omega$, which cannot be made any coarser subject to these conditions.  Thus if $G$ acts $k$-by-block-transitively on $\Omega$ with respect to some equivalence relation $\sim$, then $\sim = \sim_\pi$.  Conversely, if $G$ acts $k$-by-block-transitively on a set $\Omega$ with respect to an equivalence relation $\sim$, then as a $G$-set, $\Omega$ is an extension of the $k$-transitive $G$-set $\Omega/\sim$.
\end{enumerate}
\end{rem}

\subsection{Characterizations of boundary-$k$-transitivity}\label{sec:trees}

In this subsection, we establish some notation for actions on trees and explain the connection between $k$-by-block-transitive actions and transitive actions on the boundary of a tree for $k = 2,3$.  For the theory of groups acting on trees, the standard reference is \cite[Chapter I]{Serre:trees}.

\begin{defn}\label{defn:tree}
Let $T$ be a tree; write $VT$ for the set of vertices of $T$.  Then there is a natural partition of $VT$ into two parts $V_0T$ and $V_1T$\index{V@$VT, V_tT$}, such that if $v \in V_tT$ then all neighbours of $v$ belong to $V_{1-t}T$.  Say a vertex $v$ is of \defbold{type $t$}\index{type (of vertex)} if $v \in V_tT$.  Throughout, if we refer to the type of a vertex, or some other object indexed by vertex type, using an integer (including implicitly, for instance with subscripts), that integer is to be understood modulo $2$.  We say $T$ is \defbold{semiregular}\index{semiregular (tree)} if the \defbold{degree}\index{degree} of $v$, that is, the number of neighbours of $v$, is constant on $v \in V_0T$ and also on $v \in V_1T$; in that case we will write $d_t$\index{d@$d_t$} for the number of neighbours of $v$ for all $v \in VT_t$.  (Both $d_0 = d_1$ and $d_0 \neq d_1$ are allowed.)  If $d_0=d_1$ the tree is \defbold{regular}.  To avoid degenerate cases, we will assume $T$ is \defbold{thick}, that is, every vertex has at least three neighbours.  The \defbold{geometric realization} $[T]$ of $T$ is the geodesic space formed by taking a point for each vertex $x \in VT$, and adjoining a \defbold{geometric edge}, that is, a unit line segment with endpoints $x$ and $y$, for each unordered pair $\{x,y\}$ of neighbouring vertices.  Note that automorphisms of $T$ naturally extend to isometries of $[T]$; we equip $VT$ with the subspace metric in $[T]$.

An \defbold{arc}\index{arc} of $T$ is an ordered pair $(x,y)$ of neighbouring vertices of $T$, while an \defbold{(undirected) edge} is an unordered pair $\{x,y\}$ of neighbouring vertices.  (We will sometimes identify an undirected edge with its associated geometric edge, where it is convenient to do so.)   Given an arc $a = (x,y)$, then $\ol{a}$ denotes the \defbold{reverse} of $a$, namely $\ol{a} = (y,x)$.

In pictures of trees, we adopt the following convention: 

Vertices of interest will be marked as black squares or hollow circles.  The permutation group of interest in the picture is the pointwise stabilizer of the black squares, in its action on the set of hollow circles.

A \defbold{(vertex) path}\index{path} in $T$ is a sequence $P = (x_i)_{i \in I}$ of distinct vertices such that $x_{i+1}$ is adjacent to $x_i$ for all $i \ge 0$, where $I$ is an interval in $\Zb$; the \defbold{length} of the path is $|I|-1$.  We say the path is a \defbold{ray}\index{ray} if the indexing set is the non-negative integers.  Two rays $(x_i)_{i \ge 0}$ and $(y_i)_{i \ge 0}$ are \defbold{equivalent} if there exist $n,t \in \Zb$ such that $y_{i+t} = x_i$ for all $i \ge n$.  An \defbold{end}\index{end} of $T$ is an equivalence class of rays; the set of ends then forms the \defbold{boundary}\index{boundary} $\partial T$\index{d@$\partial T$} of $T$.  (The boundary is also usually equipped with a topology, but this is not relevant for the present discussion.)  Note that for each pair $(v,\xi) \in VT \times \partial T$, there is a unique ray $(v_{\xi,0},v_{\xi,1},\dots)$ representing $\xi$ such that $v_{\xi,0} = v$.

Given a vertex $v \in VT$ we write
\[
S(v,n) = \{w \in VT \mid d(v,w)=n\}.
\]
Given $x,y \in VT$, we write $S_x(y,k)$ for the set of vertices $z$ at distance $k$ from $y$, such that the path from $x$ to $z$ passes through $y$.  To put this another way, if we picture the tree as being rooted at $x$, then $S_x(y,k)$ denotes the set of vertices that are descended from $y$ and are at distance $k$ from $y$.  For $n \ge 1$ we can partition $S(v,n)$ as follows:
\[
S(v,n) = \bigsqcup_{w \in S(v,1)}S_v(w,n-1).
\]
This defines an $\Aut(T)(v)$-invariant equivalence relation $\sim_{v,n}$\index{s@$\sim_{v,n}$} on $S(v,n)$: we write $x \sim_{v,n} y$ if there exists $w \in S(v,1)$ such that $\{x,y\} \subseteq S_v(w,n-1)$.  Note that given $x,y \in S(v,n)$, then $(x,y)$ is a $\sim_{v,n}$-distant pair (in other words, $x \not\sim_{v,n} y$) if and only if $d(x,y)=2n$; see Figure~\ref{fig:svn}, where the blocks are indicated by dashed rectangles and the path between a distant pair is highlighted.

In particular, we see that as $\Aut(T)(v)$-sets, $S(v,n)$ is an extension of $S(v,1)$.

\begin{figure}
\caption{The equivalence relation $\sim_{v,3}$}
\label{fig:svn}
\begin{center}
\begin{tikzpicture}[scale=0.7, every loop/.style={}, square/.style={regular polygon,regular polygon sides=4}]

\tikzstyle{every node}=[circle,
                        inner sep=0pt, minimum width=5pt]

\draw[line width=2pt,color=black] (-5.5,-6) -- (-5,-4) -- (-4,-2) -- (0,0) -- (0,-2) -- (1,-4) -- (0.5,-6);

\draw
{
(0,0.5) node{$v$}
(0,0) node[square, draw=black, fill=black, minimum width=10pt]{}
(-4,-2) node[fill=black]{} to (0,0)
(0,-2) node[fill=black]{} to (0,0)
(4,-2) node[fill=black]{} to (0,0)
(-5,-4) node[fill=black]{} to (-4,-2)
(-3,-4) node[fill=black]{} to (-4,-2)
(-1,-4) node[fill=black]{} to (0,-2)
(1,-4) node[fill=black]{} to (0,-2)
(3,-4) node[fill=black]{} to (4,-2)
(5,-4) node[fill=black]{} to (4,-2)

(-5.5,-6) node[draw=black, fill=white, minimum width=8pt]{} to (-5,-4)
(-4.5,-6) node[draw=black, fill=white, minimum width=8pt]{} to (-5,-4)
(-3.5,-6) node[draw=black, fill=white, minimum width=8pt]{} to (-3,-4)
(-2.5,-6) node[draw=black, fill=white, minimum width=8pt]{} to (-3,-4)

(-1.5,-6) node[draw=black, fill=white, minimum width=8pt]{} to (-1,-4)
(-0.5,-6) node[draw=black, fill=white, minimum width=8pt]{} to (-1,-4)
(0.5,-6) node[draw=black, fill=white, minimum width=8pt]{} to (1,-4)
(1.5,-6) node[draw=black, fill=white, minimum width=8pt]{} to (1,-4)

(2.5,-6) node[draw=black, fill=white, minimum width=8pt]{} to (3,-4)
(3.5,-6) node[draw=black, fill=white, minimum width=8pt]{} to (3,-4)
(4.5,-6) node[draw=black, fill=white, minimum width=8pt]{} to (5,-4)
(5.5,-6) node[draw=black, fill=white, minimum width=8pt]{} to (5,-4)
};

\draw[color=black, dashed] (-5.9,-6.4) rectangle (-2.1,-5.6);
\draw[color=black, dashed] (-1.9,-6.4) rectangle (1.9,-5.6);
\draw[color=black, dashed] (2.1,-6.4) rectangle (5.9,-5.6);
\end{tikzpicture}
\end{center}
\end{figure}
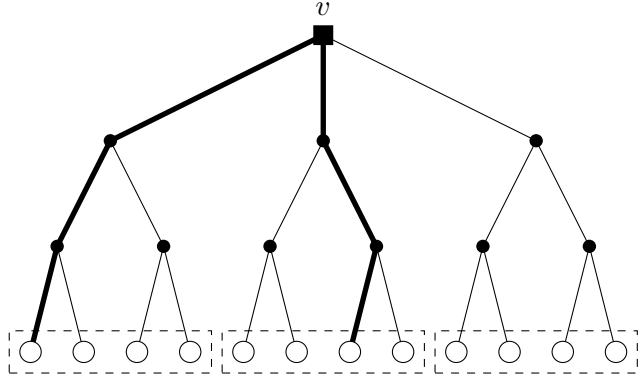

Analogously, we define an equivalence relation $\sim_v$\index{s@$\sim_{v}$} on $\partial T$ by writing $\xi \sim_v \xi'$ if $v_{\xi,1} = v_{\xi',1}$.

Given $G \le \Aut(T)$, the \defbold{type-preserving subgroup} $G^+$ of $G$ is the setwise stabilizer of $V_0T$ in $G$; we say $G$ is \defbold{type-preserving}\index{type-preserving} if $G = G^+$. 
\end{defn}

Given an automorphism $g$ of $\Aut(T)$, define the \defbold{translation length}\index{translation length} $l(g) := \min_{p \in [T]}\{d(p,gp)\}$.  A standard observation is that if $l(g) > 0$, then $\{p \in [T] \mid d(p,gp) = l(g)\}$ is a line in the tree (called the \defbold{axis}\index{axis} of $g$; we call $g$ a \defbold{translation} in this case), whereas if $l(g)=0$, then $g$ fixes a vertex or the midpoint of an edge.  The next lemma gives a way of detecting when $g$ is translation using pairs $(a,ga)$, for $a$ an arc of $T$.

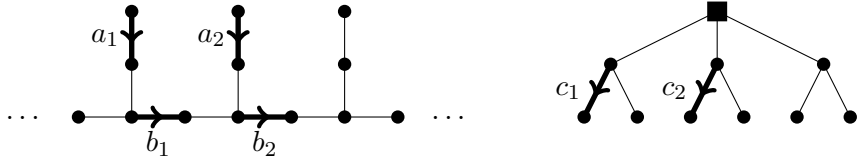
\begin{figure}
\caption{Elliptic versus hyperbolic pairs of arcs}
\label{fig:translation}
\begin{center}
\begin{tikzpicture}[scale=0.7, every loop/.style={}, square/.style={regular polygon,regular polygon sides=4}]

\tikzstyle{every node}=[circle,
                        inner sep=0pt, minimum width=5pt]
                        
\tikzset{->-/.style={decoration={
  markings,
  mark=at position .6 with {\arrow{>}}},postaction={decorate}}}

\draw (0,0) node[fill=black]{} -- (1,0)  node[fill=black]{} -- (2,0)  node[fill=black]{} -- (3,0)  node[fill=black]{} -- (4,0) node[fill=black]{} -- (5,0) node[fill=black]{}  -- (6,0) node[fill=black]{};

\draw{
(-1,0) node{$\dots$}
(1,2) node[fill=black]{} to (1,1) node[fill=black]{} to (1,0)
(3,2) node[fill=black]{} to (3,1) node[fill=black]{} to (3,0)
(5,2) node[fill=black]{} to (5,1) node[fill=black]{} to (5,0)
(7,0) node{$\dots$}
};

\draw[line width=2pt,->-] (1,2) -- (1,1);
\draw[line width=2pt,->-] (3,2) -- (3,1);
\draw[line width=2pt,->-] (1,0) -- (2,0);
\draw[line width=2pt,->-] (3,0) -- (4,0);
\draw{
(0.5,1.5) node{$a_1$}
(2.5,1.5) node{$a_2$}
(1.5,-0.5) node{$b_1$}
(3.5,-0.5) node{$b_2$}
};

\draw{
(12,2) node[square, draw=black, fill=black, minimum width=10pt]{}

(10,1) node[fill=black]{} to (12,2)
(12,1) node[fill=black]{} to (12,2)
(14,1) node[fill=black]{} to (12,2)

(9.5,0) node[fill=black]{} to (10,1)
(10.5,0) node[fill=black]{} to (10,1)
(11.5,0) node[fill=black]{} to (12,1)
(12.5,0) node[fill=black]{} to (12,1)
(13.5,0) node[fill=black]{} to (14,1)
(14.5,0) node[fill=black]{} to (14,1)
};   

\draw[line width=2pt,->-] (10,1) -- (9.5,0);
\draw[line width=2pt,->-] (12,1) -- (11.5,0);
\draw{
(9.2,0.5) node{$c_1$}
(11.2,0.5) node{$c_2$}
};

\end{tikzpicture}
\end{center}
\end{figure}

\begin{lem}\label{lem:axis_detection}
Let $T$ be a tree and let $g \in \Aut(T)$.  Given an arc $a = (x,y)$, we write $a^- =x$ and $a^+=y$.  Given arcs $a,b$ of $T$, we say that $(a,b)$ is \defbold{hyperbolic}\index{hyperbolic (pair of arcs)} if $\{a^-,a^+\} \neq \{b^-,b^+\}$ and $d(a^-,b^-) = d(a^+,b^+)$.  Otherwise, we say that $(a,b)$ is \defbold{elliptic}\index{elliptic (pair of arcs)}.  Then given an arc $a$ of $T$, the pair $(a,ga)$ is hyperbolic if $g$ is a translation and $a$ belongs to the axis of $g$; otherwise, $(a,ga)$ is elliptic.
\end{lem}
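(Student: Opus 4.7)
The argument uses the standard structure of the displacement function $f(p) := d(p, gp)$ for a tree automorphism $g$: the set $\Min(g) := \{p \in [T] : f(p) = l(g)\}$ is a nonempty convex subset of $[T]$ with $f(p) = l(g) + 2d(p, \Min(g))$ for every $p$, and $\Min(g)$ has one of three shapes --- a nonempty subtree of fixed vertices (if $g$ fixes a vertex), a single edge midpoint (if $g$ is an edge inversion without a fixed vertex), or the axis of $g$ (if $g$ is a translation); see \cite[Chapter~I, \S6]{Serre:trees}. In this language the hyperbolicity condition on $(a, ga)$ amounts to $f(a^-) = f(a^+)$ together with $ga \notin \{a, \ol{a}\}$.

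\textbf{Forward direction.} If $g$ is a translation with axis $A$ and $a$ is an arc of $A$, then $g$ restricts to a shift of $A$ by $l(g) \ge 1$, giving $d(a^\pm, g a^\pm) = l(g)$. A translation preserves the two ends of its axis, so $ga$ is an arc of $A$ pointing in the same direction as $a$ but displaced by $l(g) \ge 1$; hence $ga \notin \{a, \ol{a}\}$, and $(a, ga)$ is hyperbolic.

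\textbf{Reverse direction.} Suppose $(a, ga)$ is hyperbolic; write $a = (x, y)$. The crux is a local observation: for adjacent vertices $x, y$, the equality $f(x) = f(y)$ forces $\{x, y\} \subseteq \Min(g)$. Indeed, if $v \in VT$ has $d(v, \Min(g)) > 0$, then by convexity of $\Min(g)$ exactly one neighbour of $v$ lies on the geodesic $[v, \Min(g)]$, so $f$ decreases by $2$ along that neighbour and increases by $2$ along every other; a short case distinction (using convexity again to exclude the situation where neither of $x, y$ lies on the geodesic from the other to $\Min(g)$) then shows that adjacent vertices $x, y$ not both in $\Min(g)$ have $f(x) \neq f(y)$. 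Having placed $\{x, y\} \subseteq \Min(g)$, the three shapes of $\Min(g)$ finish the proof: if $g$ fixes a vertex then $x, y$ are both fixed and $ga = a$, contradicting hyperbolicity; if $g$ is an edge inversion then $\{x, y\}$ must be the inverted edge and $ga = \ol{a}$, again a contradiction; hence $g$ is a translation and $a$ lies on its axis. The only nontrivial step is the local observation on adjacent vertices, which is essentially a convexity argument in the tree.
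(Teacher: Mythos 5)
Your argument is correct in substance and supplies the detail that the paper's proof omits entirely: the paper simply declares the lemma a ``straightforward observation'' about how the arcs are arranged relative to a fixed point, respectively the axis, and points to a figure, so your displacement-function argument via $\Min(g)$ and the formula $d(p,gp) = l(g) + 2d(p,\Min(g))$ is precisely the justification the paper is gesturing at rather than a different route. One imprecision is worth fixing: your local observation, that adjacent vertices $x,y$ not both in $\Min(g)$ have $f(x) \neq f(y)$, is false in exactly one situation, namely when $g$ inverts the edge $\{x,y\}$ itself. There $\Min(g)$ is the midpoint of that edge, so neither vertex lies in $\Min(g)$, yet $f(x)=f(y)=1$; this is also the one case where neither of $x,y$ lies on the geodesic from the other to $\Min(g)$, and convexity does not exclude it. Your proof survives because in that case $ga = \ol{a}$, so $(a,ga)$ is elliptic by definition and the hyperbolicity hypothesis is already violated, but the local observation should be stated with the exception built in --- say, ``$f(x)=f(y)$ for adjacent $x,y$ forces either $\{x,y\} \subseteq \Min(g)$ or $\Min(g)$ to be the midpoint of $\{x,y\}$'' --- after which your three-way case analysis on the shape of $\Min(g)$ goes through verbatim.
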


\begin{proof}
The lemma is a straightforward observation considering how the arcs are arranged relative to a fixed point of $g$, respectively, relative to the axis of $g$.  For example, in Figure~\ref{fig:translation}, the pair $(a_1,a_2)$ is elliptic, the pair $(b_1,b_2)$ is hyperbolic, and the pair $(c_1,c_2)$ is elliptic.
\end{proof}

Given a set $X$ of $k \ge 2$ ends, there is a unique smallest subtree $T_X$ containing representatives for each $\xi \in X$.  If $|X| = 2$ then $T_X$ is a line and for $|X|=3$ then $T_X$ has a unique vertex $v(X)$ of degree $3$ and all others of degree $2$.  Observe that, for a given vertex $v \in VT$ and triple of distinct ends $(\xi_1,\xi_2,\xi_3)$, then one has $v(\xi_1,\xi_2,\xi_3) = v$ if and only if $(\xi_1,\xi_2,\xi_3)$ is a $\sim_v$-distant triple.  For example, in Figure~\ref{fig:triple}, part of the $3$-regular tree is shown with the subtree $T_{\{\xi_1,\xi_2,\xi_3\}}$ marked with larger vertices and thicker edges.  Viewing the tree from the vertices $x,y,z \in VT$, we see that $(\xi_1,\xi_2,\xi_3)$ is a distant triple with respect to $\sim_x$, whereas we have $\xi_2 \sim_y \xi_3$ (but $\xi_1 \not\sim_y \xi_2$) and all three ends are $\sim_z$-equivalent.

\begin{figure}
\caption{Structure associated to a triple of ends}
\label{fig:triple}
\begin{center}
\begin{tikzpicture}[scale=0.7, every loop/.style={}, square/.style={regular polygon,regular polygon sides=4}]

\tikzstyle{every node}=[circle,
                        inner sep=0pt, minimum width=5pt]
          
\draw[line width=2pt,color=black] \foreach \x in {1,2,3}
{
(0,0) node[fill=black, minimum width=8pt]{} to ++ (-90+120*\x:1) node[fill=black, minimum width=8pt]{} to ++ (-90+120*\x:1) node[fill=black, minimum width=8pt]{} to ++ (-90+120*\x:1) node[fill=black, minimum width=8pt]{} ++(-90+120*\x:1.5) node[rotate=-90+120*\x]{$\ldots$} ++(-90+120*\x:1) node{$\xi_{\x}$}
};

\foreach \x in {1,2,3}{
\foreach \y in {1,2,3}{
\foreach \z in {-1,1}{
\draw (0,0) ++ (-90+120*\x:\y) to ++ (-180+120*\x:1) node[fill=black]{} to ++ (-180+120*\x+15*\z:1) node[fill=black]{};
}
}
};

\draw{
(0,0.5) node{$x$} ++ (30:1) node{$y$}
(0,0) ++ (30:1) ++ (-60:1) ++ (-75:1) ++ (0,-0.5) node{$z$}
};
\end{tikzpicture}
\end{center}
\end{figure}
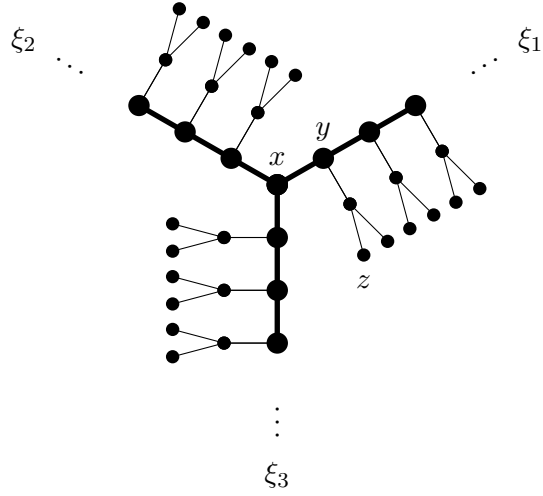

We now consider the action of a subgroup $G \le \Aut(T)$.  We say that $G$ is \defbold{$k$-type-distance-transitive} for $k \ge 0$ if given $x,y,x',y' \in VT$ such that $x'$ is the same type as $x$ and $d(x,y) = d(x',y') = k$, then there exists $g \in G$ such that $gx = x'$ and $gy = y'$.  In particular, $G$ is $0$-type-distance-transitive if and only if any two vertices of the same type are in the same $G$-orbit.  Since every path of length $k'$ is contained in a path of length $k$ for all $k \ge k'$, we see that if $G$ is $k$-type-distance-transitive then it is also $k'$-type-distance-transitive for $k' < k$.   We say that $G$ is \defbold{type-distance-transitive}\index{type-distance-transitive} if $G$ is $k$-type-distance-transitive for all $k$.  Note that $\Aut(T)$ itself is type-distance-transitive if and only if $T$ is regular or semiregular.  Write $\mc{H}_T$ for the class of closed boundary-$2$-transitive subgroups of $\Aut(T)$.

We note the following version of a well-known fact about groups acting on trees (actually an analogous statement is true for any connected bipartite graph), which shows one connection between the conditions we want to consider on pairs of vertices and transitivity of the local action.

\begin{lem}\label{lem:edge-walk}
Let $T$ be a tree and let $G \le \Aut(T)$.  Then the following are equivalent:
\begin{enumerate}[(i)]
\item $G^+$ is transitive on undirected edges of $T$;
\item For all $v \in VT$ and arcs $a,a'$ originating at $v$, there exists $g \in G(v)$ such that $ga = a'$;
\item $G$ is $1$-type-distance-transitive.
\end{enumerate}
\end{lem}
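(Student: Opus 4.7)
The plan is to prove the equivalence by a cycle of implications: (i) $\Rightarrow$ (ii) $\Rightarrow$ (iii) $\Rightarrow$ (i). Throughout I will use the basic fact that any automorphism of a tree either preserves or exchanges the two types of vertices globally (since the tree is connected).

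For (i) $\Rightarrow$ (ii), I take $v \in VT$ and two arcs $a = (v,x)$, $a' = (v,x')$ originating at $v$. The vertices $x,x'$ are both of type opposite to $v$. By (i), there exists $g \in G^+$ mapping the undirected edge $\{v,x\}$ to $\{v,x'\}$. Since $g$ preserves types and $v$ is the unique vertex of its type in $\{v,x'\}$, necessarily $gv = v$ and $gx = x'$; thus $g \in G(v)$ and $ga = a'$.

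For (ii) $\Rightarrow$ (iii), the key step is to move one vertex of a pair to any chosen vertex of the same type by a path-lifting argument, and then correct the remaining adjacent vertex using (ii) at a single vertex. More precisely, given vertices $x,x'$ of the same type, take a path $x=v_0,v_1,\ldots,v_{2n}=x'$ of even length. For each $i$, apply (ii) at $v_{2i+1}$ to obtain $g_i \in G(v_{2i+1})$ mapping the arc $(v_{2i+1},v_{2i})$ to $(v_{2i+1},v_{2i+2})$; this sends $v_{2i}$ to $v_{2i+2}$. The composition $g_{n-1}\cdots g_0$ then maps $x$ to $x'$. Given arcs $(x,y)$ and $(x',y')$ as in (iii), I first produce $g \in G$ with $gx = x'$ by this path argument, obtaining an arc $(x',gy)$ at $x'$, and then apply (ii) at $x'$ to find $h \in G(x')$ with $h(gy) = y'$. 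Then $hg$ realises (iii).

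Finally, for (iii) $\Rightarrow$ (i), given two undirected edges $\{u,v\}$ and $\{u',v'\}$ with $u,u'$ of the same type, I orient them as arcs $(u,v)$ and $(u',v')$ of distance $1$ and apply (iii) to obtain $g \in G$ with $gu = u'$ and $gv = v'$. Since $g$ sends a vertex of type $0$ to a vertex of type $0$, it is globally type-preserving, so $g \in G^+$, and evidently $g\{u,v\} = \{u',v'\}$.

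The only step requiring any thought is the path-induction inside (ii) $\Rightarrow$ (iii); the other two implications are essentially bookkeeping about types versus arcs versus undirected edges. I expect no serious obstacle, and this should take only a few lines in the final writeup.
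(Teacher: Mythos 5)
Your proof is correct and uses essentially the same ideas as the paper: the type-preservation bookkeeping for passing between arcs, undirected edges and ordered pairs at distance $1$, plus a connectivity argument (your explicit composition of vertex-stabilizer elements along a path is the same mechanism as the paper's observation that the subgraph spanned by the $G^+$-orbit of an edge contains every edge incident to any of its vertices and hence equals $T$). The only organizational difference is that you run the cycle (i)$\Rightarrow$(ii)$\Rightarrow$(iii)$\Rightarrow$(i), whereas the paper proves (i)$\Leftrightarrow$(ii) directly and then notes (i)$\Leftrightarrow$(iii); both are fine.
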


\begin{proof}
Suppose $G^+$ is transitive on undirected edges; let $v \in VT$ and let $a$ and $a'$ be arcs originating at vertices $v$ and $v'$ respectively, where $v$ and $v'$ are of the same type.  Then $\ol{a'}$ or $a'$ must be in the same orbit as $a$; however because $G^+$ is type-preserving, it cannot send $a$ to $\ol{a'}$.  Thus there is $g \in G$ such that $ga = a'$.  Thus (i) implies (ii).

Now suppose (ii) holds; note that for each $v \in VT$ the stabilizer $G(v)$ is type-preserving, so $G(v) = G^+(v)$.  Let $e$ be an undirected edge; let $T'$ be the subgraph generated by the $G^+$-orbit $G^+e$.  Clearly $T'$ has no isolated vertex.  Moreover, all undirected edges incident with a vertex $v \in VT$ lie in the same $G^+$-orbit; thus if $T'$ contains a vertex, it contains all edges incident with that vertex.  Since $T$ is connected we conclude that $T' = T$, showing that $G^+$ is transitive on undirected edges of $T$.  Thus (i) and (ii) are equivalent.

Given the definitions, we see that $G$ is $1$-type-distance-transitive if and only if $G^+$ is $1$-type-distance-transitive.  It is then clear that (i) and (iii) are equivalent.
\end{proof}

Under mild nondegeneracy assumptions, an action that is transitive on the boundary, in particular any $G \in \mc{H}_T$, is type-distance-transitive.  The conclusion is similar to part of \cite[Lemma~3.1.1]{BurgerMozes}, however that result assumes the group is closed and the tree is locally finite, and makes an appeal to the Baire category theorem.  For the next two propositions, we give purely combinatorial arguments, retaining the notation of Lemma~\ref{lem:axis_detection}.  We also give some other related implications that can be proved at this level of generality.

\begin{prop}\label{prop:boundary-trans}
Let $T$ be a thick tree and let $G \le \Aut(T)$.  Suppose that $G$ acts transitively on $\partial T$ and that $G$ does not fix any vertex or preserve any undirected edge; write $m$ for the minimum translation length occurring in $G$.  Then the following holds.
\begin{enumerate}[(i)]
\item Exactly one of the following holds:
\begin{enumerate}[(a)]
\item $m=1$ and $G$ is arc-transitive;
\item $m=2$ and the orbits of $G$ on $VT$ are exactly $V_0T$ and $V_1T$.
\end{enumerate}
\item Every finite path in $T$ is contained in the axis of some element of $G$ of translation length $m$.
\item $G$ is type-distance-transitive.
\end{enumerate}
\end{prop}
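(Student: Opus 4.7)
The plan is to prove (i)–(iii) in the order (ii), then (i) and (iii), after establishing two preliminary facts: $G$ contains a translation, and the $G$-action on $T$ is minimal. Thickness of $T$ ensures $|\partial T| \geq 2$, so boundary transitivity rules out a fixed end; combined with the hypothesis of no fixed vertex or invariant edge, the classical classification of tree isometries gives a translation in $G$. A proper $G$-invariant subtree $T'$ would have $\partial T' \subseteq \partial T$ a $G$-invariant subset, so $\partial T' \in \{\emptyset, \partial T\}$ by boundary transitivity; the former case yields a fixed vertex, edge, or end (via the center of a finite subtree, or the unique end of a ray), and the latter forces $T' = T$ by thickness. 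As a standard consequence of minimality, every edge lies on the axis of some translation in $G$; moreover, since boundary transitivity conjugates end-stabilizers $G(\xi)$ to one another, the minimum translation length $d_\xi$ in $G(\xi)$ is independent of $\xi$, and equals $m$.

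For (ii), the core step is: every arc $a$ of $T$ lies on the axis of some length-$m$ translation. Let $g_0 \in G$ have axis $L_0 \supseteq a$ with positive end $\eta_0$, and let $g_1 \in G$ be any length-$m$ translation. By boundary transitivity, conjugate $g_1$ so that its axis $L_1$ ends at $\eta_0$; then $L_0$ and $L_1$ share a ray towards $\eta_0$. If $a$ is contained in this shared ray we are done; otherwise $a$ lies further from $\eta_0$ than the branching vertex. Now use that $G(\eta_0)$ contains a length-$m$ translation $g_* $ (from the equality $d_{\eta_0} = m$), translating by $m$ along a ray ending at $\eta_0$. Replacing $g_1$ by $g_*^t g_1 g_*^{-t}$ for large $t$ shifts the branching point further away from $\eta_0$ along $L_0$, so that after finitely many iterations the axis contains $a$. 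For a general finite path $P$, iterate this construction along successive arcs of $P$ and combine the resulting length-$m$ translations using compatibility of their axes along $P$.

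For (i), suppose $m \geq 3$. Using (ii) and boundary transitivity, produce two length-$m$ translations $g,g'$ whose axes share a bounded segment of length $s$ with opposite orientations; the standard product formula for tree translations gives $l(gg') = \max(0, 2m - 2s)$. Minimality of $m$ forces $2m - 2s \in \{0\} \cup m\mathbb{Z}_{\geq 1}$, which combined with freedom in positioning axes via boundary transitivity yields $m \leq 2$. The dichotomy (a) vs (b) is then determined by whether $G$ contains a type-swapping element: if so, $m$ must be odd (so $m = 1$), and the existence of two edge-inversions on adjacent edges (whose product is a length-$2$ translation, forcing $m \leq 2$, but combined with type-swapping gives $m = 1$) together with Lemma~\ref{lem:edge-walk} yields arc-transitivity; if not, $G$ is type-preserving, the vertex orbits are exactly $V_0 T$ and $V_1 T$, and $m = 2$. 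For (iii), given paths $P_1, P_2$ of length $k$ with starting vertices of the same type, use (ii) to embed each in the axis of a length-$m$ translation $g_i$; boundary transitivity provides $h \in G$ sending the positive end of $g_1$ to that of $g_2$, placing $h P_1$ on the axis of $g_2$, and a suitable power of $g_2$ then shifts $h P_1$ onto $P_2$ (the type constraint matches because the total shift is a multiple of $m$, which respects the bipartite structure in case (b), while in case (a) any displacement along an axis is achievable).

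The main obstacle is (ii), specifically the "push the branching point" step: with only boundary $1$-transitivity, one can freely specify one end of a conjugated axis but not both simultaneously, so the argument must exploit the length-$m$ translation inside the end-stabilizer $G(\eta_0)$ to shift branching points by multiples of $m$. The input that $d_{\eta_0} = m$ — itself a consequence of boundary transitivity — is therefore essential. Once (ii) is in hand, (i) and (iii) follow from the product formula for tree translations, the minimality of $m$, and Lemma~\ref{lem:edge-walk}.
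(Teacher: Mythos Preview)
Your proof has a structural gap that breaks the proposed order. The step ``for a general finite path $P$, iterate \dots\ and combine the resulting length-$m$ translations'' does not work: having a length-$m$ axis through each arc of $P$ gives you no way to produce a single length-$m$ axis through all of $P$, since products of length-$m$ translations need not have length $m$ and distinct axes through consecutive arcs need not agree beyond the shared vertex. This is fatal for your order, because your argument for (i) needs (ii) for paths of length about $m-1$ (to exhibit two length-$m$ translations with a prescribed overlap $s$ so that the product formula forces $0<2m-2s<m$), and your (iii) invokes (ii) outright.

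The paper runs the argument in a different order. First the single-arc case of (ii) is established; your sketch of this step is essentially right, except that you should conjugate $g_1$ by $g_0$ itself rather than by an arbitrary $g_*\in G(\eta_0)$ of length $m$, since you need the conjugating element to preserve $L_0$, and the axis of $g_*$ is only guaranteed to share an end with $L_0$. Then (i) is proved without any product formula: along the axis $(x_i)$ of a fixed length-$m$ translation $g_0$, each arc $(x_i,y_i)$ with $y_i$ off the axis lies in the $G$-orbit of some $(x_j,x_{j+1})$ with $j<i$; taking $j$ maximal, the pair $((x_j,x_{j+1}),(x_i,y_i))$ is hyperbolic, so by Lemma~\ref{lem:axis_detection} there is a translation of length $i-j$, forcing $i-j=m$ by minimality, and then $g_0 g^{-1}\in G(x_i)$ carries $y_i$ to $x_{i+1}$. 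This gives transitivity of $G(v)$ on $S(v,1)$ for every $v$, hence edge-transitivity via Lemma~\ref{lem:edge-walk}, and the dichotomy (a)/(b). Only with (i) in hand does the paper prove the general case of (ii): arc-transitivity within a type lets one map a suitable arc at one end of $P$ to an arc just past the other end, the resulting pair of arcs is hyperbolic, so $P$ lies on a single translation axis, and one then conjugates a length-$m$ translation to share an end and pushes as before. Your sketch of (iii) needs the same push step: sending the attracting end of $g_1$ to that of $g_2$ does not by itself place $hP_1$ on the axis of $g_2$.

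Your product-formula route to $m\le 2$ is a genuinely different idea, but to make it work you still need to position two length-$m$ axes with controlled overlap, and the only tool available at that stage is the single-arc case; it is not clear how to do this without first proving the local-transitivity step above. Your dichotomy argument is also incomplete: in the type-preserving case you assert the orbits are exactly $V_0T$ and $V_1T$, but nothing you have proved gives transitivity on each part, which is precisely what the local-transitivity argument supplies.
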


\begin{proof}
Since $G$ does not fix any vertex, end or undirected edge, by the standard classification of groups acting on trees (see \cite{Serre:trees}), $G$ contains a translation, so $m$ is well-defined.  By \cite[Lemma~2.1(iii)]{MollerVonk}, the union of the axes of translation of $G$ is a subtree $T'$ of $T$.  Since $G$ acts transitively on $\partial T$ we see that $\partial T' = \partial T$ and hence $T' = T$.  Thus given an arc $a$ of $T$, then $a$ lies along the axis $L$ of some translation $g \in G$; since $G$ is transitive on $\partial T$, we see that $L$ and $hL_0$ have a common end for some $h \in G$, and hence $g^na$ belongs to $hL_0$ for some $n \in \Zb$.  Thus every arc lies on some axis of translation length $m$.

Let $g_0 \in G$ have translation length $m$ and let $(x_i)_{i \in \Zb}$ be the vertices of the axis of $g_0$, such that $g_0x_i = x_{i+m}$.  Then we see that $VT = \bigcup^m_{i=1} Gx_i$ and that every arc of $T$ can be represented as $(gx_i,gx_{i+1})$ for some $g \in G$ and $0 \le i < m$.  Since $T$ is thick, for each vertex $x_i$ there is some neighbour $y_i$ other than $x_{i-1}$ and $x_{i+1}$, and then we have $(x_i,y_i)$ in the same $G$-orbit as $(x_j,x_{j+1})$ for some $j < i$, with the result that $((x_j,x_{j+1}),(x_i,y_i))$ is a hyperbolic pair of arcs, and hence belongs to some axis of translation by Lemma~\ref{lem:axis_detection}.  Taking $j$ maximal such that these conditions are satisfied, then $i-j \le m$, but also, given that $m$ is the minimum translation length of $G$, we see that $i-j \ge m$.  So in fact $i-j=m$, and we deduce that $y_i$ is in the same $G(x_i)$-orbit as $x_{i+1}$.  A similar argument shows that $y_i$ is in the same $G(x_i)$-orbit as $x_{i-1}$.  Thus $G(x_i)$ acts transitively on its neighbours for all $i \in \Zb$; since $VT = \bigcup^m_{i=1} Gx_i$ we deduce that $G$ is transitive on undirected edges.  Considering hyperbolic pairs of arcs and using Lemma~\ref{lem:axis_detection}, it is now easy to see that (i) holds.  We then observe that if $\{a,b\}$ is a pair of arcs whose initial vertices are of the same type, then $b = ga$ for some $g \in G$.

\begin{figure}
\caption{Embedding a finite path in an axis of minimal translation length}
\label{fig:tdt}
\begin{center}
\begin{tikzpicture}[scale=1, every loop/.style={}, square/.style={regular polygon,regular polygon sides=4}]

\tikzstyle{every node}=[circle,
                        inner sep=0pt, minimum width=5pt]
                        
\tikzset{->-/.style={decoration={
  markings,
  mark=at position .6 with {\arrow{>}}},postaction={decorate}}}

\draw[->-] (0,1) -- (0,0);
\draw[->-] (3,0) -- (4,0);
\draw[line width=2pt] (0,0) -- (3,0);

\draw[->] (0.5,2) to (0.5,0.5) to (1.5,0.5);
\draw[->] (6.5,-2) to (6.5,-0.5) to (7.5,-0.5);
\draw[->] (0.5,-2) to (0.5,-0.5) to (1.5,-0.5);

\draw{
(0,-2.5) node[rotate=90]{$\ldots$}
(7.5,0) node{$\ldots$}
(8,0) node{$\xi$}
(0,2.5) node[rotate=90]{$\ldots$}
(6,-2.5) node[rotate=90]{$\ldots$}

(-0.2,0.5) node{$a$}
(3.5,0.25) node{$b$}
(-0.25,-0.25) node{$x$}
(3,-0.4) node{$y$}

(0.9,1.5) node{$g$}
(6.9,-1.5) node{$h$}
(1.5,-1.5) node{$g^{-n}hg^n$}
};

\draw{
(0,2) node[draw=black, fill=white]{} -- (0,1) node[draw=black, fill=black]{} -- (0,0)
(6,-2) node[draw=black, fill=white]{} -- (6,-1) node[draw=black, fill=black]{} -- (6,0)
};
\draw (0,-2) node[draw=black, fill=white]{} -- (0,-1) node[draw=black, fill=black]{} -- (0,0) node[draw=black, fill=white, minimum width=8pt]{} --(1,0)  node[draw=black, fill=black, minimum width=8pt]{} -- (2,0)  node[draw=black, fill=white, minimum width=8pt]{} -- (3,0)  node[draw=black, fill=black, minimum width=8pt]{} -- (4,0)  node[draw=black, fill=white]{} -- (5,0)  node[draw=black, fill=black]{}  -- (6,0)  node[draw=black, fill=white]{} -- (7,0)  node[draw=black, fill=black]{};

\end{tikzpicture}
\end{center}
\end{figure}
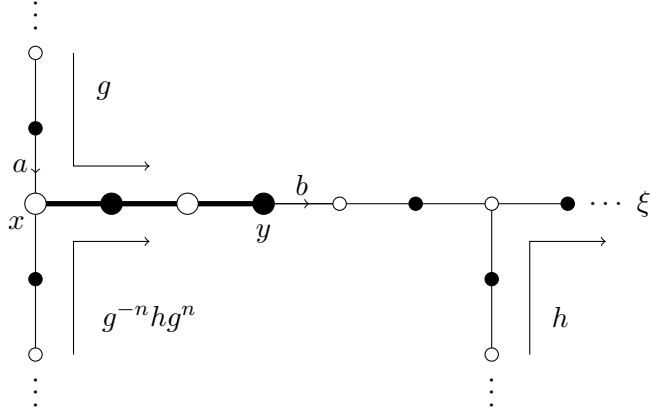

Consider now a pair of vertices $x,y \in VT$ of opposite types and let $P$ be the path from $x$ to $y$; see Figure~\ref{fig:tdt}, where the types of vertices are indicated as in Figure~\ref{fig:semiregular} and $P$ is marked with larger vertices and thicker edges.  Then there are arcs $a=(a_1,x)$ and $b = (y,b_2)$, such that $d(a_1,y) > d(x,y)$ and $d(x,b_2) > d(x,y)$.  We then see that $a_1$ and $y$ are of the same type, so $b = ga$ for some $g \in G$, and that $(a,b)$ is a hyperbolic pair of arcs.  By Lemma~\ref{lem:axis_detection}, the arcs $a$ and $ga$ both belong to the axis of $g$, so $x$ and $y$ lie on the axis of $g$; it then follows that $P$ lies on the axis of $g$.  Let $\xi$ be the attracting end of $g$ and let $h$ be a translation of length $m$; since $G$ acts transitively on $\partial T$, we can choose $h$ to have attracting end $\xi$.  Then for sufficiently large $n \ge 0$ we see that $g^nP$ is contained in the axis of $h$, and hence $P$ is contained in the axis of $g^{-n}hg^{n}$.  Thus all odd length paths are contained in an axis of translation length $m$; since a path of length $2n$ is contained in a path of length $2n+1$, in fact every finite path in $T$ is contained in some axis of translation length $m$, proving (ii).

It remains to show that $G$ is type-distance-transitive.  Let $x,y,x',y' \in VT$ such that $x'$ is the same type as $x$, $y'$ is the same type as $y$ and $d(x,y) = d(x',y')$.  Let $P$ be the oriented path from $x$ to $y$ and $P'$ the oriented path from $x'$ to $y'$.  Then $P$ lies on the axis of some translation $g$ of length $2$ and $P'$ lies on the axis of some translation $g'$ of length $2$.  After replacing $g$ or $g'$ with inverses as necessary, we may assume that $P$ is oriented towards the attracting end of $g$, and similarly for $g'$.  After conjugating in $G$ we may assume that $g$ and $g'$ have the same attracting end.  Then after applying a sufficiently large power of $n$, we find that $g^nP$ is on the axis of $g'$.  We now observe that the action of $g'$ on its axis is such that any two segments of the same type, length and orientation are in the same $\grp{g'}$-orbit.  Thus there exists $m$ such that $(g')^{-m}g^nP = P'$, proving (iii).
\end{proof}

\begin{prop}\label{prop:type-distance-trans}
Let $T$ be a thick tree and let $G \le \Aut(T)$.
\begin{enumerate}[(i)]
\item Suppose that for all $v \in VT$, the action of $G(v)$ on $\partial T$ is $2$-by-block-transitive relative to $\sim_v$.  Then $G$ is $2$-transitive on $\partial T$.
\item Suppose that $G$ is $2$-transitive on $\partial T$.  Then $G$ is type-distance-transitive and for every ray $\rho = (v_1,v_2,\dots)$ in $T$, the pointwise stabilizer of $\rho$ in $G$ acts transitively on $S_{v_2}(v_1,1)$.
\item Suppose that $G$ is $1$-type-distance-transitive, and suppose that for each type $t$ of vertex and some $k$, there exists a path $P = (v_1,v_2,\dots,v_k)$ in $T$ of length $k$ with $v_1$ of type $t$, such that the pointwise stabilizer of $P$ in $G$ acts transitively on $S_{v_2}(v_1,1)$.  Then $G$ is $k$-type-distance-transitive.
\item Suppose that $G$ is $0$-type-distance-transitive.  Then $G$ is $2n$-type-distance-transitive for some $n \ge 1$ if and only if, for all $v \in VT$, the action of $G(v)$ on $S(v,n)$ is $2$-by-block-transitive relative to the equivalence relation $\sim_{v,n}$.
\end{enumerate}
\end{prop}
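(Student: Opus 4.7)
The strategy is to identify, for each configuration of ends or vertex-pairs, a canonical vertex (or path) to which the local hypothesis can be applied. Three of the four parts reduce cleanly to a single application at a suitable vertex; the clause that requires real work is the ray-stabilizer portion of part (ii).

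For part (i), I first observe that $2$-by-block-transitivity of $G(v)$ on $\partial T$ forces transitivity on points (there are at least two blocks), so $G$ is transitive on $\partial T$. Given two pairs of distinct ends $(\xi_1,\xi_2)$, $(\eta_1,\eta_2)$, reduce via transitivity to $\xi_1=\eta_1$ and set $v=v(\xi_1,\xi_2,\eta_2)$; by construction $(\xi_1,\xi_2,\eta_2)$ is $\sim_v$-distant, so both $(\xi_1,\xi_2)$ and $(\xi_1,\eta_2)$ are $\sim_v$-distant pairs, and $2$-by-block-transitivity of $G(v)$ produces the required element (which automatically fixes $\xi_1$). For the forward direction of part (iv), I use that the midpoint $v$ of a length-$2n$ path is determined by its endpoints: any $g\in G$ sending a $\sim_{v,n}$-distant pair in $S(v,n)$ to another such pair must fix $v$. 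For the converse, given two such pairs with midpoints $v,v'$ (of matching type by parity of $n$), $0$-type-distance-transitivity moves $v$ to $v'$ and then $2$-by-block-transitivity of $G(v')$ on $S(v',n)$ finishes the alignment.

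For part (ii), type-distance-transitivity follows from Proposition~\ref{prop:boundary-trans} once its hypotheses are verified: $G$ is transitive on $\partial T$ by $2$-transitivity, no end is fixed, and neither a vertex nor an edge can be fixed, because $T$ thick produces both $\sim_v$-equivalent and $\sim_v$-inequivalent pairs of ends, which a vertex- or edge-fixer cannot interchange. The pointwise stabilizer of $\rho$ equals $G(v_1)\cap G(\xi)$; given $y,y'\in S_{v_2}(v_1,1)$, I choose ends $\eta,\eta'$ with $v_{1,\eta,1}=y$ and $v_{1,\eta',1}=y'$ and apply $2$-transitivity on $\partial T$ to obtain $g\in G$ with $g\xi=\xi$, $g\eta=\eta'$. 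Then $g$ maps the line $T_{\{\xi,\eta\}}$ (which passes through $v_1$ since $y\neq v_2$) isometrically onto $T_{\{\xi,\eta'\}}$ preserving the orientation toward $\xi$; parametrising both lines by $\Zb$ with $v_1\leftrightarrow 0$ and the $\xi$-direction positive, $g$ acts as a shift by some integer $c$. With $m\in\{1,2\}$ the minimum translation length from Proposition~\ref{prop:boundary-trans}(i), one checks that $c\in m\Zb$: automatic if $m=1$, and if $m=2$ the element $g$ is type-preserving, forcing $c$ to be even. A translation $h\in G(\xi)\cap G(\eta')$ of length $-c$ along $T_{\{\xi,\eta'\}}$ is then built by taking any translation of length $m$ from Proposition~\ref{prop:boundary-trans}(ii), conjugating via an element of $G$ chosen (using $2$-transitivity on $\partial T$) to send the axis-ends to $(\xi,\eta')$, and raising to the $(-c/m)$-th power. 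The product $hg$ fixes $v_1$ and $\xi$, hence $\rho$ pointwise, and sends $y$ to $y'$.

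For part (iii), I induct on $j$ to establish $j$-type-distance-transitivity for $j=1,\ldots,k$, with base case $j=1$ given. For the step $j\to j+1$ (with $j+1\le k$), the inductive hypothesis aligns two length-$(j+1)$ paths of matching type pattern on their first $j+1$ vertices, reducing the problem to moving the last vertex inside $S_{w_{j-1}'}(w_j',1)$ via an element of $G(w_0',\ldots,w_j')$. The required transitivity is transferred from the hypothesized transitivity of $G(P)$ on $S_{v_2}(v_1,1)$: by the inductive hypothesis some element of $G$ sends the first $j+1$ vertices of $P$ onto the reverse of $(w_0',\ldots,w_j')$, and conjugation by this element carries $G(v_1,\ldots,v_{j+1})\supseteq G(P)$ to $G(w_0',\ldots,w_j')$ while preserving the transitivity on the relevant link. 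The main obstacle I anticipate is the ray-stabilizer portion of (ii): upgrading a generic $g$ with $g\xi=\xi$ to one that additionally fixes $v_1$ demands both the parity control on the displacement $c$ and the existence of a compensating translation of exactly length $c$ along the specific line $T_{\{\xi,\eta'\}}$; both points rely on feeding Proposition~\ref{prop:boundary-trans} back into $2$-transitivity on $\partial T$.
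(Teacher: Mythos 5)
Your proposal is correct and takes essentially the same route as the paper's proof: part (i) via the median vertex $v(\xi_1,\xi_2,\eta_2)$, part (ii) by feeding Proposition~\ref{prop:boundary-trans} into $2$-transitivity to correct an end-fixing element by a power of a length-$m$ translation along the target line (with the same parity/type-preservation argument), part (iii) by induction using the path-stabilizer hypothesis, and part (iv) via the midpoint of a length-$2n$ geodesic. The only cosmetic deviation is in (iii), where you append the free vertex at the end of the aligned path and transfer the link-transitivity by reversing $P$ via conjugation, whereas the paper prepends it at the first vertex of $P$ so that the hypothesis on $S_{v_2}(v_1,1)$ applies directly; both versions are sound.
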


\begin{proof}
(i)
Consider three distinct ends $\xi_1,\xi_2,\xi_3 \in \partial T$, and let $w = v(\xi_1,\xi_2,\xi_3)$.  Then we see that $\xi_1,\xi_2,\xi_3$ are in distinct $\sim_w$-classes.  Since the action of $G(w)$ is $2$-by-block-transitive, there is $g \in G(v)$ such that $g\xi_1 = \xi_1$ and $g\xi_2 = \xi_3$.  Thus $G$ is $2$-transitive on $\partial T$.

(ii)
Given $v \in VT$, it is clear that $G(v)$ preserves the equivalence relation $\sim_{v}$, so $G(v)$ is not $2$-transitive on $\partial T$, and hence $G$ does not fix an vertex; by a similar argument, $G$ does not preserve any edge.  Thus Proposition~\ref{prop:boundary-trans} applies; in particular, $G$ is type-distance-transitive and has some minimum translation length $m \in \{1,2\}$.  Let $H$ be the pointwise stabilizer of the given ray $\rho$.  Given $x \in S_{v_2}(v_1,1)$, we can extend $\rho$ to a bi-infinite line $L_x = (\dots,w_{-1},w_0,w_1,w_2,\dots)$ such that $w_0 = x$ and $w_i = v_i$ for $i \ge 1$.  Now let $x,y \in S_{v_2}(v_1,1)$; form the lines $L_x$ and $L_y$, and let $\xi \in \partial T$ be the end represented by $\rho$.  Since $G$ acts $2$-transitively on $\partial T$, there exists $g \in G(\xi)$ such that $gL_y = L_x$.  If $m=1$ we can write $gy = w_j$ for some $j \in \Zb$, which implies $gv_i = w_{i+j}$ for all $i \ge 1$.  If instead $m=2$, then by Proposition~\ref{prop:boundary-trans}(i), $G$ is type-preserving, so in fact $gy = w_{2j}$ for some $j \in \Zb$, and then $gv_i = w_{i+2j}$ for all $i \ge 1$.  In either case, $L_x$ is the axis of some translation $h \in G$ of translation length $m$, where say $hw_i = w_{i+m}$ for all $i \in \Zb$.  It then follows that $h^{-j}gy = w_0 = x$ and $h^{-j}gv_i = v_i$ for all $i \ge 1$; thus $h^{-j}g$ is an element of $H$ that sends $y$ to $x$, showing that $H$ is transitive on $S_{v_2}(v_1,1)$.

(iii)
Note first that if the given condition is true for the given $k$, then it is also true for $k'$ satisfying $2 \le k' < k$: we can simply truncate the path $P$.  We can therefore show that $G$ is $k$-type-distance-transitive by induction on $k$.  We may take $k \ge 2$ and assume that $G$ is $(k-1)$-type-distance-transitive.  Fix $t \in \{0,1\}$; we take the path $P = (v_1,\dots,v_k)$ as in the statement and choose some $v_0 \in S_{v_2}(v_1,1)$.  Now let $x,y \in VT$ with $d(x,y)=k$ and $x$ of type $1-t$; write the path from $x$ to $y$ as $(w_0,w_1,w_2,\dots,w_k)$ where $w_0=x$ and $w_k=y$.  Since $G$ is $(k-1)$-type-distance-transitive, there exists $g \in G$ such that $gw_i = v_i$ for $i \in \{1,k\}$; then in fact $gw_i = v_i$ for all $i \in \{1,\dots,k\}$.  In particular, $gx$ is some neighbour of $gw_1$ other than $v_2$, so $gx \in S_{v_2}(v_1,1)$.  By hypothesis, there is then an element $h$ of the pointwise stabilizer of $P$ such that $hgx = v_{t,0}$, and we also have $hgy = gy = v_{t,k}$, so we have mapped the pair $(x,y)$ to the pair $(v_{t,0},v_{t,k})$; note that the latter pair depends only on $t$ and $k$.  Thus $G$ is $k$-type-distance-transitive, and the conclusion follows by induction.

(iv)
Given $v \in VT$, it is clear that $G(v)$ preserves the equivalence relation $\sim_{v,n}$.  Suppose $G$ is $2n$-type-distance-transitive.  Then given two $\sim_{v,n}$-distant pairs $(x,y)$ and $(x',y')$ in $S(v,n)$, then $d(x,y) = d(x',y')=2n$ and all vertices have the same type; thus there is $g \in G$ such that $gx = x'$ and $gy = y'$.  We now observe that $v$ is the midpoint of the unique path from $x$ to $y$ and also the midpoint of the unique path from $x'$ to $y'$, so in fact $g \in G(v)$.  Thus the action of $G(v)$ on $S(v,n)$ is $2$-by-block-transitive.  Conversely, suppose $G(v)$ is $2$-by-block-transitive on $S(v,n)$ for all $v \in VT$, and let $x,y,x',y' \in VT$ such that $d(x,y) = d(x',y') = 2n$.  Let $v$ and $v'$ be the midpoint of the path from $x$ to $y$, respectively from $x'$ to $y'$.  Since $G$ is $0$-type-distance-transitive, there exists $g \in G$ such that $gv = v'$.  Then we see that $(gx,gy)$ and $(x',y')$ are both $\sim_{v',n}$-distant pairs in $S(v',n)$, so there exists $h \in G(v')$ such that $hgx = x'$ and $hgy = y'$.  Thus $G$ is $2n$-type-distance-transitive.
\end{proof}

If we assume the tree is locally finite and that $G$ is closed, we get a series of equivalent properties of the action.

\begin{cor}\label{cor:distance_transitivity}
Let $T$ be a thick locally finite tree and let $G$ be a closed subgroup of $\Aut(T)$.  Then the following are equivalent:
\begin{enumerate}[(i)]
\item $G$ acts transitively on $\partial T$ and does not fix any vertex or preserve any undirected edge;
\item For all $n \ge 1$ and $v \in VT$, the action of $G(v)$ on $S(v,n)$ is $2$-by-block-transitive relative to the equivalence relation $\sim_{v,n}$;
\item For all $v \in VT$, the action of $G(v)$ on $\partial T$ is $2$-by-block-transitive relative to the equivalence relation $\sim_{v}$;
\item $G \in \mc{H}_T$;
\item $G$ is $1$-type-distance-transitive, and for each type $t$ of vertex and each $k \ge 2$, there exists a path $P = (v_1,v_2,\dots,v_k)$ in $T$ of length $k$ with $v_1$ of type $t$, such that the pointwise stabilizer of $P$ in $G$ acts transitively on $S_{v_2}(v_1,1)$;
\item $G$ is type-distance-transitive.
\end{enumerate}
\end{cor}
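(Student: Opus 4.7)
The plan is to establish the cycle (iv)$\Rightarrow$(i)$\Rightarrow$(vi)$\Rightarrow$(ii)$\Rightarrow$(iii)$\Rightarrow$(iv), together with the standalone equivalence (vi)$\Leftrightarrow$(v). Most of these implications reduce either to direct observations or to the preceding propositions in this subsection; the only step requiring a genuinely new argument is (ii)$\Rightarrow$(iii), which is where the topological assumptions of closedness of $G$ and local finiteness of $T$ get used.

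For (iv)$\Rightarrow$(i), thickness of $T$ ensures that a fixed vertex (respectively, a preserved undirected edge) would induce a $G$-invariant partition of $\partial T$ into at least two infinite classes, which is incompatible with $2$-transitivity on $\partial T$. The implication (i)$\Rightarrow$(vi) is exactly Proposition~\ref{prop:boundary-trans}(iii). For (vi)$\Rightarrow$(ii), type-distance-transitivity supplies both $0$-type-distance-transitivity and $2n$-type-distance-transitivity for every $n \ge 1$, so Proposition~\ref{prop:type-distance-trans}(iv) delivers the required $2$-by-block-transitivity of $G(v)$ on $S(v,n)$. Finally (iii)$\Rightarrow$(iv) is immediate from Proposition~\ref{prop:type-distance-trans}(i) together with the standing assumption that $G$ is closed.

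The main step is (ii)$\Rightarrow$(iii). Fix $v \in VT$ and two $\sim_v$-distant pairs of ends $(\xi_1, \xi_2)$ and $(\xi'_1, \xi'_2)$; the goal is to produce $g \in G(v)$ with $g\xi_i = \xi'_i$ for $i = 1, 2$. For each $n \ge 1$ the truncations $(v_{\xi_1, n}, v_{\xi_2, n})$ and $(v_{\xi'_1, n}, v_{\xi'_2, n})$ are $\sim_{v,n}$-distant pairs in $S(v,n)$, so by (ii) the set
\[
K_n = \{ g \in G(v) : g v_{\xi_i, n} = v_{\xi'_i, n} \text{ for } i = 1, 2 \}
\]
is nonempty, evidently closed in $G(v)$, and satisfies $K_{n+1} \subseteq K_n$. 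Since $T$ is locally finite and $G$ is closed in $\Aut(T)$, the stabilizer $G(v)$ is compact, so $\bigcap_{n \ge 1} K_n$ is nonempty by the finite intersection property. Any element of this intersection fixes $v$ and sends the ray $(v, v_{\xi_i, 1}, v_{\xi_i, 2}, \ldots)$ to $(v, v_{\xi'_i, 1}, v_{\xi'_i, 2}, \ldots)$, hence sends $\xi_i$ to $\xi'_i$ as required.

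For (vi)$\Leftrightarrow$(v), the direction (v)$\Rightarrow$(vi) is Proposition~\ref{prop:type-distance-trans}(iii); conversely, assuming (vi), given any path $P = (v_1, \ldots, v_k)$ of length $k \ge 2$ and any $x, y \in S_{v_2}(v_1, 1)$, the pairs $(x, v_k)$ and $(y, v_k)$ have matching types and distance $k$, so $k$-type-distance-transitivity furnishes $g \in G$ with $gx = y$ and $gv_k = v_k$; uniqueness of geodesics in $T$ then forces $g$ to fix each of $v_1, \ldots, v_{k-1}$. The main obstacle is the compactness argument in (ii)$\Rightarrow$(iii), where a consistent global choice of group element must be extracted from a coherent family of finite-radius choices; the rest is essentially bookkeeping on top of the combinatorial groundwork already laid by Propositions~\ref{prop:boundary-trans} and \ref{prop:type-distance-trans}.
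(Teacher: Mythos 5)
Your proof is correct and follows essentially the same route as the paper: the same appeals to Propositions~\ref{prop:boundary-trans} and~\ref{prop:type-distance-trans} for the combinatorial implications, and the same compactness argument (nested closed stabilizer conditions in the compact group $G(v)$ versus the paper's convergent subsequence of $(g_n)$) for (ii)$\Rightarrow$(iii). The only cosmetic difference is that you derive (v) from (vi) directly rather than obtaining (iv)$\Rightarrow$(v) from Proposition~\ref{prop:type-distance-trans}(ii), which changes nothing of substance.
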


\begin{proof}
The parts of Proposition~\ref{prop:type-distance-trans} give us the implications 
\[
\text{(iii)} \Rightarrow \text{(iv)} \Rightarrow \text{(v)} \Rightarrow \text{(vi)} \Rightarrow \text{(ii)}.
\]
Suppose (ii) holds; note that $G(v)$ is compact for all $v \in VT$.  Fix $v \in VT$; given $\xi \in \partial T$, write $(v_{\xi,0},v_{\xi,1},\dots)$ for the ray representing $\xi$ such that $v_{\xi,0} = v$.  Let $(\xi,\chi)$ and $(\xi',\chi')$ be two $\sim_v$-distant pairs of ends.  Then for each $n \ge 1$, we find that $(v_{\xi,n},v_{\chi,n})$ and $(v_{\xi',n},v_{\chi',n})$ are both $\sim_{v,n}$-distant pairs in $S(v,n)$, so by (ii) there is $g_n \in G(v)$ such that $g_nv_{\xi,n} = v_{\xi',n}$ and $g_nv_{\chi,n} = v_{\chi',n}$.  Taking $g \in G(v)$ to be the limit of any convergent subsequence of $(g_n)$, we find that $g\xi = \xi'$ and $g\chi = \chi'$, showing that $G(v)$ is $2$-by-block-transitive relative to $\sim_v$.  Thus (ii) $\Rightarrow$ (iii), so the statements (ii)--(vi) are all equivalent.

To finish the proof, the implication (iv) $\Rightarrow$ (i) is clear, and we have (i) $\Rightarrow$ (vi) by Proposition~\ref{prop:boundary-trans}.  Thus all six statements are equivalent.
\end{proof}

Let us now consider higher degrees of transitivity on $\partial T$.  We see that the trees $T_X$ are an obstacle to boundary-$4$-transitivity.

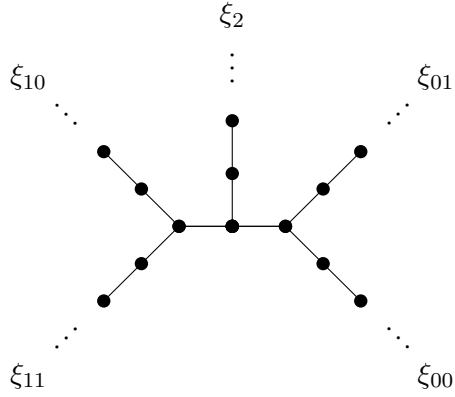
\begin{figure}
\caption{Five ends with quadruples belonging to different $\Aut(T)$-orbits}
\label{fig:quadruple}
\begin{center}
\begin{tikzpicture}[scale=0.7, every loop/.style={}, square/.style={regular polygon,regular polygon sides=4}]

\tikzstyle{every node}=[circle,
                        inner sep=0pt, minimum width=5pt]
                        
\foreach \x in {0,1}{
\foreach \y in {0,1}{
\draw 
{
(0,0) node[fill=black]{} to ++ (180*\x:1) node[fill=black]{} to ++ (180*\x-45+90*\y:1) node[fill=black]{} to ++ (180*\x-45+90*\y:1) node[fill=black]{} ++ (180*\x-45+90*\y:1) node[rotate=180*\x-45+90*\y]{$\ldots$} ++ (180*\x-45+90*\y:1) node{$\xi_{\x\y}$}
};
}
};

\draw 
{
(0,0) node[fill=black]{} to (0,1) node[fill=black]{} to (0,2) node[fill=black]{}
(0,3) node[rotate=90]{$\ldots$}
(0,4) node{$\xi_2$}
};

\end{tikzpicture}
\end{center}
\end{figure}

\begin{lem}\label{lem:4-trans}
Let $T$ be a thick tree, let $G \le \Aut(T)$ and let $k \ge 4$.  Then $G$ has infinitely many orbits of $k$-tuples of ends of $T$.
\end{lem}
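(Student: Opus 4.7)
The plan is to reduce to the case $k=4$ and exhibit an $\Aut(T)$-invariant of $4$-tuples of distinct ends that takes infinitely many values; since $G$-orbits refine $\Aut(T)$-orbits, infinitely many $\Aut(T)$-orbits automatically give infinitely many $G$-orbits.

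For the reduction, since $T$ is thick, $\partial T$ is infinite (in fact uncountable), so every $4$-tuple of distinct ends extends to a $k$-tuple of distinct ends. The coordinate projection from $k$-tuples of distinct ends to $4$-tuples of distinct ends is therefore a surjective $G$-equivariant map. If $G$ had only finitely many orbits on $k$-tuples, the image would give only finitely many orbits on $4$-tuples. Thus it suffices to treat $k=4$.

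For the invariant, let $X = (\xi_1,\xi_2,\xi_3,\xi_4)$ be a $4$-tuple of distinct ends, and consider the convex hull $T_X$. Each leaf of $T_X$ corresponds to one of the four ends, and each interior vertex has degree $2$ or $3$ in $T_X$; a simple count shows that the set of degree-$3$ vertices of $T_X$ consists either of a single vertex (the ``star'' case, where all four ends meet at one branch point) or of exactly two vertices $v_1(X), v_2(X)$. In the latter case, define $b(X) := d(v_1(X),v_2(X)) \ge 1$; since any $\phi \in \Aut(T)$ sends $T_X$ to $T_{\phi X}$ and preserves vertex degrees within these subtrees, $b(X)$ is an $\Aut(T)$-invariant.

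To see that $b$ takes every value $n \ge 1$, pick a path $v_0, v_1, \ldots, v_n$ in $T$ of length $n$; by thickness, choose neighbours $u_1, u_2$ of $v_0$ distinct from $v_1$, and neighbours $u_3, u_4$ of $v_n$ distinct from $v_{n-1}$. Extending each $u_i$ to a ray disjoint from $\{v_0,\ldots, v_n\}$ yields four distinct ends $\xi_1,\ldots,\xi_4$ whose convex hull $T_X$ is precisely the path $v_0\cdots v_n$ together with these four rays, so $v_1(X) = v_0$ and $v_2(X) = v_n$, giving $b(X) = n$. Hence $\Aut(T)$ (and therefore $G$) has infinitely many orbits on $4$-tuples of distinct ends, and thus on $k$-tuples of distinct ends for every $k \ge 4$. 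There is no substantive obstacle; the only mild care required is the elementary classification of $T_X$ into the star or two-branch-point cases, which is a standard observation about finite trees with four leaves.
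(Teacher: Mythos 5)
Your proof is correct and follows essentially the same route as the paper: both exhibit an unbounded $\Aut(T)$-invariant measuring the spread of the branch vertices of $T_X$ (the paper uses the diameter of the set of degree-$\ge 3$ vertices of $T_X$ directly for general $k$, while you reduce to $k=4$ via the equivariant coordinate projection and use the distance between the two branch points). One cosmetic slip: in the ``star'' case the central vertex of $T_X$ has degree $4$, not $3$, so your statement that interior vertices have degree $2$ or $3$ should be adjusted, but this does not affect the argument.
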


\begin{proof}
Let $X = \{\xi_1,\xi_2,\dots,\xi_k\}$ be a set of $k$ distinct ends of $T$; then $T_X$ has finitely many vertices of degree $\ge 3$, and we can assign an invariant $d^*(X) \in \Nb$, which is the diameter in $T_X$ of the set of vertices of $T_X$ of degree $\ge 3$.  It is then clear that $d^*(X) = d^*(gX)$ for all $g \in G$, and we see that $d^*(X)$ can take unbounded values in $\Nb$ as $X$ ranges over all possible sets of $k$ ends; thus $G$ has infinitely many orbits on $k$-tuples of ends.  For example in Figure~\ref{fig:quadruple}, if $X = \{\xi_{ij} \mid i,j \in \{0,1\}\}$ and $Y = \{\xi_{00},\xi_{01},\xi_2,\xi_{10}\}$, then $d^*(X)=2$ but $d^*(Y)=1$, so there is no element of $\Aut(T)$ that maps $X$ to $Y$.
\end{proof}

We are thus left with the case of a group $G \le \Aut(T)$ acting $3$-transitively on the boundary of $T$.    Here is an analogue of Proposition~\ref{prop:type-distance-trans} and Corollary~\ref{cor:distance_transitivity} for boundary-$3$-transitive actions.

\begin{prop}\label{prop:3-trans}
Let $T$ be a thick tree and let $G \le \Aut(T)$.  Then the following are equivalent:
\begin{enumerate}[(i)]
\item $G$ acts $3$-transitively on $\partial T$;
\item $G$ acts vertex-transitively on $T$, and for each vertex $v \in VT$, then $G(v)$ is $3$-by-block-transitive on $\partial T$ relative to the equivalence relation $\sim_v$.
\end{enumerate}
\end{prop}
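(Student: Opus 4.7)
The plan is to exploit the $G$-equivariant branch-vertex map $(\xi_1,\xi_2,\xi_3) \mapsto v(\xi_1,\xi_2,\xi_3)$ on ordered triples of distinct ends, together with the characterization already recorded above that $v(\xi_1,\xi_2,\xi_3) = v$ if and only if $(\xi_1,\xi_2,\xi_3)$ is a $\sim_v$-distant triple. Thickness of $T$ will be used twice: to guarantee that every $v \in VT$ actually arises as the branch vertex of some triple (so the branch map is surjective onto $VT$), and to guarantee that $\sim_v$ has at least three blocks (so the $3$-by-block-transitivity condition on $G(v)$ is nontrivial).

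For (i)~$\Rightarrow$~(ii), assume $G$ is $3$-transitive on $\partial T$. Given any $v \in VT$, I would choose three distinct neighbours $w_1,w_2,w_3$ of $v$ using thickness, and extend each to a ray based at $v$ passing through $w_i$, producing ends $\xi_1,\xi_2,\xi_3$ with $v(\xi_1,\xi_2,\xi_3)=v$. Equivariance of the branch map, combined with $3$-transitivity of $G$ on triples of distinct ends, then immediately yields vertex-transitivity. For the $3$-by-block-transitivity at a fixed vertex $v$, given two $\sim_v$-distant triples $(\xi_1,\xi_2,\xi_3)$ and $(\xi'_1,\xi'_2,\xi'_3)$, both triples have branch vertex $v$ by the characterization, so any $g \in G$ sending one triple to the other must satisfy $gv = v$, hence lies in $G(v)$.

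For (ii)~$\Rightarrow$~(i), I take arbitrary triples $(\xi_1,\xi_2,\xi_3)$ and $(\xi'_1,\xi'_2,\xi'_3)$ of distinct ends, with branch vertices $v$ and $v'$ respectively. Vertex-transitivity of $G$ supplies $g \in G$ with $gv=v'$; equivariance of the branch map then shows $(g\xi_1,g\xi_2,g\xi_3)$ is $\sim_{v'}$-distant, and $3$-by-block-transitivity of $G(v')$ supplies $h \in G(v')$ with $hg\xi_i = \xi'_i$ for each $i \in \{1,2,3\}$, so that $hg$ is the required element of $G$.

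The argument is essentially bookkeeping and I do not anticipate any serious obstacle. The one conceptual point worth highlighting is that vertex-transitivity, as opposed to merely the type-distance-transitivity that comes for free in the boundary-$2$-transitive setting, is exactly what the branch-vertex construction extracts from boundary-$3$-transitivity, and once it is in hand the $3$-by-block-transitive statement reduces to the already-noted geometric fact that $\sim_v$-distant triples are precisely those with branch vertex $v$.
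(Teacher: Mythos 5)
Your proposal is correct and follows essentially the same route as the paper's proof: both arguments use the $G$-equivariant branch-vertex map together with the fact that $v(\xi_1,\xi_2,\xi_3)=v$ exactly when the triple is $\sim_v$-distant, deducing vertex-transitivity from surjectivity of that map and then transferring triples via a vertex-transitivity element followed by an element of the relevant vertex stabilizer. Your explicit remarks on where thickness enters (surjectivity of the branch map and the existence of at least three blocks of $\sim_v$) match the paper's brief aside to the same effect.
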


\begin{proof}
Suppose that $G$ acts $3$-transitively on $\partial T$.  We see that every vertex of $T$ is realized as $v(\xi_1,\xi_2,\xi_3)$ for some triple $(\xi_1,\xi_2,\xi_3)$ of distinct ends, so $G$ is vertex-transitive.  Fix $v \in VT$; since $v$ has at least $3$ neighbours, there are at least $3$ blocks of $\sim_v$.  Let $(\xi_1,\xi_2,\xi_3)$ and $(\xi'_1,\xi'_2,\xi'_3)$ be $\sim_v$-distant triples.  Then there is $g \in G$ such that $g\xi_i = \xi'_i$ for $1 \le i \le 3$.  Moreover, we see that
\[
v(\xi_1,\xi_2,\xi_3) = v = v(\xi'_1,\xi'_2,\xi'_3),
\]
so $gv = v'$.  Thus $G(v)$ is transitive on $\sim_v$-distant triples in $\partial T$.

Conversely, suppose that $G$ is vertex-transitive and that $G(v)$ is transitive on $\sim_v$-distant triples for all $v \in VT$.  Then given triples $(\xi_1,\xi_2,\xi_3)$ and $(\xi'_1,\xi'_2,\xi'_3)$ of distinct ends, with $v(\xi'_1,\xi'_2,\xi'_3) = v$ say, there is $g \in G$ such that
\[
v(g\xi_1,g\xi_2,g\xi_3) = gv(\xi_1,\xi_2,\xi_3) = v,
\]
and then $h \in G(v)$ such that $hg\xi_i = \xi'_i$ for $1 \le i \le 3$.  Thus $G$ is $3$-transitive on $\partial T$.
\end{proof}

For the remainder of the article, we will assume that $T$ is a thick locally finite semiregular tree, meaning that it is specified up to isomorphism by the vertex degrees $d_0,d_1 \ge 3$.  Let $\Omega_t$\index{Omega@$\Omega_t$} be a set of size $d_t$ for $t \in \{0,1\}$; depending on the group under consideration, it will often be convenient to regard $\Omega_t$ as the set underlying some additional structure, such as the points of a projective space.  Given $G \in \mc{H}_T$, since all vertices in $V_tT$ belong to the same $G$-orbit, there is a well-defined (up to permutational isomorphism) \defbold{local action of $G$ of type $t$}\index{local action}, which is the subgroup of $\Sym(\Omega_t)$ induced by the action of the point stabilizer $G(v)$ on $S(v,1)$, for any $v \in V_tT$.  Write $\mc{H}_{F_0,F_1}$ for the class of $G \in \mc{H}_T$ such that the local action of $G$ of type $t$ is the subgroup $F_t$\index{F@$F_t$} of $\Sym(d_t)$; note that we may assume $F_0$ and $F_1$ are $2$-transitive, since otherwise $\mc{H}_{F_0,F_1}$ will be empty.  We will write $F_t(\omega)$ for a point stabilizer of $F_t$ and regard it as a permutation group acting on $\Omega_t \setminus \{\omega\}$; in particular, taking a vertex $v$ of type $t$ neighbour $w$ of $v$, then $G(v,w)$ acts as $F_t(\omega)$ on $S_w(v,1)$.

Given a group $G \le \Aut(T)$, an \defbold{end local action} of $G$ is the action of $G(x,\xi)$ on $S_y(x,1)$, where $x$ is a vertex of $T$ and $(x,y)$ is the arc originating at $x$ that points towards the end $\xi$ of $T$.  In general, a closed subgroup $G \le \Aut(T)$ can have many end local actions at a vertex, depending on the choice of end.  However the situation is more homogeneous if $G \in \mc{H}_T$.

Let $G \le \Aut(T)$ with local action $F_t$ at each vertex of type $t$.  Let $x$ be a vertex of type $t$ and consider the action of $G(x,y)$ on $S_y(x,1)$, where $y \neq x$; write $k = d(x,y)$.  If $G$ is $k$-type-distance-transitive, then up to permutational isomorphisms of $F_t(\omega)$, the action of $G(x,y)$ on $S_y(x,1)$ is a subgroup of $F_t(\omega)$ that depends only on $k$ and the type $t$ of $x$; in this case we write $\Lambda^k_t(G)$\index{L@$\Lambda^k_t$} for this permutation group.   Note in particular that $\Lambda^1_t(G) = F_t(\omega)$.  (On the other hand, for $k \ge 2$, if $G$ fails to be $k$-type-distance-transitive, there is no guarantee that $\Lambda^k_t(G)$ is well-defined.)  The following is a variant of Proposition~\ref{prop:type-distance-trans}(iii).

\begin{cor}\label{cor:type-distance-trans}
Let $T$ be a thick tree and let $G \le \Aut(T)$.  Suppose for some $k \ge 1$ that $G$ is $k$-type-distance-transitive.  Then $G$ is $(k+1)$-type-distance-transitive if and only if $\Lambda^k_t(G)$ is transitive for all $t \in \{0,1\}$.
\end{cor}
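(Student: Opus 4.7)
The plan is to exploit the fact that in a tree, stabilising an ordered pair $(x,y)$ automatically fixes the whole geodesic between $x$ and $y$, so that $G(x,y)$ is the pointwise stabiliser of this path and $\Lambda^k_t(G)$ records its action on the remaining neighbours of $x$. The equivalence then comes down to a direct translation between pairs of vertices at distance $k+1$ and pairs of points in $S_y(x,1)$.

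For the forward implication, I would assume $G$ is $(k+1)$-type-distance-transitive, fix $x$ of type $t$ and $y$ with $d(x,y)=k$, and take $v_1,v_2 \in S_y(x,1)$. Both $(y,v_1)$ and $(y,v_2)$ are ordered pairs at distance $k+1$ whose endpoints match in type, so there is some $g \in G$ with $g(y,v_1)=(y,v_2)$. The vertex $x$ is the unique vertex on the geodesic from $y$ to $v_1$ at distance $k$ from $y$, and the image of this geodesic is the geodesic from $y$ to $v_2$, whose unique vertex at distance $k$ from $y$ is again $x$. Hence $gx=x$, so $g \in G(x,y)$ realises $gv_1=v_2$ and transitivity of $\Lambda^k_t(G)$ follows.

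For the reverse implication, assume $\Lambda^k_0(G)$ and $\Lambda^k_1(G)$ are both transitive, and let $(u,v),(u',v')$ be pairs of vertices with $d(u,v)=d(u',v')=k+1$ whose endpoints match in type. Let $x$ and $x'$ be the penultimate vertices on the geodesics from $u$ to $v$ and from $u'$ to $v'$, respectively; then $d(u,x)=d(u',x')=k$ and $x,x'$ share a common type. By $k$-type-distance-transitivity there exists $g \in G$ with $gu=u'$ and $gx=x'$. Then $gv$ and $v'$ both lie in $S_{u'}(x',1)$, so transitivity of $\Lambda^k_t(G)$ at the type $t$ of $x'$ yields $h \in G(x',u')$ with $hgv=v'$; the composite $hg$ takes $(u,v)$ to $(u',v')$.

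The only point to watch is the bookkeeping of vertex types: the reverse argument invokes $\Lambda^k_t(G)$ at the type of the penultimate vertex of each distance-$(k+1)$ geodesic, so both values $t \in \{0,1\}$ must be available in the hypothesis. This is really just a matter of careful indexing rather than a genuine obstacle, and no tree machinery beyond what is already used in Proposition~\ref{prop:type-distance-trans}(iii) is required.
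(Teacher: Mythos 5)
Your proof is correct and takes essentially the same route as the paper: your forward implication is exactly the paper's converse argument (fix the far endpoint, note that the image geodesic forces the whole path, hence $x$, to be fixed), and your reverse implication is precisely the single inductive step of Proposition~\ref{prop:type-distance-trans}(iii) that the paper's proof invokes, merely peeling the last vertex off the geodesic rather than the first. The type bookkeeping you flag at the end is handled correctly, so there is nothing to add.
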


\begin{proof}
If $\Lambda^k_t(G)$ is transitive for all $t \in \{0,1\}$, the argument of Proposition~\ref{prop:type-distance-trans}(iii) shows that $G$ is $(k+1)$-type-distance-transitive.  Conversely, suppose $G$ is $(k+1)$-type-distance-transitive.  We can take any path $P = (v_1,v_2,\dots,v_k)$ in $T$ of length $k$, and for any $x,y \in S_{v_2}(v_1,1)$, there exists $g \in G$ such that $gx = gy$ and $gv_k = v_k$.  We see that in fact every such element $g$ fixes $P$ pointwise, so in fact $G(P)$ acts transitively on $S_{v_2}(v_1,1)$.  Thus $\Lambda^k_t(G)$ is transitive.
\end{proof}

Now suppose $G \in \mc{H}_{F_0,F_1}$.  Then given Corollary~\ref{cor:distance_transitivity}, we can define $\Lambda^k_t(G)$ for all $k \ge 1$ and $t \in \{0,1\}$.  In turn, we have a normal subgroup $\Delta^k_t(G)$\index{D@$\Delta^k_t(G)$} of $\Lambda^k_t(G)$, which is the action of $G(x,y) \cap \bigcap_{z \in VT, d(y,z)=1}G(z)$ on $S_y(x,1)$; the groups $\Delta^k_t(G)$ will become important in Section~\ref{sec:main_section}.  It is clear that $\Lambda^{k+1}_t(G) \le \Lambda^k_t(G)$, so we can obtain the action of $G(x,\xi)$ on $S_y(x,1)$ for $x \in V_tT$ and $\xi \in \partial T_{(x,y)}$ as
\[
\Lambda_t(G) = \Lambda^\infty_t(G) := \bigcap_{k \ge 1} \Lambda^k_t(G);
\]
again, up to permutational isomorphisms of $F_t(\omega)$, the group $\Lambda_t(G)$ is independent of the choice of the pair $(x,\xi)$.\index{L@$\Lambda_t(G)$}  In Figure~\ref{fig:lambda} we illustrate these finite permutation groups using the convention from Definition~\ref{defn:tree}: part of the $(6,4)$-semiregular tree is shown (only vertices of interest displayed), with $x$ of type $0$, and we obtain an instance of $\Lambda^3_0(G)$, $\Delta^3_0(G)$ or $\Lambda_0(G)$ acting on the hollow circles by fixing the square vertices shown.

\begin{figure}\index{L@$\Lambda_t(G), \; \Lambda^k_t(G)$}\index{D@$\Delta^k_t(G)$}
\caption{The finite permutation groups $\Lambda^k_t(G)$, $\Delta^k_t(G)$ and $\Lambda_t(G)$}
\label{fig:lambda}
\begin{center}
\begin{tikzpicture}[scale=0.7, every loop/.style={}, square/.style={regular polygon,regular polygon sides=4}]

\tikzstyle{every node}=[circle,
                        inner sep=0pt, minimum width=5pt]
          
\draw \foreach \x in {0,1,2}
{
(\x,0) node[square, draw=black, fill=black, minimum width=10pt]{} to (\x+1,0) node[square, draw=black, fill=black, minimum width=10pt]{}
};
\draw \foreach \x in {-2,-1,0,1,2}
{
(3,0) to ++(30*\x:1.5) node[draw=black, fill=white, minimum width=8pt]{}
};
\draw{
(0,0.5) node{$y$}
(3,0.5) node{$x$}
(3.5,-2.5) node{$\Lambda^3_0(G)$}
};

\draw \foreach \x in {0,1,2}
{
(8+\x,0) node[square, draw=black, fill=black, minimum width=10pt]{} to (8+\x+1,0) node[square, draw=black, fill=black, minimum width=10pt]{}
};
\draw \foreach \x in {-2,-1,0,1,2}
{
(11,0) to ++(30*\x:1.5) node[draw=black, fill=white, minimum width=8pt]{}
};
\draw \foreach \x in {-1,0,1}
{
(8,0) to ++ (180+45*\x:1) node[square, draw=black, fill=black, minimum width=10pt]{}
};
\draw{
(8,0.5) node{$y$}
(11,0.5) node{$x$}
(11.5,-2.5) node{$\Delta^3_0(G)$}
};

\draw \foreach \x in {0,1,2,3}
{
(16+\x,0) node[square, draw=black, fill=black, minimum width=10pt]{} to (16+\x+1,0) node[square, draw=black, fill=black, minimum width=10pt]{}
};
\draw \foreach \x in {-2,-1,0,1,2}
{
(20,0) to ++(30*\x:1.5) node[draw=black, fill=white, minimum width=8pt]{}
};
\draw{
(14.5,0) node{$\xi$}
(15,0) node{$\ldots$}
(20,0.5) node{$x$}
(20.5,-2.5) node{$\Lambda_0(G)$}
};
\end{tikzpicture}
\end{center}
\end{figure}
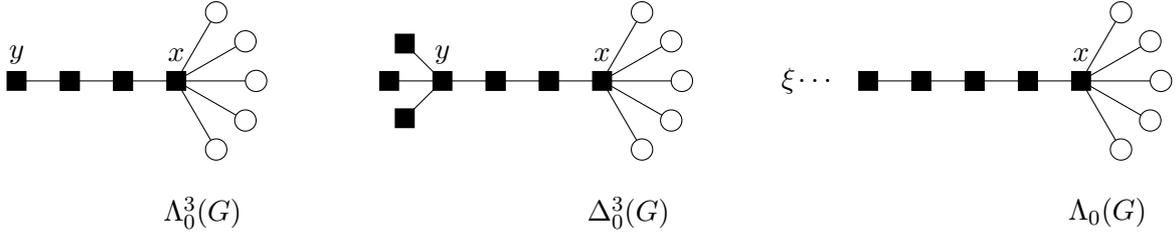

The following is an immediate consequence of Corollary~\ref{cor:distance_transitivity} (more specifically, the fact that (iv) implies (ii)).

\begin{cor}\label{cor:tree_2bbtrans}
Let $T$ be a thick locally finite tree, let $G \in \mc{H}_{F_0,F_1}$, let $t \in \{0,1\}$ and let $k \in \Nb \cup \{\infty\}$.  Then $F_t$ has block-faithful $2$-by-block-transitive action on $F_t/\Lambda^k_t(G)$, with block stabilizer $F_t(\omega)$.
\end{cor}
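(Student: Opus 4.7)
My plan is to realize the $F_t$-action on $F_t/\Lambda^k_t(G)$ as the quotient by the local kernel of the $G(v)$-action on the sphere $S(v,k)$ (or on $(\partial T,\sim_v)$ when $k=\infty$) for a vertex $v \in V_tT$, and then to transport $2$-by-block-transitivity across this quotient using Corollary~\ref{cor:distance_transitivity}(iv)$\Rightarrow$(ii) (or (iv)$\Rightarrow$(iii)), as the remark preceding the statement already signals.

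Fix $v \in V_tT$ and let $N \lhd G(v)$ denote the pointwise stabilizer of the closed ball $B(v,1)$, so that $F_t \cong G(v)/N$. Choose $u_0 \in S(v,k)$ and let $w_0 \in S(v,1)$ be the neighbour of $v$ on the path to $u_0$, with $\omega \in \Omega_t$ the corresponding label. The second isomorphism theorem, together with the definition of $\Lambda^k_t(G)$ as the quotient of $G(v,u_0)$ by its kernel on $S(v,1)\setminus\{w_0\}$, identifies
\[
\Lambda^k_t(G) \cong G(v,u_0)N/N \le G(v,w_0)/N = F_t(\omega),
\]
using $N \le G(v,w_0)$. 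Since $G$ is type-distance-transitive by Corollary~\ref{cor:distance_transitivity}, $G(v)$ acts transitively on $S(v,k)$, yielding $G(v)$-equivariant identifications $S(v,k) \cong G(v)/G(v,u_0)$ and $F_t/\Lambda^k_t(G) \cong G(v)/G(v,u_0)N$, and hence a natural $G(v)$-equivariant surjection $\pi \colon S(v,k) \twoheadrightarrow F_t/\Lambda^k_t(G)$ through which the action factors via $F_t$.

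The key compatibility is that $\sim_{v,k}$ on $S(v,k)$ and the natural block relation on the standard extension $F_t/\Lambda^k_t(G)$ of $\Omega_t = F_t/F_t(\omega)$ are both pullbacks of the diagonal of $\Omega_t \cong S(v,1)$ along the respective parent projections, and $\pi$ intertwines these two projections. Consequently $\pi$ sends $\sim_{v,k}$-classes onto blocks, and any distant pair in $F_t/\Lambda^k_t(G)$ lifts to a $\sim_{v,k}$-distant pair in $S(v,k)$. By Corollary~\ref{cor:distance_transitivity}(iv)$\Rightarrow$(ii), $G(v)$ acts $2$-by-block-transitively on $(S(v,k),\sim_{v,k})$; this transfers along $\pi$ to give $2$-by-block-transitivity of $F_t$ on $F_t/\Lambda^k_t(G)$, with block stabilizer manifestly $F_t(\omega)$. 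Block-faithfulness is automatic: the kernel of the $F_t$-action on $F_t/\Lambda^k_t(G)$ is contained in $\Lambda^k_t(G) \le F_t(\omega)$, and since $F_t$ acts faithfully on $\Omega_t$, any normal subgroup of $F_t$ lying in $F_t(\omega)$ is trivial.

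For $k=\infty$ the argument is unchanged after replacing $(S(v,k),\sim_{v,k})$ with $(\partial T,\sim_v)$, $u_0$ with an end $\xi$ pointing away from $v$ through $w_0$, and invoking Corollary~\ref{cor:distance_transitivity}(iv)$\Rightarrow$(iii) in place of (iv)$\Rightarrow$(ii). The only nontrivial piece of bookkeeping is verifying the compatibility of the equivalence relations under $\pi$; once that is in place the corollary is essentially a direct translation of Corollary~\ref{cor:distance_transitivity}(ii) or (iii).
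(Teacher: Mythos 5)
Your proposal is correct and follows the same route as the paper, which derives the corollary directly from Corollary~\ref{cor:distance_transitivity} (specifically the implication (iv)~$\Rightarrow$~(ii), resp.\ (iv)~$\Rightarrow$~(iii) for $k=\infty$). You have simply made explicit the identification of the $G(v)$-set $S(v,k)$ (resp.\ $\partial T$) with the coset space $F_t/\Lambda^k_t(G)$ and the matching of $\sim_{v,k}$ with the block relation, details the paper leaves implicit by calling the statement an immediate consequence.
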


\section{Preliminaries on finite groups}

In this section and in Example~\ref{ex:exceptional} later, some calculations on individual finite groups were performed using the computer algebra package GAP (\cite{GAP}); we omit the details of these routine computations.  The main textbook used by the author for facts about finite permutation groups was \cite{DixonMortimer}.

\subsection{The soluble residual}

\begin{defn}
Given a (topological) group $G$ and $g,h \in G$, the \defbold{commutator}\index{G@$[g,h], [G,H]$} of $g$ and $h$ is the element $[g,h] := ghgh\inv$.  By convention, if $H$ and $K$ are subgroups of $G$, we define the \defbold{commutator group} $[H,K]:= \grp{[h,k] \mid h \in H, k \in K}$; note that elements of $[H,K]$ need not be of the form $[h,k]$ for $h \in H$ and $k \in K$.  In particular, the commutator $[G,G]$ is the \defbold{derived group}\index{derived group} of $G$; we say $G$ is \defbold{perfect} if $[G,G]=G$ and \defbold{topologically perfect}\index{p@(topologically) perfect} if $[G,G]$ is dense in $G$.  On the other hand, taking the derived series $G_0 = G$, $G_{i+1} = [G_i,G_i]$ for $i \ge 1$, we say $G$ is \defbold{soluble} if $G_n = \triv$ for some $n$ (the least $n$ for which this occurs is the \defbold{derived length} of $G$).

\index{prosoluble residual}\index{soluble residual}\index{soluble radical}\index{O@$O^\mc{C}(G),O_\mc{C}(G)$}\index{O@$O^\infty(G),O_\infty(G)$}Let $\mc{C}$ be a class of finite groups that is closed under subgroups, quotients and extensions.  Given a profinite group $G$ we write $O^\mc{C}(G)$ for the \defbold{pro-$\mc{C}$ residual}\index{pro-$\mc{C}$ residual} of $G$, that is, the intersection of all kernels of continuous homomorphisms of $G$ to groups in $\mc{C}$.  If $G$ is finite, then $G/O^\mc{C}(G)$ is the largest soluble quotient of $G$, whereas $O^\mc{C}(G)$ has no nontrivial quotient in $\mc{C}$; in this case $O^\mc{C}(G)$ is known as the \defbold{$\mc{C}$-residual}\index{$\mc{C}$-residual}; we also define the \defbold{$\mc{C}$-radical}\index{$\mc{C}$-radical} $O_\mc{C}(G)$, which is the largest normal subgroup of $G$ that belongs to $\mc{C}$.  Mostly we will be interested in the case that $\mc{C}$ is the class of finite soluble groups: in that case we define $O^\infty(G) := O^\mc{C}(G)$ (the \defbold{(pro-)soluble residual}) and $O_{\infty}(G) := O_{\mc{C}}(G)$ (the \defbold{soluble radical}).

A finite group $G$ has a composition series, that is, a subnormal series in which every factor is simple; the number of composition factors of a given isomorphism type does not depend on the choice of series.  Write $\kappa(G)$\index{k@$\kappa(G)$} for the number of nonabelian factors of $G$ appearing in a composition series; note that $\kappa(G)=0$ if and only if $G$ is soluble.  A \defbold{minimal normal subgroup}\index{minimal normal subgroup} of a finite group $G$ is a normal subgroup that is minimal among nontrivial normal subgroups of $G$; the \defbold{socle}\index{socle} is the group generated by all minimal normal subgroups of $G$.
\end{defn}

Note that if $\phi$ is a homomorphism with domain $G$ and $H,K \le G$, we have $\phi([h,k]) = [\phi(h),\phi(k)]$ for all $h,k \in G$, and hence $\phi([H,K]) = [\phi(H),\phi(K)]$.  We now make some observations about the relationship between composition factors and subnormal subgroups.

\begin{lem}\label{lem:minimally_covered}
Let $\phi: A \rightarrow B$ be a homomorphism of finite groups and let $P$ be a perfect subgroup of $B$ such that $\phi(A) \ge P$.  Then there is a unique smallest normal subgroup $N$ of $A$ such that $\phi(N) \ge P$; moreover $N$ is perfect.
\end{lem}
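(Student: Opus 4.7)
The plan is to identify the candidate $N$ as the intersection of all normal subgroups of $A$ whose image under $\phi$ contains $P$, and then to exploit the perfectness of $P$ twice: once to show this family is closed under intersections (giving existence and uniqueness of the minimum), and once more to conclude that $N$ itself is perfect.

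More precisely, I would first let $\mathcal{F}$ denote the collection of normal subgroups $M$ of $A$ with $\phi(M) \ge P$. The hypothesis that $\phi(A) \ge P$ ensures $A \in \mathcal{F}$, so $\mathcal{F}$ is nonempty. The key step is to verify that $\mathcal{F}$ is closed under finite intersections. Given $M_1, M_2 \in \mathcal{F}$, the inclusions $\phi(M_i) \ge P$ together with $P = [P,P]$ yield
\[
P \;=\; [P,P] \;\le\; [\phi(M_1),\phi(M_2)] \;=\; \phi([M_1,M_2]).
\]
Since $M_1$ and $M_2$ are both normal in $A$, we have $[M_1,M_2] \le M_1 \cap M_2$, and therefore $P \le \phi(M_1 \cap M_2)$, i.e.\ $M_1 \cap M_2 \in \mathcal{F}$. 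By finiteness of $A$ the intersection $N := \bigcap_{M \in \mathcal{F}} M$ then lies in $\mathcal{F}$ and is clearly the unique minimum.

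For perfectness, the same trick applies: $[N,N]$ is characteristic in $N$, hence normal in $A$, and
\[
\phi([N,N]) \;=\; [\phi(N),\phi(N)] \;\ge\; [P,P] \;=\; P,
\]
so $[N,N] \in \mathcal{F}$. Minimality of $N$ forces $N \le [N,N]$, and hence $N = [N,N]$.

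There is no substantive obstacle; the only thing to be careful about is that the perfectness hypothesis on $P$ is used in an essential way both to close $\mathcal{F}$ under intersections (without it, two normal subgroups each mapping onto $P$ need not have intersection mapping onto $P$) and to promote the minimum $N$ to a perfect subgroup. The combinatorial fact $[M_1,M_2] \le M_1 \cap M_2$ for normal subgroups, and the compatibility $\phi([H,K])=[\phi(H),\phi(K)]$ for homomorphic images, are the only auxiliary inputs.
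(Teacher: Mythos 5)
Your proof is correct and follows essentially the same route as the paper: both use the identity $\phi([M_1,M_2])=[\phi(M_1),\phi(M_2)]$ together with $[M_1,M_2]\le M_1\cap M_2$ and the perfectness of $P$ to show the family of admissible normal subgroups is closed under pairwise meets, then invoke finiteness for the minimum and apply the same commutator trick once more to get $N=[N,N]$. No gaps.
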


\begin{proof}
Let $\mc{N}$ be the set of normal subgroups $N$ of $H$ such that $\phi(A) \ge P$.  Given $N_1,N_2 \in \mc{N}$ and $g,h \in P$, then $\phi(g_1)=g$ and $\phi(h_2)=h$ for some $g_1 \in N_1$ and $h_2 \in N_2$.    Since $N_1$ and $N_2$ are normal in $H$, we then have $[g_1,h_2] \in [N_1,N_2] \le N_1 \cap N_2$, and clearly 
\[
\phi([g_1,h_2]) = [\phi(g_1),\phi(g_2)] = [g,h].
\]  Since $P$ is generated by commutators it follows that $\phi([N_1,N_2]) \ge P$, so $[N_1,N_2] \in \mc{N}$; in particular, any two elements of $\mc{N}$ have a common lower bound.  Since $A$ is finite, we deduce that $\mc{N}$ has a smallest element $N$ and that $N = [N,N]$.
\end{proof}

\begin{lem}\label{lem:minimally_covered:simple}
Let $G$ be a finite group with $\kappa(G)=1$, let $O^\infty(G) \le H \le G$ and let $N$ be a subnormal subgroup of $H$.  Then $N \ge O^\infty(G)$ or $N \le O_{\infty}(G)$.
\end{lem}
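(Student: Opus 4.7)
The plan is to reduce the dichotomy to the case of a single normal inclusion $N \unlhd H$ and then exploit the simplicity of $S := R/O_\infty(R)$, where $R := O^\infty(G)$. The structural setup is as follows. Recall that $R$ is always perfect. Since $\kappa(G) = 1$ and $G/R$ is soluble, $R$ inherits the unique nonabelian composition factor of $G$; combined with perfectness, this forces $R/O_\infty(R) \cong S$ to be a nonabelian simple group (any minimal normal subgroup of $R/O_\infty(R)$ is nonabelian and equals $S$ by $\kappa=1$, and the quotient above it is perfect with only abelian composition factors, hence trivial). A parallel check gives $O_\infty(R) = R \cap O_\infty(G)$. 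The key structural observation is that inside $\overline{G} := G/O_\infty(G)$, the normal subgroup $\overline{R} := RO_\infty(G)/O_\infty(G) \cong S$ has trivial centralizer: $C_{\overline{G}}(\overline{R})$ meets $\overline{R}$ trivially since $Z(S) = \triv$, so it embeds into $\overline{G}/\overline{R} \cong G/RO_\infty(G)$, which is a quotient of the soluble $G/R$; hence $C_{\overline{G}}(\overline{R})$ is a soluble normal subgroup of $\overline{G}$, and is trivial by the definition of $O_\infty(G)$.

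To reduce to the case $N \unlhd H$, I would induct on the length $k$ of a subnormal chain $N = N_0 \unlhd N_1 \unlhd \cdots \unlhd N_k = H$. The case $k=0$ is immediate, and for $k \ge 1$ the $k=1$ case applied to $N_{k-1} \unlhd H$ yields either $N_{k-1} \le O_\infty(G)$ (whence $N \le O_\infty(G)$) or $R \le N_{k-1}$ (whence the hypothesis is restored with $H$ replaced by $N_{k-1}$ and a chain of length $k-1$).

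It thus remains to handle $N \unlhd H$ with $R \le H$. Since $R \unlhd G$, $N \cap R$ is normal in $R$, and its image in $S$ is either $\triv$ or all of $S$. If the image is $S$, then $R = (N \cap R)\, O_\infty(R)$, so $R/(N \cap R)$ is simultaneously perfect (as a quotient of $R$) and soluble (as a quotient of $O_\infty(R)$), hence trivial, giving $N \ge R$. If the image is $\triv$, then $N \cap R \le O_\infty(G)$, and the containment $[N,R] \le N \cap R \le O_\infty(G)$ shows that the image of $N$ in $\overline{G}$ centralizes $\overline{R}$; by the structural observation this image is trivial, so $N \le O_\infty(G)$. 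I expect the main obstacle to be isolating the correct structural statement $C_{\overline{G}}(\overline{R}) = \triv$, which expresses the ``almost quasisimple modulo soluble radical'' character of $G$ under the hypothesis $\kappa(G) = 1$; once this is in place, the dichotomy unrolls from the simplicity of $S$ and the perfectness of $R$.
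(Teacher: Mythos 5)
Your argument is correct, and it reaches the crucial normal case of the dichotomy by a genuinely different mechanism than the paper. The paper reduces to $N \unlhd G$ (after replacing $G$ by $H$, using $O^\infty(H)=O^\infty(G)$) and then simply counts composition factors: if $N \not\ge O^\infty(G)$, then $O^\infty(G)N/N$ is a nontrivial perfect subgroup of $G/N$, so it already accounts for the unique nonabelian composition factor, forcing $\kappa(N)=0$ and hence $N \le O_\infty(G)$. You instead first establish the structural picture that $S = R/O_\infty(R)$ (with $R = O^\infty(G)$) is nonabelian simple and that the image $\overline{R}$ of $R$ in $G/O_\infty(G)$ has trivial centralizer there, and then run the dichotomy through the image of $N \cap R$ in $S$: perfectness of $R$ handles the surjective branch, and the commutator estimate $[N,R] \le N \cap R \le O_\infty(G)$ together with the centralizer statement handles the trivial branch. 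Both proofs reduce the subnormal case to the normal case by induction on depth; you peel the chain from the top while keeping the conclusion phrased in terms of $O_\infty(G)$ throughout, whereas the paper recurses into $N^*$ and so implicitly also needs that the soluble radical of a subnormal subgroup lies in $O_\infty(G)$ --- a standard fact, but one your formulation sidesteps. What the paper's route buys is brevity: nothing beyond additivity of $\kappa$ along normal series is used. What yours buys is the explicit, reusable identification of the simple section $S$ and the fact that $\overline{R}$ has trivial centralizer in $G/O_\infty(G)$, which is the ``component with trivial centralizer'' picture underlying the hypothesis $\kappa(G)=1$.
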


\begin{proof}
We see that $\kappa(H)=1$ and $O^{\infty}(G) = O^\infty(H)$, so we may assume $G = H$.

First consider the case that $N$ is normal in $G$ and $N$ does not contain $O^\infty(G)$.  Since $O^\infty(G)$ is perfect, we see that $O^\infty(G)N/N$ is nontrivial and perfect, so it accounts for at least one nonabelian composition factor of $G/N$.  Since $\kappa(G)=1$ it follows that $\kappa(N)=0$, that is, $N$ is soluble.  Since $N$ is normal in $G$ we deduce that $N \le O_{\infty}(G)$.

In the general case we proceed by induction on the subnormal depth of $N$.  By the induction hypothesis $N \unlhd N^{*}$ where $N^{*} \ge O^\infty(G)$ or $N^{*} \le O_{\infty}(G)$.  If $N^{*} \le O_{\infty}(G)$ then clearly also $N \le O_{\infty}(G)$; if $N^{*} \ge O^\infty(G)$, then $O^\infty(N^*) = O^\infty(G)$ and the conclusion follows by the previous paragraph.
\end{proof}

The next lemma is a generalization of \cite[Lemma~2.5]{Radu}.

\begin{lem}\label{lem:subdirect}
Let $B$ be a finite group such $\kappa(B)=1$ and form the direct product $\prod^n_{i=1}B_i$ of finitely many copies $B_i$ of $B$, with projection $\pi_i$ onto the $i$-th factor; form the semidirect product $\prod^n_{i=1}B_i \rtimes \Sym(n)$, with $\Sym(n)$ in its natural action permuting the copies of $B_i$, and let $\phi$ be the quotient map from $\prod^n_{i=1}B_i \rtimes \Sym(n)$ to $\Sym(n)$.  Write $O_i = O^\infty(B_i)$.  Let $A \le \prod^n_{i=1}B_i \rtimes \Sym(n)$ be such that $\phi(A)$ is transitive and let $H$ be a subgroup of $\prod^n_{i=1}B_i$ normalized by $A$ such that $\pi_i(H) \ge O_i$ for $1 \le i \le n$.

Then there is a $\phi(A)$-invariant equivalence relation $\sim$ on $\{1,\dots,n\}$ and perfect normal subgroups $M_1,\dots,M_n$ of $H$, with the following properties:
\begin{enumerate}[(i)]
\item Given a normal subgroup $N$ of $H$, then $\pi_i(N) \ge O_i$ if $N \ge M_i$ and $\pi_i(N) \le O_\infty(B_i)$ if $N \not\ge M_i$;
\item $M_i=M_j$ if and only if $i \sim j$;
\item $\pi_i(M_j) = O_i$ if $i \sim j$ and $\pi_i(M_j) = \triv$ if $i \not\sim j$;
\item For $1 \le i \le n$ we have $\kappa(M_i)=1$.
\end{enumerate}
\end{lem}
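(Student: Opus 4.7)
The plan is to define each $M_i$ via Lemma~\ref{lem:minimally_covered} applied to $\pi_i: H \to B_i$ with perfect subgroup $P = O_i$ (which is contained in $\pi_i(H)$ by hypothesis): this yields a unique smallest normal subgroup $M_i$ of $H$ with $\pi_i(M_i) \ge O_i$, and moreover $M_i$ is perfect. Before turning to the listed properties, I would establish $A$-equivariance of the family $(M_i)$. Since $A$ normalizes $H$, each $\sigma \in A$ with $\phi(\sigma) = \tau$ acts as an automorphism of $H$, and a direct calculation in the semidirect product gives $\pi_j(\sigma h \sigma^{-1}) = a_j\, \pi_{\tau^{-1}(j)}(h)\, a_j^{-1}$ for an appropriate $a_j \in B_j$ (under the implicit identification $B_i \cong B$). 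As $O_j$ is characteristic in $B_j$, setting $j = \tau(i)$ shows that $\sigma M_i \sigma^{-1}$ is a normal subgroup of $H$ whose $\tau(i)$-th projection contains $O_{\tau(i)}$; invoking minimality on both sides yields $\sigma M_i \sigma^{-1} = M_{\tau(i)}$. Define $\sim$ on $\{1,\dots,n\}$ by $i \sim j \Leftrightarrow M_i = M_j$; this is manifestly $\phi(A)$-invariant, and (ii) holds by construction.

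For (i), given $N \unlhd H$, the image $\pi_i(N)$ is normal in $\pi_i(H)$, which contains $O^\infty(B_i) = O_i$, so Lemma~\ref{lem:minimally_covered:simple} applied with $G = B_i$ yields $\pi_i(N) \ge O_i$ or $\pi_i(N) \le O_\infty(B_i)$; the first alternative is equivalent to $N \ge M_i$ by minimality. In particular $\pi_i(M_i)$ is perfect and contains $O_i$, but any perfect subgroup of $B_i$ maps trivially to the soluble quotient $B_i/O_i$ and so is contained in $O_i$; hence $\pi_i(M_i) = O_i$ exactly.

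For (iii), the central step is to show that the $\phi(A)$-invariant preorder $i \preceq j \Leftrightarrow M_i \le M_j$ collapses to an equivalence relation. Suppose to the contrary $M_i \lneq M_j$ for some $i,j$. Transitivity of $\phi(A)$ supplies $\tau \in \phi(A)$ with $\tau(j) = i$, and $A$-equivariance yields $M_{\tau(i)} \lneq M_{\tau(j)} = M_i$; iterating produces an infinite strictly descending chain among the finitely many $M_k$'s, a contradiction. Now if $\pi_i(M_j) \ne 1$, then $\pi_i(M_j)$ is perfect (as $M_j$ is) and nontrivial, so by Lemma~\ref{lem:minimally_covered:simple} it contains $O_i$; minimality then gives $M_i \le M_j$, whence $M_i = M_j$ and $i \sim j$. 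Combining this with the identity $\pi_i(M_i) = O_i$ from the previous paragraph establishes (iii).

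For (iv), fix $i$ and let $K = M_i \cap \ker \pi_i$, a normal subgroup of $H$ with $M_i / K \cong O_i$. For any $l$, Lemma~\ref{lem:minimally_covered:simple} gives $\pi_l(K) \ge O_l$ or $\pi_l(K) \le O_\infty(B_l)$. When $l \sim i$, the first alternative would force $M_l = M_i \le K$ by minimality, contradicting $\pi_i(K) = 1 \ne O_i$; when $l \not\sim i$, part (iii) already gives $\pi_l(K) \le \pi_l(M_i) = 1$. Hence $\pi_l(K) \le O_\infty(B_l)$ for every $l$, so $K$ embeds in $\prod_l O_\infty(B_l)$ and is soluble. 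Since the number of nonabelian composition factors is additive across extensions and $\kappa(O_i) = 1$, one concludes $\kappa(M_i) = \kappa(K) + 1 = 1$.

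The main obstacle is the antisymmetry step inside (iii): on its own, Lemma~\ref{lem:minimally_covered} only provides a preorder on the $M_i$'s, and it is precisely the transitivity hypothesis on $\phi(A)$, combined with the conjugation rule $\sigma M_i \sigma^{-1} = M_{\phi(\sigma)(i)}$, that converts this preorder into an equivalence relation via the descending-chain argument. Without this input the vanishing of $\pi_i(M_j)$ for $i \not\sim j$ could fail, and the control of $\pi_l(K)$ for $l \not\sim i$ that underpins (iv) would collapse with it.
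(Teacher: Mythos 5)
Your proof is correct, and it rests on the same foundation as the paper's: each $M_i$ is defined as the unique smallest normal subgroup of $H$ with $\pi_i(M_i) \ge O_i$ via Lemma~\ref{lem:minimally_covered}, part (i) follows from Lemma~\ref{lem:minimally_covered:simple}, and part (iv) is handled by showing $M_i \cap \ker\pi_i$ is soluble. Where you genuinely diverge is in the treatment of $\sim$ and the proof of (ii)--(iii), which is where the transitivity of $\phi(A)$ must enter. The paper introduces the auxiliary subgroups $K_i = \pi_i\inv(O_\infty(B_i)) \cap H$, defines $i \sim j$ by $O^\infty(K_i) = O^\infty(K_j)$, and uses transitivity only once, in the form ``two conjugate finite subgroups, one contained in the other, are equal because they have the same order''; it then has to verify separately that this relation matches the partition by the $M_i$. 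You instead make (ii) definitional ($i \sim j$ iff $M_i = M_j$), establish the equivariance $\sigma M_i \sigma\inv = M_{\phi(\sigma)(i)}$ explicitly, and convert a hypothetical strict containment $M_i \lneq M_j$ into an infinite strictly descending chain among finitely many subgroups --- which is really the same ``conjugate and comparable implies equal'' principle applied directly to the $M_i$ rather than to the soluble residuals of the $K_i$. Your route is somewhat more direct and avoids the auxiliary groups $K_i$ entirely; the paper's version has the mild advantage that the relation $\sim$ is defined by an intrinsic invariant of the index ($O^\infty(K_i)$) before any comparison of the $M_i$ is made, but the two arguments are of essentially equal length and both are complete.
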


\begin{proof}
We define $M_i$ to be the unique smallest normal subgroup $N$ of $H$ such that $\pi_i(N) \ge O_i$, which exists and is perfect by Lemma~\ref{lem:minimally_covered}.  The minimality of $M_i$ ensures that $\pi_i(M_i) = O_i$.

To complete the proof of (i), consider $N \unlhd H$ such that $N \not\ge M_i$.  Then by the uniqueness of $M_i$, we have $\pi_i(N) \not\ge O_i$.  Since $\pi_i(H) \ge O_i$ and $N$ is normal in $H$, we see from Lemma~\ref{lem:minimally_covered:simple} that $\pi_i(N) \le O_\infty(B_i)$, proving (i).

Let $K_i = \pi\inv_i(O_\infty(B_i)) \cap H$ and define a relation $\sim$ on $\{1,\dots,n\}$ by setting $i \sim j$ if $O^\infty(K_i) = O^{\infty}(K_j)$; clearly $\sim$ is then an $A$-invariant equivalence relation.  We argue that $i \sim j$ if and only if $\pi_i(K_j)$ is soluble.  If $O^\infty(K_i) = O^{\infty}(K_j)$, then $\pi_i(O^{\infty}(K_j))$ is soluble, so $\pi_i(K_j)$ is also soluble.  Conversely if $\pi_i(K_j)$ is soluble then $O^{\infty}(K_j) \le K_i$, and then since $O^{\infty}(K_j)$ is perfect we have $O^{\infty}(K_j) \le O^{\infty}(K_i)$; but then $O^{\infty}(K_i)$ and $O^{\infty}(K_j)$ are finite groups conjugate under the action of $A$, so they have the same order and hence $O^{\infty}(K_i) = O^{\infty}(K_j)$.

Consider now $i,j$ such that $i \not\sim j$.  Then $\pi_i(K_j)$ is insoluble, so $K_j \ge M_i$ by (i), hence $\pi_j(M_i)$ is soluble; since $M_i$ is perfect, it follows that $\pi_j(M_i) = \triv$.  In particular, $M_i \neq M_j$.

Next, consider $i,j$ such that $i \sim j$.  Suppose that $\pi_i(M_j) \not\ge O_i$.  Then $\pi_i(N) \le O_\infty(O_i)$ by (i), so $M_j \le K_i$.  But then, since $M_j$ is perfect,
\[
M_j \le O^\infty(K_i) = O^{\infty}(K_j) \le K_j,
\]
so $\pi_j(M_j)$ is soluble, a contradiction.  Thus in fact $\pi_i(M_j) \ge O_i$, so $M_j \ge M_i$; by symmetry also $M_i \ge M_j$.  This completes the proof of (ii) and (iii).

We now know that $\pi_j(M_i \cap K_i)$ is soluble for all $j$: if $j \sim i$ then $\pi_j(K_i)$ is soluble, whereas if $j \not\sim i$ then $\pi_j(M_i)$ is soluble.  Thus $M_i \cap K_i$ is soluble, and hence $\kappa(M_i) = \kappa(\pi_i(M_i)) = 1$, proving (iv).
\end{proof}

\subsection{Finite $2$-transitive permutation groups}\label{sec:2trans}

Since projective linear groups and their relatives will appear in several places, we establish some terminology.

\begin{defn}\label{defn:linear}
Let $p$ be a prime, let $q = p^e$ for some natural number $e$, and let $V = \Fb^n_q$ be a finite-dimensional vector space with basis $\{v_1,\dots,v_n\}$ over the field $\Fb_q$ of $q$ elements.  The \defbold{general linear group} $\GL(V) = \GL_n(q)$\index{GL@$\GL$} consists of all bijective linear maps from $V$ to itself; there are normal subgroups $\SL(V)$\index{SL@$\SL$}, the \defbold{special linear group}, consisting of all such linear maps of determinant $1$, and $Z(V)$\index{Z@$Z(V)$}, the group of nonzero scalar maps from $V$ to $V$.  The \defbold{general semilinear group} $\GaL(V) = \GaL_n(q)$\index{GL@$\GaL$} is the group of permutations of $V$ generated by $\GL(V)$ and $f$, where $f$ acts on $V$ by sending $\sum^n_{i=1}\lambda_iv_i$ to $\sum^n_{i=1}\lambda^p_iv_i$.  In fact $\GaL_n(q)$ can be written as a semidirect product $\GaL_n(q) = \GL_n(q) \rtimes \grp{f}$, where $f$ has order $e$.

Given $G \le \GaL(V)$, the action of $G$ on $V$ induces an action of $G$ on the \defbold{projective space} $P(V) = P_n(q)$\index{P@$P(V)$}, which is the set of $1$-dimensional subspaces of $V$; we write $\mathrm{P}G$\index{PGL@$\PGaL, \; \PGL, \; \PSL$} for the group of permutations induced by $G$ on $P(V)$, and regard $\mathrm{P}G$ as the \defbold{projective} version of $G$ (so $\PGaL_n(q)$ is the projective general semilinear group, and so on).  It is a standard fact, which we will use without further comment, that the kernel of the action of $\GaL(V)$ on $P(V)$ is exactly $Z(V)$.

Given a subgroup $G$ of $\GaL(V)$ or $\PGaL(V)$, we define $e_G$\index{e@$e_G$} to be the index $|G:G \cap \GL(V)|$, respectively $|G:G \cap \PGL(V)|$.

A \defbold{finite permutation group of affine type}\index{affine type} is a subgroup of $\Sym(A)$ of the form $G = A \rtimes G_0$, where $A$ is a finite elementary abelian group acting on itself by multiplication, while $G_0$ acts faithfully on $A$ by conjugation.  Note that when $G$ is written in this form then $G_0$ is the stabilizer of the trivial element of $A$.  We interpret $A$ as the additive group of a finite vector space $V$; our convention is to choose $V$ to have as small a dimension as possible, subject to the existence of an isomorphism $(V,+) \cong A$ such that $G_0$ corresponds to a subgroup of $\GaL(V)$.  If there is no ambiguity, we can also write $G = V \rtimes G_0$ and $G_0 \le \GaL(V)$, and regard $G$ as a subgroup of $\Sym(V)$ where $V$ acts on itself by addition and $G_0$ acts as a subgroup of the standard action of $\GaL(V)$ on $V$.
\end{defn}

\begin{rem}\label{rem:linear}
Let $\omega \in P(V)$ and $L = \GaL_n(q)(\omega)$.  We regard $P(V/\omega)$ as an $L$-factor space of $P(V) \setminus \{\omega\}$ via the map
\[
P(V) \setminus \{\omega\} \rightarrow P(V/\omega);\; \omega' \mapsto \grp{\omega, \omega'}_{\Fb_q}/\omega.
 \]
 \end{rem}

\begin{table}[h!]
\begin{small}
\begin{center}

\begin{tabular}{ c | c | c | c | c}

Degree & $s$ & $S$ & $F_\mathrm{max}$ & $F_\mathrm{max}(\omega)$ \\ \hline
$d \ge 5$ & $d-2$ or $d$ & $\Alt(d)$ & $\Sym(d)$ & $\Sym(d-1)$ \\ 
$\frac{q^{n+1}-1}{q-1},n \ge 2$ & $2$ & $\PSL_{n+1}(q)$ & $\PGaL_{n+1}(q)$ & $(\Fb_q,+)^n \rtimes \GaL_n(q)$ \\
$2^{2n+1} -2^{n}$ & $2$ & $\mathrm{Sp}_{2n+2}(2)$ & $\mathrm{Sp}_{2n+2}(2)$ & $\Omega^-_{2n+2}(2).2$ \\
$2^{2n+1} +2^{n}$ & $2$ & $\mathrm{Sp}_{2n+2}(2)$ & $\mathrm{Sp}_{2n+2}(2)$ & $\Omega^+_{2n+2}(2).2$ \\ \hline
$11$ & $2$ & $\PSL_2(11)$ & $\PSL_2(11)$ & $\Alt(5)$ \\
$11$ & $4$ & $\mathrm{M}_{11}$ & $\mathrm{M}_{11}$ & $\Alt(6).2$ \\ 
$12$ & $3$ & $\mathrm{M}_{11}$ & $\mathrm{M}_{11}$ & $\PSL_2(11)$ \\ 
$12$ & $5$ & $\mathrm{M}_{12}$ & $\mathrm{M}_{12}$ & $\mathrm{M}_{11}$ \\ 
$15$ & $2$ & $\Alt(7)$ & $\Alt(7)$ & $\PSL_3(2)$ \\ 
$22$ & $3$ & $\mathrm{M}_{22}$ & $\mathrm{M}_{22}.2$ & $\PSL_3(4).2$ \\ 
$23$ & $4$ & $\mathrm{M}_{23}$ & $\mathrm{M}_{23}$ & $\mathrm{M}_{22}$ \\ 
$24$ & $5$ & $\mathrm{M}_{24}$ & $\mathrm{M}_{24}$ & $\mathrm{M}_{23}$ \\
$176$ & $2$ & $\mathrm{HS}$ & $\mathrm{HS}$ & $\mathrm{PSU}_3(5).2$ \\ 
$276$ & $2$ & $\mathrm{Co}_3$ & $\mathrm{Co}_3$ & $\mathrm{McL}.2$\\
\end{tabular}
\captionof{table}{Finite almost simple $2$-transitive permutation groups with insoluble point stabilizer\label{table:2trans_almost_simple}}
\end{center}
\end{small}
\end{table}

Now let $\Omega$ be a finite set and suppose $F \le \Sym(\Omega)$ is $2$-transitive.  The possibilities for $F$ are all known, in part as a consequence of the classification of finite simple groups; an overview can be found for example in \cite[\S7.7]{DixonMortimer}.  Every such group $F$ is either of affine type or has simple socle; independently, $F(\omega)$ could be soluble or insoluble.  For the purposes of this article, we will mainly be focused on the case that $F$ has simple socle and $F(\omega)$ is insoluble.  In Table~\ref{table:2trans_almost_simple}, the possible socles $S$ are listed (which is also $2$-transitive in all cases), including the largest group $F_\mathrm{max}$ with the given socle, the degree $s$ of transitivity of a group $F$ such that $S \le F \le F_\mathrm{max}$ and the structure of a point stabilizer of $F_\mathrm{max}$.  With socle $\PSL_{n+1}(q)$, the cases $(n,q) \in \{(2,2),(2,3)\}$ are exceptions; in these cases $F$ is still $2$-transitive, but it has soluble point stabilizer.

An \defbold{inner automorphism} of a group $G$ is an automorphism $c_g: x \mapsto gxg\inv$ for some $g \in G$.  The inner automorphisms form a normal subgroup $\mathrm{Inn}(G)$ of the automorphism group $\Aut(G)$; the quotient $\Out(G):=\Aut(G)/\Inn(G)$\index{Out@$\Out(G)$} is known as the \defbold{outer automorphism group}\index{outer automorphism group}.

The next lemma gives some facts we will need about $2$-transitive affine type groups.

\begin{lem}[{See for example \cite[Appendix~1]{LiebeckAffine}}]\label{lem:insoluble_affine}
Let $G = V \rtimes G_0$ be a finite $2$-transitive group of affine type with point stabilizer $G_0$, and suppose $G$ is insoluble.  We consider $V$ as a vector space of dimension $a$ over the field $q = p^e$, where $a$ is minimal such that $G_0 \le \GaL(V)$.  Let 
\[
Z = Z(V) \cap G_0, N = O^\infty(G_0) \text{ and } R = O_\infty(G_0).
\]
Then one of the following holds:
\begin{enumerate}[(a)]
\item $N = \SL_a(q)$;
\item $N = \mathrm{Sp}_a(q)$, $a \ge 4$ even, $(a,q) \neq (4,2)$, or $N = \Alt(6)$ of index $2$ in $\mathrm{Sp}_4(2)$;
\item $N = \mathrm{G}_2(q)$, $a=6$, $p=2$, $e > 1$, or $N = \mathrm{PSU}_3(3)$ of index $2$ in $\mathrm{G}_2(2)$;
\item $G_0 = N \in \{\Alt(7),\SL_2(13)\}$;
\item $N = \SL_2(5)$, $a=2$, $q \in \{9,11,19,29,59\}$;
\item $q=3,a=4$ and $N$ has the structure $2^{1+4}.\Alt(5)$, with $G_0 \le 2^{1+4}.\Sym(5)$ and $R = 2^{1+4}$.
\end{enumerate}
Moreover, in all cases $Z = \CC_{G_0}(N)$ and $NR/R$ is simple; except in case (f) we have $Z = R$.
\end{lem}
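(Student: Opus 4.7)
The plan is to invoke the classification of Liebeck (\cite[Appendix~1]{LiebeckAffine}), which catalogues the finite insoluble affine $2$-transitive permutation groups and directly yields the six possibilities (a)--(f) for the soluble residual $N = O^{\infty}(G_0)$; since $G$ is $2$-transitive, $G_0$ acts transitively on $V \setminus \{0\}$ and hence irreducibly on $V$, so the hypotheses of Liebeck's theorem are met. What remains is to verify the three ``moreover'' assertions: $Z = C_{G_0}(N)$ in all six cases, simplicity of $NR/R$, and $Z = R$ outside case (f).

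First I would prove $Z = C_{G_0}(N)$. The inclusion $Z \le C_{G_0}(N)$ is immediate since $Z$ consists of scalars. For the reverse, inspection of the list shows that $N$ acts absolutely irreducibly on $V$ over $\Fb_q$ in every case, so Schur's lemma gives $C_{\GL(V)}(N) = Z(\GL(V))$. The minimality of $\dim V$ in Definition~\ref{defn:linear} prevents $N$ from being realized over a proper subfield of $\Fb_q$, ruling out nontrivial Frobenius-twisted centralizers of $N$ in $\GaL(V) \setminus \GL(V)$; hence $C_{\GaL(V)}(N) = Z(\GL(V))$ and intersecting with $G_0$ gives $C_{G_0}(N) = Z$.

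Next I would handle the equality $R = Z$ in cases (a)--(e), where the list presents $N$ as a quasisimple group (with $\Alt(6) \le \mathrm{Sp}_4(2)$ in (b) and $\mathrm{PSU}_3(3) \le \mathrm{G}_2(2)$ in (c) treated as particular quasisimple entries). Indeed $R \cap N$ is a soluble normal subgroup of the quasisimple group $N$, hence $R \cap N \le Z(N)$, which is itself central in $G_0$. Then $[R,N] \le R \cap N \le Z(N)$, so $[R,N,N] = 1$; the three subgroups lemma combined with $N = [N,N]$ forces $[R,N] = 1$, so $R \le C_{G_0}(N) = Z$. The reverse inclusion $Z \le R$ holds because $Z$ is an abelian normal subgroup of $G_0$, and so $R = Z$. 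Consequently $NR/R \cong N/Z(N)$ is simple. In case (f), direct inspection of the structure $2^{1+4}.\Alt(5)$ shows $R = 2^{1+4} \le N$, $NR/R \cong \Alt(5)$ is simple, and $Z$ is properly contained in $R$ since the extraspecial normal subgroup is not central in $G_0$.

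The main obstacle is the step establishing $C_{\GaL(V)}(N) = Z(\GL(V))$: one must verify, by going through each case of Liebeck's list, that the natural module of $N$ is not a restriction of scalars from any smaller field, so that the minimality of $(a,q)$ in Definition~\ref{defn:linear} genuinely excludes nontrivial semilinear centralizers. The remaining steps then reduce either to general quasisimplicity arguments or to the explicit structural description in case (f).
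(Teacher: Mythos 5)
Your overall architecture matches the paper's: invoke Hering's theorem as stated in Liebeck's appendix to obtain the list (a)--(f), establish $Z = \CC_{G_0}(N)$ via absolute irreducibility of $N$ on $V$, and then deduce $R = Z$ and the simplicity of $NR/R$. Your three-subgroups-lemma derivation of $R \le \CC_{G_0}(N)$ from quasisimplicity of $N$ is a clean alternative to the paper's route (which instead observes that the image of $N$ in $\PGaL(V)$ is simple with trivial centralizer, so that the image of $G_0$ has trivial soluble radical), and case (f) is treated the same way in both.

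The genuine problem is your justification for $\CC_{\GaL(V)}(N) = Z(V)$, specifically the step ruling out semilinear centralizing elements. You appeal to the minimality of $a$ in Definition~\ref{defn:linear}, but that minimality runs in the wrong direction: choosing $a$ minimal means choosing $q = p^e$ with $e$ maximal, i.e.\ it prevents $G_0$ from being rewritten semilinearly over a \emph{larger} field. It says nothing about whether the $\Fb_q N$-module $V$ descends to a proper \emph{subfield} $\Fb_{q_0}$, which is exactly the condition that would produce a $\phi$-semilinear element centralizing $N$ (such an element is an $N$-equivariant semilinear bijection, and exists precisely when $V$ is isomorphic to its Frobenius twist as an $N$-module). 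A descent of $N$ to $\Fb_{q_0}$ lives in dimension $a$ over $\Fb_{q_0}$ and does not contradict the minimality of $a$ for the permutation representation of $G_0$ on the set $V$. So the fact you need --- that in each case the natural module of $N$ is not realizable over a proper subfield --- must be verified case by case (for instance, for $N = \SL_2(5)$ with $q=9$ the relevant Brauer character involves $\sqrt{5} \notin \Fb_3$); the paper outsources this to \cite[Lemmas~2.10.1 and~4.0.5]{KleidmanLiebeck}. You do flag that a case-by-case check is required, but the mechanism you propose for it (minimality of $(a,q)$) is not the one that does the work. A second, smaller omission: Hering's theorem hands you a perfect normal subgroup $N$ of $G_0$ of the stated isomorphism type, not the equality $N = O^\infty(G_0)$; for that you still need $G_0/N$ to be soluble, which the paper obtains from the embedding of $G_0R/NR$ into $\Out(NR/R)$ together with the solubility of the outer automorphism groups of the relevant simple groups.
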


\begin{proof}
Since $G$ is insoluble but $V$ is abelian, the group $G_0$ must be insoluble.  Starting from Hering's Theorem as stated in \cite[Appendix~1]{LiebeckAffine} and eliminating the cases where $G_0$ is soluble, we see that $G_0$ has a perfect normal subgroup $N$ as given in one of the cases (a)--(f).  We claim that in all cases the action of $N$ on $P(V)$ is absolutely irreducible: that is, if we extend scalars in $V$ to the algebraic closure, $\ol{V} := V \otimes_{\Fb_q} \overline{\Fb_p}$, then there is no proper nonzero $\overline{\Fb_p}$-subspace $W$ of of $\ol{V}$ such that $P(W)$ is $N$-invariant.  This claim follows in cases (a) and (b) from \cite[Corollary~2.10.7]{KleidmanLiebeck} and in the remaining cases from the argument given in \cite[Appendix~1]{LiebeckAffine}.

It follows that the image of $N$ in $\PGaL(V)$ has trivial centralizer (see \cite[Lemmas~2.10.1 and~4.0.5]{KleidmanLiebeck}), so $Z = \CC_{G_0}(N)$.  In cases (a)--(e), we see that the image of $N$ in $\PGaL(V)$ is simple; it follows that the image of $G_0$ has trivial soluble radical, so $R = Z$.  In all cases including case (f), one sees that quotient $NR/R$ is simple.  In particular, the quotient $G_0R/NR$ embeds in $\Out(NR/R)$, which is known to be soluble in all cases, so $G_0/NR$ is soluble and hence $G_0/N$ is soluble; since $N$ is perfect we deduce that $N = O^\infty(G_0)$.
\end{proof}

The following fact will be useful for later arguments; in particular, it will let us apply the lemmas from the previous subsection.

\begin{lem}\label{lem:kappa_one}
Let $F$ be a $2$-transitive subgroup of $\Sym(\Omega)$ with $\Omega$ finite, let $\omega \in \Omega$ and let $O = O^\infty(F(\omega))$.  Then
\[
\kappa(F(\omega)) \le 1.
\]
Consequently, the group $S = O/O_\infty(O)$ is either trivial or a nonabelian simple group, and $\Out(S)$ is soluble.
\end{lem}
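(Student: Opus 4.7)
The plan is to invoke the classification of finite $2$-transitive permutation groups, which splits $F$ into an affine type case and an almost simple type case according to the nature of its socle, and then verify $\kappa(F(\omega)) \le 1$ in each case. The structural consequences for $S = O/O_\infty(O)$ will follow by a short argument using that $O$ is perfect, together with Schreier's conjecture (a consequence of CFSG) applied at the very end.

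Suppose first that $F$ is of affine type, so $F = V \rtimes F_0$ with $F_0 = F(\omega)$. If $F_0$ is soluble then $\kappa(F_0) = 0$. If $F_0$ is insoluble, Lemma~\ref{lem:insoluble_affine} produces $N = O^\infty(F_0)$ and $R = O_\infty(F_0)$ with $NR/R$ a nonabelian simple quotient. Since $N$ is perfect while $N \cap R \le R$ is soluble, the isomorphism $N/(N \cap R) \cong NR/R$ shows $\kappa(N) = 1$; and since $F_0/N$ is soluble by construction, we conclude $\kappa(F_0) = 1$.

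Now suppose $F$ is almost simple with nonabelian simple socle $S$, so $S \le F \le F_\mathrm{max}$ with $F_\mathrm{max}$ as in Table~\ref{table:2trans_almost_simple}. Since $S$ is a transitive normal subgroup of the $2$-transitive group $F$, we have $F = F(\omega) S$, so $F(\omega)/S(\omega) \cong F/S$ embeds in $\Out(S)$, which is soluble by Schreier. Hence $F/S$ is soluble and $\kappa(F(\omega)) = \kappa(S(\omega))$. A case analysis through the entries of Table~\ref{table:2trans_almost_simple} shows $\kappa(S(\omega)) \le 1$: in each case the point stabilizer in $S$ is either soluble or has a unique nonabelian composition factor (the Levi-type subgroup $\PSL_n(q)$, an orthogonal or symplectic subgroup, an alternating group, or a specific sporadic/classical simple group). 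This settles the inequality $\kappa(F(\omega)) \le 1$ in all cases.

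For the consequence, $O$ is perfect by definition of the soluble residual. If $O$ is soluble then $O = \triv$ and $S = \triv$, so assume $O$ is insoluble and hence $\kappa(O) = 1$. Writing $Q := O/O_\infty(O)$, the group $Q$ is perfect as a quotient of $O$, has trivial soluble radical by maximality of $O_\infty(O)$, and satisfies $\kappa(Q) = 1$. Triviality of the soluble radical forces the socle of $Q$ to be a direct product of nonabelian simple groups, and $\kappa(Q) = 1$ permits only a single such factor, call it $S_0$. Then $Q/S_0$ has $\kappa = 0$ so is soluble, but is also perfect as a quotient of $Q$, hence trivial; so $S = Q = S_0$ is nonabelian simple. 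Finally $\Out(S)$ is soluble by Schreier. The main ``obstacle'' is purely computational, namely the case-by-case verification using Table~\ref{table:2trans_almost_simple}, but no genuine difficulty arises because the structure of point stabilizers is standard information.
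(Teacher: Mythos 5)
Your proof is correct and follows essentially the same route as the paper: the affine case via Lemma~\ref{lem:insoluble_affine} and the normal series $\triv \unlhd R \unlhd NR \unlhd G_0$, and the almost simple case by inspection of Table~\ref{table:2trans_almost_simple}, with the final consequence deduced from $\kappa(O)=1$ and Schreier. Your intermediate reduction to $\kappa(S(\omega))$ via $F(\omega)/S(\omega) \hookrightarrow \Out(S)$ is a mild tightening of the paper's ``inspect the table'' step but not a different approach.
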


\begin{proof}
We may assume $F(\omega)$ is insoluble.  If $F$ is almost simple, we see that $\kappa(F(\omega)) = 1$ by inspecting Table~\ref{table:2trans_almost_simple}.

If instead $F$ is of affine type, then $F$ is as described in Lemma~\ref{lem:insoluble_affine}.  In particular, $G_0 = F(\omega)$ has a normal series
\[
\triv \unlhd R \unlhd NR \unlhd G_0
\]
such that $NR/R$ is a nonabelian simple group whereas $R$ and $G/NR$ are soluble, so $\kappa(G_0)=1$.

In particular, if $\kappa(F(\omega))=0$ then $S$ and $\Out(S)$ are trivial.  Otherwise, $S$ belongs to a known list of finite nonabelian simple groups and $\Out(S)$ is soluble.
\end{proof}

Given a finite $2$-transitive permutation group $F$ acting on a set $\Omega$, we write $\delta^*(F)$\index{d@$\delta^*(F)$} for the number of orbits of $O^\infty(F(\omega))$ acting on $\Omega \setminus \{\omega\}$.  For all finite $2$-transitive permutation groups, the following lemma gives the value of $\delta^*(F)$ and also whether or not the soluble radical of the point stabilizer is transitive.  The key exceptional case is that $F$ is as in case (e) of Lemma~\ref{lem:insoluble_affine}, which we call \defbold{exceptional $\SL_2(5)$ affine type}\index{exceptional $\SL_2(5)$ affine type}.

\begin{lem}\label{lem:soluble_residual_trans}
Let $F$ be a $2$-transitive subgroup of $\Sym(\Omega)$ and let $\omega \in \Omega$.
\begin{enumerate}[(i)]
\item If $F(\omega)$ is soluble, then $\delta^*(F) = |\Omega|-1$.
\item If $F$ is of exceptional $\SL_2(5)$ affine type, with socle of order $q^2$, then $\delta^*(F) = 2,1,3,7,29$ for $q =9,11,19,29,59$ respectively.
\item If $F(\omega)$ is insoluble but $F$ is not of exceptional $\SL_2(5)$ affine type, then $\delta^*(F)=1$.
\end{enumerate}
Moreover, $O_\infty(F(\omega))$ acts transitively on $\Omega \setminus \{\omega\}$ if and only if $F(\omega)$ is soluble.
\end{lem}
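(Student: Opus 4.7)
The plan is to dispatch each part by reducing to the classification of $2$-transitive permutation groups with insoluble point stabilizer recalled in Section~\ref{sec:2trans}. Part (i) is immediate: if $F(\omega)$ is soluble then $O^\infty(F(\omega)) = \triv$, which has $|\Omega| - 1$ singleton orbits on $\Omega \setminus \{\omega\}$; at the same time $O_\infty(F(\omega)) = F(\omega)$ is transitive on $\Omega \setminus \{\omega\}$ by the $2$-transitivity of $F$, giving one direction of the moreover statement.

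For the remaining parts I assume $F(\omega)$ is insoluble and treat the affine and almost simple cases separately. In the affine case I apply Lemma~\ref{lem:insoluble_affine}: here $\omega = 0$, $\Omega \setminus \{\omega\} = V \setminus \{0\}$ and $O^\infty(F(\omega)) = N$. For cases (a)--(d) and (f), $N$ acts transitively on $V \setminus \{0\}$; this is a classical fact for $\SL_a(q)$, $\mathrm{Sp}_a(q)$ and $\mathrm{G}_2(q)$ acting on their natural modules (cases (a)--(c)), and reduces to a direct check of the specific embeddings in cases (d) and (f), notably $\Alt(7) \hookrightarrow \GL_4(2)$, $\SL_2(13) \hookrightarrow \GL_3(3)$, and $2^{1+4}.\Alt(5) \hookrightarrow \GL_4(3)$. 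Case (e), the exceptional $\SL_2(5)$ affine type, is handled by explicit orbit computation: for $q \in \{11,19,29,59\}$, the group $\SL_2(5)$ of order $120$ acts freely on $\Fb_q^2 \setminus \{0\}$, giving $\delta^*(F) = (q^2-1)/120 \in \{1,3,7,29\}$; for $q = 9$, a nontrivial stabilizer of order $3$ appears, yielding $2$ orbits of length $40$.

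In the almost simple case I proceed row by row through Table~\ref{table:2trans_almost_simple}. For the $\PGaL_{n+1}(q)$-row acting on $P_n(q)$, Remark~\ref{rem:linear} identifies $P_n(q) \setminus \{\omega\}$ as an equivariant fibration over $P_{n-1}(q)$ with fibres of size $q$, and $O^\infty(F(\omega))$ contains both $\SL_n(q)$ (transitive on the base) and the translation subgroup $(\Fb_q,+)^n$ (transitive on fibres), so it is transitive on $P_n(q) \setminus \{\omega\}$. The other rows are handled by directly inspecting the listed structure of $F(\omega)$: in each case $O^\infty(F(\omega))$ contains the socle of $F(\omega)$, which already inherits transitivity on $\Omega \setminus \{\omega\}$ from the $2$-transitivity of $F$. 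The remaining direction of the moreover statement is then obtained by a size argument: in the affine case Lemma~\ref{lem:insoluble_affine} gives $O_\infty(F(\omega)) = R$, which is either a group of scalars of order dividing $q - 1$ (too small for transitivity on $q^a - 1 \geq q^2 - 1$ nonzero vectors, since $a \geq 2$ in every insoluble affine case) or, in case (f), the group $2^{1+4}$ of order $32 < 80$; in the almost simple case Table~\ref{table:2trans_almost_simple} shows that $O_\infty(F(\omega))$ is contained in a small normal part of $F(\omega)$ that is easily verified to be too small to act transitively.

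The main obstacle is case (e) for $q = 9$, where the orbit count is not determined by an abstract freeness argument and genuinely requires knowledge of the embedding $\SL_2(5) \hookrightarrow \GL_2(\Fb_9)$ and the fixed-point structure of its elements; the other cases reduce to invocations of standard facts or finite computations that can be verified in GAP.
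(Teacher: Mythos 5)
Your overall case division (soluble point stabilizer; affine type via Lemma~\ref{lem:insoluble_affine}; almost simple via Table~\ref{table:2trans_almost_simple}, with the projective-space row handled through the fibration of Remark~\ref{rem:linear}) is the same as the paper's, and your treatment of the exceptional $\SL_2(5)$ case is cleaner than the paper's appeal to calculation: for $q \in \{11,19,29,59\}$ no nontrivial element of $\SL_2(5)$ can have eigenvalue $1$ (it would have to be unipotent of order $q$), so the action on $V \setminus \{0\}$ is free and $\delta^* = (q^2-1)/120$, while for $q=9$ the order-$3$ elements become unipotent and an orbit count gives $2$. However, there are two genuine gaps. First, for the non-projective almost simple rows you assert that the socle of $F(\omega)$ ``inherits transitivity on $\Omega \setminus \{\omega\}$ from the $2$-transitivity of $F$''. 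That is not a valid inference: $2$-transitivity of $F$ makes $F(\omega)$ transitive, but a normal subgroup (even the socle) of a transitive group need not be transitive. The conclusion is true, but the paper proves it by noting that $|F(\omega):O^\infty(F(\omega))| \le 2$ is coprime to $|\Omega|-1$ in every row except $(\mathrm{M}_{11},\mathrm{M}_{10},11)$, where $\Alt(6)$ has index $2$ in $\mathrm{M}_{10}$ acting on $10$ points and transitivity must be verified directly (it is the natural action of $\PSL_2(9)$ on $P_1(9)$). That single case is exactly where your argument has no content.

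Second, your cardinality argument for the intransitivity of $O_\infty(F(\omega))$ in the almost simple case fails for the projective-space row. For $\PSL_{n+1}(q) \le F \le \PGaL_{n+1}(q)$ the point stabilizer contains the translations and scalars, so $O_\infty(F(\omega))$ can have order at least $q^n(q-1)$, which may exceed $|\Omega|-1 = q(q^n-1)/(q-1)$: already for $\PGL_3(4)$ on $21$ points the soluble radical of the point stabilizer has order $48$ while the degree is $20$. The argument the paper uses, and which you need here, is that $O_\infty(F(\omega))$ acts trivially on the nontrivial factor space $P(V/\omega)$ (because the induced action of $F(\omega)$ there is almost simple), hence all its orbits lie inside fibres of size $q$. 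A small factual slip worth correcting: the transitive linear group $\SL_2(13)$ of Lemma~\ref{lem:insoluble_affine}(d) sits in $\GL_6(3)$, not $\GL_3(3)$; it has no faithful $3$-dimensional module in characteristic $3$.
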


\begin{proof}
Let $R = O_\infty(F(\omega))$ and $N = O^\infty(F(\omega))$.  If $F(\omega)$ is soluble then $N$ is trivial, so (i) is clear, and also $F(\omega) = R$, so $R$ is transitive.  Thus we may assume $F(\omega)$ is insoluble.

If $\Omega$ is a projective space $P_n(q)$ with $\PSL_{n+1}(q) \le F \le \PGaL_{n+1}(q)$, then $n \ge 2$ and $(n,q) \not\in \{(2,2),(2,3)\}$, and we can extend $F$ by the scalar matrices to obtain $\widetilde{F}$ where $Z(V) \le \widetilde{F} \le \GaL_{n+1}(q)$, and similarly the subgroups $R$ and $N$ lift to subgroups $\widetilde{R}$ and $\widetilde{N}$ of $\GaL_{n+1}(q)$.  The stabilizer $\widetilde{F}(\omega)$ of the line $\omega$ then acts on the factor space $P(V/\omega)$ of $\Omega \setminus \{\omega\}$ (recall Remark~\ref{rem:linear}), where the action contains the simple group $\PSL(V/\omega)$; thus $\widetilde{F}(\omega)$ modulo the kernel of this action is almost simple.  We deduce that $\widetilde{R}$ acts trivially on $P(V/\omega)$, so $R$ does not act transitively on $\Omega \setminus \{\omega\}$.  Meanwhile, $\widetilde{N} = Z(V)W \rtimes \SL_n(q)$ where $W$ is the kernel of the action of $\GL(V)(\omega)$ on $(V/\omega) \oplus \omega$.  We can write $P(V) \setminus \{\omega\}$ as the union
\[
P(V) \setminus \{\omega\} = \bigcup_{\beta/\omega \in P(V/\omega)}\left(P(\beta) \setminus \{\omega\}\right).
\]
The copy of $\SL_n(q)$ is enough for $\widetilde{N}$ to act transitively on $P(V/\omega)$, and then given a $1$-dimensional subspace $\beta/\omega$ of $V/\omega$, then $W(\beta)$ acts transitively on $P(\beta) \setminus \{\omega\}$.  Thus $N$ acts transitively on $P(V) = \Omega \setminus \{\omega\}$.

If $F$ is almost simple, but not as in the previous paragraph, then we can inspect the possible point stabilizers in Table~\ref{table:2trans_almost_simple}.  In all cases we see that $R = \triv$, so certainly $R$ is not transitive.  In most cases, $|F(\omega):N|$ is coprime to $|\Omega \setminus \{\omega\}|$, so clearly $N$ acts transitively on $\Omega \setminus \{\omega\}$.  The exception is $F= \mathrm{M}_{11}$ acting on $11$ points and $N = \Alt(6)$, and in that case one can easily check directly that $N$ acts transitively on the remaining $10$ points.

We have now proved all the conclusions in the case that $F$ is almost simple.  Thus from now on we may assume that $F = V \rtimes G_0$ as in Lemma~\ref{lem:insoluble_affine}, and we can identify $\Omega \setminus \{\omega\}$ with the set of nontrivial elements of $V$, so that $F(\omega) = G_0$.

For the action of $N$: In cases (a)--(d) the group $V \rtimes N$ belongs to one of the standard families of $2$-transitive affine groups, so $N$ acts transitively on $\Omega \setminus \{\omega\}$; one can also check that $N = 2^{1+4}.\Alt(5)$ acts transitively on $\Omega \setminus \{\omega\}$ in case (f).  In case (e), the case of exceptional $\SL_2(5)$ affine type, then $N = \SL_2(5)$ acting on $\Fb^2_q$ for $q \in \{9,11,19,29,59\}$.  In these cases, $NZ(V)/Z(V)$ is one of the subgroups of $\PSL_2(q)$ isomorphic to $\Alt(5)$; these are all conjugate in $\PGL_2(q)$, see for example \cite[XII.259]{Dickson}.  The number of orbits of $N$ on $\Fb^2_q \setminus \{0\}$ is $\delta^* = 2,1,3,7,29$ for $q=9,11,19,29,59$ respectively, as can be checked by calculation.  This completes the proof of (ii) and (iii).

For the action of $R$: In cases (a)--(e) then $R$ acts by scalars on $V$, so it acts trivially on $P(V)$; since $P(V)$ is not a singleton and naturally occurs as a factor space of $\Omega \setminus \{\omega\}$, it follows that $R$ acts intransitively on $\Omega \setminus \{\omega\}$.  In case (f), we have $|R| = 32$ but $|\Omega \setminus \{\omega\}|=80$, so again $R$ cannot act transitively.
\end{proof}

\subsection{Block-faithful $2$-by-block-transitive actions}

We need to pay special attention to the finite $2$-transitive permutation groups where the action can be properly extended to a block-faithful $2$-by-block-transitive action.  Here we recall one of the main theorems from \cite{Reidkblock}.

\begin{thm}[{See \cite[Theorem~1.3 and Corollary~1.4]{Reidkblock}}]\label{thm:2bbtrans}
Let $G$ be a finite group with a faithful $2$-transitive action on the set $\Omega_0$, extending to a $2$-by-block-transitive action of $G$ on the set $\Omega = \Omega_0 \times B$, with block size $|B| \ge 2$; let $\omega \in \Omega$.  Then $G$ has a nonabelian simple socle $S$ and the stabilizer $G([\omega])$ of the block $[\omega]$ containing $\omega$ is the unique largest proper subgroup of $G$ that contains $G(\omega)$; write $W$ for the socle of $G([\omega])$.  If $S$ is of Lie type and naturally represented as a group of (projective) matrices of dimension $n+1$ over $\Fb_q$, then we can regard $G$ as a subgroup of $\PGaL_{n+1}(q)$.  Up to isomorphism of permutation groups, exactly one of the following is satisfied.
\begin{enumerate}[(a)]
\item $\PSL_{n+1}(q) \le G \le \PGaL_{n+1}(q)$, with the standard action on $\Omega_0 = P_n(q)$ and with $n \ge 2$, $q > 2$.  Let $\mu$ be a generator of $\Fb^*_q$ and let $D$ be the subgroup of $\Fb^*_q/\grp{\mu^{n+1}}$ generated by determinants of matrices representing elements of $G \cap \PGL_{n+1}(q)$.  In this case $G(\omega)$ contains the soluble residual of $G([\omega])$; in the case $(n,q)=(2,3)$, then $G(\omega)$ is of the form $W \rtimes \SL_2(3)$.  In addition, $e_{G(\omega)} = e_G$, and the block size $|B|$ divides $q-1$ and is coprime to $|D|$.
\item $\PSL_3(q) \le G \le \PGaL_3(q)$ and $G(\omega)$ is contained in a group of the form
\[
L^{\GaL_1} = G \cap (W \rtimes \GaL_1(q^2)), \text{ such that } |L^{\GaL_1}:G(\omega)| \le 2;
\]
in this case $|B| = |L^{\GaL_1}:G(\omega)|q(q-1)/2$.
\item $S$ is of rank $1$ simple Lie type and the action of $G$ on $\Omega_0$ is the standard $2$-transitive action.  In this case, $|B|$ divides $e_G$ and also divides the order of the multiplicative group of the field.
\item The action is one of eighteen exceptional $2$-by-block-transitive actions, listed in \cite[Table~1]{Reidkblock}.
\end{enumerate}
Moreover, in all cases $W \le G(\omega)$.
\end{thm}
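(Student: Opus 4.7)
The plan is to combine a general overgroup-lattice argument with a case analysis based on the CFSG-derived classification of finite $2$-transitive permutation groups.

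First I would establish the general structural claims. For the unique maximality of $H = G([\omega])$ among proper overgroups of $K = G(\omega)$: if $K \le M \le G$ with $M \not\subseteq H$, pick $m \in M \setminus H$, so that $m\omega \notin [\omega]$ because $H$ is the setwise stabilizer of $[\omega]$. By $2$-by-block-transitivity, $K$ is transitive on $\Omega \setminus [\omega]$, so the $M$-orbit $M\omega$ contains $K \cdot m\omega = \Omega \setminus [\omega]$. Since $M\omega$ is a block of the $G$-action on $\Omega$ and hence its size divides $|\Omega|$, while $|M\omega| \ge |\Omega| - |B| + 1 > |\Omega|/2$, we must have $M\omega = \Omega$ and $M = G$. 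The final containment $W \le K$ of the socle of $H$ in $K$ will be verified at the end of the case analysis.

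Second, I would show that $G$ has a nonabelian simple socle. By the standard dichotomy, $G$ is either almost simple with nonabelian simple socle, or of affine type $G = V \rtimes G_0$. In the affine case, after a coordinate change trivializing the natural $V$-cocycle on $\Omega = V \times B$, one can write $v \cdot (u,b) = (v + u, b)$ and $g \cdot (u,b) = (g(u), \phi_0(g) b)$ for a homomorphism $\phi_0: G_0 \to \Sym(B)$ with point stabilizer $K^* = G_0(b_0)$; then $K = 1 \times K^*$. Transitivity of $K$ on $\Omega \setminus [\omega] = V^\times \times B$ requires that $\phi_0(K^*(u))$ act transitively on $B$ for every $u \in V^\times$; but $\phi_0(K^*(u)) \le \phi_0(K^*)$ fixes the point $b_0 \in B$, contradicting $|B| \ge 2$. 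This rules out the affine case.

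Third, for the almost simple case I would work through the list of finite $2$-transitive groups with nonabelian simple socle and in each instance determine the admissible pairs $(H,K)$ with $K$ core-free, $K < H$, and $K$ transitive on $\Omega \setminus [\omega]$. Alternating socles $\Alt(d)$ admit no proper extension; sporadic socles and the small-degree exceptional actions listed in Table~\ref{table:2trans_almost_simple} can be handled by direct subgroup-lattice computations inside $H$, yielding at most the eighteen lines of \cite[Table~1]{Reidkblock} that together comprise case (d). For $S$ of rank-$1$ simple Lie type, the Borel decomposition $H = U \rtimes T$ together with the transitivity of $K$ on the unipotent radical forces $K \supseteq U$, and the quotient action on $T$ pins down $|B|$ as a divisor of both $e_G$ and the order of the underlying multiplicative group, giving case (c). The main case is $\PSL_{n+1}(q)$ with $n \ge 2$: here $H$ is the maximal parabolic stabilizing a point of $P_n(q)$ with Levi decomposition $U \rtimes L$, where $U \cong (\Fb_q,+)^n$ and $L$ contains $\GL_n(q)$ modulo scalars. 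Using the residue identification of $P(V/\omega)$ as an $L$-factor of $\Omega \setminus [\omega]$ (Remark~\ref{rem:linear}), together with Lemma~\ref{lem:subdirect} applied to the simple Levi quotient, one shows that any admissible $K$ must contain $U$ and the soluble residual of $L$, with the remaining freedom parametrized by a subgroup $D \le \Fb_q^\ast / \langle \mu^{n+1} \rangle$ coming from determinants; this yields case (a). For $n = 2$ an additional Singer-type family arises from $G \cap (W \rtimes \GaL_1(q^2))$, producing case (b). The principal obstacle lies in the fine verification of case (a) — specifically that $|B|$ divides $q - 1$ and is coprime to $|D|$, that $e_{G(\omega)} = e_G$, and that $W \le G(\omega)$ — which rests on careful analysis of the short exact sequence $1 \to U \to H \to L \to 1$ together with the interaction between $K$ and the field-automorphism and determinant quotients of $L$.
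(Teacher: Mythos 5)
The first thing to say is that the paper does not prove this statement at all: Theorem~\ref{thm:2bbtrans} is imported verbatim from \cite[Theorem~1.3 and Corollary~1.4]{Reidkblock}, so there is no internal proof to compare your proposal against. Your two general structural arguments are correct and are essentially the right ones. The unique-maximality argument is sound: for $G(\omega) \le M \le G$ with $M \not\le G([\omega])$, the orbit $M\omega$ is a block containing $\Omega \setminus [\omega]$, so $|M\omega| \ge |\Omega| - |B| + 1 > |\Omega|/2$ forces $M\omega = \Omega$ and hence $M = G$. The exclusion of affine type is also correct: taking the translates by $V$ as coset representatives in the standard-extension construction of Remark~\ref{rem:standard_extension} trivializes the cocycle, the $B$-coordinate then transforms only through the $G_0$-action on $G_0/G(\omega)$, and so the $G(\omega)$-orbit of a point $(u,b_0)$ with $u \neq 0$ never leaves $V^\times \times \{b_0\}$, contradicting $2$-by-block-transitivity when $|B| \ge 2$.

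Everything after that, however, is a table of contents rather than a proof, and the content of the theorem lives precisely there. Each case-by-case assertion is left unverified: that alternating socles admit no proper block-faithful extension; that in the rank-$1$ case $K \supseteq U$ (the natural Sylow argument presupposes $p \nmid |B|$, which is itself part of the conclusion of case (c), not a hypothesis); that for $\PSL_{n+1}(q)$ the only way for $G(\omega)$ to avoid the soluble residual of the Levi factor is the Singer-type configuration of case (b), and only when $n=2$; and that the residue of the analysis is exactly the eighteen actions of case (d). The precise arithmetic conclusions ($|B|$ divides $q-1$ and is coprime to $|D|$, $e_{G(\omega)} = e_G$, $|B| = |L^{\GaL_1}:G(\omega)|\,q(q-1)/2$, and so on) are the delicate part of the classification and are restated rather than derived. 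None of this can be checked from the sketch; it is the subject of the separate paper \cite{Reidkblock}, and within the present article the correct ``proof'' of this statement is the citation.
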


Not all groups as in cases (b) and (c) admit proper block-faithful $2$-by-block-transitive actions; the precise conditions are given in \cite[Proposition~3.27]{Reidkblock} for case (b) and \cite[Proposition~3.21]{Reidkblock} for case (c).

Since these will come up later as special cases, in Table~\ref{table:small_q} we highlight the $2$-by-block-transitive actions with nontrivial blocks of $\mathrm{M}_{11}$ and with socle $\PSL_3(q)$ for $2 \le q \le 5$, as given in \cite[Tables~1 and 3]{Reidkblock}.  A blank entry should be read as a repeat of the entry above it.  In the last column it is indicated which case of Theorem~\ref{thm:2bbtrans} the action falls under.

\begin{table}[h!]
%\begin{small}
\begin{center}

\begin{tabular}{ c | c | c | c | c}

$G$ & $G(\omega)$ & $|\Omega_0|$ & $|B|$ & case \\ \hline
$\mathrm{M}_{11}$ & $\Alt(6)$ & $11$ & $2$ & (d) \\ \hline 
$\PGaL_3(2)$ & $W \rtimes C_3$ & $7$ & $2$ & (b) \\ \hline 
$\PGaL_3(3)$ & $W \rtimes \SL_2(3)$ & $13$ & $2$ & (a)  \\
 & $W \rtimes \GaL_1(9)$ & & $3$ & (b)  \\
 & $W \rtimes C_8$ & & $6$ &  \\ 
 & $W \rtimes Q_8$ & & $6$ & \\ \hline
$\PGL_3(4)$ & $W \rtimes (C_{15} \rtimes C_2)$ & $21$ & $6$  & (b) \\
 & $W \rtimes C_{15}$ & & $12$  & \\ \hline
$\PGaL_3(4)$ & $W \rtimes \GaL_1(16)$ & $21$ & $6$ & (b) \\ \hline
$\PGaL_3(5)$ & $W \rtimes \SL_2(5).C_2$ & $31$ & $2$ & (a) \\
 & $W \rtimes \SL_2(5)$ & & $4$ &  \\
 & $W \rtimes \GaL_1(25)$ & & $10$ & (b) \\
 & $W \rtimes C_{24}$ & & $20$ & \\
 & $W \rtimes (C_3 \rtimes C_8)$ & & $20$ & \\
 &  $W \rtimes (\SL_2(3) \rtimes C_4)$ & & $5$ & (d) \\
  &  $W \rtimes (\SL_2(3) \rtimes C_2)$ & & $10$ &  \\
  &  $W \rtimes \SL_2(3)$ & & $20$ & \\
\end{tabular}
\captionof{table}{Some $2$-by-block-transitive actions of special interest\label{table:small_q}}
\end{center}
%\end{small}
\end{table}

Combining Corollary~\ref{cor:tree_2bbtrans} and Theorem~\ref{thm:2bbtrans} leads to the following restrictions on $\Lambda_t(G)$ for $G \in \mc{H}_T$, which we will build upon to prove the main results of this article.

\begin{lem}\label{lem:lambda_restriction}
Let $T$ be a thick locally finite tree, let $G \in \mc{H}_{F_0,F_1}$ and let $t \in \{0,1\}$.  Let $\Lambda = \Lambda_t(G)$.  Then one of the following holds, where unless otherwise stated, if $\PSL_{n+1}(q) \le F_t \le \PGaL_{n+1}(q)$ for $n \ge 1$ we have $\Omega_t = P_n(q)$:
\begin{enumerate}[(i)]
\item $\PSL_{n+1}(q) \le F_t \le \PGaL_{n+1}(q)$, with $n \ge 2$ and $q > 2$, and the action of $F_t$ on $F_t/\Lambda$ is as described in case (a) of Theorem~\ref{thm:2bbtrans};
\item $\PSL_3(q) \le F_t \le \PGaL_3(q)$ and the action of $F_t$ on $F_t/\Lambda$ is as described in case (b) of Theorem~\ref{thm:2bbtrans};
\item $F_t$ has socle of rank $1$ Lie type and the action of $F_t$ on $F_t/\Lambda$ is as described in case (c) of Theorem~\ref{thm:2bbtrans};
\item $(F_t,\Lambda,d_t) = (\mathrm{M}_{11},\Alt(6), 11)$;
\item $(F_t,\Lambda,d_t) = (\PSL_5(2),C^4_2 \rtimes \Alt(7), 31)$;
\item $\PSL_3(q) \le F_t \le \PGaL_3(q)$, the action of $F_t$ on $F_t/\Lambda$ is one of the remaining exceptional actions included in case (d) of Theorem~\ref{thm:2bbtrans}, and $O^\infty(F_t(\omega)) \nleq \Lambda$;
\item $\Lambda = F_t(\omega)$.
\end{enumerate}
Moreover, in all cases:
\begin{enumerate}[(a)]
\item $\Lambda$ contains the socle of $F_t(\omega)$;
\item $O_\infty(\Lambda)$ acts transitively on $\Omega_t \setminus \{\omega\}$ if and only if $\Lambda$ is soluble.
\end{enumerate}
\end{lem}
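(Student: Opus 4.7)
The plan is to combine Corollary~\ref{cor:tree_2bbtrans} (specialized to $k=\infty$) with the classification in Theorem~\ref{thm:2bbtrans}. Corollary~\ref{cor:tree_2bbtrans} gives that $F_t$ acts $2$-by-block-transitively on $F_t/\Lambda$ with block stabilizer $F_t(\omega)$; block-faithfulness of this action follows because $F_t$ acts faithfully on $\Omega_t \cong F_t/F_t(\omega)$. If the block size $|F_t(\omega):\Lambda|$ equals $1$ then $\Lambda = F_t(\omega)$, landing us in case (vii); otherwise the block size is at least $2$ and Theorem~\ref{thm:2bbtrans} applies.

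Cases (a), (b), (c) of Theorem~\ref{thm:2bbtrans} become cases (i), (ii), (iii) of the lemma directly. For case (d) of Theorem~\ref{thm:2bbtrans} I inspect the eighteen entries of \cite[Table~1]{Reidkblock}: the $\mathrm{M}_{11}$ entry yields case (iv); the $\PSL_5(2)$ entry yields case (v); the remaining entries all have socle $\PSL_3(q)$, and any of these with $O^\infty(F_t(\omega)) \le \Lambda$ can equivalently be absorbed into case (i), leaving case (vi). The moreover (a) is then the final assertion of Theorem~\ref{thm:2bbtrans}: $W \le G(\omega)$, which in our notation reads $\Soc(F_t(\omega)) \le \Lambda$, and the statement is trivial in case (vii).

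For moreover (b), I note first that $\Lambda$ is transitive on $\Omega_t \setminus \{\omega\}$ by Proposition~\ref{prop:type-distance-trans}(ii). The ``if'' direction is immediate, since $O_\infty(\Lambda) = \Lambda$ when $\Lambda$ is soluble. For the converse I proceed case by case. In cases (ii) and (iii), the descriptions from Theorem~\ref{thm:2bbtrans} force $\Lambda$ to be soluble (it sits inside $V \rtimes \GaL_1(q^2)$, respectively inside a Borel subgroup of a rank-$1$ Lie-type group, which is a soluble $U \rtimes T$), so the hypothesis $\Lambda$ insoluble cannot arise. Case (vii) is the final assertion of Lemma~\ref{lem:soluble_residual_trans}. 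In case (i), I use that $\Lambda \ge O^\infty(F_t(\omega))$ is perfect (for $(n,q) \neq (2,2),(2,3)$, the only subcases where $F_t(\omega)$ is insoluble) to pin down $O_\infty(\Lambda)$: one shows $O^\infty(\Lambda) = O^\infty(F_t(\omega))$, and then $O_\infty(\Lambda) \cap O^\infty(\Lambda)$ is a soluble normal subgroup of the perfect group $O^\infty(F_t(\omega))$, hence contained in the ``unipotent radical times central scalars'' $V \cdot Z(\SL_n(q))$ of the parabolic, which is intransitive on $\Omega_t \setminus \{\omega\}$ by direct computation. The complementary quotient $O_\infty(\Lambda)\cdot O^\infty(F_t(\omega))/O^\infty(F_t(\omega))$ is an abelian subquotient of $\GaL_n(q)/\SL_n(q)$, acting on the block-system of $V$-orbits in $\Omega_t \setminus \{\omega\}$, and one verifies that this abelian factor is too small to restore transitivity on the blocks. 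Cases (iv), (v), (vi) are checked against the explicit structure of $\Lambda$ given in \cite[Table~1]{Reidkblock} and Table~\ref{table:small_q}; in (iv) we have $O_\infty(\Alt(6)) = \triv$, and in (v) we have $O_\infty(C_2^4 \rtimes \Alt(7)) = C_2^4$, each manifestly intransitive on the relevant set of $d_t - 1$ points.

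The main obstacle I anticipate lies in case (i) of the ``only if'' direction: the general size comparison shows $O_\infty(\Lambda)$ is small on one hand and $|\Omega_t \setminus \{\omega\}|$ grows polynomially in $q$ and $n$, but to rule out cleanly every configuration (including a potential Singer-type cyclic factor in $\Lambda/O^\infty(F_t(\omega))$ acting transitively on the projective quotient $P_{n-1}(q)$ of blocks) requires careful accounting using the block-system refinement. By contrast, the case-by-case verifications in (iv)--(vi), while numerous, are each routine.
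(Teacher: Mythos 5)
Your reduction to the seven cases and the proof of (a) match the paper's: apply Corollary~\ref{cor:tree_2bbtrans} with $k=\infty$, feed the resulting block-faithful $2$-by-block-transitive action into Theorem~\ref{thm:2bbtrans}, and sort the eighteen exceptional actions of case (d) into (iv), (v), (vi). Your handling of (b) in cases (ii), (iii) ($\Lambda$ is automatically soluble), (iv), (v) (direct inspection) and (vii) (final assertion of Lemma~\ref{lem:soluble_residual_trans}) is also sound and consistent with the paper.

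The genuine gap is exactly where you flag it: case (i) of the ``only if'' direction. Your proposed analysis of $O_\infty(\Lambda)$ via the parabolic structure --- splitting off $O_\infty(\Lambda)\cap O^\infty(\Lambda)$, passing to a subquotient of $\GaL_n(q)/\SL_n(q)$, and then trying to rule out a transitive soluble factor on the block system $P_{n-1}(q)$ --- is left unfinished (``one verifies that this abelian factor is too small''), and the Singer-cycle scenario you worry about is not excluded by size considerations alone. The point you are missing is that normality does all the work: since $\kappa(F_t(\omega))\le 1$ (Lemma~\ref{lem:kappa_one}) and $O^\infty(F_t(\omega))\le\Lambda\le F_t(\omega)$ in case (i), Lemma~\ref{lem:minimally_covered:simple} applied to the normal subgroup $N=O_\infty(\Lambda)$ of $\Lambda$ forces either $N\ge O^\infty(F_t(\omega))$ (impossible, as $N$ is soluble and $O^\infty(F_t(\omega))$ is nontrivial perfect) or $N\le O_\infty(F_t(\omega))$; the latter is intransitive on $\Omega_t\setminus\{\omega\}$ by the final assertion of Lemma~\ref{lem:soluble_residual_trans}. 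This one-line argument also subsumes your separate treatment of case (vii), and no Singer-type obstruction can survive it because any such factor would have to lie in the soluble radical of $F_t(\omega)$. Separately, your dismissal of case (vi) as ``routine table-checking'' is thinner than what is actually needed: there the point stabilizer does \emph{not} contain $O^\infty(F_t(\omega))$, so the subnormality argument does not apply, and the paper instead uses the structural description $\widetilde{\Lambda}=W'\rtimes A$ from \cite[Proposition~3.12]{Reidkblock} together with Lemma~\ref{lem:insoluble_affine}(e) to see that $O_\infty(\widetilde{\Lambda})$ acts trivially on $P(V/\omega)$; if you insist on table inspection you must at least verify intransitivity for the insoluble $\PSL_3(q)$ entries with $q\in\{9,11,19,29,59\}$, not only the small-$q$ entries of Table~\ref{table:small_q}.
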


\begin{proof}
Let $W$ be the socle of $F_t(\omega)$.  As indicated in \cite[Table~1]{Reidkblock}, case (d) of Theorem~\ref{thm:2bbtrans} accounts for one exceptional $2$-by-block-transitive action each of $\mathrm{M}_{11}$ and $\PSL_5(2)$ (with point stabilizer $\Alt(6)$ and $C^4_2 \rtimes \Alt(7)$ respectively).  For the remaining $2$-by-block-transitive actions in \cite[Table~1]{Reidkblock}, the group satisfies $\PSL_3(q) \le G \le \PGaL_3(q)$ for some $q$ and the action is an extension of the action on projective space, such that the the point stabilizer does not contain the soluble residual of the block stabilizer.  The reduction to the cases listed and the fact that $\Lambda \ge W$ are now clear from Corollary~\ref{cor:tree_2bbtrans} and Theorem~\ref{thm:2bbtrans}.

It remains to prove that $O_\infty(\Lambda)$ acts transitively on $\Omega_t \setminus \{\omega\}$ if and only if $\Lambda$ is soluble.  We can eliminate cases (iv) and (v) by inspection.  It is clear that $\Lambda$ itself acts transitively on $\Omega_t \setminus \{\omega\}$, so if $\Lambda$ is soluble there is nothing to prove.  Thus we may assume $\Lambda$, hence also $F_t(\omega)$, is insoluble, which eliminates cases (ii) and (iii).

Write $O_t = O^\infty(F_t(\omega))$.  If $O_t \le \Lambda$, then by Lemma~\ref{lem:minimally_covered:simple} we have $O_\infty(\Lambda) \le O_{\infty}(F_t(\omega))$, and then $O_\infty(\Lambda)$ acts intransitively on $\Omega_t \setminus \{\omega\}$ by  Lemma~\ref{lem:soluble_residual_trans}.  Thus we may assume $O_t \not\le \Lambda$, which eliminates cases (i) and (vii).

The only remaining case is (vi); let $V = \Fb^3_q$ and regard $\omega$ as a line in $V$.  By \cite[Proposition~3.12]{Reidkblock}, after extending by the group of scalars to obtain $\widetilde{F} \le \GaL_3(q)$, then the lift $\widetilde{\Lambda}$ of $\Lambda$ takes the form $\widetilde{\Lambda} = W' \rtimes A$, where $W'$ acts trivially and $A$ acts transitively on nonzero vectors in $V/\omega$.  The structure of $A$ is thus subject to Lemma~\ref{lem:insoluble_affine}, and in fact falls under Lemma~\ref{lem:insoluble_affine}(e). In particular, $O_\infty(\widetilde{\Lambda})$ acts trivially on $P(V/\omega)$.  Consequently, $O_\infty(\Lambda)$ acts trivially on $P(V/\omega)$ and hence acts intransitively on $\Omega_t \setminus \{\omega\}$.
\end{proof}

Subsequent arguments will eliminate or restrict some cases of Lemma~\ref{lem:lambda_restriction}; see Remark~\ref{rem:lambda_restriction}.

\section{Rigid stabilizers and end local actions of boundary-$2$-transitive groups}\label{sec:main_section}

\subsection{Steps towards the local action of a rigid stabilizer of a half-tree}

In this section we will prove the main theorems of the article.  For the bulk of this first subsection (up to and including Proposition~\ref{prop:soluble_dichotomy}), we will not need to use details of the classification of finite $2$-transitive permutation groups, except that our approach makes use of the fact that point stabilizers have at most one nonabelian composition factor (Lemma~\ref{lem:kappa_one}).    We will state the next series of results in a form that is independent of the classification of finite $2$-transitive permutation groups.

For the rest of this section, we will write $G_k(x)$ for the pointwise stabilizer of $S(x,k)$, and $G_k(x_1,\dots,x_n) = \bigcap^n_{i=1}G_k(x_i)$\index{Gk@$G_k(x), \; G_k(x_1,\dots,x_n)$}.

Let us establish some more notation by way of a proposition.

\begin{prop}\label{prop:Hk_subnormal}\index{Theta@$\Theta_t, \; \Theta^{r,i}_t$}
Let $G \in \mc{H}_{F_0,F_1}$, acting on the tree $T$.  Let $x \in V_0T$.  For $v \in VT \setminus \{x\}$ and $i \ge 0$, picture the tree as rooted at $v$, and write $B^v_i$ for the set of vertices at distance $d(v,x)+i$ from $v$ that are not descendants of $x$.  Then set
\[
K^v_i = G(x) \cap \bigcap_{y \in B^v_i}G(y).
\]
Let $O_0 =  O^\infty(F_0(\omega))$ and write $\Theta^{r,i}_0(G)$ for the image of $K = K^v_i$ given by its action on $S_v(x,1)$, regarded as a subgroup of $F_0(\omega)$, where $r=d(x,v)$; see Figure~\ref{fig:theta}.  Then up to permutational automorphisms of $F_0(\omega)$, and for a given value of $d(v,x)$, then $\Theta^{r,i}_0(G)$ does not depend on the choice of $(v,x)$.  Moreover, the following conditions hold:
\begin{enumerate}[(i)]
\item We have $\Theta^{r,i}_0(G) \unlhd \Lambda^r_0(G)$ and $\Theta^{1,i}_0(G) \unlhd F_0(\omega)$.
\item Up to a permutational automorphism of $F_0(\omega)$, we can arrange that $\Theta^{r+1,i}_0(G) \unlhd \Theta^{r,i}_0(G)$.  In particular, $\Theta^{r,i}_0(G)$ is subnormal in $F_0(\omega)$ for all $r \ge 1$ and $i \ge 0$.
\item If $\Theta^{r,0}_0(G) \ge O_0$ for all $r \ge 1$, then $\Theta^{r,i}_0(G) \ge O_0$ for all $r \ge 1$ and $i \ge 0$.
\end{enumerate}
\end{prop}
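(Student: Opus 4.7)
The plan is to establish (i) and (ii) by direct manipulation of the subgroup $K^v_i$ using the tree geometry of $B^v_i$, and to derive (iii) from (ii) by combining the resulting subnormality with the soluble-residual dichotomy of Lemma~\ref{lem:minimally_covered:simple}.

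For (i), I first observe that $K^v_i\subseteq G(v)$: since the tree is thick, $B^v_i$ contains some vertex $y$ outside $T^v_x$, and the unique geodesic from $x$ to $y$ passes through $v$, so any element fixing $x$ and $y$ fixes $v$. Next, $G(x,v)$ preserves $B^v_i$ setwise, being the intersection of the $G(x,v)$-invariant sphere $S(v,r+i)$ with the $G(x,v)$-invariant complement of $T^v_x$. The pointwise stabiliser of $B^v_i$ is therefore normal in $G(x,v)$ and equals $K^v_i$, so $K^v_i\unlhd G(x,v)$. Passing to images under $G(x,v)\to\Lambda^r_0(G)$ yields $\Theta^{r,i}_0(G)\unlhd\Lambda^r_0(G)$; the case $r=1$ gives the second claim because $\Lambda^1_0(G)=F_0(\omega)$. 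Independence from the choice of $(v,x)$ up to a permutational automorphism follows from type-distance-transitivity (Corollary~\ref{cor:distance_transitivity}).

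For (ii), take $v'$ to be a neighbour of $v$ on the far side from $x$, so $d(x,v')=r+1$, $T^{v'}_x=T^v_x$ and $S_{v'}(x,1)=S_v(x,1)$. The crux is the chain $K^{v'}_i\subseteq K^{v'}_{i-1}\subseteq\cdots\subseteq K^{v'}_0$, each step by the standard path argument: any vertex at distance $r+j$ from $v'$ outside $T^{v'}_x$ has its tree-children at distance $r+j+1$ from $v'$ also outside $T^{v'}_x$, hence in $B^{v'}_j$, and the path argument propagates the fixing inward. Consequently, $K^{v'}_i$ fixes pointwise the region $B(v',r+1+i)\setminus(T^{v'}_x\setminus\{x\})$, and a distance check ($d(y,v')\in\{r+i-1,r+i+1\}\le r+1+i$ for $y\in B^v_i$, together with $y\notin T^v_x$) gives $K^{v'}_i\subseteq K^v_i$. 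Also $K^v_i\subseteq K^v_0=G_r(v)$ fixes $v'\in B(v,r)$, so $K^v_i$ preserves the defining data of $K^{v'}_i$ and normalises it. Taking images in $\Sym(S_v(x,1))$ after the appropriate permutational-automorphism identification gives $\Theta^{r+1,i}_0(G)\unlhd\Theta^{r,i}_0(G)$; combining with (i) gives the subnormality of $\Theta^{r,i}_0(G)$ in $F_0(\omega)$ via the chain $F_0(\omega)\unrhd\Theta^{1,i}_0(G)\unrhd\cdots\unrhd\Theta^{r,i}_0(G)$.

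For (iii) I would induct on $i$, the base case being the hypothesis. The same path argument as in (ii) shows $K^v_{i+1}\subseteq K^v_i$ and $K^v_i$ normalises $K^v_{i+1}$ (it fixes $v$ and preserves both $S(v,r+i+1)$ and $T^v_x$), so $\Theta^{r,i+1}_0(G)\unlhd\Theta^{r,i}_0(G)$. Given $\Theta^{r,i}_0(G)\supseteq O_0$ by induction, Lemma~\ref{lem:minimally_covered:simple} applied with $F_0(\omega)$ in place of $G$ (valid since $\kappa(F_0(\omega))\le 1$ by Lemma~\ref{lem:kappa_one} and $\Theta^{r,i+1}_0(G)$ is subnormal in $F_0(\omega)$ by (ii)) yields the dichotomy $\Theta^{r,i+1}_0(G)\supseteq O_0$ or $\Theta^{r,i+1}_0(G)\subseteq O_\infty(F_0(\omega))$. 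The hard part is excluding the second alternative, since a single ball stabiliser $K^{v''}_0$ at distance $r+i+1$ from $x$ does not embed into $K^v_{i+1}$ (because $B^v_{i+1}$ spans multiple branches of $v$ while $K^{v''}_0$ only covers one). My plan is to leverage the hypothesis $\Theta^{r',0}_0(G)\supseteq O_0$ at the deeper index $r'=r+i+1$ together with the subdirect-product structure of the action of $K^v_i$ on the collection of subtrees hanging off $S(v,r)$, applying Lemma~\ref{lem:subdirect} to the type-$t(y)$ orbits and then correcting the resulting elements by a suitable factor in $\rist_G(T^v_x)\subseteq K^v_{i+1}$ to land inside $K^v_{i+1}$ with the desired insoluble image in $F_0(\omega)$; this modification step is the principal obstacle I anticipate.
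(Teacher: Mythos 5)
Your treatment of (i) and (ii) is correct and follows the same route as the paper: normality in (i) comes from $G(x,v)$ preserving $B^v_i$ setwise, and the chain in (ii) comes from $K^{v'}_i$ fixing a region containing $B^v_i$ while $K^v_i$ fixes $v'$ and so normalizes $K^{v'}_i$.

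Part (iii), however, has a genuine gap, located exactly where you flag it. The dichotomy you obtain from Lemma~\ref{lem:minimally_covered:simple} is valid, but nothing in your proposal rules out the alternative $\Theta^{r,i+1}_0(G) \le O_\infty(F_0(\omega))$, and the two devices you name will not do so. The correction step is unusable: if $T^v_x$ denotes the half-tree of descendants of $x$ (viewed from $v$), then for any $h \in \rist_G(T^v_x)$ the product $gh$ fixes $B^v_{i+1}$ pointwise if and only if $g$ does, since every vertex of $B^v_{i+1}$ lies outside $T^v_x$ and is already fixed by $h$; so the correction cannot move an element into $K^v_{i+1}$. Worse, the local action of $\rist_G(T^v_x)$ at $x$ is precisely $\Theta^{1,\infty}_0(G)$ --- the quantity the proposition is ultimately trying to bound from below --- so invoking it as a source of elements is circular. (Reading $T^v_x$ as the opposite half-tree fails differently: that rigid stabilizer acts trivially on $S_v(x,1)$ and is not contained in $K^v_{i+1}$.) Finally, the hypothesis $\Theta^{r',0}_0 \ge O_0$ at depth $r'=r+i+1$ controls only a single group $K^{v''}_0$ whose fixed set lies in one branch, which is the obstruction you already identified; Lemma~\ref{lem:subdirect} as stated does not bridge this.

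The paper's mechanism, which is what is missing, bypasses the dichotomy and directly builds a subgroup of $K^y_{i+1}$ with local action at $x$ containing $O_0$. Along the path $x=y_0,\dots,y_k=y$ it takes, for each $1 \le j \le k$, the whole sphere $X_j = S_x(y_j,i+1)$ and the groups $K^z_i$ for $z \in X_j$; each has local action $\ge O_0$ at $x$ by the inductive hypothesis \emph{at level $i$} applied at radius $d(z,x)=i+j+1$ (not the level-$0$ hypothesis). The two key points are: (a) for $z,z' \in X_j$ one has $d(z,z') \le 2i+2 \le d(z,x)+i$, so the groups $K^z_i$ pairwise fix each other's anchors and hence pairwise normalize one another, whence Lemma~\ref{lem:minimally_covered} shows $L_j=\bigcap_{z\in X_j}K^z_i$ retains local action $\ge O_0$, and a second application (the $L_j$ being normal in $G(x,y)$) does the same for $L=\bigcap_j L_j$; (b) every $w \in B^y_{i+1}$ lies in $\overline{B}^z_i$ for some $z \in X_j$, so $L \le K^y_{i+1}$. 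Without this (or an equivalent) intersection argument, your induction does not close.
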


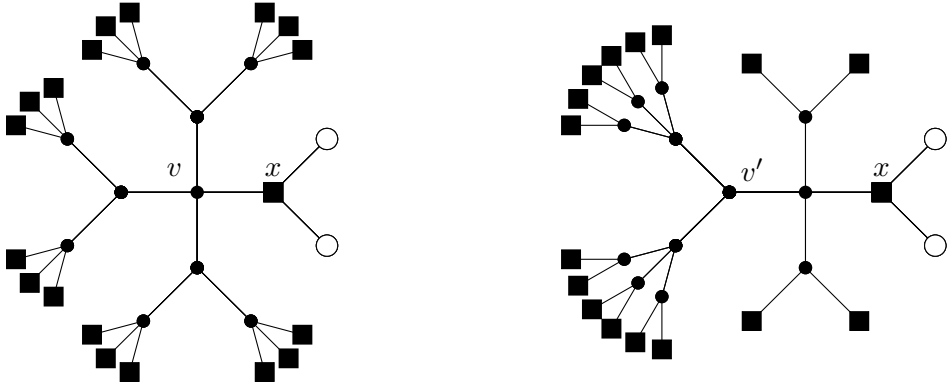
\begin{figure}
\caption{The groups $\Theta^{1,2}_0(G)$ (left) and $\Theta^{2,1}_0(G)$ (right)}
\label{fig:theta}
\begin{center}
\begin{tikzpicture}[scale=1, every loop/.style={}, square/.style={regular polygon,regular polygon sides=4}]

\tikzstyle{every node}=[circle,
                        inner sep=0pt, minimum width=5pt]

\foreach \x in {1,2,3} {
\foreach \y in {-1,1} {
\foreach \z in {-1,0,1} {
\draw (0,0) to ++ (90*\x:1) node[fill=black]{} to ++ (90*\x+45*\y:1) node[fill=black]{} to ++ (90*\x+45*\y+30*\z:0.7) node[square,draw=black, fill=black, minimum width=10pt]{};
}
}
};
                        
\foreach \x in {0} {
\foreach \y in {-1,1} {
\foreach \z in {-1,0,1} {
\draw (0,0) to ++ (90*\x:1) node[square,draw=black, fill=black, minimum width=10pt]{} to ++ (90*\x+45*\y:1) node[draw=black, fill=white, minimum width=8pt]{};
}
}
};

\draw{
(0,0) node[fill=black]{}
(-0.3,0.3) node{$v$}
(1,0.3) node{$x$}
};

\foreach \x in {1,3} {
\foreach \y in {-1,1} {
\draw (8,0) to ++ (90*\x:1) node[fill=black]{} to ++ (90*\x+45*\y:1) node[square,draw=black, fill=black, minimum width=10pt]{};
}
};

\foreach \x in {2} {
\foreach \y in {-1,1} {
\foreach \z in {-1,0,1} {
\foreach \w in {-1,1} {
\draw (8,0) to ++ (90*\x:1) node[fill=black]{} to ++ (90*\x+45*\y:1) node[fill=black]{} to ++ (90*\x+45*\y+30*\z:0.7) node[fill=black]{} to ++ (90*\x+45*\y+30*\z+15*\w:0.7) node[square,draw=black, fill=black, minimum width=10pt]{};
}
}
}
};
                        
\foreach \x in {0} {
\foreach \y in {-1,1} {
\foreach \z in {-1,0,1} {
\draw (8,0) to ++ (90*\x:1) node[square,draw=black, fill=black, minimum width=10pt]{} to ++ (90*\x+45*\y:1) node[draw=black, fill=white, minimum width=8pt]{};
}
}
};

\draw{
(8,0) node[fill=black]{}
(7.3,0.3) node{$v'$}
(9,0.3) node{$x$}
};
     
\end{tikzpicture}
\end{center}
\end{figure}

\begin{proof}
The independence of $\Theta^{r,i}_0(G)$ up to permutational automorphisms of $F_0(\omega)$ follows by the fact that $G$ acts transitively on vertices of a given type and that for each vertex $v$, $G(v)$ acts transitively on $S(v,r)$.

Note that for fixed $(v,x)$, the set $B^v_i$ is invariant under the action of $G(v,x)$, which acts as $\Lambda^r_0(G)$ on $S_v(x,1)$.  As a result, we see that $\Theta^{r,i}_0(G) \unlhd \Lambda^r_0(G)$ for all $i \ge 0$.  Since $\Lambda^1_0(G) = F_0(\omega)$, we deduce that $\Theta^{1,i}_0(G) \unlhd F_0(\omega)$, proving (i).

Let $\overline{B}^v_i$ be the smallest subtree containing $B^v_i \cup \{x\}$; we see that in fact $K^v_i$ fixes $\overline{B}^v_i$ pointwise.  For fixed $i$, the fact that $\Theta^{r+1,i}_0(G) \unlhd \Theta^{r,i}_0(G)$ follows by observing that if we take a neighbour $w$ of $v$ such that $d(w,x)=d(v,x)+1$, then $w$ is fixed by $K^v_i$, so $K^v_i$ stabilizes $B^w_i$ setwise, and moreover $\overline{B}^w_i$ also contains $B^v_i$.  The fact that $\Theta^{r,i}_0(G)$ is subnormal in $F_0(\omega)$ now follows using part (i) and induction on $r$, completing the proof of (ii).

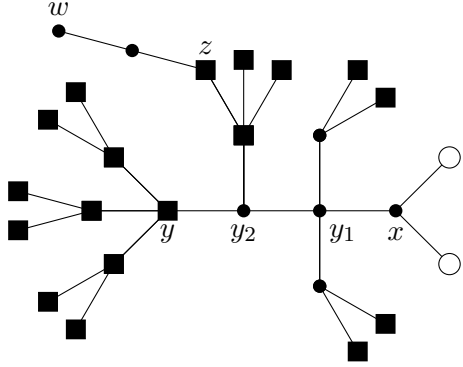
\begin{figure}
\caption{The set $\bigcup^k_{j=1}X_j$ (black squares) and a vertex $w \in B^y_{i+1}$ for $k=3,i=1$}
\label{fig:theta_extend}
\begin{center}
\begin{tikzpicture}[scale=1, every loop/.style={}, square/.style={regular polygon,regular polygon sides=4}]

\tikzstyle{every node}=[circle,
                        inner sep=0pt, minimum width=5pt]
                        
\draw (0,0) node[square,draw=black, fill=black, minimum width=10pt]{} to (1,0) node[fill=black]{} to (2,0) node[fill=black]{} to (3,0) node[fill=black]{};

\foreach \x in {-1,1} {
\draw (3,0) to ++ (45*\x:1) node[draw=black, fill=white, minimum width=8pt]{};
};

\foreach \x in {-1,1} {
\foreach \y in {1,2} {
\draw (2,0) to ++ (90*\x:1) node[fill=black]{} to ++ (30*\x*\y:1) node[square,draw=black, fill=black, minimum width=10pt]{};
}
};

\foreach \y in {-1,0,1} {
\draw (1,0) to ++ (90:1) node[square,draw=black, fill=black, minimum width=10pt]{} to ++ (90+30*\y:1) node[square,draw=black, fill=black, minimum width=10pt]{};
};
                     
\foreach \x in {-1,0,1} {
\foreach \y in {-1,1} {
\draw (0,0) to ++ (180+45*\x:1) node[square,draw=black, fill=black, minimum width=10pt]{} to ++ (180+45*\x+15*\y:1) node[square,draw=black, fill=black, minimum width=10pt]{};
}
};

\draw (1,0) to ++ (90:1) to ++ (120:1) to ++ (165:1) node[fill=black]{} to ++ (165:1) node[fill=black]{} ++ (90:0.3) node{$w$};
\draw (1,0) to ++ (90:1) ++ (120:1) ++ (90:0.3) node{$z$};

\draw{
(3,-0.3) node{$x$}
(0,-0.3) node{$y$}
(1,-0.3) node{$y_2$}
(2.3,-0.3) node{$y_1$}
};     
\end{tikzpicture}
\end{center}
\end{figure}

For (iii), we proceed by induction on $i$.  Assume $\Theta^{r,i}_0(G) \ge O_0$ for all $r \ge 1$ and some $i \ge 0$, and let $k \ge 1$.  Fix a vertex $x$ of type $0$ and a vertex $y$ such that $d(x,y)=k$.  Suppose that the vertices on the path from $x$ to $y$ are, in order, $x=y_0,y_1,\dots,y_k=y$.  Notice that for all $v \in VT \setminus \{x\}$, the group $K^v_{i}$ has local action at $x$ containing $\Theta^{r,i}_0(G)$ for $r = d(v,x)$, and hence containing $O_0$.

For $1 \le j \le k$, let $X_j = S_x(y_j,i+1)$; an example of a set of the form $\bigcup^k_{j=1}X_j$ is given in Figure~\ref{fig:theta_extend}.  Then given $z \in X_j$ we have $d(z,x) = i+j+1$ and $K^z_{i}$ has local action containing $O_0$ at $x$.  Given $z,z' \in X_j$ we have $d(z,z') \le 2i+2 \le (i+j+1)+i$, so $K^z_{i}$ fixes $z'$ and \textit{vice versa}.  In particular, for each $z \in X_j$, then $K^z_{i}$ is normal in the group $M_j = \grp{K^z_i \mid z \in X_j}$.  Let $L_j = \bigcap_{z \in X_j}K^z_i$.  Then by Lemma~\ref{lem:minimally_covered}, we see that $L_j$ has local action at $x$ containing $O_0$.  We see moreover that the subgroups $L_j$ are all normal in $G(x,y)$, so by Lemma~\ref{lem:minimally_covered} again, the group $L = \bigcap L_j$ has local action containing $O_0$ at $x$.  We now aim to show that $L$ fixes $B^y_{i+1}$ pointwise: note that $B^y_{i+1}$ consists of all vertices $w$ such that $d(w,y)=k+i+1$ and the path from $w$ to $y$ does not pass through $x$.  Given $w \in B^y_{i+1}$, then the path from $w$ to $y$ meets the path from $x$ to $y$ at some vertex $y_j$ for $1 \le j \le k$ ($j=2$ in the example shown in Figure~\ref{fig:theta_extend}),
so that
\[
k+i+1 = d(w,y_j)+d(y_j,y)=d(w,y_j)+k-j; \text{ hence } d(w,y_j) = i+j+1.
\]
Taking a vertex $z$ on the path from $y_j$ to $w$ at distance $i+1$ from $y_j$, we see that $z \in X_j$ and $d(w,z)=j \le d(z,x)+i$.  Thus $w \in \overline{B}^z_{i}$, so $w$ is fixed by $L$.  Thus $L$ fixes $B^y_{i+1}$ pointwise, from which we deduce that $\Theta^{k,i+1}_0(G) \ge O_0$.  Since $k \ge 1$ was arbitrary, in fact $\Theta^{k,i+1}_0(G) \ge O_0$ for all $k \ge 1$ and the inductive step is complete.
\end{proof}

We define $\Theta^{r,i}_1(G)$ similarly to $\Theta^{r,i}_0(G)$.  Given that $\Theta^{r,i}_t(G)$ is weakly decreasing as $r$ increases (for fixed $i$) or $i$ increases (for fixed $r$), together with the fact that all the groups here are subgroups of a fixed finite group $F_t(\omega)$, we find that $\Theta^{r,i}_t(G)$ is eventually constant as $r \rightarrow \infty$ or $i \rightarrow \infty$.  We define limiting subgroups $\Theta^{\infty,i}_t(G)$, $\Theta^{r,\infty}_t(G)$ and $\Theta^{\infty,\infty}_t(G)$\index{Theta@$\Theta_t, \; \Theta^{r,i}_t$}, which are again defined up to permutational automorphisms of $F_t(\omega)$.

Where the group $G \in \mc{H}_T$ is obvious from context, from now on we will simply write $\Theta^{r,i}_t$\index{Theta@$\Theta_t, \; \Theta^{r,i}_t$} for $\Theta^{r,i}_t(G)$, and similarly for $\Delta^k_t$\index{D@$\Delta^k_t(G)$} and $\Lambda^k_t$\index{L@$\Lambda_t(G), \; \Lambda^k_t(G)$}.

Note that in Proposition~\ref{prop:Hk_subnormal}, for any fixed vertex $v \in VT \setminus \{x\}$ then $K = \bigcap_{i \ge 0} K^v_i$ is the pointwise stabilizer of the half-tree $T_{(x,y)}$, where $y$ is the first vertex after $x$ in the path from $x$ to $v$, or equivalently $K$ is the rigid stabilizer of the half-tree $T_{(y,x)}$.  Thus $\Theta^{r,\infty}_0$ is the local action $\Theta_0$ of the rigid stabilizer of a half-tree at the root of the half-tree; since $\Theta^{r,\infty}_0$ does not depend on $r$, we also have $\Theta^{\infty,\infty}_0 = \Theta_0$.  Similar observations apply to $\Theta_1$.  Proposition~\ref{prop:Hk_subnormal}(iii) can then be restated more succinctly as follows:

\begin{cor}\label{cor:Hk_subnormal}
Let $G \in \mc{H}_{F_0,F_1}$.  If $\Theta^{\infty,0}_0 \ge O^\infty(F_0(\omega))$, then $\Theta_0 \ge O^\infty(F_0(\omega))$.
\end{cor}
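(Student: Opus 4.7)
The plan is to deduce the corollary from Proposition~\ref{prop:Hk_subnormal}(iii) by a simple passage to limits, so the proof should be very short. Write $O_0 = O^\infty(F_0(\omega))$. The key observation is that the families $(\Theta^{r,i}_0)_{r \ge 1}$ for fixed $i$, and $(\Theta^{r,i}_0)_{i \ge 0}$ for fixed $r$, are each weakly decreasing chains of subgroups of the finite group $F_0(\omega)$: this is immediate from Proposition~\ref{prop:Hk_subnormal}(ii) in the first case, and in the second case it is immediate from the definition of $K^v_i$ as an intersection of stabilizers indexed by the increasing sets $B^v_i$. Hence both chains stabilize after finitely many steps, and the limits $\Theta^{\infty,i}_0$ and $\Theta^{r,\infty}_0$ are themselves members of the respective chains.

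First I would read off from this that the hypothesis $\Theta^{\infty,0}_0 \ge O_0$ automatically propagates upwards: since $\Theta^{r,0}_0 \supseteq \Theta^{\infty,0}_0$ for every $r \ge 1$, we get $\Theta^{r,0}_0 \ge O_0$ for all $r \ge 1$. This is exactly the hypothesis needed to invoke Proposition~\ref{prop:Hk_subnormal}(iii), which then yields $\Theta^{r,i}_0 \ge O_0$ for all $r \ge 1$ and all $i \ge 0$.

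Finally I would pass to the limit in $i$ with $r$ fixed (say $r = 1$). Because the chain $(\Theta^{r,i}_0)_{i \ge 0}$ stabilizes, its limit $\Theta^{r,\infty}_0$ is one of its terms and so still contains $O_0$. Since $\Theta^{r,\infty}_0 = \Theta^{\infty,\infty}_0 = \Theta_0$ by the remarks preceding the corollary (the local action of the rigid stabilizer of a half-tree at its root), this gives $\Theta_0 \ge O_0$, as required. There is no real obstacle here: the only thing one must be careful about is not to confuse the directions of the two indices, and to note that decreasing chains in a finite group stabilize so that ``limit'' and ``eventual value'' coincide.
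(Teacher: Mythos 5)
Your argument is correct and is exactly the paper's intended proof: the corollary is presented there as a direct restatement of Proposition~\ref{prop:Hk_subnormal}(iii), using that both index families are weakly decreasing chains in the finite group $F_0(\omega)$, so the limiting groups are eventual values and the hypothesis on $\Theta^{\infty,0}_0$ is equivalent to $\Theta^{r,0}_0 \ge O_0$ for all $r$. One small imprecision: the sets $B^v_i$ are spheres at increasing radius, not nested sets, so monotonicity in $i$ is not literally ``an intersection over increasing index sets''---it follows instead from the observation (made in the proof of Proposition~\ref{prop:Hk_subnormal}(ii)) that $K^v_{i+1}$ fixes the convex hull $\overline{B}^v_{i+1}$ pointwise, which contains $B^v_i$.
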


We recall that $\Delta^k_t(G)$\index{D@$\Delta^k_t(G)$} is the action of $G(x) \cap G_1(y)$ on $S_y(x,1)$, where $x$ and $y$ are vertices such that $d(x,y) = k  \ge 1$ and $x$ has type $t$.  Note that $\Delta^k_t(G) \unlhd \Lambda^k_t(G)$ and $\Delta^k_t(G) \le \Lambda^{k+1}_t(G)$.

The next lemma will be used in a special case in this subsection and then in a more significant way in Section~\ref{sec:line}, where we investigate the structure of the groups $\Lambda^k_t$ in more detail.  A \defbold{subdirect product}\index{subdirect product} of two groups $A$ and $B$ is a subgroup $C$ of $A \times B$ such that 
\[
(A \times \triv)C = C(\triv \times B) = A \times B.
\]

\begin{lem}\label{lem:Goursat}
Let $G \in \mc{H}_T$ and let $x,y \in VT$ where $x$ is of type $t$ and $d(x,y) = k > 0$.  Let $N_{y,x}$ and $N_{x,y}$ be the kernels of the action of $G(x,y)$ on $S_y(x,1)$ and $S_x(y,1)$ respectively and let $N = N_{x,y}N_{y,x}$.  If $k$ is odd, we regard the quotient $G(x,y)/N$ as a copy of a group $A_k = A_{k,0} = A_{k,1}$ that depends only on $G$ and $k$; if $k$ is even, we regard the quotient $G(x,y)/N$ as a copy of a group $A_{k,t}$ that depends on $(G,k,t)$.

Then we have surjective homomorphisms $\psi_{k,t}: \Lambda^k_t \rightarrow A_{k,t}$ with kernel $\Delta^k_t$, such that $G(x,y)$ acts on $S_y(x,1) \times S_x(y,1)$ as a subdirect product
\[
M_{x,y} = \{(a,b) \in \Lambda^k_t \times \Lambda^k_{t+k} \mid \psi_{k,t}(a) = \psi_{k,t+k}(b)\}.
\] 
\end{lem}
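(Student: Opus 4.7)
My plan is to apply Goursat's lemma to the diagonal action of $G(x,y)$ on $S_y(x,1) \times S_x(y,1)$. By type-distance-transitivity (Corollary~\ref{cor:distance_transitivity}), the two coordinate actions are well-defined up to permutational isomorphism as $\Lambda^k_t$ and $\Lambda^k_{t+k}$, with kernels $N_{y,x}$ and $N_{x,y}$, respectively; since each coordinate action is surjective onto its factor, the combined action embeds $G(x,y)/(N_{x,y}\cap N_{y,x})$ into $\Lambda^k_t \times \Lambda^k_{t+k}$ as a subdirect product $M$.

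Next, I invoke Goursat's lemma: any subdirect product in $A \times B$ arises as the pullback of $A$ and $B$ along a pair of surjections onto a common quotient. Here the common quotient is $Q = G(x,y)/N$ with $N = N_{x,y}N_{y,x}$, and the two maps $\psi_{k,t}: \Lambda^k_t \twoheadrightarrow Q$ and $\psi_{k,t+k}: \Lambda^k_{t+k} \twoheadrightarrow Q$ are induced by the inclusions $N_{y,x} \subseteq N$ and $N_{x,y} \subseteq N$. By the second isomorphism theorem, $\ker \psi_{k,t}$ equals the image of $N_{x,y}$ under the quotient map $G(x,y) \to \Lambda^k_t$.

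The key identification is that $N_{x,y} = G(x,y) \cap G_1(y)$: any element of $G(x,y)$ already fixes $y$ and the unique vertex $y_{k-1}$ adjacent to $y$ on the path to $x$, so $S(y,1) = S_x(y,1) \cup \{y_{k-1}\}$ shows that fixing $S_x(y,1)$ pointwise is equivalent to fixing all of $S(y,1)$ pointwise. Consequently the image of $N_{x,y}$ in $\Lambda^k_t$ is exactly $\Delta^k_t$ by the definition of the latter, giving $\ker \psi_{k,t} = \Delta^k_t$; the symmetric argument yields $\ker \psi_{k,t+k} = \Delta^k_{t+k}$. The claimed well-definedness of $A_{k,t}$ then reduces to type-distance-transitivity of $G$: the conjugacy class of the pair $(x,y)$ in $G$ depends only on $(k,t)$, while reversing the pair (which leaves $G(x,y)$ and $N$ unchanged) shows that for odd $k$ the construction is symmetric in the two types, yielding $A_{k,0} = A_{k,1}$. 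The only nontrivial content is the identification $N_{x,y} = G(x,y) \cap G_1(y)$; everything else is formal from Goursat's lemma.
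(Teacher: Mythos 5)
Your proposal is correct and follows essentially the same route as the paper: both identify the action on $S_y(x,1)\times S_x(y,1)$ as a subdirect product via Goursat's lemma and then compute the kernels of the induced maps to the common quotient $G(x,y)/N$ by observing that the image of $N_{x,y}$ in $\Lambda^k_t$ is exactly $\Delta^k_t$. Your explicit verification that $N_{x,y}=G(x,y)\cap G_1(y)$ (because every element of $G(x,y)$ fixes the geodesic from $x$ to $y$, hence the neighbour of $y$ towards $x$) is a detail the paper leaves implicit in the phrase ``from the definition of $\Delta^k_t$'', and it is worth having spelled out.
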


\begin{proof}
The fact that up to isomorphism, the groups $A_{k,t}$ are independent of the choice of vertices $(x,y)$ follows from the fact that $G$ is type-distance-transitive.

The action of $G(x,y)$ on $S_y(x,1) \times S_x(y,1)$ yields a homomorphism $\pi$ from $G(x,y)$ to $\Lambda^k_t \times \Lambda^k_{t+k}$ with kernel $N_{x,y} \cap N_{y,x}$; write $M_{x,y} = \pi(G(x,y))$.  The description of $M_{x,y}$ as a subdirect product of $\Lambda^k_t$ and $\Lambda^k_{t+k}$ is an instance of Goursat's lemma; we can also see the argument as follows.    If we take the action of $G(x,y)$ on $S_y(x,1)$, we have a surjective homomorphism $\pi_{x,y}$ from $G(x,y)$ to $\Lambda^k_t$ with kernel $N_{y,x}$.  From the definition of $\Delta^k_t$, we see that the image of $N_{x,y}$ in $\Lambda^k_t$, and hence also of $N$, is exactly $\Delta^k_t$.  The same description applies to $\Lambda^k_{t+k}$ with the roles of $x$ and $y$ reversed.  Hence
\[
\frac{\Lambda^k_t}{\Delta^k_t} \cong \frac{G(x,y)}{N} \cong A_{k,t} \cong \frac{\Lambda^k_{t+k}}{\Delta^k_{t+k}}. \qedhere
\]
\end{proof}

At this point, we have enough of a general setup to construct examples; these are in their own section (Section~\ref{sec:examples}) so as not to interrupt the flow of progress towards the main theorems, but the reader may find it informative to go back and forth between the present section and the examples section as motivation for some of the steps in the proof.

We now proceed by a series of lemmas on the groups $\Theta^{k,i}_t$, culminating in a set of restrictions on the situation where one has $\Theta_t \ngeq O_t$.

\begin{lem}\label{lem:action_to_rigid_action}
Let $G \in \mc{H}_{F_0,F_1}$, with $\kappa(F_0(\omega)) = 1$.  Write $O_0 := O^\infty(F_0(\omega))$ and $R_0 = O_\infty(O_0)$, and suppose that $\Theta^{k,0}_0 \ge O_0$ for some $k$.  We also make one of the following assumptions:
\begin{enumerate}[(a)]
\item We have $k =1$; $F_1$ is not isomorphic to any subquotient of $\Out(O_0/R_0)$; and $O_\infty(F_0(\omega))$ is intransitive.
\item We have $k \ge 2$ and $\Theta^{k-1,1}_0 \ge O_0$.
\end{enumerate}
Then $\Theta^{k,1}_0 \ge O_0$.
\end{lem}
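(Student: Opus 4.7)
The plan is to treat the two alternative hypotheses separately, with case (b) viewed as an induction on $k$ starting from case (a).

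For case (a) ($k=1$), I would proceed as follows. Let $v$ be a neighbor of $x$ and set $L = G_1(v) = N_{x,v}$. For each $w \in S(v,1)$, write $\pi_w: L \to F_0(\omega)$ for the map induced by the local action of $L$ at $w$ on $S_v(w,1)$. The assumption $\Theta^{1,0}_0 \ge O_0$ translates to $\pi_w(L) = \Delta^1_0 \supseteq O_0$ for every $w$. I would then apply Lemma~\ref{lem:subdirect} with $H = L$ inside $\prod_{w \in S(v,1)} F_0(\omega)$ and $A = G(v)$, whose induced permutation action on $S(v,1)$ is the $2$-transitive group $F_1$. The resulting $F_1$-invariant equivalence relation $\sim$ on $S(v,1)$ is therefore either trivial or universal. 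In the trivial case, the perfect normal subgroup $M_x \unlhd L$ supplied by Lemma~\ref{lem:subdirect} satisfies $\pi_x(M_x) \supseteq O_0$ while $\pi_{w'}(M_x) = \{1\}$ for every $w' \in S_x(v,1)$, so $M_x \le K^v_1$ and the local action of $K^v_1$ at $x$ contains $O_0$, as required.

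The crux of case (a) is to rule out the universal case, where one has a single perfect normal subgroup $M \unlhd L$ with $\kappa(M) = 1$ and $\pi_w(M) = O_0$ for every $w$. Writing $S = O_0/R_0$, the quotient $M/M^*$ is isomorphic to $S$ (with $M^*$ the unique maximal normal subgroup of $M$ with nonabelian simple quotient), and the induced maps $\bar\pi_w: M/M^* \to S$ are automorphisms $\alpha_w \in \Aut(S)$. I would trace the $G(v)$-conjugation action on $M/M^*$ together with the $F_1$-equivariance of the assignment $w \mapsto \alpha_w \Inn(S)$, and use the intransitivity of $R_0$ on $\Omega_0 \setminus \{\omega\}$ to constrain how $L/M$ embeds in $\prod F_0(\omega)/O_0$; I expect this to exhibit a subquotient of $F_1$ inside $\Out(S)$, contradicting the hypothesis $F_1 \not\lesssim \Out(O_0/R_0)$. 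The universal case therefore cannot occur.

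For case (b) ($k \ge 2$), let $w_1$ be the neighbor of $v$ on the path to $x$, so $d(x,w_1) = k-1$. I would first observe, via thick-tree propagation from the vertices that $K^v_1$ is required to fix, that $K^v_1$ pointwise fixes the ball $B(v,k)$, and then check by a short distance argument that $B^{w_1}_1 \subseteq B(v,k) \cup B^v_1$; these together yield $K^v_1 \le K^{w_1}_1$. The inductive hypothesis $\Theta^{k-1,1}_0 \ge O_0$ gives that the local action of $K^{w_1}_1$ at $x$ contains $O_0$. To extend the conclusion to the smaller group $K^v_1$, I would follow the strategy of Proposition~\ref{prop:Hk_subnormal}(iii): choose a finite collection of vertices $z$ at distance $k-1$ from $x$, lying in suitably chosen positions descending from the path vertices $x_1, \dots, x_{k-1}$, so that $\bigcap_z K^z_1 \subseteq K^v_1$; each $K^z_1$ has local action at $x$ containing $O_0$ by the inductive hypothesis, and an application of Lemma~\ref{lem:minimally_covered} to the corresponding minimal normal subgroups in the local action at $x$ then yields that the intersection retains a local action at $x$ containing $O_0$.

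The hardest step is ruling out the universal equivalence relation in case (a): extracting the contradictory subquotient of $F_1$ inside $\Out(S)$ from the twisted-diagonal structure of $M$ will require carefully tracking how the $G(v)$-conjugation interacts with the automorphism twists $\alpha_w$, and this is where both the intransitivity of $R_0$ and the hypothesis on $F_1$ become essential. Case (b) should be more routine once the inclusion $K^v_1 \le K^{w_1}_1$ is established, although identifying the correct family of auxiliary vertices at distance $k-1$ from $x$ so that Lemma~\ref{lem:minimally_covered} applies cleanly will require some care.
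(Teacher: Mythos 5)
Your setup for case (a) and your handling of the trivial equivalence relation there match the paper, but both of the steps you flag as delicate contain genuine gaps. In the universal subcase of (a), the contradiction does not come directly from exhibiting $F_1$ inside $\Out(S)$: writing $A = G(v)/G_2(v)$, $H = G_1(v)/G_2(v)$ and $C = \CC_A(M/O_\infty(M))$, one only gets that $A/HC$ embeds in a subquotient of $\Out(S)$, and $A/HC$ is a priori a \emph{proper} quotient of $F_1 \cong A/H$, so no contradiction with the hypothesis on $\Out(O_0/R_0)$ need arise. That hypothesis is used precisely to exclude the case $C \le H$; when $C \nleq H$ a separate argument is required: $C$ is then a nontrivial normal subgroup of the primitive group $F_1$ acting on $S(v,1)$, hence transitive there, hence (since every nonuniversal $G(v)$-invariant equivalence relation on $S(v,2)$ refines $\sim_{v,2}$) transitive on $S(v,2)$, so for $w \in S(v,1)$ the stabilizer of $w$ in $C$ acts on $S_v(w,1)$ as a transitive normal subgroup of $F_0(\omega)$ centralizing $O_0/R_0$; since $\kappa(F_0(\omega))=1$ such a subgroup is soluble, contradicting the intransitivity of $O_\infty(F_0(\omega))$. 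Note also that the hypothesis you need here is intransitivity of $O_\infty(F_0(\omega))$, not of $R_0=O_\infty(O_0)$ as you write; the latter is strictly weaker and does not yield the contradiction.

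Case (b) as you describe it fails on distance grounds. The set $B^v_1$ contains $S_{w_1}(v,k+1)$, whose vertices lie at distance $2k+1$ from $x$, whereas for any $z$ with $d(z,x)=k-1$ the subtree $\overline{B}^z_1$ fixed by $K^z_1$ is contained in $B(z,k) \subseteq B(x,2k-1)$. Hence no family of such $z$ can give $\bigcap_z K^z_1 \subseteq K^v_1$: already in $\mathbf{U}(F_0,F_1)$ there are elements fixing every $\overline{B}^z_1$ pointwise while moving points of $S_{w_1}(v,k+1)$. The paper instead stays inside the subdirect-product framework of your case (a): it applies Lemma~\ref{lem:subdirect} to $H = G_k(c)/G_{k+1}(c)$, where $c$ is the vertex at distance $k$ from the type-$0$ sphere, and shows the resulting equivalence relation on $S(c,k)$ is trivial by noting that any two related points would lie in a common branch $S_c(z,k-1)$ for a neighbour $z$ of $c$; the hypothesis $\Theta^{k-1,1}_0 \ge O_0$ applied at $z$ shows that the image of $G_k(c)$ in $\prod_{y \in S_c(z,k-1)}F_0(\omega)$ contains the full product of the groups $O_0$ (via normality of $G_k(c)$ in $G_{k-1}(z)$ and Lemma~\ref{lem:minimally_covered:simple}), which forces the subgroups $M_y$ to be pairwise distinct within the branch and so kills the relation. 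Some group-theoretic argument of this kind, rather than a covering of $B^v_1$ by fixed trees, is unavoidable here.
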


\begin{proof}
Let $x$ be a vertex of type $k$ and consider the action of $G_k(x)$ on $S(x,k+1)$.  By hypothesis, $\Theta^{k,0}_0 \ge O_0$, which ensures that the image of $G_k(x)$ in the action on $S_x(y,1)$ contains $O_0$ for each $y \in S(x,k)$.  We can thus apply Lemma~\ref{lem:subdirect} where $A = G(x)/G_{k+1}(x)$ embedded in $\prod_{y \in S(x,k)}B_y \rtimes \Sym(S(x,k))$, each $B_y$ is a copy of $F_0(\omega)$, and $H = G_k(x)/G_{k+1}(x)$.  We obtain a $G(x)$-invariant equivalence relation $\sim$ on $S(x,k)$, and for each $y \in S(x,k)$ a perfect normal subgroup $M_y$ of $H$ with $\kappa(M_y)=1$, such that $M_y$ acts as $O_0$ on $S_x(y,1)$, where $M_{y} = M_{y'}$ if and only if $y \sim y'$, and where $M_y$ acts trivially on $S_x(y',1)$ if $y' \not\sim y$.  To show that $\Theta^{k,1}_0 \ge O_0$, it is now enough to show that $\sim$ is the trivial equivalence relation.  Note that for all $k' \ge 1$, since $G(x)$ has $2$-by-block-transitive action on $S(x,k')$, every nonuniversal $G(x)$-invariant equivalence relation $\sim'$ on $S(x,k')$ must satisfy $\sim' \subseteq \sim_{x,k'}$.  In particular, if $\sim$ is nontrivial then there exist $y',y'' \in S(x,k)$ such that $y' \sim y''$ and $0 < d(y',y'') < 2k$.

In case (a), we have $k=1$.  Since $G(x)$ acts primitively on $S(x,1)$, the only possibilities are that $\sim$ is trivial or universal, so we may assume for a contradiction that $\sim$ is the universal relation on $S(x,1)$.  Then $M := M_y$ is constant as a function of $y \in S(x,1)$, and hence $M$ is a subgroup of $H$ normalized by $A$.  Let $C$ be the centralizer of $M/O_\infty(M)$ in $A$.  Then $A/MC$ is isomorphic to a subgroup of $\Out(O_0/R_0)$, so $A/HC$ is isomorphic to a subquotient of $\Out(O_0/R_0)$; thus by our hypothesis, $C \nleq H$.  Since $G(x)/G_1(x) \cong F_1$ is $2$-transitive, it follows that $C$ acts transitively on $S(x,1)$.  Now the $C$-orbit relation on $S(x,2)$ is $G(x)$-invariant and not contained in $\sim_{x,2}$, so it is universal, that is, $C$ is transitive on $S(x,2)$.  In particular, given $y \in S(x,1)$ then $C_y := C \cap G(x,y)/G_2(x)$ acts transitively on $S_x(y,1)$.  However, we see that $C_y$ acts as a normal subgroup of $F_0(\omega)$ that centralizes $O_0/R_0$.  Since $\kappa(F_0(\omega))=1$, we deduce that $C_y$ acts as a soluble normal subgroup of $F_0(\omega)$.  Thus $F_0(\omega)$ has a soluble transitive normal subgroup, contradicting our hypothesis.  This contradiction completes the proof of case (a).

Now suppose we are in case (b), so $k \ge 2$.  Suppose for a contradiction that $\sim$ is not the trivial equivalence relation.  There are then $y',y'' \in S(x,k)$ such that $y' \sim y''$ and $0 < d(y',y'') < 2k$, so we have $y',y'' \in S_x(z,k-1)$ for some neighbour $z$ of $x$.  We have an action of $G_{k-1}(z)$ on $S_x(z,k)$.  Let $y \in S_x(z,k-1)$.  Since $\Theta^{k-1,1}_0 \ge O_0$, we see that there is $N_y \unlhd G_{k-1}(z)$ acting as $O_0$ on $S_z(y,1) = S_x(y,1)$ and fixing pointwise the rest of $S(z,k)$.  Thus $G_{k-1}(z)$ acts on $S_x(z,k)$ as a group containing the soluble residual $O = \prod_{y \in S_x(z,k-1)}O_0$ of $\prod_{y \in S_x(z,k-1)}F_0(\omega)$.  Now $G_k(x)$ is normal in $G_{k-1}(z)$, so if we take the image $H'$ of $G_k(x)$ in $\prod_{y \in S_x(z,k-1)}F_0(\omega)$, then $H' \cap O$ is normal in $O$; we then see that in fact $H' \ge O$ by Lemma~\ref{lem:minimally_covered:simple}.  Recalling that $\kappa(M_y)=1$, it follows that for each $y \in S_x(z,k-1)$, the image of $M_y$ in $O$ is just the direct factor indexed by $y$.  But then $M_{y'} \neq M_{y''}$, so $y' \not\sim y''$, a contradiction.  This contradiction completes the proof of case (b).
\end{proof}

\begin{lem}\label{lem:type_swap}
Let $G \in \mc{H}_{F_0,F_1}$.  For each $t \in \{0,1\}$ write $O_t := O^\infty(F_t(\omega))$.  Let $t \in \{0,1\}$ and $k \ge 1$, and assume that $\Theta^{k,1}_{1-t} \ge O_{1-t}$ and, if $k \ge 2$, that $\Theta^{k-1,1}_t \ge O_t$.  Then $\Theta^{k,0}_t \ge O_t$.
\end{lem}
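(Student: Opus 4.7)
My plan is to first identify $K^v_0$ with $G_k(v)$, the pointwise stabiliser of the closed ball $B(v,k)$, via a standard ball-rigidity argument: $K^v_0$ fixes $\{x\}\cup B^v_0=S(v,k)$ pointwise, and since the tree is thick every interior vertex of $B(v,k)$ has at least two already-fixed neighbours in the next sphere outward, so fixing propagates inward to the entire ball. Applying the same observation to the $K^{v^{*}}_1$ groups in the hypotheses places each of them inside $G_{k-1}(v^{*})$ (for the $\Theta^{k-1,1}_t$ hypothesis) or $G_k(v^{*})$ (for the $\Theta^{k,1}_{1-t}$ hypothesis), with only the single target fibre of the relevant sphere freely acted on. The aim reduces to showing that the projection $\pi\colon G_k(v)\to F_t(\omega)$ onto $S_v(x,1)$ has image containing $O_t$.

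For the inductive case $k\ge 2$ I would set $v'=x_{k-1}$ and invoke $\Theta^{k-1,1}_t\ge O_t$ at $(x,v')$ to produce a subgroup $H\subseteq G_{k-1}(v')$ fixing $B^{v'}_1\subseteq S(v',k)$ pointwise and with $\pi(H)\supseteq O_t$. Ball rigidity then shows the only part of $B(v,k)$ that $H$ may fail to fix is the outer shell $S_{v'}(v,k)=\bigsqcup_{y\in S_{v'}(v,k-1)}S_{v'}(y,1)$. A parity count shows that each $y\in S_{v'}(v,k-1)$ has type $1-t$ and is at distance $k$ from $v'$, so I would apply $\Theta^{k,1}_{1-t}\ge O_{1-t}$ at $(y,v')$ to produce $N_y\subseteq H$ that fixes all of $B(v',k)$ (hence is trivial under $\pi$), acts trivially on every other fibre $S_{v'}(y',1)$, and projects onto $S_{v'}(y,1)$ with image $\supseteq O_{1-t}$.

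The Goursat step then runs as follows. Let $\rho\colon H\to\Sym(S_{v'}(v,k))$; the $\{N_y\}_y$ collectively show $\rho(\ker\pi\cap H)\supseteq\prod_{y}O_{1-t}$. Since $\rho(H)$ embeds in $\prod_{y}F_{1-t}(\omega)$ and contains $\prod_y O_{1-t}$, the quotient $\rho(H)/\rho(\ker\pi\cap H)$ is a subquotient of $\prod_y F_{1-t}(\omega)/O_{1-t}$, and therefore soluble. Goursat's lemma applied to the subdirect product $(\pi,\rho)(H)\subseteq\pi(H)\times\rho(H)$ identifies $\pi(H)/\pi(\ker\rho\cap H)$ with $\rho(H)/\rho(\ker\pi\cap H)$, so the former is also soluble and $O^{\infty}(\pi(H))\subseteq\pi(\ker\rho\cap H)$. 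Since $O_t$ is perfect and contained in $\pi(H)$ it lies inside $O^{\infty}(\pi(H))$, so every $\sigma\in O_t$ is realised by some $h\in\ker\rho\cap H$; such $h$ fixes all of $B(v,k)$ and hence lies in $G_k(v)=K^v_0$, yielding $\Theta^{k,0}_t\supseteq O_t$.

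For the base case $k=1$ I would run the same argument with $H=G(x,v)$: here $\pi(H)=F_t(\omega)\supseteq O_t$ and $\rho(H)=F_{1-t}(\omega)$, and the subdirect structure of $(\pi,\rho)(H)$ is supplied by Lemma~\ref{lem:Goursat}; the role of the $N_y$ is played by the single subgroup $K^x_1$ produced by $\Theta^{1,1}_{1-t}\ge O_{1-t}$ applied at $(v,x)$, which sits inside $G_1(x)\subseteq\ker\pi$ and projects onto $S_x(v,1)$ with image $\supseteq O_{1-t}$. The same Goursat argument produces the required elements in $\ker\rho\cap H=G_1(v)=K^v_0$. The main obstacle I expect is the subdirect-product bookkeeping in the inductive step: correctly identifying $\rho(H)$ as a subgroup of $\prod_y F_{1-t}(\omega)$ that already contains the full direct product of the $O_{1-t}$-subgroups, so that the soluble-quotient property transfers cleanly from the $\rho$-side to the $\pi$-side.
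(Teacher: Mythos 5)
Your proof is correct and follows essentially the same route as the paper's: the same configuration of vertices, the same use of $\Theta^{k-1,1}_t \ge O_t$ to produce a group whose local action at $x$ contains $O_t$, and the same use of $\Theta^{k,1}_{1-t} \ge O_{1-t}$ to show that the kernel of that local action already covers the soluble residual of the outer-shell action. The only difference is in the packaging of the last step: you invoke Goursat's lemma on the subdirect product $(\pi,\rho)(H)$, whereas the paper corrects elements of the soluble residual of a normal closure by elements of $G_k(x)$ lying in the kernel of the outer-shell action --- the same perfect-versus-soluble transfer.
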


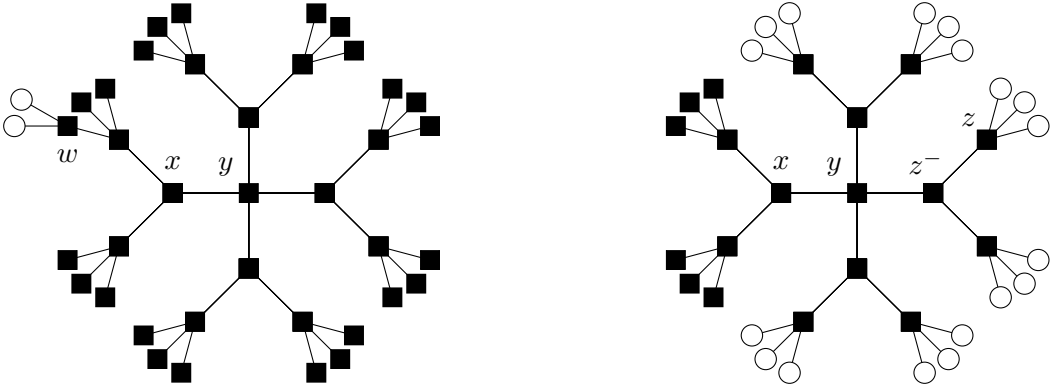
\begin{figure}
\caption{The action of $G_3(y)$ on $S_y(w,1)$ (left) and of $G_2(x,y)$ on $S_x(y,3)$ (right)}
\label{fig:typeswap}
\begin{center}
\begin{tikzpicture}[scale=1, every loop/.style={}, square/.style={regular polygon,regular polygon sides=4}]

\tikzstyle{every node}=[circle,
                        inner sep=0pt, minimum width=5pt]
                        
\foreach \x in {0,1,2,3} {
\foreach \y in {-1,1} {
\foreach \z in {-1,0,1} {
\draw (0,0) to ++ (90*\x:1) node[square, draw=black, fill=black, minimum width=10pt]{} to ++ (90*\x+45*\y:1) node[square, draw=black, fill=black, minimum width=10pt]{} to ++ (90*\x+45*\y+30*\z:0.7) node[square, draw=black, fill=black, minimum width=10pt]{};
}
}
};

\foreach \x in {2} {
\foreach \y in {-1} {
\foreach \z in {1} {
\foreach \w in {-1,1} {
\draw (0,0) ++ (90*\x:1) ++ (90*\x+45*\y:1) ++ (90*\x+45*\y+30*\z:0.7) node[fill=black]{} to ++ (90*\x+45*\y+30*\z+15*\w:0.7) node[draw=black, fill=white, minimum width=8pt]{};
}
}
}
};

\draw{
(0,0) node[square, draw=black, fill=black, minimum width=10pt]{}
(-0.3,0.35) node{$y$}
(-1,0.4) node{$x$} ++(135:1) ++ (165:0.7) ++ (0,-0.8) node{$w$}
};

\foreach \x in {0,1,3} {
\foreach \y in {-1,1} {
\foreach \z in {-1,0,1} {
\draw (8,0) to ++ (90*\x:1) node[square, draw=black, fill=black, minimum width=10pt]{} to ++ (90*\x+45*\y:1) node[square, draw=black, fill=black, minimum width=10pt]{} to ++ (90*\x+45*\y+30*\z:0.7) node[draw=black, fill=white, minimum width=8pt]{};
}
}
};

\foreach \x in {2} {
\foreach \y in {-1,1} {
\foreach \z in {-1,0,1} {
\draw (8,0) to ++ (90*\x:1) node[square, draw=black, fill=black, minimum width=10pt]{} to ++ (90*\x+45*\y:1) node[square, draw=black, fill=black, minimum width=10pt]{} to ++ (90*\x+45*\y+30*\z:0.7) node[square, draw=black, fill=black, minimum width=10pt]{};
}
}
};

\draw{
(8,0) node[square, draw=black, fill=black, minimum width=10pt]{}
(7.7,0.35) node{$y$}
(7,0.4) node{$x$}
(8.9,0.4) node{$z^-$} ++(45:0.8) node{$z$}
};

\end{tikzpicture}
\end{center}
\end{figure}

\begin{proof}
Let us take $t=1$; the argument for $t=0$ is similar.  Let $x$ and $y$ be adjacent vertices of $T$ of types $k$ and $k+1$ modulo $2$ respectively.

Suppose $k=1$; note that $\Theta^{1,0}_i = \Delta^1_i$ and $F_i(\omega) = \Lambda^1_i$.  We have $\Theta^{1,0}_0 \ge O_0$ by hypothesis, so $F_0(\omega)/\Theta^{1,0}_0$ is soluble.  By Lemma~\ref{lem:Goursat} it follows that $F_1(\omega)/\Theta^{1,0}_1$ is soluble, so $\Theta^{1,0}_1 \ge O_1$.

From now on we suppose $k \ge 2$.  We can interpret $\Theta^{k,0}_1$ as the action of $G_k(y)$ on $S_y(w,1)$ for any $w \in S(y,k)$; in fact we will take $w \in S_y(x,k-1)$, which ensures that $S_y(w,1) = S_x(w,1)$ (see Figure~\ref{fig:typeswap}).  For $z \in VT \setminus \{x\}$, let $z^-$ be the penultimate vertex on the path from $x$ to $z$ and write $G_{x,z} := G(z,z^-)/G_1(z)$.  Note that $G_{x,z}$ is a copy of $F_i(\omega)$, where $i$ is the type of $z$.

As can be seen in Figure~\ref{fig:typeswap} (right), the action of $G_{k-1}(x,y)$ on $S_x(y,k)$ is naturally described by a homomorphism 
\[
\varphi: G_{k-1}(x,y) \rightarrow \prod_{z \in S_x(y,k-1)}G_{x,z}.
\]

Note that $\varphi$ is equivariant with respect to the natural actions of $G(x,y)$ on the domain and codomain of $\varphi$, and the kernel of $\varphi$ is $G_k(y)$.  Since $\Theta^{k,1}_0 \ge O_0$, we see that $G_k(x)/G_{k+1}(x)$ contains $(O_0)^{|S(x,k)|}$; in particular, $\varphi(G_k(x))$ contains the soluble residual of $\prod_{z \in S_x(y,k-1)}G_{x,z}$.

Since $\Theta^{k-1,1}_1 \ge O_1$, we see that $G_{k-1}(x)/G_k(x)$ contains $(O_1)^{|S(x,k-1)|}$; in particular, we have a subgroup $A$ of $G_{k-1}(x)$ that acts as $O_1$ on $S_{x}(w,1)$ but fixes pointwise the set $S(x,k) \setminus S_{x}(w,1)$.  Let $N$ be the normal closure of $A$ in $G(x,y)$ and let $N^*$ be the soluble residual of $N$; then $N \le G_{k-1}(x,y)$ and the local action of $N$ at $w$ contains $O_1$, so the local action of $N^*$ at $w$ also contains $O_1$.  At the same time, $\varphi(N^*)$ is contained in the soluble residual of $\prod_{z \in S_x(y,k-1)}G_{x,z}$, so $\varphi(N^*) \le \varphi(G_k(x))$.

For each $a \in O_1$ we now take $g_a \in N^*$ acting as $a$ on $S_{x}(w,1)$, and $g'_a \in G_k(x)$ such that $\varphi(g'_a) = \varphi(g_a)$.  Then $h_a = (g'_a)\inv g_a$ is an element of $\ker\varphi = G_k(y)$; since $g'_a \in S_k(x)$ and $S_{y}(w,1) \subseteq S(x,k)$, the actions of $g_a$ and $h_a$ on $S_{y}(w,1)$ are the same.  Given the freedom of choice of $a$, we see that the action of $G_k(y)$ on $S_y(w,1)$ contains $O_1$, in other words, $\Theta^{k,0}_1 \ge O_1$.
\end{proof}

\begin{lem}\label{lem:soluble_bridge}
Let $G \in \mc{H}_T$ and let $\mc{C}$ be an extension-closed variety of finite groups.  Let $k \ge 1$ and suppose $x \in V_0T$ and $y \in V_1T$ are adjacent vertices such that $G_{k-1}(x,y)/G_k(x)$ and $G_{k-1}(x,y)/G_k(y)$ are both $\mc{C}$-groups.  Then the compact open subgroup $G_{k-1}(x,y)$ is pro-$\mc{C}$; in particular $G$ is locally pro-$\mc{C}$.
\end{lem}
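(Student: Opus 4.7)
Plan: To show $H := G_{k-1}(x,y)$ is pro-$\mc{C}$, note that $\{K_n := G_n(x,y)\}_{n \ge k-1}$ is a descending basis of open neighborhoods of $1$ in $H$, each normal in $G(x,y) \supseteq H$; it thus suffices to show every $H/K_n$ lies in $\mc{C}$. Using closure of $\mc{C}$ under subgroups and finite products, the hypothesis gives $H/K_k \hookrightarrow (H/G_k(x)) \times (H/G_k(y)) \in \mc{C}$; and by type-distance-transitivity of $G$ (Corollary~\ref{cor:distance_transitivity}), the analogues $G_{k-1}(u,v)/G_k(u)$ and $G_{k-1}(u,v)/G_k(v)$ lie in $\mc{C}$ for every adjacent pair $(u,v)$ of opposite types. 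Induction on $n$ combined with extension-closure of $\mc{C}$ reduces the problem to showing $K_n/K_{n+1} \in \mc{C}$ for all $n \ge k$.

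For this, set $R_m := B(x,m) \cup B(y,m)$, so that $K_n/K_{n+1}$ acts faithfully on the ``new shell'' $R_{n+1} \setminus R_n$. Each vertex $z$ in this shell has a predecessor $z^-$ on the geodesic back to $\{x,y\}$ that lies in $R_n$ and is fixed by $K_n$, so grouping vertices by predecessor yields an embedding $K_n/K_{n+1} \hookrightarrow \prod_w L_w$, where $w$ ranges over $R_n \setminus R_{n-1}$ and $L_w$ is the image of $K_n$ in $\Sym(S_x(w,1))$ (or the analogue on the $y$-side). So it suffices to show each $L_w \in \mc{C}$.

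Fix such a $w$, say on the $x$-side, and let $v$ be the vertex on the geodesic from $w$ to $x$ with $d(v,w) = k-1$ and $u$ its predecessor on that geodesic (with $u = x$ when $n = k$). A direct distance computation shows $B(u,k-1) \cup B(v,k-1) \subseteq B(x,n)$, so $K_n \le G_{k-1}(u,v)$. Since $v$ lies on the geodesic from $x$ to $w$, we have $S_v(w,1) = S_x(w,1)$, and the image of $G_{k-1}(u,v)$ in $\Sym(S_v(w,1))$ is a quotient of $G_{k-1}(u,v)/G_k(v) \in \mc{C}$ (the hypothesis transported to the pair $(u,v)$). Hence $L_w \in \mc{C}$, and the symmetric argument handles the $y$-side; local pro-$\mc{C}$-ness of $G$ follows immediately from pro-$\mc{C}$-ness of the open subgroup $H$. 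The main obstacle is identifying this correct reference edge: near $w$ (at $(w^-,w)$) the hypothesis cannot be applied because $K_n \not\le G_{k-1}(w^-,w)$ for $k \ge 2$, while stepping back exactly $k-1$ vertices from $w$ keeps both $(k-1)$-balls inside $B(x,n)$ and simultaneously places $w$ on the $k$-sphere of $v$, which is precisely the sphere that the hypothesis controls.
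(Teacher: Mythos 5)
Your proof is correct, but it takes a genuinely different route from the paper's. The paper argues globally: the hypothesis together with extension-closure of $\mc{C}$ forces the equalities $O^{\mc{C}}(G_k(x)) = O^{\mc{C}}(G_{k-1}(x,y)) = O^{\mc{C}}(G_k(y))$ of pro-$\mc{C}$ residuals, so this residual is a compact subgroup normalized by both $G(x)$ and $G(y)$, hence normal in $\grp{G(x),G(y)}$; since the local actions are $2$-transitive that group acts minimally on $T$ and therefore has no nontrivial compact normal subgroup, forcing the residual to be trivial. You instead build the pro-$\mc{C}$ filtration by hand, showing each sphere quotient $G_n(x,y)/G_{n+1}(x,y)$ lies in $\mc{C}$ by decomposing it over predecessor vertices and, for each boundary vertex $w$, locating the edge $(u,v)$ exactly $k-1$ steps back from $w$ to which the hypothesis transports via type-distance-transitivity; your checks that $B(u,k-1) \cup B(v,k-1) \subseteq B(x,n)$ and that the local action at $w$ factors through $G_{k-1}(u,v)/G_k(v)$ are the key points and they are sound. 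The paper's proof is shorter and leans on the structural fact that a minimal action admits no nontrivial compact normal subgroup; yours is more elementary and combinatorial, makes the $\mc{C}$-filtration of $G_{k-1}(x,y)$ explicit, and in exchange needs type-distance-transitivity (Corollary~\ref{cor:distance_transitivity}) to move the hypothesis around the tree. Both inputs are available for $G \in \mc{H}_T$, so either route is legitimate.
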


\begin{proof}
The hypothesis ensures the following equalities of pro-$\mc{C}$ residuals:
\[
O^{\mc{C}}(G_k(x)) = O^{\mc{C}}(G_{k-1}(x,y)) = O^{\mc{C}}(G_k(y)).
\]
In particular, we see that $O^{\mc{C}}(G_{k-1}(x,y))$ is normalized by both $G(x)$ and $G(y)$, and hence is normal in $H = \grp{G(x),G(y)}$.  Since both local actions are $2$-transitive, we see that $H$ acts minimally on $T$; in particular, $H$ has no nontrivial compact normal subgroup.  It follows that $O^{\mc{C}}(G_{k-1}(x,y))$ is trivial, in other words, $G_{k-1}(x,y)$ is pro-$\mc{C}$.
\end{proof}

We now obtain the promised restrictions on $G \in \mc{H}_{F_0,F_1}$ in the case that $\Theta_t \ngeq O_t$ for some $t \in \{0,1\}$.

\begin{prop}\label{prop:soluble_dichotomy}
Let $G \in \mc{H}_{F_0,F_1}$, with $F_t$ acting on $\Omega_t$, and write $O_t := O^\infty(F_t(\omega))$.  Suppose that $\kappa(F_0),\kappa(F_1) \le 1$ and that $\Out(O_t/O_\infty(O_t))$ is soluble for $t \in \{0,1\}$.  Then exactly one of the following holds:
\begin{enumerate}[(i)]
\item $\Theta_t \ge O_t$ for all $t \in \{0,1\}$ and $O_t \neq \triv$ for some $t \in \{0,1\}$.
\item $G$ is locally pro-$\mc{C}$, where $\mc{C}$ is the smallest extension-closed variety of finite groups containing $O_\infty(F_0(\omega))$ and $O_\infty(F_1(\omega))$.  Moreover, for $t \in \{0,1\}$, we have $\Theta_t \le O_\infty(F_t(\omega))$, and the group $O_{\mc{C}}(\Lambda_t)$ acts transitively on $\Omega_t \setminus \{\omega\}$.
\end{enumerate}
\end{prop}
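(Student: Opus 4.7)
The plan is first to verify that cases (i) and (ii) are mutually exclusive, then to produce a dichotomy forcing one of them, and finally to check the remaining technical conclusions of (ii).  Mutual exclusivity is immediate: if both held then some $O_t$ would be nontrivial and satisfy $O_t \le \Theta_t \le O_\infty(F_t(\omega))$, forcing the perfect group $O_t$ to be soluble, a contradiction.

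The dichotomy itself comes from Proposition~\ref{prop:Hk_subnormal}(ii), which makes $\Theta_t$ subnormal in $F_t(\omega)$; together with the hypothesis $\kappa(F_t(\omega)) \le 1$, Lemma~\ref{lem:minimally_covered:simple} forces each $\Theta_t$ individually to satisfy $\Theta_t \ge O_t$ or $\Theta_t \le O_\infty(F_t(\omega))$.  What remains is to rule out the ``mixed'' configuration, in which $\Theta_t \ge O_t$ with $O_t \ne \triv$ while $\Theta_{1-t} \le O_\infty(F_{1-t}(\omega))$ with $O_{1-t} \ne \triv$.  To do this, I would bootstrap upward from the given $\Theta_t \ge O_t$ (which yields $\Theta^{r,i}_t \ge O_t$ for every $r$ and $i$) via a simultaneous induction on $k \ge 1$ proving $\Theta^{k,0}_{1-t} \ge O_{1-t}$ and $\Theta^{k,1}_{1-t} \ge O_{1-t}$.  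The induction alternates Lemma~\ref{lem:type_swap} (to cross types at depth $0$) with Lemma~\ref{lem:action_to_rigid_action}(b) (to upgrade depth $0$ to depth $1$), the base case $k=1$ being handled by Lemma~\ref{lem:action_to_rigid_action}(a).  The two side hypotheses of (a) are automatic in our situation: intransitivity of $O_\infty(F_{1-t}(\omega))$ follows from $O_{1-t} \ne \triv$ via Lemma~\ref{lem:soluble_residual_trans}, while $F_t$ cannot be a subquotient of $\Out(O_{1-t}/O_\infty(O_{1-t}))$ because the latter is soluble by assumption and $O_t \ne \triv$ forces $F_t$ to be insoluble.  Once the induction is complete, Corollary~\ref{cor:Hk_subnormal} promotes $\Theta^{\infty,0}_{1-t} \ge O_{1-t}$ to $\Theta_{1-t} \ge O_{1-t}$, contradicting $\Theta_{1-t} \le O_\infty(F_{1-t}(\omega))$.

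With the mixed configurations excluded, case analysis on the dichotomy leaves us either with $\Theta_t \ge O_t$ for both $t$ (case (i), provided some $O_t \ne \triv$) or with $\Theta_t \le O_\infty(F_t(\omega))$ for both $t$, which is the first clause of (ii).  For the local pro-$\mc{C}$ assertion in (ii), my plan is to invoke Lemma~\ref{lem:soluble_bridge} at a depth $k$ chosen large enough that the stabilized sequences $\Theta^{r,i}_t$ have entered $\mc{C}$; the layer quotients $G_{k-1}(x,y)/G_k(x)$ and $G_{k-1}(x,y)/G_k(y)$ then embed into products of such local quotients and are themselves $\mc{C}$-groups, so the lemma delivers pro-$\mc{C}$-ness of a compact open subgroup.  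For the transitivity of $O_\mc{C}(\Lambda_t)$, I would observe that $\Theta_t$ is normal in $\Lambda_t$ (it is subnormal in each $\Lambda^r_t$ by Proposition~\ref{prop:Hk_subnormal}(i) and constant in the limit) and lies in $\mc{C}$, so $\Theta_t \le O_\mc{C}(\Lambda_t)$; one then reduces to showing that $\Theta_t$ itself acts transitively on $\Omega_t \setminus \{\omega\}$, which should come from the way the rigid stabilizer of the half-tree realizes the boundary-$2$-transitivity of $G$ on $\partial T_{(y,x)}$.

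The main obstacle I expect is the final transitivity check for (ii-c): the normal containment $\Theta_t \le O_\mc{C}(\Lambda_t) \le \Lambda_t$ does not by itself force $\Theta_t$ to act transitively, and there is no a priori reason for $\Lambda_t$ to lie in $\mc{C}$ even when it turns out to be soluble; hence one must leverage the specific geometry of the rigid stabilizer acting on the half-tree rather than rely purely on the subnormal bookkeeping used for the earlier dichotomy.
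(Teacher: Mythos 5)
Your handling of the dichotomy itself is sound and essentially reproduces the paper's argument: mutual exclusivity, the subnormal position of $\Theta_t$ via Proposition~\ref{prop:Hk_subnormal} and Lemma~\ref{lem:minimally_covered:simple}, and the alternating bootstrap (Lemma~\ref{lem:type_swap} to cross types, Lemma~\ref{lem:action_to_rigid_action} to pass from depth $0$ to depth $1$, with the side hypotheses of case (a) verified exactly as you say) followed by Corollary~\ref{cor:Hk_subnormal} is the same induction the paper runs. One imprecision in your route to local pro-$\mc{C}$-ness: the layer quotients $G_{k-1}(x,y)/G_k(x)$ embed into products of copies of $\Theta^{k-1,0}_t$, whose limit as $k \to \infty$ is $\Theta^{\infty,0}_t$, not $\Theta_t = \Theta^{r,\infty}_t$; knowing $\Theta_t \le O_\infty(F_t(\omega))$ does not by itself place $\Theta^{\infty,0}_t$ in $\mc{C}$. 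This is fixable — $\Theta^{\infty,0}_t$ is subnormal in $F_t(\omega)$, so by Lemma~\ref{lem:minimally_covered:simple} it either contains $O_t$, which Corollary~\ref{cor:Hk_subnormal} would propagate to $\Theta_t$ and contradict your case hypothesis when $O_t \neq \triv$, or lies in $O_\infty(F_t(\omega)) \in \mc{C}$ — but you need to say it. (The paper sidesteps this by dichotomizing directly on whether $G$ is locally pro-$\mc{C}$, with Lemma~\ref{lem:soluble_bridge} as the pivot.)

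The genuine gap is the transitivity of $O_{\mc{C}}(\Lambda_t)$, and your proposed reduction — show that $\Theta_t$ itself acts transitively on $\Omega_t \setminus \{\omega\}$ — cannot work. In case (ii) you have $\Theta_t \le O_\infty(F_t(\omega))$, and by the final assertion of Lemma~\ref{lem:soluble_residual_trans} the soluble radical of $F_t(\omega)$ is transitive only when $F_t(\omega)$ is soluble; so in precisely the interesting instances of case (ii) (some $F_t(\omega)$ insoluble, e.g.\ the exceptional groups of Example~\ref{ex:exceptional}, where $\Theta_t$ equals the intransitive soluble radical), $\Theta_t$ is intransitive and the reduction is vacuous. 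You correctly sense that something beyond the subnormal bookkeeping is needed, but you do not supply it. The paper's argument is global rather than local: fixing $k$ with $G_k(x)$ pro-$\mc{C}$ and $\Lambda^l_t = \Lambda_t$ for $l \ge k$, if $O_{\mc{C}}(\Lambda_t)$ were intransitive then for every $y$ at suitable distance from $x$ the action of $G_k(x) \cap G(y)$ on $S_x(y,1)$ — a normal $\mc{C}$-subgroup of $\Lambda_t$ — would be intransitive, forcing $G_k(x)$, and hence its finite-index overgroup $G(x)$, to have infinitely many orbits on $\partial T$, contradicting Corollary~\ref{cor:distance_transitivity}(iii). Without this (or an equivalent) argument, clause (ii) is not established.
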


\begin{proof}
Note that all groups in $\mc{C}$ are soluble.  If (i) holds, it is clear that there is no $k \ge 0$ or $v \in VT$ for which $G_k(v)$ is prosoluble, and hence $G$ is not locally prosoluble.  Thus (i) and (ii) are mutually exclusive and we may assume (ii) is false.

Suppose for the moment that $G$ is not locally pro-$\mc{C}$.  Then Lemma~\ref{lem:soluble_bridge} implies that there is some $t \in \{0,1\}$ such that $\Theta^{\infty,0}_t$ is not a $\mc{C}$-group; say this happens for $t=1$.  By the definition of $\mc{C}$, we deduce that
\[
\Theta^{\infty,0}_1 \nleq O_{\infty}(F_1(\omega)) \neq F_1(\omega);
\]
by Proposition~\ref{prop:Hk_subnormal}, $\Theta^{\infty,0}_1$ is subnormal in $F_1(\omega)$, so we deduce via Lemma~\ref{lem:minimally_covered:simple} that
\[
\Theta^{\infty,0}_1 \ge O_1 \neq \triv,
\]
and hence $\Theta_1 \ge O_1$ by Corollary~\ref{cor:Hk_subnormal}.  If $F_0(\omega)$ is soluble, we automatically have $\Theta_0 \ge O_0$, so suppose $F_0(\omega)$ is insoluble.  Lemma~\ref{lem:type_swap} ensures that $\Theta^{1,0}_0 \ge O_0$.  Proceeding by induction on $k$, we find that
\[
\Theta^{k,0}_0 \ge O_0 \; \Rightarrow \; \Theta^{k,1}_0 \ge O_0 \; \Rightarrow \; \Theta^{k+1,0}_0 \ge O_0,
\]
for all $k \ge 1$, with the first implication by Lemma~\ref{lem:action_to_rigid_action} (in the case $k=1$ we use the fact that $\Out(O_0/O_\infty(O_0))$ is soluble), and the second by Lemma~\ref{lem:type_swap}.  Thus $\Theta^{\infty,0}_0 \ge O_0$, and we deduce $\Theta_0 \ge O_0$ via Corollary~\ref{cor:Hk_subnormal}.  Thus (i) holds.

From now on we may assume $G$ is locally pro-$\mc{C}$, in other words, there is some $x \in V_0T$ and $k \ge 1$ such that $G_k(x)$ is pro-$\mc{C}$.  We immediately deduce that $\Theta_t$ is soluble for $t \in \{0,1\}$; since $\Theta_t$ is clearly normal in $F_t(\omega)$, we deduce that
\[
\forall t \in \{0,1\}: \; \Theta_t \le O_\infty(F_t(\omega)).
\]
We can also choose $k$ large enough that $\Lambda^l_t = \Lambda_t$ for $t \in \{0,1\}$ and all $l \ge k$.

The remaining possibility to rule out is that we have some $t \in \{0,1\}$ such that $O_{\mc{C}}(\Lambda_t)$ does not act transitively.  Let $y \in S(x,2l+t)$ such that $2l \ge k$ and consider the action $\Delta$ of $G_k(x) \cap G(y)$ on $S_x(y,1)$.  Then, since $\Delta$ is a normal $\mc{C}$-subgroup of $\Lambda_t$, we see that $\Delta$ acts intransitively.  Repeating this argument for all $l \ge k/2$, we find that $G_k(x)$ has infinitely many orbits on the boundary; since $G_k(x)$ has finite index in $G(x)$, it follows that $G(x)$ has infinitely many orbits on the boundary, contradicting Corollary~\ref{cor:distance_transitivity}(iii).  This completes the proof that the negation of (ii) implies (i), and hence the proof of the dichotomy.
\end{proof}

The proof of Proposition~\ref{prop:soluble_dichotomy} did not require any detailed knowledge of finite $2$-transitive permutation groups.  However, if we now invoke Lemmas~\ref{lem:kappa_one} and~\ref{lem:lambda_restriction}, we see there is a strong restriction on Proposition~\ref{prop:soluble_dichotomy}(ii).

\begin{cor}\label{cor:soluble_dichotomy}
Let $G \in \mc{H}_{F_0,F_1}$, with $F_t$ acting on $\Omega_t$, and write $O_t := O^\infty(F_t(\omega))$.  Then exactly one of the following holds:
\begin{enumerate}[(i)]
\item $\Theta_t \ge O_t$ for all $t \in \{0,1\}$.
\item For all $t \in \{0,1\}$, we have $\Theta_t \le O_{\infty}(F_t(\omega))$ and $\Lambda_t$ is soluble; but at least one of $F_0(\omega)$ and $F_1(\omega)$ is insoluble.  Moreover, for all $t \in \{0,1\}$ such that $F_t(\omega)$ is insoluble, then $\PSL_3(q) \le F_t \le \PGaL_3(q)$ with $q > 3$, and $F_t(\omega)$ is a point stabilizer of the usual action of $F_t$ on $P_2(q)$.
\end{enumerate}
\end{cor}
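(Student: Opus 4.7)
The plan is to derive this from Proposition~\ref{prop:soluble_dichotomy} combined with Lemma~\ref{lem:lambda_restriction}. First I verify the hypotheses of Proposition~\ref{prop:soluble_dichotomy}: Lemma~\ref{lem:kappa_one} supplies $\kappa(F_t(\omega)) \le 1$ and solubility of $\Out(O_t/O_\infty(O_t))$ for each $t$. The case where both $F_t(\omega)$ are soluble is degenerate, with both $O_t$ trivial, so case (i) of the corollary holds trivially; case (ii) is then ruled out by its requirement that at least one point stabilizer be insoluble. Thus I may assume some $O_t$ is nontrivial and apply the proposition, and case (i) of the proposition then gives case (i) of the corollary immediately.

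For case (ii) of the proposition, I already have $\Theta_t \le O_\infty(F_t(\omega))$ for each $t$. Since $\mc{C}$ consists of soluble groups, $O_\mc{C}(\Lambda_t) \le O_\infty(\Lambda_t)$, so transitivity of $O_\mc{C}(\Lambda_t)$ on $\Omega_t \setminus \{\omega\}$ lifts to transitivity of $O_\infty(\Lambda_t)$, and Lemma~\ref{lem:lambda_restriction}(b) then forces $\Lambda_t$ to be soluble for each $t$. For the structural restriction, I would inspect the seven cases of Lemma~\ref{lem:lambda_restriction} for any index $t$ with $F_t(\omega)$ insoluble. Case (i) would give $\Lambda_t \ge O_t$ via Theorem~\ref{thm:2bbtrans}(a), making $\Lambda_t$ insoluble, a contradiction. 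Case (iii) requires the point stabilizer of a rank-$1$ Lie type $2$-transitive action, which is a soluble Borel subgroup. Cases (iv) and (v) have explicitly insoluble $\Lambda_t$. Case (vii) gives $\Lambda_t = F_t(\omega)$, which cannot be simultaneously soluble and insoluble. This leaves cases (ii) and (vi), both of which satisfy $\PSL_3(q) \le F_t \le \PGaL_3(q)$; for $F_t(\omega)$ to be insoluble we need $q \ge 4 > 3$, since the point stabilizers in $\PGaL_3(2)$ and $\PGaL_3(3)$ are soluble.

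Mutual exclusivity of the two cases in the corollary is automatic: in case (ii) every $\Lambda_t$ is soluble, hence so is $\Theta_t \le \Lambda_t$, contradicting the containment $\Theta_t \ge O_t$ demanded by case (i) whenever $O_t$ is nontrivial. Once Proposition~\ref{prop:soluble_dichotomy} and Lemma~\ref{lem:lambda_restriction} are in hand the only real work is the finite case check in the second paragraph, so there is no substantial obstacle here; all the genuinely nontrivial content has already been absorbed into the two cited results.
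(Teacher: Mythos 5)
Your proposal is correct and follows essentially the same route as the paper: dispose of the case where both point stabilizers are soluble, apply Proposition~\ref{prop:soluble_dichotomy}, use $O_{\mc{C}}(\Lambda_t)\le O_\infty(\Lambda_t)$ together with Lemma~\ref{lem:lambda_restriction}(b) to get solubility of $\Lambda_t$, and then eliminate all cases of Lemma~\ref{lem:lambda_restriction} except (ii) and (vi). Your case-by-case elimination (in particular the observation that rank~$1$ Lie type standard actions have soluble Borel point stabilizers) simply spells out details the paper leaves implicit.
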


\begin{proof}
Cases (i) and (ii) are clearly mutually exclusive.  If $F_0(\omega)$ and $F_1(\omega)$ are both soluble then clearly (i) holds, so we may assume $O_0 \neq \triv$ say.  In that case, Proposition~\ref{prop:soluble_dichotomy}(i) implies case (i) of the corollary, so we may assume Proposition~\ref{prop:soluble_dichotomy}(ii) holds, implying that $\Theta_t \le O_{\infty}(F_t(\omega))$ and that $O_{\infty}(\Lambda_t)$ acts transitively on $\Omega_t \setminus \{\omega\}$ for both $t \in \{0,1\}$.  By Lemma~\ref{lem:lambda_restriction}(b), $\Lambda_0$ and $\Lambda_1$ are both soluble, so $\Lambda_0$ does not contain $O_0$.  Taking $t$ such that $O_t \neq \triv$, then all cases except (ii) and (vi) of Lemma~\ref{lem:lambda_restriction} are ruled out.  In the remaining cases we have $\PSL_3(q) \le F_t \le \PGaL_3(q)$, and $F_t(\omega)$ is a point stabilizer of the usual action of $F_t$ on $P_2(q)$; since $O_t$ is nontrivial we have $q > 3$.
\end{proof}

Corollary~\ref{cor:soluble_dichotomy}(i) is the conclusion of Theorem~\ref{intro:locally_prosoluble}(i), so the theorem will be proved as soon as we reduce Corollary~\ref{cor:soluble_dichotomy}(ii) to the special case given in Theorem~\ref{intro:locally_prosoluble}(ii).

\subsection{The local actions of the pointwise stabilizer of a line segment}\label{sec:line}

Let $x$ and $y$ be vertices in the tree $T$ at distance $k$ apart, such that $x$ is of type $t$.  Recall (as illustrated in Figure~\ref{fig:lambda}) that given $G \in \mc{H}_T$, the group $\Lambda^k_t:=\Lambda^k_t(G)$ is the action of $G(x,y)$ on $S_y(x,1)$, while $\Delta^k_t:=\Delta^k_t(G)$ is the action of $G(x) \cap G_1(y)$ on $S_y(x,1)$.  Let $\delta^k_t$\index{d@$\delta^k_t$} be the number of orbits of $\Delta^k_t$ on $S_y(x,1)$.

We find a useful relationship between the groups $\Lambda^k_t$ and $\Delta^k_t$.

\begin{lem}\label{lem:line_index}
Let $G \in \mc{H}_T$ and let $k \ge 1$.  Then
\[
|\Lambda^k_t: \Lambda^{k+1}_t| = \delta^k_{t+k}.
\]
Moreover, $|\Lambda^k_t: \Lambda^{k+1}_t|$ divides $d_{t+k}-1$.
\end{lem}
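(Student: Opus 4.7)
The plan is to identify both sides of the claimed equality with the number of orbits of a certain normal subgroup on a transitive set, then extract the divisibility statement from normality.

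First I fix a pair $x,y$ with $x$ of type $t$ and $d(x,y)=k$, together with a vertex $z \in S_x(y,1)$, so that $d(x,z)=k+1$ and $y$ lies on the geodesic from $x$ to $z$. Then $S_z(x,1)=S_y(x,1)$ and $G(x,z) \le G(x,y)$. Writing $N$ for the kernel of the action of $G(x,y)$ on $S_y(x,1)$, so that $\Lambda^k_t = G(x,y)/N$, the subgroup $\Lambda^{k+1}_t$ is naturally identified with $G(x,z)N/N$, because $G(x,z) \cap N$ is precisely the kernel of the action of $G(x,z)$ on $S_z(x,1)$. Consequently
\[
|\Lambda^k_t : \Lambda^{k+1}_t| = |G(x,y) : G(x,z)N|.
\]

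Next I use that $G(x,y)$ acts transitively on $S_x(y,1)$ (by type-distance-transitivity, Corollary~\ref{cor:distance_transitivity}) with point stabilizer $G(x,z)$ at $z$. Since $N \unlhd G(x,y)$, the standard orbit-counting identity gives that $|G(x,y):G(x,z)N|$ equals the number of $N$-orbits on $S_x(y,1)$. To match this with $\delta^k_{t+k}$, I verify the identity $N = G(y) \cap G_1(x)$: any element fixing $x$, $y$, and $S_y(x,1)$ pointwise automatically fixes the remaining neighbour of $x$, which lies on the $x$-to-$y$ geodesic, and so fixes the whole of $S(x,1)$; the reverse inclusion is immediate. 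Applying the definition of $\Delta^k_{t+k}$ to the pair $(y,x)$ — with $y$ of type $t+k$ at distance $k$ from $x$ — shows that $G(y) \cap G_1(x)$ is exactly the subgroup whose action on $S_x(y,1)$ realises $\Delta^k_{t+k}$, so the orbit count is $\delta^k_{t+k}$ and the first assertion follows.

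For the divisibility, I invoke that $\Delta^k_{t+k} \unlhd \Lambda^k_{t+k}$ and that $\Lambda^k_{t+k}$ acts transitively on $S_x(y,1)$; the quotient $\Lambda^k_{t+k}/\Delta^k_{t+k}$ then permutes the $\Delta^k_{t+k}$-orbits transitively, forcing all orbits to have the same size, so $\delta^k_{t+k}$ divides $|S_x(y,1)| = d_{t+k}-1$. The argument is essentially bookkeeping of pointwise stabilizers; the only step that deserves care is the identification $N = G(y) \cap G_1(x)$, which is where the asymmetry between the subscripts $t$ and $t+k$ in the statement enters. I do not anticipate any serious obstacle.
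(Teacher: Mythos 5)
Your proof is correct and follows essentially the same route as the paper's: both identify the kernel of the $G(x,y)$-action on $S_y(x,1)$ with $G(y)\cap G_1(x)$, recognise its orbit count on $S_x(y,1)$ as $\delta^k_{t+k}$, and compute the index via the setwise stabilizer $N\,G(x,z)$ of an orbit (the paper's subgroup $B$). The divisibility argument via normality and transitivity is also the one the paper uses.
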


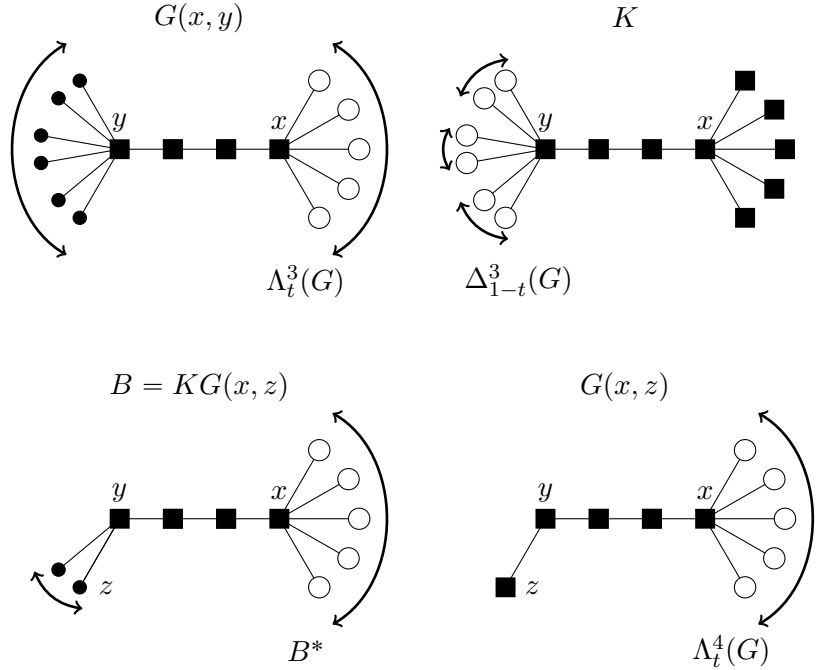
\begin{figure}[h]
\caption{From $\Lambda^k_t(G)$ to $\Lambda^k_{t+1}(G)$ via $\Delta^k_{t+k}(G)$}
\label{fig:delta}
\begin{center}
\begin{tikzpicture}[scale=0.7, every loop/.style={}, square/.style={regular polygon,regular polygon sides=4}]

\tikzstyle{every node}=[circle,
                        inner sep=0pt, minimum width=5pt]
          
\draw \foreach \x in {0,1,2}
{
(\x,0) node[square, draw=black, fill=black, minimum width=10pt]{} to (\x+1,0) node[square, draw=black, fill=black, minimum width=10pt]{}
};
\draw \foreach \x in {-2,-1,0,1,2}
{
(3,0) to ++(30*\x:1.5) node[draw=black, fill=white, minimum width=8pt]{}
};
\draw \foreach \x in {-3,-2,-0.5,0.5,2,3}
{
(0,0) to ++ (180+20*\x:1.5) node[draw=black, fill=black]{}
};
\draw{
(-1,2) edge[line width=1pt, <->, bend right=60] (-1,-2)
(4,2) edge[line width=1pt, <->, bend left=60] (4,-2)
(0,0.5) node{$y$}
(3,0.5) node{$x$}
(1.5,2.5) node{$G(x,y)$}
(3.5,-2.5) node{$\Lambda^3_t(G)$}
};

\draw \foreach \x in {0,1,2}
{
(8+\x,0) node[square, draw=black, fill=black, minimum width=10pt]{} to (8+\x+1,0) node[square, draw=black, fill=black, minimum width=10pt]{}
};
\draw \foreach \x in {-2,-1,0,1,2}
{
(11,0) to ++(30*\x:1.5) node[square, draw=black, fill=black, minimum width=10pt]{} 
};
\draw \foreach \x in {-3,-2,-0.5,0.5,2,3}
{
(8,0) to ++ (180+20*\x:1.5) node[draw=black, fill=white, minimum width=8pt]{}
};
\draw{
(7.3,1.7) edge[line width=1pt, <->, bend right=30] (6.4,1)
(7.3,-1.7) edge[line width=1pt, <->, bend left=30] (6.4,-1)
(6.2,-0.4) edge[line width=1pt, <->, bend left=30] (6.2,0.4)
(8,0.5) node{$y$}
(11,0.5) node{$x$}
(9.5,2.5) node{$K$}
(7.5,-2.5) node{$\Delta^3_{1-t}(G)$}
};

\draw (0,-7) to ++ (240:1.5) ++ (0.5,0) node{$z$};

\draw \foreach \x in {0,1,2}
{
(0+\x,-7) node[square, draw=black, fill=black, minimum width=10pt]{} to (0+\x+1,-7) node[square, draw=black, fill=black, minimum width=10pt]{}
};
\draw \foreach \x in {-2,-1,0,1,2}
{
(3,-7) to ++(30*\x:1.5) node[draw=black, fill=white, minimum width=8pt]{}
};
\draw \foreach \x in {2,3}
{
(0,-7) to ++ (180+20*\x:1.5) node[draw=black, fill=black]{}
};
\draw{
(-0.7,-8.7) edge[line width=1pt, <->, bend left=30] (-1.6,-8)
(4,-5) edge[line width=1pt, <->, bend left=60] (4,-9)
(0,-6.5) node{$y$}
(3,-6.5) node{$x$}
(1.5,-4.5) node{$B = KG(x,z)$}
(3.5,-9.5) node{$B^*$}
};

\draw (8,-7) to ++ (240:1.5) node[square, draw=black, fill=black, minimum width=10pt]{} ++ (0.5,0) node{$z$};

\draw \foreach \x in {0,1,2}
{
(8+\x,-7) node[square, draw=black, fill=black, minimum width=10pt]{} to (8+\x+1,-7) node[square, draw=black, fill=black, minimum width=10pt]{}
};
\draw \foreach \x in {-2,-1,0,1,2}
{
(11,-7) to ++(30*\x:1.5) node[draw=black, fill=white, minimum width=8pt]{}
};
\draw{
(12,-5) edge[line width=1pt, <->, bend left=60] (12,-9)
(8,-6.5) node{$y$}
(11,-6.5) node{$x$}
(9.5,-4.5) node{$G(x,z)$}
(11.5,-9.5) node{$\Lambda^4_t(G)$}
};

\end{tikzpicture}
\end{center}
\end{figure}

\begin{proof}
The key groups involved in the proof are illustrated in Figure~\ref{fig:delta}.  Take two vertices $x$ and $y$ with $d(x,y)=k$ and $x \in V_tT$, and take $z \in S_x(y,1)$; we compare the actions of $G(x,y)$ and $G(x,z)$ on $S_y(x,1)$.  Note that $G(x,y)$ acts transitively on $S_x(y,1)$.  The kernel $K = G_1(x) \cap G(y)$ of the action of $G(x,y)$ on $S_y(x,1)$ is a normal subgroup of $G(x,y)$, so its orbits on $S_x(y,1)$ are blocks of the action of $G(x,y)$; considering the type of $y$, there are $\delta = \delta^k_{t+k}$ orbits in this action, and also $\delta$ divides $d_{t+k}-1$.  Let $B$ be the setwise stabilizer in $G(x,y)$ of the $K$-orbit of $z$ and let $B^*$ be the local action of $B$ at $x$.  Then $G(x,z),K \le B \le G(x,y)$, and moreover the orbits $Kz$ and $Bz$ coincide, so $B = KG(x,z)$.  In particular, since the local action of $K$ at $x$ is trivial, we see that $B^* = \Lambda^{k+1}_t$.  Since $G(x,y)$ permutes the blocks of $S_x(y,1)$ transitively, we have $|G(x,y):B|=\delta$, and then since $B \ge K$, we have $|\Lambda^k_t:B^*| = \delta$.  We conclude that $\delta = |\Lambda^k_t: \Lambda^{k+1}_t|$.
\end{proof}

When $\triv \neq O^\infty(F_0(\omega)) \le \Lambda^k_0$, we can narrow down the possible values of $\delta^k_0$ further, finding that except in some special situations, we have $\delta^k_0 =1$, so that $\Lambda^{k+1}_k = \Lambda^k_k$.  The main work in the proof will be to deal with the situation of Corollary~\ref{cor:soluble_dichotomy}(ii), which will isolate the case of exceptional locally prosoluble type.

\begin{lem}\label{lem:insoluble_line_index}
Let $G \in \mc{H}_{F_0,F_1}$; for $t \in \{0,1\}$, write $O_t = O^\infty(F_t(\omega))$.  Suppose that $\tau \in \{0,1\}$ and $k \ge 1$ are such that $\Lambda^k_\tau \ge O_\tau$.  Then exactly one of the following is satisfied.
\begin{enumerate}[(a)]
\item We have $O_\tau = \triv$.
\item We have $\Delta^k_\tau \ge O_\tau \neq \triv$ and $\delta^k_\tau= 1$, so $\Lambda^{k+1}_{\tau+k} = \Lambda^k_{\tau+k}$.
\item We have $\Delta^k_\tau \ge O_\tau \neq \triv$ but $\delta^k_\tau > 1$.  In this case, $F_\tau$ is of exceptional $\SL_2(5)$ affine type, whereas $F_{\tau+k}$ is almost simple.  In particular, $d_\tau = q^2$ for $q \in \{9,19,29,59\}$; we have $O^\infty(F_\tau(\omega)) = \SL_2(5)$; $\delta^k_\tau = \delta^*(F_\tau) \in \{2,3,7,29\}$; and $\Lambda^k_\tau = \Lambda_\tau = F_\tau(\omega)$.
\item We have $\Delta^k_\tau$ soluble but $O_\tau \neq \triv$, so $\Delta^k_\tau$ does not contain $O_\tau$.  In this case, $G$ is of exceptional locally prosoluble type, with $F_t = \PGaL_3(q_t)$ and $\{q_0,q_1\} = \{4,5\}$.  For both $t \in \{0,1\}$, it is also the case that $\Lambda^k_t = F_t(\omega)$ whereas $\Delta^k_t$ is the soluble radical of $F_t(\omega)$, with the result that $\delta^k_t = q_t+1$, and $\Lambda^{k+1}_t$ is soluble.
\end{enumerate}
Moreover, if $F_\tau \cong F_{\tau+k}$ (for example, if $k$ is even), then cases (c) and (d) cannot occur.
\end{lem}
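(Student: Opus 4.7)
If $O_\tau=\triv$ we are in case~(a); so assume $O_\tau\ne\triv$, whence $\kappa(F_\tau(\omega))=1$ by Lemma~\ref{lem:kappa_one}. The whole argument turns on applying Lemma~\ref{lem:minimally_covered:simple} with $G=F_\tau(\omega)$, $H=\Lambda^k_\tau\ge O_\tau=O^\infty(G)$, and $N=\Delta^k_\tau\unlhd H$, yielding the dichotomy $\Delta^k_\tau\ge O_\tau$ (which will lead to cases~(b) and~(c)) or $\Delta^k_\tau\le O_\infty(F_\tau(\omega))$, that is, $\Delta^k_\tau$ soluble (case~(d)).

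In the first branch, if $\delta^k_\tau=1$ then Lemma~\ref{lem:line_index} applied with the types $(\tau,\tau+k)$ swapped gives $|\Lambda^k_{\tau+k}:\Lambda^{k+1}_{\tau+k}|=\delta^k_\tau=1$, which is case~(b). If $\delta^k_\tau>1$, then since the orbits of $\Delta^k_\tau$ coarsen those of $O_\tau$ we have $\delta^*(F_\tau)\ge\delta^k_\tau>1$; by Lemma~\ref{lem:soluble_residual_trans} this forces $F_\tau$ to be of exceptional $\SL_2(5)$ affine type, and $\delta^*(F_\tau)=1$ for $q=11$ rules out that prime, leaving $q\in\{9,19,29,59\}$ with $O_\tau=\SL_2(5)$. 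Since $F_\tau$ then has elementary abelian socle, Theorem~\ref{thm:2bbtrans} admits no nontrivial block-faithful $2$-by-block-transitive extension, so by Corollary~\ref{cor:tree_2bbtrans} we conclude $\Lambda^k_\tau=\Lambda_\tau=F_\tau(\omega)$. The Goursat-type isomorphism $\Lambda^k_\tau/\Delta^k_\tau\cong\Lambda^k_{\tau+k}/\Delta^k_{\tau+k}$ of Lemma~\ref{lem:Goursat} then transfers the unique nonabelian composition factor of $F_\tau(\omega)$ into $F_{\tau+k}(\omega)$, and Lemma~\ref{lem:kappa_one} together with Table~\ref{table:2trans_almost_simple} forces $F_{\tau+k}$ to be almost simple; the equality $\delta^k_\tau=\delta^*(F_\tau)$ follows by checking that every normal subgroup of $F_\tau(\omega)$ lying between $\SL_2(5)$ and $F_\tau(\omega)$ shares its orbits on $\Fb_q^2\setminus\{0\}$ with $\SL_2(5)$.

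In the second branch, $\Lambda^k_\tau/\Delta^k_\tau$ is insoluble (it has $O_\tau/O_\infty(O_\tau)$ as a quotient), and by Lemma~\ref{lem:Goursat} so is $\Lambda^k_{\tau+k}/\Delta^k_{\tau+k}$; hence $F_{\tau+k}(\omega)$ is insoluble, and Lemma~\ref{lem:kappa_one} forces $O_0/O_\infty(O_0)\cong O_1/O_\infty(O_1)$. Taking $y_1$ to be the first vertex on the path from $x$ to $y$, the standard analysis of half-tree stabilizers shows $\rist_G(T_{(y_1,x)})\le G(x,y)\cap G_1(y)$, so the local action $\Theta_\tau$ embeds in $\Delta^k_\tau$ and is therefore soluble; thus $\Theta_\tau\not\ge O_\tau$, and Corollary~\ref{cor:soluble_dichotomy}(ii) applies, giving $\PSL_3(q_t)\unlhd F_t\le\PGaL_3(q_t)$ with $q_t>3$ for both~$t$. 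Applying Lemma~\ref{lem:lambda_restriction} to $\Lambda^k_t$, the insoluble options compatible with $\Lambda^k_t\ge O_t$ (as shown above, on both sides) are cases~(i) and~(vii); the requirement that $\Delta^k_t$ be soluble and normal in $\Lambda^k_t$ with insoluble quotient identifies $\Delta^k_t$ with the unipotent radical $W_t=O_\infty(F_t(\omega))$ and forces $\Lambda^k_t=F_t(\omega)$. The orbits of $W_t$ on $P_2(q_t)\setminus\{\omega\}$ are the $q_t+1$ lines through $\omega$, giving $\delta^k_t=q_t+1$, and $\Lambda^{k+1}_t$ is then soluble by Lemma~\ref{lem:line_index}. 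The matched simple composition factors, combined with the enumeration of soluble-quotient $2$-by-block-transitive actions of $\PSL_3(q_t)$-groups in Table~\ref{table:small_q}, isolate $\{F_0,F_1\}=\{\PGaL_3(4),\PGaL_3(5)\}$ as the unique consistent pair, placing $G$ in the exceptional locally prosoluble type.

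For the closing clause, case~(c) demands $F_\tau$ affine and $F_{\tau+k}$ almost simple, while case~(d) demands $q_0\ne q_1$; either is incompatible with $F_\tau\cong F_{\tau+k}$, and in particular both are automatically excluded when $k$ is even. The main obstacle throughout is case~(d): descending from the general bound $q_t>3$ of Corollary~\ref{cor:soluble_dichotomy}(ii) to the specific coincidence $\{q_0,q_1\}=\{4,5\}$ requires the delicate combination of the Goursat subdirect compatibility of Lemma~\ref{lem:Goursat} with the fine-grained classification in Table~\ref{table:small_q}.
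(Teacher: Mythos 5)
Your overall skeleton (split on whether $\Delta^k_\tau \ge O_\tau$ via Lemma~\ref{lem:minimally_covered:simple}, then on the value of $\delta^k_\tau$) matches the paper's, but there are two genuine gaps. First, in case (c) you claim the Goursat isomorphism $\Lambda^k_\tau/\Delta^k_\tau \cong \Lambda^k_{\tau+k}/\Delta^k_{\tau+k}$ ``transfers the unique nonabelian composition factor'' into $F_{\tau+k}(\omega)$. But in this branch $\Delta^k_\tau \ge O_\tau \ge O^\infty(\Lambda^k_\tau)$, so $\Lambda^k_\tau/\Delta^k_\tau$ is \emph{soluble} and nothing nonabelian is transferred. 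The correct route to ``$F_{\tau+k}$ is almost simple'' is the one you used for $F_\tau$ but not for $F_{\tau+k}$: by Lemma~\ref{lem:line_index}, $\delta^k_\tau>1$ forces $\Lambda^{k+1}_{\tau+k}$ to be a proper subgroup of $F_{\tau+k}(\omega)$, and then Corollary~\ref{cor:tree_2bbtrans} together with Theorem~\ref{thm:2bbtrans} excludes affine type for $F_{\tau+k}$.

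Second, and more seriously, in case (d) you never exclude $q_0=q_1$. Matching the nonabelian composition factors across the Goursat isomorphism only yields $\PSL_2(q_0)\cong\PSL_2(q_1)$, which is trivially satisfied when $q_0=q_1=q$ for any $q>3$, and the classification of $2$-by-block-transitive actions with soluble point stabilizer does not forbid this either (case (b) of Theorem~\ref{thm:2bbtrans} provides such actions for every $q$; Table~\ref{table:small_q} is only a sample for $q\le 5$). So the claim that the ``unique consistent pair'' is $\{\PGaL_3(4),\PGaL_3(5)\}$ is unsupported, and your justification of the closing clause (``case (d) demands $q_0\ne q_1$'') is circular. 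The paper's Claim~1 supplies the missing idea: if $q_0=q_k$, then since $\Out(\PGaL_2(q_0))$ is trivial, the image of $G(x,y)$ in $\PGaL_2(q_0)\times\PGaL_2(q_k)$ is conjugate into the diagonal, which acts intransitively on $P_1(q_0)\times P_1(q_k)$; this contradicts the transitivity of $G(x,y)$ on $S_y(x,1)\times S_x(y,1)$ guaranteed by type-distance-transitivity. Only after this does $\PSL_2(q_0)\cong\PSL_2(q_1)$ with $q_0\ne q_1$ pin down $\{q_0,q_1\}=\{4,5\}$. Relatedly, your derivation of $\Lambda^k_t=F_t(\omega)$ and $\Delta^k_t=O_\infty(F_t(\omega))$ from ``soluble normal with insoluble quotient'' is too quick --- a proper case-(a) point stabilizer of index dividing $q_t-1$ has the same property --- whereas the paper obtains these via its minimality argument (Claims~2--4).
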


\begin{proof}
Note that the four cases are mutually exclusive: case (a) is the only case in which $O_\tau = \triv$; cases (b) and (c) are distinguished from each other by the value of $\delta^k_\tau$; case (d) is the only case in which $\Delta^k_\tau$ does not contain $O_\tau$.  If $O_\tau = \triv$ there is nothing more to prove, so assume $O_\tau \neq \triv$.  Given Lemma~\ref{lem:line_index}, if $O_\tau \neq \triv$ and $\delta^k_\tau = 1$ we are in case (b), so we assume from now on that $\delta^k_\tau > 1$.

Without loss of generality let us take $\tau=0$.  For $t \in \{0,1\}$, let $R^k_t$ be the soluble radical of $\Lambda^k_t$.  We have already observed that $\Delta^k_0$ is normal in $\Lambda^k_0$.  Since $\Lambda^k_0$ acts transitively on $\Omega \setminus \{\omega\}$, all orbits of $\Delta^k_0$ are of equal size.  Note also that $\Delta^k_t$ contains $\Theta_t$.

Suppose $\Delta^k_0 \ge O_0$.  Then $O_0$ is nontrivial and intransitive, so we fall into case (ii) of Lemma~\ref{lem:soluble_residual_trans}; thus $\delta^k_0$ divides $\delta^*(F_0)$, with $\delta^*(F_0) \in \{2,3,7,29\}$.  Since $\delta^*(F_0)$ is prime, in fact $\delta^k_0=\delta^*(F_0)$.  Since $F_0$ is of affine type, we see by Lemma~\ref{lem:lambda_restriction} that $\Lambda^k_0 = \Lambda_0 = F_0(\omega)$.  By Lemma~\ref{lem:line_index} we deduce that $\Lambda^{k+1}_k$ is properly contained in $F_k(\omega)$; by Lemma~\ref{lem:lambda_restriction} it follows that $F_k$ is almost simple, so $F_0 \not\cong F_k$ and $k$ is odd. Thus we are in case (c) of the present lemma.

We have now dealt with the situation in which $\Delta^k_0 \ge O_0$, so from now on we suppose that $\Delta^k_0$ does not contain $O_0$.  By hypothesis $\Lambda^k_0 \ge O_0$, from which it follows that $O_0$ is the soluble residual of $\Lambda^k_0$; we then see that $\kappa(\Lambda^k_0) = \kappa(F_k(\omega)) = 1$.  Since $\Delta^k_0$ is normal in $\Lambda^k_0$, it follows via Lemma~\ref{lem:minimally_covered:simple} that $\Delta^k_0 \le R^k_t$, and hence $\Theta_0 \le R^k_t$.

By Corollary~\ref{cor:soluble_dichotomy} we have $\PSL_3(q_0) \unlhd F_0 \le \PGaL_3(q_0)$ with $F_0$ acting on $\Omega_0 = P_2(q_0)$, for some prime power $q_0 > 3$; moreover, $F_1$ is of the same form (using a possibly different prime power $q_1>3$) or $F_1(\omega)$ is soluble.

The projective line $P_1(q_0)$ is an $F_0(\omega)$-equivariant quotient of $\Omega_0 \setminus \{\omega\}$ in the manner described in Remark~\ref{rem:linear}.  Since $O_0 \le \Lambda^k_0$, the action of $\Lambda^k_0$ on $P_1(q_0)$ yields a homomorphism $\pi_0: \Lambda^k_0 \rightarrow \PGaL_2(q_0)$ such that the image contains the simple normal subgroup $\PSL_2(q_0)$.  The kernel of $\pi_0$ is the soluble radical $R^k_0$ of $\Lambda^k_0$, since the centralizer of $\PSL_2(q_0)$ in $\PGaL_2(q_0)$ is trivial.  In particular, $\pi_0(\Delta^k_0) = \triv$.

Let $A_{k,t}$ and $\psi_{k,t}$ be as in Lemma~\ref{lem:Goursat}; note that $A_{k,0} = A_{k,k}$.  Then we have surjective homomorphisms $\psi_{k,0}:\Lambda^k_0 \rightarrow A_{k,0}$ and $\psi_{k,k}:\Lambda^k_k \rightarrow A_{k,0}$ with kernel $\Delta^k_0$ and $\Delta^k_k$ respectively.  Let $B_{k,0} = A_{k,0}/O_{\infty}(A_{k,0})$ and let $\alpha$ be the quotient map from $A_{k,0}$ to $B_{k,0}$; then $B_{k,0}$ is an almost simple group with socle $\PSL_2(q_0)$.  Since $B_{k,0}$ occurs as a quotient of $\Lambda^k_k$, it follows that $\Lambda^k_k$ is insoluble, and hence $F_k(\omega)$ is insoluble; thus $\PSL_3(q_k) \unlhd F_k \le \PGaL_3(q_k)$ for some $q_k > 3$.  We can then define $\pi_k: \Lambda^k_k \rightarrow \PGaL_2(q_k)$ similarly to $\pi_0$.

To finish the proof, we must show that we are in case (d), with $F_0 \not\cong F_k$; we accomplish this with a series of claims.

\emph{Claim 1: The parameter $k$ is odd.  If $\Lambda^k_k \ge O_k$, then $\{q_0,q_1\} = \{4,5\}$ and $\PSL_2(q_0) \cong \PSL_2(q_1) \cong \Alt(5)$.}

Note that if $k$ is even, then $\Lambda^k_0 = \Lambda^k_k$ and $F_0 = F_k$; thus in this case $\Lambda^k_k \ge O_k$ and $q_0=q_k$.

Suppose that $\Lambda^k_k \ge O_k$ and $q_0=q_k$, with the aim of deriving a contradiction.  We see that $\alpha\psi_{k,0}$ and $\alpha\psi_{k,k}$ have kernel $R^k_0$ and $R^k_k$ respectively; there are thus injective homomorphisms $\beta_0,\beta_k:B_{k,0} \rightarrow \PGaL_2(q_0)$ such that
\[
\beta_0\alpha\psi_{k,0} = \pi_0 \text{ and } \beta_k\alpha\psi_{k,k} = \pi_k.
\]
The maps $\beta_0$ and $\beta_k$ differ only by an automorphism of $\PGaL_2(q_0)$; indeed, since $\PGaL_2(q_0)$ has trivial outer automorphism group (see for instance \cite[Theorem~3.2]{Wilson}), there is $\lambda \in \PGaL_2(q_0)$ such that $\beta_k(b) = \lambda \beta_0(b) \lambda\inv$ for all $b \in B_{k,0}$.
Consider the action of $G(x,y)$ on $S_y(x,1) \times S_x(y,1)$.  Via the map described in Remark~\ref{rem:linear}, we obtain a factor $G(x,y)$-space $P_1(q_0) \times P_1(q_k)$ of $S_y(x,1) \times S_x(y,1)$.  The group of permutations of $P_1(q_0) \times P_1(q_k)$ induced by $G(x,y)$ is then the following subgroup of $\PGaL_2(q_0)^2 = \PGaL_2(q_0) \times \PGaL_2(q_k)$:
\[
M = \{(h,\lambda h \lambda\inv) \mid h \in H\}, \text{ where } H = \pi_0(\Lambda^k_0).
\]
In particular, $M$ is contained in a $\PGaL_2(q_0)^2$-conjugate of $D = \{(h,h) \mid h \in \PGaL_2(q_0)\}$.  However, we see that the action of any conjugate of $D$ is such that a point stabilizer in one factor fixes a point in the other factor, so $D$ acts intransitively on $P_1(q_0) \times P_1(q_k)$, and hence $G(x,y)$ is intransitive on $S_y(x,1) \times S_x(y,1)$, which implies that $G$ is not type-distance-transitive.  This contradicts Corollary~\ref{cor:distance_transitivity}.

From the contradiction we deduce that $k$ is odd; suppose that $\Lambda^k_k \ge O_k$.  Then $q_0 \neq q_k$ and $\kappa(\Lambda^k_0) = \kappa(\Lambda^k_k)=1$; moreover, the unique nonabelian composition factor of $\Lambda^k_t$ for $t \in \{0,k\}$ is $\PSL_2(q_k)$.  Given the surjective maps $\psi_{k,0}$ and $\psi_{k,k}$, we then have $\PSL_2(q_0) \cong \PSL_2(q_k)$ as abstract groups; thus $\{q_0,q_k\} = \{4,5\}$ and $\PSL_2(q_0) \cong \PSL_2(q_1) \cong \Alt(5)$, see \cite{Artin}.  This completes the proof of Claim 1.

We now know that $\PSL_3(q_t) \unlhd F_t \le \PGaL_3(q_t)$ and $q_t > 3$ for all $t \in \{0,1\}$, and we can write $q_1 = q_k$.

\emph{Claim 2: Given $k' \ge 1$ and $t \in \{0,1\}$ such that $\Delta^{k'}_t \ge O_t$, then $\delta^{k'}_t= 1$.}

As mentioned before, we have already reduced the situation in which $\Delta^{k'}_t \ge O_t$ to that described in the three cases (a), (b), (c).  On the other hand, since $\PSL_3(q_t) \unlhd F_t \le \PGaL_3(q_t)$ and $q_t > 3$ for all $t \in \{0,1\}$, we see that neither local action is soluble and neither local action is of affine type.  Thus if $\Delta^{k'}_t \ge O_t$ then we are in the situation of case (b), meaning that $\delta^{k'}_t= 1$ as claimed.

\emph{Claim 3: Suppose $k$ is minimal such that there exists $t \in \{0,1\}$ such that $\Delta^k_t$ does not contain $O_t$.  Then case (d) holds.}

By Claim 2 and Lemma~\ref{lem:line_index}, we see that
\[
\forall t \in \{0,1\}: \; F_t(\omega) = \Lambda^1_t = \Lambda^2_t = \dots = \Lambda^k_t.
\]
In particular, $\Lambda^k_k \ge O_k$.  By Claim 1 we have $\{q_0,q_1\} = \{4,5\}$; without loss of generality, $q_0=5$ and $q_1=4$.  We then have $F_0 = \PSL_3(5) = \PGaL_3(5)$, and hence $B_{k,0} \cong \Sym(5)$, acting as $\PGaL_2(q_t)$ on $P_1(q_t)$.  It follows that $F_1(\omega)/R^k_k$ is isomorphic to $\Sym(5)$ (as opposed to $\Alt(5)$), which in turn implies that $F_1 \nleq \PGL_3(4)$.

We see that $\Lambda^k_t = F_t(\omega)$ has a unique minimal normal subgroup $W_t$, which is elementary abelian of order $q^2_t$. Considering the structure of $\Lambda^k_t$, one sees that $B_{k,0}$ is maximal among common quotients of $\Lambda^k_0$ and $\Lambda^k_1$, so to satisfy Lemma~\ref{lem:Goursat} we must have $\Delta^k_t = R^k_t$.  We then see that orbits of $\Delta^k_t$ correspond to points in $P_1(q_t)$, so $\delta^k_t = q_t+1$.  Then by Lemma~\ref{lem:line_index}, we have $|\Lambda^k_t : \Lambda^{k+1}_t| = q_{1-t}+1$; more precisely, $\Lambda^{k+1}_t$ is the preimage of a point stabilizer of the action of $B_{k,0}$ on $P_1(q_{1-t})$.

By Corollary~\ref{cor:tree_2bbtrans}, the action of $F_t$ on the cosets of $\Lambda^{k+1}_t$ is $2$-by-block-transitive, with block size $q_{1-t}+1$.  The possibilities are controlled by Table~\ref{table:small_q}; in particular, given that $F_1 \nleq \PGL_3(4)$, we must have $F_1 = \PGaL_3(4)$.  Obtaining $\Lambda^{k+1}_t$ as in the previous paragraph, one sees that $\Lambda^{k+1}_0$ is of the form $W_0 \rtimes (\SL_2(3) \rtimes C_4)$ and $\Lambda^{k+1}_1$ is of the form $W_1 \rtimes \GaL_1(16)$.  Moreover, these subgroups are indeed point stabilizers of $2$-by-block-transitive actions as indicated in Table~\ref{table:small_q}. In particular, note that the groups $\Lambda^{k'}_0$ and $\Lambda^{k'}_1$ are soluble for all $k' > k$ including $k'=\infty$.  From Proposition~\ref{prop:soluble_dichotomy}, it follows that $G$ is locally prosoluble, so we deduce that $G$ is of exceptional locally prosoluble type.  This completes the proof of Claim 3.

\emph{Claim 4: The parameter $k$ is minimal such that there exists $t \in \{0,1\}$ such that $\Delta^k_t$ does not contain $O_t$.}

For this claim we consider the structure of $\Lambda^{k'}_t$ for all possible choices of $k'$ and $t$.  As in the proof of Claim 3, we have
\[
\forall t \in \{0,1\}: \; F_t(\omega) = \Lambda^1_t = \Lambda^2_t = \dots = \Lambda^{k_0}_t,
\]
where $k_0$ is minimal such that there exists $t \in \{0,1\}$ such that $\Delta^k_t$ does not contain $O_t$.  Then by Claim 3, we see that $\Lambda^{k'}_t$ is soluble for all $k' > k_0$ and $t \in \{0,1\}$, whereas $O_0$ and $O_1$ are not soluble.  Since in the hypothesis of the lemma we assumed that $\Lambda^k_\tau \ge O_\tau$, it follows that $k = k_0$ as claimed.

Combining Claims 3 and 4 completes the proof of the lemma.
\end{proof}

The critical case of Lemma~\ref{lem:insoluble_line_index} for the purposes of Theorem~\ref{intro:locally_prosoluble}, which ultimately leads to case (ii) of Theorem~\ref{intro:locally_prosoluble}, is case (d): this is the only situation in which we can have $\Lambda^k_t \ge O_t$ but $\Lambda^{k+1}_t \ngeq O_t$ for some $t \in \{0,1\}$.  The proof that this situation can actually occur will be given in Example~\ref{ex:exceptional} at the end of the article.

Note the following consequence, which completes a large part of the proof of Theorem~\ref{intro:locally_prosoluble} and takes care of a special case of Theorem~\ref{intro:end_stabilizer}.

\begin{cor}\label{cor:insoluble_line_index}
Let $G \in \mc{H}_{F_0,F_1}$ and write $O_t = O^\infty(F_t(\omega))$.  Suppose that Lemma~\ref{lem:insoluble_line_index}(d) does not occur for any $k$.  Then $\Delta^k_t \ge O_t$ for both $t \in \{0,1\}$ and all $k \ge 0$; consequently, $\Lambda_t \ge O_t$.  If in addition, we have $O_0 \neq \triv$ and $O_1 \neq \triv$, and neither $F_0$ nor $F_1$ is of exceptional $\SL_2(5)$ affine type, then $\Lambda_t = F_t(\omega)$ for both $t \in \{0,1\}$.
\end{cor}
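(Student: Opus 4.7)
The plan is to derive all three assertions directly from Lemma~\ref{lem:insoluble_line_index}, using the inclusion $\Delta^{k-1}_t \le \Lambda^k_t$ recorded just before Lemma~\ref{lem:Goursat}.

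First I would prove the intermediate statement that $\Lambda^k_t \ge O_t$ for every $k \ge 1$ and every $t \in \{0,1\}$, by induction on $k$. The base case $k=1$ is immediate since $\Lambda^1_t = F_t(\omega) \ge O_t$. For the inductive step, assume $\Lambda^{k-1}_t \ge O_t$ and apply Lemma~\ref{lem:insoluble_line_index} with $\tau = t$ and parameter $k-1$. By the global hypothesis of the corollary, case (d) is excluded, so one of (a), (b), (c) holds; in each of these we get $\Delta^{k-1}_t \ge O_t$: trivially in (a), where $O_t = \triv$, and by the explicit conclusion in (b) and (c). Since $\Delta^{k-1}_t \le \Lambda^k_t$, this yields $\Lambda^k_t \ge O_t$. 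Intersecting over $k$ gives $\Lambda_t \ge O_t$.

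To establish the first claim $\Delta^k_t \ge O_t$ for all $k \ge 1$, apply Lemma~\ref{lem:insoluble_line_index} directly with $(\tau,k) = (t,k)$: the hypothesis $\Lambda^k_t \ge O_t$ is supplied by the previous paragraph, case (d) is again excluded, and the resulting cases (a)--(c) each give $\Delta^k_t \ge O_t$ as above (with $k = 0$ being trivial or vacuous).

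For the sharper conclusion, the additional hypothesis $O_t \neq \triv$ rules out case (a) of Lemma~\ref{lem:insoluble_line_index}, the assumption that neither $F_0$ nor $F_1$ is of exceptional $\SL_2(5)$ affine type rules out case (c), and case (d) is excluded by assumption. Thus, for every $k \ge 1$ and every $t$, case (b) is forced; in particular $\delta^k_t = 1$ and hence $\Lambda^{k+1}_{t+k} = \Lambda^k_{t+k}$. Letting $t$ range over $\{0,1\}$ so that $t+k$ covers both parities, we deduce $\Lambda^{k+1}_\sigma = \Lambda^k_\sigma$ for all $\sigma$ and $k \ge 1$, whence $\Lambda^k_\sigma = \Lambda^1_\sigma = F_\sigma(\omega)$ for all $k$; passing to the intersection, $\Lambda_\sigma = F_\sigma(\omega)$.

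The whole argument is essentially a bookkeeping application of Lemma~\ref{lem:insoluble_line_index}, and no new technical obstacle arises. The only point requiring a moment's care is that in the inductive step one should apply the lemma with $\tau = t$ (rather than with $\tau = t+k-1$, which would only pin down the index $|\Lambda^{k-1}_t : \Lambda^k_t|$ via Lemma~\ref{lem:line_index} and would need the subdirect-product structure of Lemma~\ref{lem:Goursat} to transfer information back to the type-$t$ side); the chosen $\tau = t$ routes through $\Delta^{k-1}_t \le \Lambda^k_t$ and bypasses this entirely.
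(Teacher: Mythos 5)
Your proposal is correct and follows essentially the same route as the paper: an induction on $k$ that repeatedly applies Lemma~\ref{lem:insoluble_line_index} with $\tau=t$, using the exclusion of case (d) to get $\Delta^{k}_t \ge O_t$ and then $\Delta^{k}_t \le \Lambda^{k+1}_t$ to propagate $\Lambda^{k+1}_t \ge O_t$, followed by the observation that the extra hypotheses force case (b) and hence $\delta^k_t=1$ and $\Lambda^k_t = F_t(\omega)$ for all $k$. The only difference is that you spell out the bookkeeping (including the parity point about $\Lambda^{k+1}_{t+k} = \Lambda^k_{t+k}$) that the paper leaves implicit.
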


\begin{proof}
Given $t \in \{0,1\}$ we can repeatedly apply Lemma~\ref{lem:insoluble_line_index} to restrict $\Delta^k_t$ and $\Lambda^{k+1}_t$ in terms of $\Lambda^k_t$, starting with $k=1$, where we have $\Lambda^1_t = F_t(\omega) \ge O_t$.  Because case (d) of Lemma~\ref{lem:insoluble_line_index} never occurs, we see by induction on $k$ that $\Delta^k_t \ge O_t$, and hence also $\Lambda^{k+1}_t \ge O_t$, for all $k \ge 1$ and both $t \in \{0,1\}$.  In particular, $\Lambda_t \ge O_t$ for both $t \in \{0,1\}$.

Now suppose that $O_0 \neq \triv$ and $O_1 \neq \triv$, and neither $F_0$ nor $F_1$ is of exceptional $\SL_2(5)$ affine type.  Then only case (b) of Lemma~\ref{lem:insoluble_line_index} can occur, so we see by Lemma~\ref{lem:line_index} and induction that $\Lambda^k_t = F_t(\omega)$ and $\delta^k_t = 1$ for all $k \ge 0$ and $t \in \{0,1\}$.  Thus $\Lambda_t = F_t(\omega)$ for both $t \in \{0,1\}$.
\end{proof}

We can also give the possible groups $\Lambda_t$ when Lemma~\ref{lem:insoluble_line_index}(d) does occur.

\begin{lem}\label{lem:end_stabilizer:excep}
Let $F_t = \PGaL_3(q_t)$ for $q_0=5$ and $q_1=4$, let $G \in \mc{H}_{F_0,F_1}$ be such that $\Lambda_t$ is properly contained in a point stabilizer of $F_t$ for some $t \in \{0,1\}$, and let $W_t$ be the socle of $F_t(\omega)$.  Then $\Lambda_t = W_t \rtimes A_t$, where
\[
A_0 \in \{\SL_2(3) \rtimes C_4, \SL_2(3) \rtimes C_2,  \SL_2(3), C_3 \rtimes C_8  \}  \; \text{ and } \; A_1 = \GaL_1(16).
\]
\end{lem}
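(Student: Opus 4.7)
The plan is to reduce to case~(d) of Lemma~\ref{lem:insoluble_line_index}, apply Corollary~\ref{cor:tree_2bbtrans} together with Table~\ref{table:small_q} to list the soluble candidates for $A_t$, and then rule out two spurious candidates by a short element-order argument inside $\SL_2(3) \rtimes C_4 \le \GL_2(5)$. For the reduction: both $F_0 = \PGaL_3(5)$ and $F_1 = \PGaL_3(4)$ are almost simple (neither is of affine type, in particular neither is of exceptional $\SL_2(5)$ affine type) with insoluble point stabilizers, so Corollary~\ref{cor:insoluble_line_index} would force $\Lambda_t = F_t(\omega)$ for both $t$ unless case~(d) of Lemma~\ref{lem:insoluble_line_index} holds at some level $k$. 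The hypothesis that $\Lambda_\tau \subsetneq F_\tau(\omega)$ for some $\tau$ therefore puts us in that case, and Claim~3 of the proof of Lemma~\ref{lem:insoluble_line_index} gives
\[
\Lambda^{k+1}_0 = W_0 \rtimes (\SL_2(3) \rtimes C_4), \qquad \Lambda^{k+1}_1 = W_1 \rtimes \GaL_1(16),
\]
with $\Lambda^{k'}_t$ soluble for every $k' > k$. Since $\Lambda_t \le \Lambda^{k+1}_t$ and $\Lambda_t \supseteq W_t$ by Lemma~\ref{lem:lambda_restriction}(a), we can write $\Lambda_t = W_t \rtimes A_t$ with $A_t$ a soluble subgroup of $\SL_2(3) \rtimes C_4$ when $t=0$ and of $\GaL_1(16)$ when $t=1$.

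Next, by Corollary~\ref{cor:tree_2bbtrans} the action of $F_t$ on $F_t/\Lambda_t$ is $2$-by-block-transitive with block stabilizer $F_t(\omega)$, so $A_t$ must appear as a complement in one of the soluble entries of the $F_t$-row of Table~\ref{table:small_q}. For $t = 1$ the only soluble entry is $\GaL_1(16)$, giving $A_1 = \GaL_1(16)$ immediately. For $t = 0$ the soluble entries supply six candidates for $A_0$: namely $\GaL_1(25)$, $C_{24}$, $C_3 \rtimes C_8$, $\SL_2(3) \rtimes C_4$, $\SL_2(3) \rtimes C_2$, and $\SL_2(3)$. Four of these match the lemma; it remains to rule out $\GaL_1(25)$ and $C_{24}$.

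Since $\GaL_1(25)$ contains $C_{24}$ with index $2$, it suffices to show that $\SL_2(3) \rtimes C_4$ contains no element of order $24$. The specific subgroup $\SL_2(3) \rtimes C_4 \le \GL_2(5)$ arising in Table~\ref{table:small_q} has order $96$ and contains $\SL_2(3)$; its intersection with $Z := Z(\GL_2(5)) \cong C_4$ must therefore contain $\{\pm I\}$, and since $\PGL_2(5) \cong \Sym_5$ has no subgroup of order $48$, this intersection must be all of $Z$. The image of $\SL_2(3) \rtimes C_4$ in $\PGL_2(5)$ then has order $24$ and index $5$, so it is a copy of $\Sym_4$. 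As $\Sym_4$ has no element of order exceeding $4$ and $Z$ has exponent $4$, every $g \in \SL_2(3) \rtimes C_4$ satisfies $g^4 \in Z$ and hence $g^{16} = 1$, so no element has order $24$. This eliminates $C_{24}$ and \emph{a fortiori} $\GaL_1(25)$, leaving precisely the four candidates claimed. The main (minor) obstacle to this plan is the structural identification of the image of $\SL_2(3) \rtimes C_4$ in $\PGL_2(5)$; this can alternatively be verified by direct GAP computation, in the spirit of the computational remarks at the start of Section~3.
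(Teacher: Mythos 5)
Your proposal is correct and follows essentially the same route as the paper: reduce via Corollary~\ref{cor:insoluble_line_index} to case~(d) of Lemma~\ref{lem:insoluble_line_index}, then match $\Lambda_t$ against the entries of Table~\ref{table:small_q} using $2$-by-block-transitivity; the paper leaves the final containment check for $t=0$ as ``one can check'', which you make explicit with the element-order argument. One small imprecision there: for $g$ whose image in $\Sym(4)$ has order $3$ you only get $g^3 \in Z$ (hence $g^{12}=1$) rather than $g^4 \in Z$, but in every case an element of order $24$ is impossible, so the conclusion stands.
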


\begin{proof}
Since $O_0 \neq \triv$ and $O_1 \neq \triv$, but we have $\Lambda_t \lneq F_t(\omega)$ for some $t \in \{0,1\}$, we see from Corollary~\ref{cor:insoluble_line_index} that Lemma~\ref{lem:insoluble_line_index}(d) must occur for some $k$, with the result that 
\[
\Lambda_0 \le W_0 \rtimes  (\SL_2(3) \rtimes C_4) \; \text{ and } \; \Lambda_1 \le W_1 \rtimes \GaL_1(16).
\]
Since $F_t$ has $2$-by-block-transitive action on the cosets of $\Lambda_t$, the possible groups for $\Lambda_0$ and $\Lambda_1$ can be deduced from Table~\ref{table:small_q}.  In particular, we see immediately that $\Lambda_1 = W_1 \rtimes \GaL_1(16)$.  One can check that among the groups occurring in Table~\ref{table:small_q} as point stabilizers of $2$-by-block-transitive actions of $F_0 = \PGaL_3(5)$, those that are subgroups of $W_0 \rtimes (\SL_2(3) \rtimes C_4)$ are as given in the statement.
\end{proof}

We have a few other special cases to deal with before the proofs of the main theorems.  First let us note some calculations that are easily verified using the list of possible actions in Table~\ref{table:small_q}.

\begin{lem}\label{lem:small_PSL3}
Let $G = \PGaL_3(p)$ for $p \in \{2,3\}$, acting $2$-by-block-transitively on the set $\Omega$ with point stabilizer $G(\omega)$, and with the set $\Omega/\sim$ of blocks corresponding to the projective plane $P_2(q)$ equipped with its usual $G$-action.  Let $N \unlhd G(\omega)$ and let $\delta$ be the number of orbits of $N$ on the set of blocks other than $[\omega]$.  If $p=2$ then $\delta \in \{1,3,6\}$.  If $p=3$ then $\delta \in \{1,2,4,12\}$; moreover, when the blocks are singletons, the case $\delta=2$ does not occur.
\end{lem}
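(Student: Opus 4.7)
The plan is a direct case analysis based on the point stabilizers listed in Table~\ref{table:small_q}, together with the singleton-block case $G(\omega)=G([\omega])$. First I would establish the common structural setup. Since $\PGaL_3(p)=\PGL_3(p)$ for $p\in\{2,3\}$, we may represent $G$ on $V=\Fb_p^3$ with $[\omega]$ corresponding to a $1$-dimensional subspace $\omega\le V$; then $G([\omega])$ is the maximal parabolic $W\rtimes\GL_2(p)$, where $W=\mathrm{hom}(V/\omega,\omega)\cong\Fb_p^2$ is the unipotent radical and the Levi $\GL_2(p)=\GL(V/\omega)$ acts naturally. By Theorem~\ref{thm:2bbtrans}, the socle $W$ of $G([\omega])$ is contained in $G(\omega)$, so $G(\omega)=W\rtimes L$ for some $L\le\GL_2(p)$, with $L=\GL_2(p)$ in the singleton case. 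The $p+1$ lines through $[\omega]$ partition $P_2(p)\setminus\{[\omega]\}$ into $p+1$ groups of $p$ points; $W$ preserves each such line $\ell$ and acts transitively on $\ell\setminus\{[\omega]\}$, while the induced action of $G(\omega)$ on the pencil of lines through $[\omega]$ is the standard action of $L\le\GL_2(p)$ on $P_1(p)$.

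Next I would classify the normal subgroups of $G(\omega)$. In every case occurring, $L$ acts faithfully and irreducibly on $W$: this is immediate because $C_3$, $\SL_2(3)$, $\GaL_1(9)$, $C_8$, $Q_8$ and $\GL_2(p)$ are each transitive on $W\setminus\{0\}$. A direct centralizer computation in the semidirect product gives $C_{W\rtimes L}(W)=W$, so any $N\unlhd G(\omega)$ with $N\cap W=\{1\}$ lies in $W$ and is therefore trivial. Hence each normal subgroup of $G(\omega)$ is either $\{1\}$ or of the form $W\rtimes N'$ for a unique $N'\unlhd L$. Since $W$ fuses each line $\ell\setminus\{[\omega]\}$ into a single orbit, the number of orbits of $W\rtimes N'$ on $P_2(p)\setminus\{[\omega]\}$ equals the number of orbits of the image of $N'$ in $L/(L\cap\{\pm I\})$ on $P_1(p)$, while $N=\{1\}$ gives $\delta=p(p+1)\in\{6,12\}$.

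It then remains to enumerate, for each $L$, the normal subgroups of $L$ and their orbit counts on $P_1(p)$. For $p=2$ the possibilities are $L=C_3$ (with blocks of size $2$) and $L=\GL_2(2)=S_3$ (singletons); in both cases the normal subgroups map onto subgroups of $S_3$ acting on $P_1(2)$ with orbit counts in $\{1,3\}$, yielding $\delta\in\{1,3,6\}$. For $p=3$ one similarly enumerates the normal subgroups of $L\in\{\SL_2(3),\GaL_1(9),C_8,Q_8,\GL_2(3)\}$ and reads off the orbit counts via their images in $\PGL_2(3)\cong S_4$ acting on $P_1(3)$: $\SL_2(3)$ and $\GL_2(3)$ contribute only counts in $\{1,4\}$ (their normal subgroups map to $\{1\}$, $V_4$, $A_4$, $S_4$), while $C_8$ and $Q_8$ each contribute counts in $\{1,2,4\}$. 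The main obstacle in the enumeration is the case $L=\GaL_1(9)\cong SD_{16}$: one must identify the seven normal subgroups $\{1\}, Z, C_4, C_8, Q_8, D_8, SD_{16}$, verify that $Z=\{\pm I\}$ is the kernel of the induced map to $\PGL_2(3)$, and check that their images in $D_8\le S_4$ produce orbit counts $4,4,2,1,1,2,1$ respectively on the four points of $P_1(3)$. Combining all cases yields $\delta\in\{1,2,4,12\}$ for $p=3$, and since the singleton case $L=\GL_2(3)$ contributes only $\{1,4,12\}$, the value $\delta=2$ does not arise when the blocks are singletons.
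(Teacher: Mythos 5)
Your proposal is correct. The paper does not actually write out a proof of this lemma: it is introduced only with the remark that the calculations are "easily verified using the list of possible actions in Table~\ref{table:small_q}", with the preamble to Section~3 indicating that such routine checks on individual finite groups were delegated to GAP. So you and the paper start from the same classification input --- the list of block-faithful $2$-by-block-transitive point stabilizers of $\PGaL_3(2)$ and $\PGaL_3(3)$ from Table~\ref{table:small_q}, plus the singleton-block case $G(\omega)=G([\omega])$ --- but where the paper asserts the orbit counts, you supply a structural argument. Your two reductions are sound: since $W\le G(\omega)$ by Theorem~\ref{thm:2bbtrans} and $L=G(\omega)\cap\GL_2(p)$ acts faithfully and irreducibly on $W$ (being transitive on $W\setminus\{0\}$ in every listed case), any nontrivial $N\unlhd G(\omega)$ satisfies $N\cap W=W$ (irreducibility) or else $N\le \CC_{G(\omega)}(W)=W$ forces $N=\triv$, so $N=W\rtimes N'$ with $N'\unlhd L$; and since $W$ is transitive on each fibre of $P_2(p)\setminus\{[\omega]\}\rightarrow P_1(p)$, the count $\delta$ for nontrivial $N$ equals the number of orbits of the image of $N'$ in $\PGL_2(p)\cong\Sym(p+1)$ acting on $P_1(p)$. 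I have checked the remaining enumeration --- the normal subgroups of $C_3$, $\Sym(3)$, $\SL_2(3)$, $\GaL_1(9)$, $C_8$, $Q_8$ and $\GL_2(3)$ and their images --- including the semidihedral case $\GaL_1(9)$ with its seven normal subgroups and orbit counts $4,4,2,1,1,2,1$, and the fact that $\delta=2$ arises only from images generated by double transpositions, which do not occur among images of normal subgroups of $\GL_2(3)$; this is precisely what the final clause of the lemma requires. What your route buys is a fully human-verifiable proof in place of a computer check, at the cost of some length and of having to identify correctly how each $L$ sits inside $\PGL_2(p)$, which you do.
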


We now obtain some restrictions on $\Lambda^k_t(G)$ when both local actions have socle $\PSL_3(q)$, or when one local action has such a socle and the other has soluble point stabilizers.

\begin{lem}\label{lem:leftover_psl}
Let $G \in \mc{H}_{F_0,F_1}$.  Suppose that $\PSL_3(q_0) \le F_0 \le \PGaL_3(q_0)$ with $F_0$ acting on $P_2(q_0)$, for some prime power $q_0 \ge 2$.  Suppose also that $F_1$ is of the same form for some prime power $q_1 \le q_0$, or that $F_1$ is some other $2$-transitive group such that $F_1(\omega)$ is soluble; in the latter case, write $q_1=1$.  Suppose also that $G$ does not satisfy the hypotheses of Lemma~\ref{lem:end_stabilizer:excep}.  Write $m_t=q_t$ if $q_t \le 3$; $m_t=1/3$ if $q_t > 3$ and $F_t \ngeq \PGL_3(q_t)$; and $m_t=1$ otherwise.  Then for all $k \ge 1$ and $t \in \{0,1\}$, we have $\Lambda^k_t \ge O^\infty(F_t(\omega))$.  Moreover, one of the following holds.
\begin{enumerate}[(i)]
\item We have $q_1=1$ and $|F_0(\omega):\Lambda^k_0|$ divides $m_0(q_0-1)$ for all $k \ge 1$.
\item We have $(q_0,q_1)=(3,2)$, $|F_0(\omega):\Lambda^k_0| \in \{1,3,6\}$ and $|F_1(\omega):\Lambda^k_1| \le 2$ for all $k \ge 1$.
\item We have $q_0 \equiv 1 \mod 9$, $q_1 = 2$ and $F_0 \ngeq \PGL_3(q_0)$.  In this case, for all $k \ge 1$ we have $\Lambda^k_1 = F_1(\omega)$ and $|\Lambda^k_0:\Lambda^{k+1}_0| \in \{1,3\}$.
\item We have $\Lambda^k_t = F_t(\omega)$ for all $k \ge 1$ and $t \in \{0,1\}$.
\end{enumerate}
\end{lem}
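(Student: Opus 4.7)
The overall plan is a case analysis on $(q_0, q_1)$, combining Corollary~\ref{cor:insoluble_line_index}, Lemmas~\ref{lem:line_index}, \ref{lem:insoluble_line_index}, \ref{lem:lambda_restriction}, \ref{lem:Goursat}, and \ref{lem:small_PSL3} with Theorem~\ref{thm:2bbtrans} to control how the chain $F_t(\omega) = \Lambda^1_t \ge \Lambda^2_t \ge \cdots$ descends with $k$ on both sides $t \in \{0,1\}$.

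I would begin by establishing the uniform claim $\Lambda^k_t \ge O^\infty(F_t(\omega)) =: O_t$ via Corollary~\ref{cor:insoluble_line_index}: the exceptional locally prosoluble situation of Lemma~\ref{lem:insoluble_line_index}(d) is excluded by hypothesis, and neither $F_0$ nor $F_1$ is of exceptional $\SL_2(5)$ affine type since each is either almost simple with $\PSL_3(q_t)$ socle or has soluble point stabilizers. Corollary~\ref{cor:insoluble_line_index} in fact delivers $\Delta^k_t \ge O_t$; combined with Lemma~\ref{lem:insoluble_line_index} this eliminates its cases (a), (c), (d) whenever $q_\tau \ge 4$, leaving only case (b) and hence $\delta^k_\tau = 1$ and $\Lambda^{k+1}_{\tau+k} = \Lambda^k_{\tau+k}$ for all $k$ via Lemma~\ref{lem:line_index}. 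If $q_0, q_1 \ge 4$ this forces $\Lambda^k_t = F_t(\omega)$ throughout, i.e.\ case (iv). If $q_0 \ge 4 > q_1$ (recall $q_1 \le q_0$), the trivialisation on the $\tau = 0$ side already gives $\Lambda^k_1 = F_1(\omega)$ for every $k$, while the chain $\Lambda^\bullet_0$ can descend only through the factors $\delta^j_1$ at odd $j$. I would bound these using the $2$-by-block-transitive structure from Corollary~\ref{cor:tree_2bbtrans}: since $\Lambda^{k+1}_0 \ge O_0$ is insoluble, Lemma~\ref{lem:lambda_restriction} rules out all its alternatives except (i) and (vii), so Theorem~\ref{thm:2bbtrans}(a) applies and the block size $|F_0(\omega):\Lambda^{k+1}_0|$ divides $q_0 - 1$ while being coprime to the determinant subgroup $|D| \in \{1, 3\}$, with $|D| = 3$ exactly when $3 \mid q_0 - 1$ and $F_0 \ge \PGL_3(q_0)$. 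Unpacking the definition of $m_0$ shows this is precisely $m_0(q_0 - 1)$, which establishes case (i) in the configuration $q_1 = 1$.

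The remaining configurations — $q_0 \ge 4$ with $q_1 \in \{2,3\}$, and $q_0 \le 3$ — require the Goursat-type compatibility of Lemma~\ref{lem:Goursat}: for odd $k$ the common quotient $A_k \cong \Lambda^k_0/\Delta^k_0 \cong \Lambda^k_1/\Delta^k_1$ must be realisable simultaneously on both sides. On the $F_1 = \PSL_3(2)$ side Lemma~\ref{lem:small_PSL3} limits the possible orbit counts to $\delta \in \{1, 3, 6\}$; on the $F_0$ side with $F_0 \ngeq \PGL_3(q_0)$, the quotient $F_0(\omega)/O_0$ is cyclic-by-cyclic (determinant composed with Frobenius) and cannot accommodate the extra $C_2$ factor required for $\delta = 6$, forcing $|\Lambda^k_0:\Lambda^{k+1}_0| \in \{1, 3\}$. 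The arithmetic condition $q_0 \equiv 1 \pmod 9$ of case (iii) then follows by requiring the $3$-part of $q_0 - 1$ to accommodate all nontrivial factors in the telescoping product $|F_0(\omega):\Lambda^k_0|$. The configurations $(q_0, q_1) = (3, 2)$ and $q_0 = q_1 \in \{2, 3\}$ close by direct inspection of Table~\ref{table:small_q} and Lemma~\ref{lem:small_PSL3}, producing case (ii) and case (iv) respectively. I expect the main obstacle to be this last compatibility analysis for case (iii): the interaction of the Theorem~\ref{thm:2bbtrans}(a) divisibility, the Goursat fibre product, and the explicit Table~\ref{table:small_q} entries must be carefully balanced to produce exactly $\{1, 3\}$ as admissible factors and to pin down the arithmetic constraint $9 \mid q_0 - 1$.
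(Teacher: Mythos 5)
Your overall architecture matches the paper's: Corollary~\ref{cor:insoluble_line_index} for $\Lambda^k_t \ge O_t$, Lemmas~\ref{lem:line_index} and~\ref{lem:insoluble_line_index} to reduce the descent of $\Lambda^\bullet_0$ to the factors $\delta^j_1$ at odd $j$, and Lemma~\ref{lem:Goursat} plus Lemma~\ref{lem:small_PSL3} to control those factors when $q_1 \in \{2,3\}$. But there is a concrete error in your divisibility analysis. You claim that ``divides $q_0-1$ and is coprime to $|D|$'' from Theorem~\ref{thm:2bbtrans}(a), with $|D|=3$ exactly when $3 \mid q_0-1$ and $F_0 \ge \PGL_3(q_0)$, is ``precisely'' the condition of dividing $m_0(q_0-1)$. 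It is not: in the subcase $F_0 \cap \PGL_3(q_0) = \PSL_3(q_0)$ you have $|D|=1$, so your argument yields only that $|F_0(\omega):\Lambda^k_0|$ divides $q_0-1$, whereas $m_0(q_0-1) = (q_0-1)/3$ is what the statement requires. The missing factor of $3$ does not come from the block-size clause of Theorem~\ref{thm:2bbtrans}(a) at all; it comes from the fact that $\Lambda^k_0 \ge O_0$ together with $e_{\Lambda^k_0} = e_{F_0}$ reduces the index to $|F_0(\omega)\cap\PGL_3(q_0) : \Lambda^k_0 \cap \PGL_3(q_0)|$, and the abelianization of a point stabilizer in $\PSL_3(q_0)$ is already a factor $\gcd(3,q_0-1)$ smaller than that of $\PGL_3(q_0)$. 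This gap is not cosmetic: it is exactly this sharper bound that forces the descent factors (which lie in $\{1,3\}$ once $\delta=6$ is excluded) to be trivial when $3 \parallel q_0-1$, and hence pins down the congruence $q_0 \equiv 1 \bmod 9$ in case (iii). With only ``divides $q_0-1$'' you cannot rule out a descent factor of $3$ when $q_0 \equiv 4, 7 \bmod 9$, so neither case (i) nor case (iii) is established as stated.

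Two smaller points. First, your assertion that the trivialisation $\delta^k_0=1$ ``already gives'' $\Lambda^k_1 = F_1(\omega)$ only handles the odd-$k$ descents of $\Lambda^\bullet_1$ (since $|\Lambda^k_1:\Lambda^{k+1}_1| = \delta^k_{1+k}$); for even $k$ the relevant factor is $\delta^k_1$, which must be killed by combining $X_{q_1}$ from Lemma~\ref{lem:small_PSL3} with the $Y_1$ bound from Table~\ref{table:small_q} (including the ``singleton blocks'' clause when $q_1=3$). Second, the configuration $q_0 \ge 4$, $q_1 = 3$ needs its own argument landing in case (iv): the paper uses Lemma~\ref{lem:Goursat} to show $\Delta^{k-1}_1$ contains the second derived subgroup $C_3^2 \rtimes Q_8$ of $F_1(\omega)$, which is transitive on the $12$ points, forcing $\delta^{k-1}_1=1$. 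Your sketch addresses only $q_1=2$ there.
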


\begin{proof}
Write $O_t = O^\infty(F_t(\omega))$.  Note that $m_t(q_t-1)$ is always an integer; more specifically, if $q_t > 3$ then $m_t(q_t-1)$ is the index
\[
|F_0(\omega) \cap \PGL_3(q_0): O^\infty(\PGL_3(q_0)(\omega))|.
\]
By Corollary~\ref{cor:insoluble_line_index}, we have $\Lambda_t \ge O_t$ for both $t \in \{0,1\}$; moreover, if $q_1 > 3$ then we have $\Lambda_t = F_t(\omega)$ for both $t \in \{0,1\}$.  Thus we may assume $q_1 \le 3$, that is, $F_1(\omega)$ is soluble.

Let $Y_t$ be the set of divisors of $m_t(q_t-1)$ in $\Nb$.  We next claim that $|F_t(\omega):\Lambda^k_t| \in Y_t$ for all $k \ge 1$ and both $t \in \{0,1\}$.  If $q_t \le 3$ this follows from Table~\ref{table:small_q}.  If $q_t \ge 4$, then $t=0$ and since $\Lambda^k_0 \ge O_0$, either $\Lambda^k_0 = F_0(\omega)$ or we fall under case (a) of Theorem~\ref{thm:2bbtrans}.  In particular, we have the following restrictions:
\begin{enumerate}[(a)]
\item We have 
\[
|F_0(\omega):\Lambda^k_0| = |F_0(\omega) \cap \PGL_3(q_0):\Lambda^k_0\cap \PGL_3(q_0)|;
\]
since $\Lambda^k_0 \ge O_0$, this ensures $|F_0(\omega):\Lambda^k_0| \in Y_0$.
\item If $q_0 \neq 3$ and $F_0$ contains $\PGL_3(q_0)$, then $|F_0(\omega):\Lambda^k_0|$ is coprime to $3$.
\end{enumerate}
Notice that $|F_0(\omega):\Lambda^k_0|$ is coprime to $3$ unless $q_0 = 3$ or $q_0 \equiv 1 \mod 9$.  There is nothing more to prove if $q_1=1$, so from now on we assume $q_1 \in \{2,3\}$.

Consider next the case that $(q_0,q_1)=(3,2)$.  All that remains to show in this case is that $|F_0(\omega):\Lambda^k_0| \neq 2$.  We deduce this by observing that if $k$ is minimal such that $|F_0(\omega):\Lambda^k_0| = 2$, then $\Lambda^{k-1}_0 = F_0(\omega)$ and $|\Lambda^{k-1}_0:\Lambda^k_0| = 2$, so $\delta^{k-1}_{k-1} = 2$ by Lemma~\ref{lem:line_index}.  However, by Lemma~\ref{lem:small_PSL3}, this is not a valid value for either $\delta^{k-1}_0$ (since $\Lambda^{k-1}_0 = F_0(\omega)$) or $\delta^{k-1}_1$.

From now on we are assuming that $q_1 \in \{2,3\}$ and $(q_0,q_1) \neq (3,2)$.  We prove the remaining cases by induction on $k$.  For $k=1$ there is nothing to prove, so suppose $k \ge 2$ and that we have proved the lemma for all $1 \le k' < k$ and $t \in \{0,1\}$.  Note that by Lemmas~\ref{lem:small_PSL3} and~\ref{lem:insoluble_line_index}, together with the inductive hypothesis, for each $t \in \{0,1\}$ we have $\delta^{k-1}_t \in X_{q_t}$, where $X_2 = \{1,3,6\}$, $X_3 = \{1,4,12\}$ and $X_q = \{1\}$ for $q \ge 4$.  (For $q_t=3$, we are using the fact that our inductive hypothesis ensures $\Lambda^{k-1}_t = F_t(\omega)$.)  Given $t \in \{0,1\}$, we claim that $\Lambda^k_t$ is also compatible with the statement of the lemma.

Suppose that $q_t = q_{t+k-1}$ (in other words, $k$ is odd or $q_0=q_1$).  Then by Lemma~\ref{lem:line_index} and the previous arguments, we have 
\[
\delta^{k-1}_{t+k-1} = |\Lambda^{k-1}_t:\Lambda^k_t| \in X_{q_{t+k-1}} \cap Y_t = \{1\},
\]
so $\Lambda^k_t = \Lambda^{k-1}_t$.  Similarly if $t=1$, $q_0 > 3$ and $k$ is even, then $\delta^{k-1}_0=1$, so $\Lambda^k_t = \Lambda^{k-1}_t$.

From now on we may assume that $t=0$, $q_0 > 3$ and $k$ is even; thus $|\Lambda^{k-1}_0 :\Lambda^k_0| = \delta^{k-1}_1 \in X_{q_1}$, and by the inductive hypothesis, $\Lambda^{k-1}_1 = F_1(\omega)$.

By Lemma~\ref{lem:Goursat}, we have an isomorphism
\[
\theta: \frac{\Lambda^{k-1}_0}{\Delta^{k-1}_0} \rightarrow \frac{F_1(\omega)}{\Delta^{k-1}_1},
\]
so $F_1(\omega)/\Delta^{k-1}_1$ is isomorphic to a soluble quotient of $\Lambda^{k-1}_0$.  The largest soluble quotient of $\Lambda^{k-1}_0$ has derived length at most $2$, so $\Delta^{k-1}_1$ contains the second derived group of $F_1(\omega)$.  If $q_1=3$ we see that the second derived group of $F_1(\omega)$, which takes the form $C^2_3 \rtimes Q_8$, acts transitively on $\Omega_1 \setminus \{\omega\}$.  We then deduce that $\delta^{k-1}_1 = 1$ and hence $\Lambda^k_0 = \Lambda^{k-1}_0$.  This completes the proof of the lemma in the case $q_1=3$.

We may assume from now on that $q_1=2$.  Then the second derived group $N$ of $F_1(\omega)$ is nontrivial, which rules out the case $\delta^{k-1}_1=6$; hence $\delta^{k-1}_1 \in \{1,3\}$.  Moreover, we know that $|\Lambda^{k-1}_0 :\Lambda^k_0|$ is coprime to $3$ except possibly if $q_0 \equiv 1 \mod 9$ and $F_0 \ngeq \PGL_3(q_0)$.  This completes the inductive step in all cases and hence the proof of the lemma.
\end{proof}

\subsection{The main theorems}

With the results of the previous two subsections, we can immediately prove Theorem~\ref{intro:locally_prosoluble} and its corollaries.

\begin{proof}[Proof of Theorem~\ref{intro:locally_prosoluble}]
Note that the two cases are mutually exclusive: if (ii) holds, then $\Theta_0$ is soluble but $O^\infty(F_0(\omega))$ is nontrivial and perfect, so $O^\infty(F_0(\omega))$ cannot be contained in $\Theta_0$.

If Corollary~\ref{cor:soluble_dichotomy}(i) applies, then case (i) of the present theorem holds, so we may assume Corollary~\ref{cor:soluble_dichotomy}(ii) applies.  In particular, we deduce from Corollary~\ref{cor:insoluble_line_index} that Lemma~\ref{lem:insoluble_line_index}(d) must occur for some $k$.  Then $F_t = \PGaL_3(q_t)$ for $\{q_0,q_1\} = \{4,5\}$ and the possibilities for $\Lambda_t$ are given by Lemma~\ref{lem:end_stabilizer:excep}; thus (ii) is satisfied.
\end{proof}

\begin{proof}[Proof of Corollary~\ref{intro:soluble}]
Suppose that $G$ has a residually soluble compact open subgroup $U$; to prove (ii) we may assume that one of $F_0$ and $F_1$ has an insoluble point stabilizer.  Then $U$ contains rigid stabilizers of half-trees of both types, so Theorem~\ref{intro:locally_prosoluble}(i) is false; thus (ii) holds.

Conversely, suppose (ii) holds.  In case (ii)(a), clearly every arc stabilizer of $G$ is residually soluble.  In case (ii)(b), we fall into case (ii) of Proposition~\ref{prop:soluble_dichotomy}, so $G$ is locally prosoluble.  Thus (ii) implies (i), completing the proof that (i) and (ii) are equivalent.
\end{proof}

\begin{proof}[Proof of Corollary~\ref{intro:TMS}]
Suppose $G$ has local actions $F_0$ and $F_1$.  Since $G$ does not have a residually soluble compact open subgroup, the negation of Corollary~\ref{intro:soluble}(ii) holds, so we are in case (i) of Theorem~\ref{intro:locally_prosoluble} with $\Theta_t(G) \ge O^\infty(F_t(\omega)) \neq \triv$ for some $t \in \{0,1\}$, say for $t=0$.   In addition, $\Theta_0(G)$ is a nontrivial normal subgroup of the transitive group $F_0(\omega)$, so $\Theta_0(G)$ has no fixed points.  We see that $G$ and $G(\xi)$ both have at most two orbits on vertices of the tree, ensuring that they are compactly generated and do not preserve any proper subtree.  Consider now an arc $a = (v,w)$ directed away from $\xi$ and let $H = \rist_G(T_a)$.  If $w$ is of type $0$, then $H$ acts without fixed points on $S_v(w,1)$, whereas if $w$ is of type $1$, then every $x \in S_v(w,1)$ is of type $0$ and we see that $H$ acts without fixed points on $S_v(w,2)$.  In either case, $\rist_G(T_a)$ fixes only finitely many arcs of $T_a$.  By \cite[Proposition~4.6]{CapraceMarquisReid} it follows that $\rist_G(T_a)$ is a TMS subgroup of $G$ and of $G(\xi)$.  In particular, clearly $\rist_G(T_a)$ is nontrivial; from the freedom in the choice of $a$, we conclude that $G$ and $G(\xi)$ have micro-supported action on $\partial T$.
\end{proof}

\begin{proof}[Proof of Corollary~\ref{intro:perfect}]
Note that the hypothesis excludes the exceptional locally prosoluble type.  If $G$ has two orbits on vertices of the tree $T$, then $G$ is contained in an $\Aut(T)$-conjugate $U$ of $\mathbf{U}(F_0,F_1)$ by \cite[Theorem~6(iii)]{SmithDuke}.  If $G$ is vertex-transitive then it is contained in an $\Aut(T)$-conjugate $U$ of $\mathbf{U}(F_0)$ by \cite[Proposition~3.2.2]{BurgerMozes}.  On the other hand, given an arc $a$ of $T$, then $\Theta_t(G) = F_t(\omega)$ for all $t \in \{0,1\}$ by Theorem~\ref{intro:locally_prosoluble}, and we deduce that $G(a) = U(a)$; the local actions also ensure that $G$ acts transitively on each $U$-orbit of arcs, so we conclude that $G=U$.
\end{proof}

\begin{rem}\label{rem:lambda_restriction}
Theorem~\ref{intro:locally_prosoluble} reduces the cases of Lemma~\ref{lem:lambda_restriction}, as follows (recall also Table~\ref{table:small_q}).  If $G$ is of exceptional locally prosoluble type, then $F_t = \PGaL_3(5)$ falls under case (ii) or (vi) whereas $F_t = \PGaL_3(4)$ falls under case (ii).  If $G$ is not of exceptional locally prosoluble type, then cases (v) and (vi) cannot occur, and case (ii) can only occur for $q \in \{2,3\}$.  Example~\ref{ex:ldc_end} will provide an example of case (iv), an example of case (ii) for $q=2,3$, and an example of case (i) for infinitely many $n$ and $q$.  For each of the four families of $2$-transitive groups of rank $1$ Lie type, there could be (infinitely many) examples of case (iii), but constructing them is beyond the scope of this article.
\end{rem}

Theorem~\ref{intro:end_stabilizer:same} can be deduced from Theorem~\ref{thm:end_stabilizer} below, but it is easy enough to prove directly at this stage.

\begin{proof}[Proof of Theorem~\ref{intro:end_stabilizer:same}]
For $t \in \{0,1\}$, write $O_t = O^\infty(F_t(\omega))$.  We consider the cases of Lemma~\ref{lem:lambda_restriction} for $t = 0$.  Note that since $d_0=d_1$, the exceptional locally prosoluble case does not occur.

In case (vii) there is nothing to prove, and case (v) has been eliminated by Theorem~\ref{intro:locally_prosoluble}.  If $\PSL_3(q_0) \le F_0 \le \PGaL_3(q_0)$, with $F_0$ acting on $P_2(q_0)$, then the same description applies to $F_1$ with $q_0=q_1$, and by Lemma~\ref{lem:leftover_psl} we have $\Lambda_t = F_t(\omega)$ for all $t \in \{0,1\}$.  Thus we may assume $F_0$ is not of this form; this eliminates cases (ii) and (vi), and in case (i) we have $n \ge 3$.

In cases (i) and (iv) of Lemma~\ref{lem:lambda_restriction}, then $F_0(\omega) \cong F_1(\omega)$ is insoluble.  The conclusion follows by Corollary~\ref{cor:insoluble_line_index}.

In case (iii), $F_0 \cong F_1$ is defined over a field of characteristic $p$, and we see that $d_0-1=d_1-1$ is a power of $p$, so $|F_t(\omega): \Lambda_t|$ is a power of $p$ by Lemma~\ref{lem:line_index}.  On the other hand, by Theorem~\ref{thm:2bbtrans}, the index $|F_t(\omega): \Lambda_t|$ is coprime to $p$.

Thus $F_t(\omega) = \Lambda_t$ for all $t \in \{0,1\}$.
\end{proof}

We now give the more precise version of Theorem~\ref{intro:end_stabilizer}.  When combined with Lemma~\ref{lem:end_stabilizer:excep}, the next theorem is a strengthening of Lemma~\ref{lem:lambda_restriction} and also effectively incorporates Theorem~\ref{intro:end_stabilizer:same}.

\begin{thm}\label{thm:end_stabilizer}
Let $G \in \mc{H}_{F_0,F_1}$ and write $O_t = O^{\infty}(F_t(\omega))$.  Write $\lambda_t = |F_t(\omega):\Lambda_t(G)|$ and suppose that $\lambda_t > 1$ for some given $t \in \{0,1\}$, but that $G$ is not of exceptional locally prosoluble type.  Then one of the following holds.
\begin{enumerate}[(i)]
\item We have $\PSL_{n+1}(q_t) \unlhd F_t \le \PGaL_{n+1}(q_t)$ with $F_t$ acting on $P_n(q_t)$, with $n \ge 2$, $q_t > 2$; if $q_t=3$ we suppose $n \ge 3$.  In this case, $\Lambda_t(G)$ is a point stabilizer of a $2$-by-block-transitive action of $F_t$ as in case (a) of Theorem~\ref{thm:2bbtrans} and $\lambda_t$ divides $q_t-1$.
\item The socle of $F_t$ is of rank $1$ simple Lie type.  In this case $\Lambda_t(G)$ is a point stabilizer of a $2$-by-block-transitive action of $F_t$ as described in \cite[Proposition~3.21]{Reidkblock} (case (c) of Theorem~\ref{thm:2bbtrans}).  In particular, $\lambda_t$ divides $\mathrm{gcd}(q_t-1,e_{F_t})$, where $F_t$ has (projective) matrix coefficients in $\Fb_{q_t}$.
\item The tuple $(F_t,\Lambda_t(G),d_t,\lambda_t)$ is one of the first six lines of Table~\ref{table:small_q}; consequently, $\lambda_t \in \{2,3,6\}$.
\end{enumerate}
We recall the notation and conclusion of Lemma~\ref{lem:line_index} and write
\[
\lambda_t = \mu_t \nu_{1-t}, \text{ where } \mu_i = \prod_{k \ge 1} \delta^{2k}_i \text{ and } \nu_i = \prod_{k \ge 1} \delta^{2k-1}_i.
\]
Then $\mu_t = 1$ (in other words, $\lambda_t = \nu_{1-t}$) in all cases except possibly when $F_t = \PGaL_3(3)$ and $\lambda_t=6$, in which case $\mu_t \in \{1,2\}$.  Moreover, one of the following holds.
\begin{enumerate}[(a)]
\item $F_{1-t}$ is soluble, with $d_{1-t} \equiv 1 \mod p$ for all prime divisors $p$ of $\nu_{1-t}$.
\item $F_{1-t}$ is of exceptional $\SL_2(5)$ affine type and $\nu_{1-t}$ is a power of $\delta^*(F_{1-t}) \in \{2,3,7,29\}$.
\item The socle of $F_{1-t}$ is of rank $1$ simple Lie type over a field of characteristic $p$ and $\nu_{1-t}$ is a power of $p$.
\item $F_{1-t} = \PGaL_2(8)$ acting on $28$ points and $\nu_{1-t}$ is a power of $3$.
\item $F_{1-t} = \PGaL_3(p)$ with natural action for $p \in \{2,3\}$, $F_t \not\cong F_{1-t}$ and $\nu_{1-t} = 2^a 3^b$ for some $a,b \ge 0$.  If $p=2$ then $a \le b$, whereas if $p=3$ we have $a > b$.  If $F_t = \PGaL_3(3)$ then $\lambda_t \in \{1,3,6\}$.
\end{enumerate}
\end{thm}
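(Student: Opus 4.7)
The plan is to combine the classification of $\Lambda_t$ from Lemma~\ref{lem:lambda_restriction} with the orbit-counting of Lemma~\ref{lem:line_index}, the dichotomy of Lemma~\ref{lem:insoluble_line_index}, the Goursat compatibility of Lemma~\ref{lem:Goursat}, and the explicit constraints of Lemma~\ref{lem:leftover_psl} and Lemma~\ref{lem:small_PSL3}.

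First I would establish the structural trichotomy (i)--(iii). Apply Lemma~\ref{lem:lambda_restriction} to $F_t$. Since $G$ is not of exceptional locally prosoluble type, Corollary~\ref{cor:insoluble_line_index} gives $\Lambda_t \geq O_t$; combined with $\lambda_t > 1$, this immediately eliminates cases (v), (vi), (vii) of the lemma (for (v), note $O^\infty(\PSL_5(2)(\omega)) = \PSL_5(2)(\omega)$ since $\Alt(8)$ is perfect; for (vi), $O_t \nleq \Lambda$; for (vii), $\Lambda = F_t(\omega)$). Case (ii) of the lemma with $q > 3$ is excluded because $O^\infty(\PSL_3(q)(\omega))$ contains $\SL_2(q)$, which does not fit inside $W \rtimes \GaL_1(q^2)$. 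The remaining possibilities correspond to cases (i)--(iii) of the theorem: case (i) of the lemma yields case (i) of the theorem when $(n,q_t) \neq (2,3)$, the pair $(2,3)$ together with case (ii) of the lemma for $q \in \{2,3\}$ and case (iv) of the lemma being reorganized into case (iii) of the theorem; case (iii) of the lemma is case (ii) of the theorem. The explicit descriptions of $\Lambda_t$ and the divisibility bounds on $\lambda_t$ are then read off Theorem~\ref{thm:2bbtrans} via Corollary~\ref{cor:tree_2bbtrans}.

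Next, the factorization $\lambda_t = \mu_t \nu_{1-t}$ follows by iterating Lemma~\ref{lem:line_index}:
\[
\lambda_t = \prod_{k \ge 1}|\Lambda^k_t:\Lambda^{k+1}_t| = \prod_{k \ge 1}\delta^k_{t+k},
\]
then separating by the parity of $k$. To bound $\mu_t$, observe that for even $k = 2m$ the clause ``if $F_\tau \cong F_{\tau+k}$'' in Lemma~\ref{lem:insoluble_line_index} excludes its cases (c) and (d), leaving either $O_t = \triv$ or $\delta^{2m}_t = 1$. Thus $\mu_t = 1$ whenever $F_t(\omega)$ is insoluble. The residual cases are $F_t \in \{\PGaL_3(2),\PGaL_3(3)\}$, the soluble-stabilizer entries of case (iii); here Lemma~\ref{lem:small_PSL3} restricts $\delta^{2m}_t$ to $\{1,3,6\}$ or $\{1,2,4,12\}$, with $\delta = 2$ excluded unless a nontrivial block is present. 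Combining with $\mu_t \mid \lambda_t \in \{2,3,6\}$ forces $\mu_t = 1$ except possibly when $(F_t,\lambda_t) = (\PGaL_3(3),6)$, where $\mu_t \in \{1,2\}$.

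Finally, for the dichotomy (a)--(e) describing $F_{1-t}$, I analyze the odd-distance factors $\delta^{2m-1}_{1-t}$. Any $\delta^{2m-1}_{1-t} > 1$ forces, via Lemma~\ref{lem:insoluble_line_index} applied to $(\tau,k)=(1-t,2m-1)$ with case (d) excluded by hypothesis, either $O_{1-t} = \triv$ or $F_{1-t}$ of exceptional $\SL_2(5)$ affine type (with $\delta^{2m-1}_{1-t} = \delta^*(F_{1-t}) \in \{2,3,7,29\}$ and $\Lambda^{2m-1}_{1-t} = F_{1-t}(\omega)$ saturating); the latter is case~(b). If $F_{1-t}(\omega)$ is soluble, the classification of soluble-stabilizer 2-transitive groups from Section~\ref{sec:2trans}, combined with which ones admit proper block-faithful 2-by-block-transitive actions (Theorem~\ref{thm:2bbtrans}, with Lemma~\ref{lem:Goursat} controlling the common quotient $A_{2m-1}$ shared with $\Lambda^{2m-1}_t$), reduces to: generic soluble $F_{1-t}$ (case (a)), with $p \mid \nu_{1-t} \Rightarrow p \mid d_{1-t}-1$ by Lemma~\ref{lem:line_index}; rank 1 Lie type over $\Fb_p$ (case (c)), with $p$-power divisibility of $\nu_{1-t}$ from Theorem~\ref{thm:2bbtrans}(c); $\PGaL_2(8)$ on 28 points (case (d), an exceptional entry of \cite[Table~1]{Reidkblock}); and $\PGaL_3(p)$, $p \in \{2,3\}$ (case (e)), where $F_t \not\cong F_{1-t}$ by Theorem~\ref{intro:end_stabilizer:same}, the arithmetic $\nu_{1-t} = 2^a 3^b$ with $a \leq b$ (resp.\ $a > b$) follows by decomposing each $\delta^{2m-1}_{1-t} \in \{3,6\}$ (resp.\ $\{2,4,12\}$) from Lemma~\ref{lem:small_PSL3} into its 2- and 3-adic parts and summing, and the supplementary bound on $\lambda_t$ when $F_t = \PGaL_3(3)$ comes from Lemma~\ref{lem:leftover_psl}. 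The hardest step will be singling out case~(d) from the many a priori possibilities and verifying the sharp exponent inequalities in case~(e), which require careful matching with the explicit lists in \cite[Table~1]{Reidkblock} and in Lemma~\ref{lem:small_PSL3}.
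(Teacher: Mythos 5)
Your overall strategy is the same as the paper's: Lemma~\ref{lem:lambda_restriction} plus Theorem~\ref{thm:2bbtrans} for the trichotomy (i)--(iii), Lemma~\ref{lem:line_index} for the factorization $\lambda_t = \mu_t\nu_{1-t}$, and Lemma~\ref{lem:insoluble_line_index} together with the orbit-count computations of Lemmas~\ref{lem:soluble_residual_trans} and~\ref{lem:small_PSL3} for the parity analysis; the reduction of the possibilities for $F_{1-t}$ to cases (a)--(e) also proceeds exactly as in the paper.

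There is one concrete gap, in the claim that $\mu_t = 1$ outside the $\PGaL_3(3)$ exception. Your argument splits into ``$F_t(\omega)$ insoluble'' (handled by the even-$k$ clause of Lemma~\ref{lem:insoluble_line_index}, which needs $O_t \neq \triv$ in order to land in its case (b)) and ``residual cases $F_t \in \{\PGaL_3(2),\PGaL_3(3)\}$''. But case (ii) of the theorem --- $F_t$ with socle of rank $1$ Lie type --- also has \emph{soluble} point stabilizer, so there $O_t = \triv$, Lemma~\ref{lem:insoluble_line_index} only returns its case (a), which says nothing about $\delta^{2m}_t$, and your list of residual cases omits this family entirely; consequently $\mu_t=1$ is not established for it. The paper closes this case with a divisibility argument you could easily add: since $\Delta^{2m}_t$ is normal in the transitive group $\Lambda^{2m}_t$, each $\delta^{2m}_t$ divides $d_t - 1$ (Lemma~\ref{lem:line_index}), which for rank $1$ Lie type is a power of the defining characteristic $p$; on the other hand $\mu_t$ divides $\lambda_t$, which by Theorem~\ref{thm:2bbtrans}(c) divides $\gcd(q_t-1, e_{F_t})$ and is therefore coprime to $p$; hence $\mu_t = 1$. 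A minor further remark: your attribution of the $p$-power divisibility of $\nu_{1-t}$ in case (c) to Theorem~\ref{thm:2bbtrans}(c) is slightly off --- that clause constrains $\lambda_{1-t}$, not $\nu_{1-t}$; what you actually need is that each $\delta^{2m-1}_{1-t}$ divides $d_{1-t}-1$, a power of $p$, again via Lemma~\ref{lem:line_index}.
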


\begin{proof}
Without loss of generality, let us assume $t=0$.  For $i \in \{0,1\}$, write $O_i = O^\infty(F_i(\omega))$.

By Theorem~\ref{intro:locally_prosoluble}, we now have $\Theta_i \ge O_i$ for $i =0,1$, so certainly $\Lambda_i \ge O_i$ and $\Delta^k_i \ge O_i$ for all $k \ge 1$.  In particular, for all $i \in \{0,1\}$ and $k \ge 1$, then the number of orbits $\delta^k_i$ of $\Delta^k_i$ must divide the number of orbits $\delta^*(F_i)$ of $O_i$.  By Lemma~\ref{lem:lambda_restriction}, $F_0$ is almost simple, so by Lemma~\ref{lem:soluble_residual_trans}(iii), if $F_0(\omega)$ is insoluble then $\delta^k_0=1$ for all $k$.  We have $\lambda_0 = \mu_0\nu_1$ by Lemma~\ref{lem:line_index}; in turn, $\mu_i$ and $\nu_i$ are divisors of $\delta^*(F_i)$.

We next prove that $\mu_0=1$, except for the special case where we have $\mu_0 \in \{1,2\}$; this will also lead to the cases (i)--(iii) of the present theorem.

As in the proof of Theorem~\ref{intro:end_stabilizer:same}, we consider the cases of Lemma~\ref{lem:lambda_restriction}; note that case (vii) is not applicable since $\lambda_0 > 1$.  We will deal with $F_0 \in \{\PGaL_3(2),\PGaL_3(3),\mathrm{M}_{11}\}$ in natural action separately.  Those cases aside, given Remark~\ref{rem:lambda_restriction}, we only need to consider cases (i) and (iii) of Lemma~\ref{lem:lambda_restriction}.

Suppose case (i) of Lemma~\ref{lem:lambda_restriction} applies with $q_0 \ge 4$.  In this case, $F_0(\omega)$ is insoluble, so $\delta^*(F_0)=1$ and hence $\mu_0=1$.  By Theorem~\ref{thm:2bbtrans} we see that $\lambda_0$ divides $q_0-1$,  so case (i) of the present theorem holds.

Suppose case (iii) of Lemma~\ref{lem:lambda_restriction} applies: $F_0$ has socle of rank $1$, with (projective) matrix coefficients in a field of order $q_0 = p^e_0$.  By Theorem~\ref{thm:2bbtrans}, $\lambda_0$ also divides $q_0-1$ and $e_{F_0}$.  In this case, $\delta^*(F_0) = d_0-1$ is a power of $p_0$, which is coprime to $\lambda_0$, so $\mu_0 = 1$ and hence $\lambda_0$ divides a power of $\delta^*(F_1)$.    So case (ii) of the present theorem holds.

We are left with $F_0 \in \{\PGaL_3(2),\PGaL_3(3),\mathrm{M}_{11}\}$ in natural action.  The first six lines of Table~\ref{table:small_q} account for all possible values of $\Lambda_0(G)$ and hence of $\lambda_0$; one observes that $\lambda_0 \in \{2,3,6\}$ in all cases.  Thus case (iii) of the present theorem applies.  We also note that if $F_i = \PGaL_3(p_i)$ for $p_i \in \{2,3\}$, then by Lemma~\ref{lem:small_PSL3}, for all $k$ we have $\delta^k_i \in Z_{p_i}$ where $Z_2 = \{1,3,6\}$ and $Z_3 = \{1,2,4,12\}$; on the other hand if $F_i = \mathrm{M}_{11}$ then $O_i$ is transitive, so $\delta^*(F_i)=1$.

If $F_0 = \PGaL_3(2)$ then $\lambda_0=2$, but $2$ is not a possible value of $\delta^k_0$; we deduce that $\mu_0=1$.

If $F_0 = \PGaL_3(3)$ then $\lambda_0 \in \{2,3,6\}$, whilst $\delta^k_0 \in \{1,2,4,12\}$ for all $k$; hence $\mu_0 \in \{1,2\}$.  If $\mu_0 = \lambda_0=2$, then $\nu_1=1$, and taking $k$ minimal such that $|F_0(\omega):\Lambda^k_0|>1$, we would need $|\Lambda^{k-1}_0:\Lambda^k_0| =2$ and $\delta^{k-1}_0=2$; by Lemma~\ref{lem:small_PSL3} we then have $|F_0(\omega): \Lambda^{k-1}_0| > 1$, which is impossible.  The only remaining possibility to have $\mu_0 >1$ is if $\mu_0=2$ and $\lambda_0=6$.

We now consider the possibilities for $F_1$ in terms of the value of $\nu_1$, which is necessarily a divisor of a power of $\delta^*(F_1)$.

If $F_1$ is soluble, we see that case (a) must apply.

If $F_1(\omega)$ is insoluble, then we must fall under case (ii) of Lemma~\ref{lem:soluble_residual_trans}, which yields case (b) of the present theorem.

From now on we may assume $F_1$ is almost simple and $F_1(\omega)$ is soluble; in particular, $\delta^*(F_1) = d_1-1$.

If $F_1$ has socle of rank $1$ Lie type, then $\delta^*(F_1)$ is a power of the defining characteristic $p$, so $\nu_1$ is also a power of $p$.  Thus case (c) holds.

If $F_1$ is $\PGaL_2(8)$ acting on $28$ points, then $\delta^*(F_1) = 3^3$ and we are in case (d) of the theorem.

If $F_1 = \PGaL_3(p)$ for $p \in \{2,3\}$, then $\nu_1$ is a product of elements of $Z_p$; in turn, any product of elements of $Z_p$ will satisfy the given restrictions on the prime factorization of $\nu_1$.  By Lemma~\ref{lem:leftover_psl}, if $F_0 = \PGaL_3(q)$ for some $q \in \{2,3\}$, then $q_0 = 3$, $q_1 = 2$ and $\lambda_0 \in \{3,6\}$.  Thus case (e) holds.

There are no other possibilities for $F_1$: see for instance the classification of $2$-transitive groups given in \cite[\S7.7]{DixonMortimer}.
\end{proof}

\section{A construction of some examples of boundary-$2$-transitive groups}\label{sec:examples}

In this last section, we give a construction to show that the exceptional case of Theorem~\ref{intro:locally_prosoluble} occurs, as well as examples of $G \in \mc{H}_{F_0,F_1}$ (for several different pairs $(F_0,F_1)$) that are not in the exceptional case, but still have $\Lambda_0(G) \neq F_0(\omega)$.  The examples use several of the $2$-by-block-transitive actions listed in Table~\ref{table:small_q}; they are also inspired by a (more general) unpublished construction of Florian Lehner, R\"{o}gnvaldur M\"{o}ller and Wolfgang Woess.

Before getting into specific examples, let us describe the construction in general terms.

Given a graph $\Gamma$, let $A\Gamma$\index{A@$A\Gamma, \; AT$} be the set of arcs of $\Gamma$; given a vertex $v \in V\Gamma$, write $A_\Gamma(v)$ for the set of arcs with origin $v$.  Fix a finite $2$-transitive permutation group $F_t \le \Sym(\Omega_t)$ for $t = 0,1$ with $|\Omega_t|=d_t \ge 3$; throughout this section we will assume $\Omega_0$ and $\Omega_1$ are disjoint to simplify notation.  We can then form the group $\mathbf{U}(F_0,F_1)$\index{U@$\mathbf{U}(F_0,F_1)$} acting on a $(d_0,d_1)$-semiregular tree $T$.  The construction of $\mathbf{U}(F_0,F_1)$ (see \cite{SmithDuke} for more details) entails a colouring $\mc{L}:AT \rightarrow \Omega_0 \cup \Omega_1$ of the arcs of $T$ with the following properties: all arcs terminating at a given vertex have the same colour, and for each vertex $v$ of type $t$, $\mc{L}$ restricts to a bijection from $A_T(v)$ to $\Omega_t$.  Write $\Aut^+(T)$ for the group of automorphisms of $T$ that are type-preserving on vertices.  The \defbold{($\mc{L}$-)local action} of an automorphism $g \in \Aut^+(T)$ at a vertex $v$ of type $t$ is as follows:
\[
\sigma_{\mc{L},v}(g): \Omega_t \rightarrow \Omega_t; \; c \mapsto  \mc{L}g(\mc{L}|_{A_T(v)})\inv c.
\]
The group $\mathbf{U}_{\mc{L}}(F_0,F_1)$ is then defined as the set of all $g \in \Aut(T)$ such that $\sigma_{\mc{L},v}(g) \in F_t$ for all $v \in V_tT$ and $t \in \{0,1\}$.  By \cite[Proposition~11]{SmithDuke}, up to $\Aut(T)$-conjugacy, $\mathbf{U}_{\mc{L}}(F_0,F_1)$ does not depend on the choice of $\mc{L}$, so we can just write $\mathbf{U}(F_0,F_1) = \mathbf{U}_{\mc{L}}(F_0,F_1)$.   It is then straightforward to check that $U:= \mathbf{U}(F_0,F_1)$ is a closed subgroup of $\Aut(T)$ that belongs to $\mc{H}_{F_0,F_1}$.  More generally, given $G \le \Aut(T)$, the \defbold{local action of $G$ at $v$}\index{local action} is the set of all local actions achieved by elements of $G(v)$; the group $U$ itself has local action $F_t$ at vertices of type $t$ by \cite[Lemma~13]{SmithDuke}.

The following basic observation is key to the local actions approach.

\begin{lem}\label{lem:local_action:determined}
Let $g,h \in \Aut^+(T)$.  If $gv = hv$ for some $v \in VT$, and for all $w \in VT$ we have $\sigma_{\mc{L},w}(g) = \sigma_{\mc{L},w}(h)$, then $g=h$.

More precisely, if $gv = hv$ for some $v \in VT$ and $\sigma_{\mc{L},v}(g) = \sigma_{\mc{L},v}(h)$, then also $ga = ha$ for all $a \in A_T(v)$, and hence $gw = hw$ for all $w \in S(v,1)$.
\end{lem}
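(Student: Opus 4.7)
The plan is to prove the more precise second statement first and then deduce the first statement from it by a straightforward induction on distance in $T$.

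For the second statement, I would start from the defining formula $\sigma_{\mc{L},v}(g)(c) = \mc{L}(g(\mc{L}|_{A_T(v)})^{-1}c)$. Given any $a \in A_T(v)$, applying the definition to $c = \mc{L}(a)$ shows that $\mc{L}(ga) = \sigma_{\mc{L},v}(g)(\mc{L}(a))$ and similarly $\mc{L}(ha) = \sigma_{\mc{L},v}(h)(\mc{L}(a))$. Since by hypothesis $\sigma_{\mc{L},v}(g) = \sigma_{\mc{L},v}(h)$, the arcs $ga$ and $ha$ carry the same $\mc{L}$-label. Moreover both originate at $gv = hv$, which is again of type $t$ because $g,h \in \Aut^+(T)$. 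Because $\mc{L}$ restricts to a bijection on $A_T(gv)$, this forces $ga = ha$, and taking termini yields $gw = hw$ for every $w \in S(v,1)$.

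For the first statement, I would iterate this: given $gv = hv$ at some base vertex $v$ and equal local actions at every vertex, the second statement applied at $v$ gives $gw = hw$ for all $w \in S(v,1)$. Then for each such $w$, we have $gw = hw$ and $\sigma_{\mc{L},w}(g) = \sigma_{\mc{L},w}(h)$, so the second statement applied at $w$ extends the agreement to $S(w,1)$, hence to $S(v,2)$. A straightforward induction on $n$ shows $g$ and $h$ agree on $S(v,n)$ for all $n \ge 0$, hence on $VT$. Since an automorphism of a tree is determined by its action on vertices (agreement on vertices forces agreement on arcs and thus on the whole combinatorial/geometric structure), we conclude $g = h$.

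There is no real obstacle here; the lemma is a bookkeeping statement about how the $\mc{L}$-local actions and the vertex action together determine an automorphism. The only point to keep straight is that type-preservation of $g,h$ ensures $gv$ has the same type as $v$, so that $\mc{L}$ remains a bijection $A_T(gv) \to \Omega_t$ with the same codomain as $\mc{L}|_{A_T(v)}$, which is exactly what allows the label equation to force $ga = ha$.
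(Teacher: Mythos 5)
Your proposal is correct and follows essentially the same route as the paper: the second statement is read off from the definition of $\sigma_{\mc{L},v}$ together with the fact that $\mc{L}$ restricts to a bijection on $A_T(gv)$, and the first statement follows by induction on the distance from $v$. Your write-up simply makes explicit the details that the paper leaves as "clear".
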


\begin{proof}
The second statement is clear from the fact that $\mc{L}$ restricts to a bijection from $A_T(v)$ to $\Omega_t$ (where $v$ has type $t$).  The first statement then follows from the second by considering the distance $d$ from $v$ to some vertex/arc $a$ on which $g$ and $h$ might disagree, and arguing by induction on $d$.
\end{proof}

Our next goal is to build a subgroup of $U = \mathbf{U}(F_0,F_1)$ that still has the same local actions, but such that if we fix a pair of adjacent vertices $x$ and $y$ of types $0$ and $1$ respectively, then $G(x,y)/G_1(x,y)$ will be strictly smaller than $U(x,y)/U_1(x,y)$.  The set of all pairs of vertices $(x',y')$, with $x' \neq y$ adjacent to $x$ and $y' \neq x$ adjacent to $y$, is $S_y(x,1) \times S_x(y,1)$.  The group $U(x,y)$ acts as the full direct product $F_0(\omega_0) \times F_1(\omega_1)$, where $\omega_0$ and $\omega_1$ are respectively the colours of the arcs from $x$ to $y$ and from $y$ to $x$.  Since $G$ has local actions $F_t$, the group $G(x,y)$ will project onto $F_0(\omega_0)$ and $F_1(\omega_1)$, but it need not be the full direct product; in fact, we will force it to be a given subdirect product.  By Goursat's lemma (recall also Lemma~\ref{lem:Goursat}), this means $G(x,y)/G_1(x,y)$ will occur as the pullback of a pair of surjective homomorphisms $\psi_0: F_0(\omega_0) \rightarrow B$ and $\psi_1: F_1(\omega_1) \rightarrow B$ to some common quotient $B$.

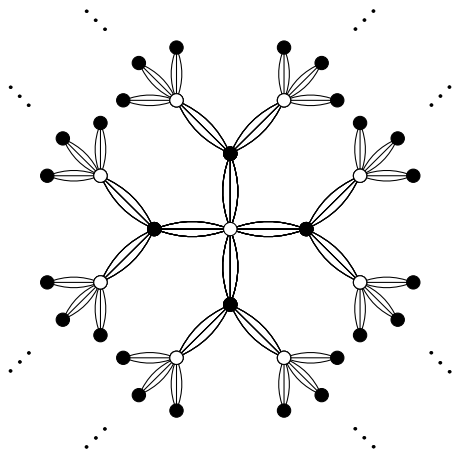
\begin{figure}[h]
\caption{The graph $T^*$ with $|B|=3$}
\label{fig:Tstar}
\begin{center}
\begin{tikzpicture}[scale=1, every loop/.style={}, square/.style={regular polygon,regular polygon sides=4}]

\tikzstyle{every node}=[circle,
                        inner sep=0pt, minimum width=5pt]
                     
\foreach \x in {0,1,2,3} {
\foreach \y in {-1,1} {
\foreach \z in {-1,0,1} {
\draw (0,0) to [bend left=20] ++ (90*\x:1) to [bend left=20] ++ (90*\x+45*\y:1) to [bend left=20] ++ (90*\x+45*\y+45*\z:0.7);
\draw (0,0) to [bend right=20] ++ (90*\x:1) to [bend right=20] ++ (90*\x+45*\y:1) to [bend right=20] ++ (90*\x+45*\y+45*\z:0.7);
\draw (0,0) to ++ (90*\x:1) node[draw=black, fill=black]{} to ++ (90*\x+45*\y:1) node[draw=black, fill=white]{} to ++ (90*\x+45*\y+45*\z:0.7) node[draw=black, fill=black]{};
\draw (0,0) ++ (90*\x:1) ++ (90*\x+45*\y:2.5) node[rotate=90*\x+45*\y]{$\ldots$};
}
}
};

\draw (0,0) node[draw=black, fill=white]{} to (0,0);
     
\end{tikzpicture}
\end{center}
\end{figure}

\begin{const}\label{const:edge}

Recall the more detailed permutation groups terminology set up in Definition~\ref{def:permutation} and Remark~\ref{rem:standard_extension}.  We suppose that we are given a pair of surjective homomorphisms $\psi_0: F_0(\omega_0) \rightarrow B$ and $\psi_1: F_1(\omega_1) \rightarrow B$ for $\omega_t \in \Omega_t$.  We can now form a standard extension $\Omega_t \times B$ of the $F_t$-set $\Omega_t$ in the manner of Remark~\ref{rem:standard_extension}, as follows.  We identify $\Omega_t$ with the coset space $F_t/F_t(\omega_t)$.  Take a set $A_t$ of left coset representatives of $F_t(\omega_t)$, with $a_\omega$ being the representative of the coset $\omega$; we then map $(\omega,b)$ to the left coset $a_\omega\psi\inv_t(b)$ of $R_t:=\ker\psi_t$ in $F_t$, thus obtaining a bijection from $\Omega_t \times B$ to $F_t/R_t$.  The induced action $\alpha_t$ of $F_t$ on $\Omega_t \times B$ is then a standard extension of $\Omega_t$.  For each $\omega \in \Omega_t$, we have a function $\psi_\omega: F_t \rightarrow B$ specified by
\[
\forall f \in F_t, b \in B: f(\omega,b) = (f\omega,\psi_\omega(f)b).
\]
The function $\psi_\omega$ is an instance of a block restriction map as in Remark~\ref{rem:standard_extension}; in the present context, the associated block action is simply the regular action of $B$ on itself.  Note that $\psi_\omega$ restricts to a surjective homomorphism from $F_t(\omega)$ to $B$; in particular, if $\omega = \omega_t$ then $\psi_\omega$ restricts to $\psi_t$.  Given $\omega \in \Omega_t$, set $R_\omega:= \{f \in F_t(\omega) \mid \psi_\omega(f) =1\}$; in particular, $R_{\omega_t} = R_t$.  We can also interpret $R_\omega$ as the stabilizer of $(\omega,b)$ in the action of $F_t$ on $\Omega_t \times B$, where $b$ can be taken to be any element of $B$.

Let $T$ be the $(d_0,d_1)$-semiregular tree, and given $v \in VT$, write $t(v)$\index{t@$t(v)$} for the type of $v$.  We now introduce an auxiliary graph $T^*$, which has its own arc colouring
\[
\mc{L}^*: AT^* \rightarrow (\Omega_0 \cup \Omega_1) \times B.
\]
The graph $T^*$ has the same vertices as $T$, but each arc of $T$ coloured $\omega$ is replaced with a set of arcs coloured $(\omega,b)$, one for each $b \in B$, in such a way that if the arc $a$ has colour $(\omega_a,b)$, then the reverse arc $\ol{a}$ has some colour $(\omega_{\ol{a}},b)$ (see Figure~\ref{fig:Tstar}).  There is then a natural projection from $T^*$ to $T$ given by fixing vertices and sending each arc of $T^*$ to the arc of $T$ with the same origin and terminus.

We define the local action of an automorphism $g \in \Aut^+(T^*)$ at a vertex $v$ of type $t$ as follows:
\[
\sigma_{\mc{L}^*,v}(g): \Omega_t \times B \rightarrow \Omega_t \times B; \; c \mapsto  \mc{L}^*g(\mc{L}^*|_{A_{T^*}(v)})\inv c.
\]
Lemma~\ref{lem:local_action:determined} still applies in this context, replacing $T$ with $T^*$ and $\mc{L}$ with $\mc{L}^*$.  We equip $\Aut^+(T^*)$ with the permutation topology on arcs of $T^*$.  Define $G^*$ to be the set of $g \in \Aut^+(T^*)$ such that $\sigma_{\mc{L}^*,v}(g) \in \alpha_t(F_t)$ for all $v \in V_tT$; given $g \in G^*$, write $\sigma^*_v(g)$ for the element of $F_t$ such that $\alpha_t(\sigma^*_v(g)) = \sigma_{\mc{L}^*,v}(g)$.  Finally, the projection of $T^*$ to $T$ induces a homomorphism $\pi: \Aut^+(T^*) \rightarrow \Aut(T)$ and define
\[
G:= \mathbf{U}(F_0,F_1,\alpha_0,\alpha_1) := \pi(G^*).
\]
\end{const}

For the rest of this section we assume $G$ has been constructed as in Construction~\ref{const:edge}.  The next proposition establishes some general features of $G$.

\begin{prop}\label{prop:edge_construction}
Let $G$ be as in Construction~\ref{const:edge}.  Given adjacent vertices $x$ and $y$ of $T$, write $\omega_{xy} = \mc{L}(a)$ where $a$ is the arc of $T$ from $x$ to $y$.
\begin{enumerate}[(i)]
\item $G$ is a closed subgroup of $\mathbf{U}(F_0,F_1)$.
\item The restriction of $\pi$ to $G$ is injective.
\item Let $T'$ be a nonempty subtree of $T$.  Define a \defbold{$T'$-portrait} to be a function $p: VT' \rightarrow F_0 \sqcup F_1$, where $p(v) \in F_t$ if $v$ has type $t$, such that for any two adjacent vertices $x$ and $y$ of $T'$ of types $0$ and $1$ respectively, we have
\[
p(x) \in F_0(\omega_{xy}), \; p(y) \in F_1(\omega_{yx}), \; \psi_{\omega_{xy}}(p(x)) = \psi_{\omega_{yx}}(p(y)).
\]
Then for every $T'$-portrait $p$, there is $g \in G^*$ fixing $T'$ pointwise, such that $\sigma^*_{v}(g) = p(v)$ for all $v \in VT'$.  Conversely, given any $g \in G^*$ fixing $T'$ pointwise, then the map $v \mapsto \sigma^*_v(g)$ defines a $T'$-portrait.
\item Let $x$ and $y$ be adjacent vertices of type $0$ and $1$ respectively.  Then $G(x,y)$ acts on $S_y(x,1) \times S_x(y,1)$ as the following subdirect product:
\[
\{(f_0,f_1) \in F_0(\omega_{xy}) \times F_1(\omega_{yx}) \mid \psi_{\omega_{xy}}(f_0) = \psi_{\omega_{yx}}(f_1)\}.
\]
\item Given $t \in \{0,1\}$ then $G$ has local action $F_t$ at vertices of type $t$ and $G$ is $2$-type-distance-transitive.
\item $G$ has the independence property $\propP{2}$ (recall Remark~\ref{rem:Pk}).
\end{enumerate}
\end{prop}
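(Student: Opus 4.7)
The plan is to prove the six parts largely in sequence, with each building on the earlier ones. The technical heart of the argument is encoded in (iii), which packages the key consistency condition on local $T^*$-actions across an edge; (iv)--(vi) are then mostly applications.

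First I would handle (i) and (ii) together. For (i), note that if $g^* \in G^*$ then $\sigma^*_v(g^*) \in F_{t(v)}$ by definition, and projecting to the first coordinate gives the local action of $\pi(g^*)$ on $T$; hence $\pi(g^*) \in \mathbf{U}(F_0,F_1)$. That $G$ is closed follows because $G^*$ is closed in $\Aut^+(T^*)$ (being defined by the closed condition $\sigma_{\mc{L}^*,v}(g) \in \alpha_t(F_t)$ for each $v$) and, once (ii) is in hand, $\pi$ restricts to a continuous injection of $G^*$ onto $G$ which is a homeomorphism on compact open subgroups. For (ii), if $\pi(g^*) = 1$ then $g^*$ fixes every vertex of $T^*$; but if $\alpha_t(f)$ fixes the first coordinate of every $(\omega,b)$ then $f \in F_t$ fixes every $\omega \in \Omega_t$, hence $f = 1$ by faithfulness, so $\sigma^*_v(g^*) = 1$ for all $v$ and Lemma~\ref{lem:local_action:determined} (applied to $T^*$) gives $g^* = 1$.

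Part (iii) is the main obstacle, because it requires checking that a portrait is precisely the gluing data needed. For the forward direction, given a portrait $p$ on $T'$, I would extend $p$ to all of $VT$ by choosing arbitrary representatives at vertices outside $T'$ (in a way that keeps the edge-compatibility condition holding), and then define $g^*$ on $T^*$ by specifying its local action at each vertex as $\alpha_{t(v)}(p(v))$ and its value on a base vertex as the identity. The crux is that the compatibility $\psi_{\omega_{xy}}(p(x)) = \psi_{\omega_{yx}}(p(y))$ on adjacent vertices of $T'$ is exactly the condition ensuring that the two prescriptions for how $g^*$ permutes the $|B|$-fold family of $T^*$-arcs over the underlying $T$-edge $\{x,y\}$ agree --- unwinding the definitions, one checks that the image of the $T^*$-arc from $x$ to $y$ coloured $(\omega_{xy},b)$ has second coordinate $\psi_{\omega_{xy}}(p(x))b$ computed at $x$ and $\psi_{\omega_{yx}}(p(y))b$ computed at $y$. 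The converse direction is immediate: if $g^* \in G^*$ fixes $T'$ pointwise, then necessarily $\sigma^*_v(g^*) \in F_{t(v)}(\omega^-_v)$ for $v \in VT'$ with a neighbour in $VT'$, and the same unwinding shows the edge condition must hold.

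Parts (iv)--(vi) then follow. For (iv), applying (iii) with $T' = \{x,y\}$ produces elements of $G^*$ fixing $x$ and $y$ realizing every compatible pair, while the inclusion in the stated subdirect product follows by extracting the portrait. For (v), local action $F_t$ at a vertex $v$ is realized by applying (iii) with $T' = \{v\}$ (any $f \in F_t$ gives a trivial portrait); and $2$-type-distance-transitivity follows from the facts that the local action is $2$-transitive and $G$ contains arc-translations (obtained, for example, by combining vertex stabilizers), together with (iv) to handle pairs at distance $2$. For (vi), given $\alpha \in G^{\propP{2}}$ the local actions $\sigma_{\mc{L},v}(\alpha) \in F_{t(v)}$ are defined and agree with those of local $G$-approximants on $B(v,2)$; I would then define $g^* \in \Aut^+(T^*)$ by prescribing $\sigma^*_v(g^*) := \sigma_{\mc{L},v}(\alpha)$ at every $v$ and fixing a base vertex. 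The edge-consistency condition needed to make $g^*$ well-defined is the same $\psi$-compatibility identity as in (iii), and it is inherited from any $g_v \in G$ agreeing with $\alpha$ on $B(v,2)$ (since such a $g_v$, lifting to $G^*$, already satisfies it and shares local actions with $\alpha$ at $v$ and at each neighbour of $v$). Thus $g^* \in G^*$ and $\pi(g^*) = \alpha$, giving $\alpha \in G$.
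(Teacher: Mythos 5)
Your proposal is correct and follows essentially the same route as the paper's proof: parts (i), (ii), (iv) and (vi) match the paper's arguments almost step for step, and you correctly identify the $\psi$-compatibility across an edge as the exact condition for the prescribed local actions to respect arc reversal in $T^*$. The only place you are noticeably lighter than the paper is the forward direction of (iii): the paper explicitly constructs the compatible extension of the portrait beyond $T'$ (setting $f_{w'} = h'h$, where $h$ exists by the standard-extension property and $h'$ by surjectivity of $\psi_{\omega}$ on point stabilizers) and finishes with a compactness argument over the infinite tree, whereas you assert that such an extension can be chosen --- this is true, but it is where the real work of (iii) lies.
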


\begin{proof}
For all $g,h \in \Aut^+(T^*)$ and $v \in VT$, the following holds:
\[
\sigma_{\mc{L}^*,v}(gh\inv) = \sigma_{\mc{L}^*,h\inv v}(g) \sigma_{\mc{L}^*,v}(h\inv) = \sigma_{\mc{L}^*,h\inv v}(g) \sigma_{\mc{L}^*,h\inv v}(h)\inv.
\]
Thus $G^*$ is a subgroup of $G$, so $\pi(G^*)$ is a subgroup of $\Aut(T)$.  Since $\alpha_t$ is compatible with the action of $F_t$ on $\Omega_t$, it is easy to see that $G \le \mathbf{U}(F_0,F_1)$.

From the definition of $G^*$, it is clear that $G^*$ is closed in the permutation topology, so $G^*$ has compact arc stabilizers.  It follows that $G$ has compact arc stabilizers, so $G$ is closed in $\Aut(T)$, completing the proof of (i).

For (ii), consider $g \in G^*$ such that $\pi(g) = 1$.  Then $g$ fixes every vertex; in particular, given $t \in \{0,1\}$ and $v \in V_tT$, then $g$ fixes $v$ and all neighbours of $v$.  Writing $\sigma_{\mc{L}^*,v}(g) = \alpha_t(f)$, then $\alpha_t(f)$ preserves each fibre of the projection map $\phi$ from $\Omega_t \times B$ to $\Omega_t$.  Since $\phi$ intertwines $\alpha_t$ with the action of $F_t$ on $\Omega_t$, it follows that $f$ acts trivially on $\Omega_t$, and since $F_t \le \Sym(\Omega_t)$ we thus have $f=1$, so $\alpha_t(f)=1$.  Thus $g$ has trivial local action at every vertex, from which we conclude that $g$ is the trivial automorphism of $T^*$, as required.

For (iii), given $g \in G^*$ fixing $T'$ pointwise, and given adjacent vertices $x$ and $y$ as in the statement, we certainly need to have
\[
\sigma^*_x(g) \in F_0(\omega_{xy}), \; \sigma^*_y(g) \in F_1(\omega_{yx}), \; \psi_{\omega_{xy}}(\sigma^*_x(g)) = \psi_{\omega_{yx}}(\sigma^*_y(g)).
\]
Say $g$ sends the arc $a$ from $x$ to $y$ coloured $(\omega_{xy},1)$ to an arc coloured $(\omega',b)$, then $\omega' = \omega_{xy}$ since $ga$ is an arc from $x$ to $y$.  Moreover, $\ol{a}$ has colour $(\omega_{yx},1)$ and $\ol{(ga)}$ has colour $(\omega_{yx},b)$, so we have
\[
\psi_{\omega_{xy}}(\sigma^*_x(g)) = b = \psi_{\omega_{yx}}(\sigma^*_y(g)).
\]
Thus the map $v \mapsto \sigma^*_v(g)$ defines a $T'$-portrait.
For the other direction, suppose we are given a $T'$-portrait $p$; we aim to construct an element $g \in G^*$ yielding this $T'$-portrait.  We decide where $g$ sends vertices and arcs working outwards from the subtree $T'$, which is fixed pointwise by assumption; we will choose local actions $\sigma^*_w(g) = f_w$ for every vertex $w$, where $f_w = p(w)$ if $w \in VT'$.  We suppose that we have chosen $gw$ and the local action $\sigma^*_w(g)=f_w$ exactly when $w \in VT''$, where $T''$ is a subtree of $T$ containing $T'$.  If $T'' = T$ we are done; otherwise, let $w \in VT''$ and suppose $w'$ is some neighbour of $w$ that does not belong to $T''$.  There is a set of $T^*$-arcs from $w$ to $w'$ with colours $\{\omega\} \times B$ for some $\omega \in \Omega_{t(w)}$; the local action $f_w$ then specifies that if $a$ is the arc coloured $(\omega,b)$ originating at $w$, then $ga$ must be the arc coloured $f_w(\omega,b)$ originating at $gw$.  All such arcs $ga$ terminate at the same vertex $w''$, which we set as $gw'$.  The $T^*$-arcs going from $w'$ to $w$ have colours $\{\omega_1\} \times B$ and those from $gw$ to $gw'$ have colours $\{\omega_2\} \times B$ for some $\omega_1,\omega_2 \in \Omega_{t(w')}$.  Let $h \in F_{t(w')}$ be such that $h(\omega_1,b) = (\omega_2,b)$ for all $b \in B$ (using the fact that the action on $\Omega_{t(w')} \times B$ is a standard extension), and let $h' \in F_{t(w')}(\omega_2)$ be such that $\psi_{\omega_2}(h') = \psi_{\omega}(f_w)$ (which is possible as $\psi_{\omega_2}(F_{t(w')}(\omega_2)) = B$).  Now set $f_{w'} = h'h$; thus if $a$ is the arc originating at $w'$ coloured $(\omega_1,b)$, then we set $ga$ to be the arc originating at $gw'$ with the colour
\[
f_{w'}(\omega_1,b) = (\omega_2,\psi_{\omega}(f_w)b).
\]
The choice of $f_{w'}$ in terms of $f_w$ ensures that we have respected edge reversal, in other words, given an arc $a$ from $w$ to $w'$ and its reverse $\ol{a}$ from $w'$ to $w$, then we have set $g(\ol{a}) = \ol{(ga)}$.  We now have compatible choices of local actions $f_z$ for $z \in VT'' \cup \{w'\}$.

By iterating this procedure, we can choose suitable $gw$ and $\sigma^*_w(g)$ for all $w \in VT''$, where $T''$ is any tree containing $T'$ such that $VT'' \setminus VT'$ is finite.  Since $G^*$ is closed in $\Aut(T^*)$ with compact vertex stabilizers, a compactness argument then allows us to make consistent choices of images of vertices and local actions over the whole tree $T$.

Part (iv) is a special case of (iii).

For (v), we note that it is clear from (i) and (iii) that $G$ has local action $F_t$ at vertices of type $t$.  Since $F_t$ is transitive on $\Omega_t$, it follows by Lemma~\ref{lem:edge-walk} that $G$ is $1$-type-distance-transitive.  Moreover, given a pair of adjacent vertices $x$ and $y$ as in part (iv), we see that $G(x,y)$ acts transitively on both $S_y(x,1)$ and $S_x(y,1)$; thus in fact $G$ is $2$-type-distance-transitive (by a similar argument to the proof of Proposition~\ref{prop:type-distance-trans}(iii)).

For part (vi) we must show $G = G^{\propP{2}}$, that is, if $g \in \Aut(T)$ acts as some element of $G$ on each ball of $T$ radius $2$, then $g \in G$.  It is clear that $G^{\propP{2}} \le \mathbf{U}(F_0,F_1)$.  Since $G$ is transitive on vertices of a given type, it is enough to take a vertex $v_0 \in VT$ and show that $G^{\propP{2}}(v_0) \le G$.  Let $g \in G^{\propP{2}}(v_0)$, and let $p: VT \rightarrow F_0 \sqcup F_1$ be given by setting $p(v) = \sigma_{\mc{L},v}(g) \in F_t$ for $v \in VT$ of type $t$.  We now construct $h \in \Aut^+(T^*)$ such that $\pi(h) = g$.  Certainly, if we set $hv = gv$ for every $v \in VT^*$, then $h$ will respect adjacency of vertices in $T^*$.  Let $x$ be a vertex of type $t$ and $y$ a neighbour of $x$, and suppose the colour of the $T$-arc from $x$ to $y$ and from $y$ to $x$ is $\omega_{xy}$, respectively $\omega_{yx}$.  Then in $T^*$, we set the image under $h$ of the arc from $x$ to $y$ coloured $(\omega_{xy},b)$ to be the arc from $hx$ to $hy$ coloured $p(x)(\omega_{xy},b)$ (where now $p(x) \in F_t$ is acting on $\Omega_t \times B$); we do this for all pairs of adjacent vertices $x$ and $y$ of $T^*$.  The only way this can fail to define an automorphism of $T^*$ is if we fail to respect edge-reversal: in other words, if there is some pair of adjacent vertices $x$ and $y$ and some $b \in B$ such that
\[
\psi_{\omega_{xy}}(p(x)) \neq \psi_{\omega_{yx}}(p(y)).
\]
Suppose this occurs, and consider what $g$ does to $B_T(x,2)$, which we define as the subtree of $T$ spanned by all vertices at distance at most $2$ from $x$.  Since $g \in G^{\propP{2}}$ there is $g' \in G^*$ such that $\pi(g')a = ga$ for every arc $a$ of $B_T(x,2)$.  By comparing the colours of arcs of $B_T(x,2)$ with the colours of the images of the arcs, we see that 
\[
\sigma_{\mc{L},x}(g) = \sigma_{\mc{L},x}(\pi(g')) \text{ and } \sigma_{\mc{L},y}(g) = \sigma_{\mc{L},y}(\pi(g')).
\]
We then have $\sigma_{\mc{L},x}(\pi(g')) = \sigma^*_x(g')$ as elements of $F_t$, and similarly for $y$ in place of $x$.  But then
\[
\psi_{\omega_{xy}}(\sigma^*_x(g')) \neq \psi_{\omega_{yx}}(\sigma^*_y(g')),
\]
contradicting part (iii).  From this contradiction, we deduce that $h$ is indeed an automorphism of $T^*$; it is then clear from how $h$ is constructed that $h \in G^*$ and $\pi(h) = g$.  Thus $G^{\propP{2}}(v_0) \le G$, which completes the proof.
\end{proof}

In particular, for any subdirect product $D$ of $F_0(\omega_0)$ and $F_1(\omega_1)$, we have constructed a closed subgroup $G$ of $\mathbf{U}(F_0,F_1)$ such that $G(x,y)$ acts on $S_y(x,1) \times S_x(y,1)$ as $D$ and such that $G$ still has local actions $F_t$.  However, whether or not $G$ actually acts $2$-transitively on $\partial T$ is more sensitive to the choice of subdirect product.  By Corollary~\ref{cor:distance_transitivity}, we have $G \in \mc{H}_T$ if and only if $G$ is $k$-type-distance-transitive for all $k$; determining whether the latter property holds requires a more detailed consideration of portraits in the sense of Proposition~\ref{prop:edge_construction}(iii).

Note that given $\omega,\omega' \in \Omega_t$, then $fR_\omega f\inv = R_{\omega'}$ for any $f \in F_t$ such that $f\omega = \omega'$.  It will be useful to also consider the following subgroups of $B$ for $\omega,\omega' \in \Omega_t$ with $\omega \neq \omega'$:
\[
K_{t,\omega,\omega'} := \psi_\omega(F_t(\omega,\omega')); \; K'_{t,\omega,\omega'} = \psi_\omega(F_t(\omega) \cap R_{\omega'}).
\]
Since $F_t$ is $2$-transitive on $\Omega_t$, one sees that up to conjugacy in $B$, the groups $K_{t,\omega,\omega'}$ and $K'_{t,\omega,\omega'}$ do not depend on the choice of the pair $(\omega,\omega')$.  For the purposes of our argument it will often be enough to understand these subgroups up to conjugacy, so we write $K_t = K_{t,\omega,\omega'}$ and $K'_t = K'_{t,\omega,\omega'}$.  Given $\omega'' \in \Omega_{1-t}$ (by default we will take $\omega'' = \omega_{1-t}$), we can then define 
\[
L_{1-t} = F_{1-t}(\omega'') \cap \psi\inv_{\omega''}(K_t) \text{ and } L'_{1-t} = F_{1-t}(\omega'') \cap \psi\inv_{\omega''}(K'_t);
\]
one sees that up to conjugacy, $L_{1-t}$ and $L'_{1-t}$ do not depend on the choice of $(\omega,\omega',\omega'')$.  In particular, we consider $L_{1-t}$ and $L'_{1-t}$ as permutation groups acting on $\Omega_{1-t} \setminus \{\omega''\}$; the permutational properties of these actions do not depend on $(\omega,\omega',\omega'')$.  Likewise the permutational properties of $F_t$ acting on $F_t/L_t$ and $F_t/L'_t$ do not depend on $(\omega,\omega',\omega'')$.  We remark that the index $|F_{1-t}(\omega''):L_{1-t}|$ counts the number of orbits of $R_\omega$ on $\Omega_t \setminus \{\omega\}$: writing $\sim_{R_\omega}$ for the $R_\omega$-orbit relation on $\Omega_t \setminus \{\omega\}$, then
\[
|F_{1-t}(\omega''):L_{1-t}| =  |B:K_t| = |F_t(\omega): F_t(\omega,\omega')R_\omega| = |(\Omega_t \setminus \{\omega\})/\sim_{R_\omega}|.
\]

The next lemma provides a necessary condition to have $G \in \mc{H}_T$.

\begin{lem}\label{lem:edge_construction:necessary}
Let $G$ be as in Construction~\ref{const:edge}.  For each $t \in \{0,1\}$ then $\Lambda^2_t = L_t$.  Moreover, letting $x$ and $y$ be adjacent vertices of $T$, then the following are equivalent:
\begin{enumerate}[(i)]
\item $G$ is $3$-type-distance-transitive;
\item $L_0$ and $L_1$ are transitive;
\item Given $z \in S_x(y,1)$ then $G(x,y,z)$ is transitive on $S_y(x,1)$, and given $z' \in S_y(x,1)$ then $G(z',x,y)$ is transitive on $S_x(y,1)$;
\item $G(x,y)$ acts transitively on $S_y(x,1) \times S_x(y,1)$.
\end{enumerate}
\end{lem}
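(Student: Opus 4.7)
The plan is to first identify $\Lambda^2_t$ with $L_t$ by unwinding the portrait characterization of Proposition~\ref{prop:edge_construction}(iii), and then to deduce the four-way equivalence from this identification together with standard facts about transitivity of product actions.

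For the identification, I will fix vertices $x,z$ of type $t$ with $d(x,z)=2$ and unique intermediate vertex $y$, noting that $G(x,z)=G(x,y,z)$. Applying Proposition~\ref{prop:edge_construction}(iii) with $T'=\{x,y,z\}$, the elements of $G(x,z)$ correspond (modulo free choices outside $T'$) to $T'$-portraits $(p(x),p(y),p(z))$ with $p(x)\in F_t(\omega_{xy})$, $p(y)\in F_{1-t}(\omega_{yx})\cap F_{1-t}(\omega_{yz})$ and $p(z)\in F_t(\omega_{zy})$, subject to the two edge-compatibility conditions
\[
\psi_{\omega_{xy}}(p(x))=\psi_{\omega_{yx}}(p(y)),\qquad \psi_{\omega_{yz}}(p(y))=\psi_{\omega_{zy}}(p(z)).
\]
The second condition forces $\psi_{\omega_{zy}}(p(z))\in \psi_{\omega_{yz}}(F_{1-t}(\omega_{yz},\omega_{yx}))=K_{1-t}$; conversely, any $p(z)\in F_t(\omega_{zy})$ with $\psi_{\omega_{zy}}(p(z))\in K_{1-t}$ admits a compatible $p(y)$ by the very definition of $K_{1-t}$, and then (since $\psi_{\omega_{xy}}$ surjects onto $B$) a compatible $p(x)$. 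Since $\sigma^*_z(g)=p(z)$ under this correspondence and acts on $S_z(x,1)\cong \Omega_t\setminus\{\omega_{zy}\}$ via the standard action of $F_t$, I will conclude that $\Lambda^2_t=F_t(\omega_{zy})\cap \psi_{\omega_{zy}}^{-1}(K_{1-t})$, which is $L_t$ up to permutational isomorphism.

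Given the identification, the equivalence (i)$\Leftrightarrow$(ii) will follow directly from Corollary~\ref{cor:type-distance-trans} applied with $k=2$, since Proposition~\ref{prop:edge_construction}(v) already provides $2$-type-distance-transitivity of $G$. For (ii)$\Leftrightarrow$(iii), the two clauses of (iii) are precisely the transitivity of $\Lambda^2_0$ and $\Lambda^2_1$ on the appropriate fibers after the obvious renaming of endpoints of the length-$2$ path. For (iv)$\Rightarrow$(iii) it suffices to fix one coordinate in the product $S_y(x,1)\times S_x(y,1)$; and for (iii)$\Rightarrow$(iv) I will combine the already-known transitivity of $G(x,y)$ on each factor separately (from $2$-type-distance-transitivity) with the two clauses of (iii) to move any pair to any other in two steps.

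None of these individual steps is hard; the only place where some care is required is the portrait computation giving $\Lambda^2_t=L_t$, where the compatibility equations at the middle vertex $y$ must be lined up cleanly against the defining expressions for $K_{1-t}$ and $L_t$.
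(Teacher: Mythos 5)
Your proposal is correct and follows essentially the same route as the paper: you identify $\Lambda^2_t$ with $L_t$ via the portrait characterization of Proposition~\ref{prop:edge_construction}(iii) (reading off the local action at the far endpoint $z$ of the length-$2$ path, whereas the paper reads it off at $x$ using a portrait on the single edge $\{x,y\}$ with $p(y)$ constrained to fix $\omega_{yx}$ and $\omega_{yz}$ --- the same computation), and then deduce (i)$\Leftrightarrow$(ii) from Corollary~\ref{cor:type-distance-trans} and (ii)$\Leftrightarrow$(iii)$\Leftrightarrow$(iv) by the standard fibre/product argument. No gaps.
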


\begin{proof}
Let $x$ and $y$ be adjacent vertices of $T$; without loss of generality, suppose $x$ is of type $0$, so that $G(x,y,z)$ acts as a copy of $\Lambda^2_0$ on $S_y(x,1)$.  We recover the action of $G(x,y,z)$ on $S_y(x,1)$ by considering $T'$-portraits $p$ where $T'$ is the tree spanned by $\{x,y\}$, such that the image of $p(y)$ in $F_1$ fixes $\omega_{yx}$ and $\omega_{yz}$.  Given that
\[
\psi_{\omega_{xy}}(p(x)) = \psi_{\omega_{yx}}(p(y)),
\]
we see that the possible values of $p(x)$ range over a copy of $L_0$.  The argument to show $\Lambda^2_1 = L_1$ is similar.

We now see that (i) and (ii) are equivalent via Corollary~\ref{cor:type-distance-trans}; in turn, given the description of $\Lambda^2_t$, we see that (ii) are (iii) are equivalent.  Finally, since $G(x,y)$ is transitive on each of $S_y(x,1)$ and on $S_x(y,1)$ taken separately, we see that (iii) and (iv) are equivalent.
\end{proof}

\begin{cor}
Let $G$ be as in Construction~\ref{const:edge} and suppose $G \in \mc{H}_T$.  Then $F_t$ acts $2$-by-block-transitively on $F_t/L_t$ for $t \in \{0,1\}$.
\end{cor}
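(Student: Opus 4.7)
The proof is essentially a one-line consequence of two earlier results. The plan is to identify $L_t$ with one of the $\Lambda^k_t(G)$ for which Corollary~\ref{cor:tree_2bbtrans} already gives us $2$-by-block-transitivity of $F_t$ on $F_t/\Lambda^k_t(G)$.

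First I would recall that by Proposition~\ref{prop:edge_construction}(v), $G$ has local action $F_t$ at vertices of type $t$, so the hypothesis $G \in \mc{H}_T$ together with that proposition places $G$ in $\mc{H}_{F_0,F_1}$. Next, by Lemma~\ref{lem:edge_construction:necessary}, we have the identification $\Lambda^2_t(G) = L_t$ for each $t \in \{0,1\}$. Finally, applying Corollary~\ref{cor:tree_2bbtrans} with $k=2$ yields that $F_t$ has block-faithful $2$-by-block-transitive action on $F_t/\Lambda^2_t(G) = F_t/L_t$, with block stabilizer $F_t(\omega)$. This is exactly the desired statement.

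There is no real obstacle here: the corollary is a direct reformulation of Corollary~\ref{cor:tree_2bbtrans} once the groups $L_t$ have been identified with the $2$-step stabilizer actions $\Lambda^2_t(G)$. The substantive work in the proof lies in the two results being invoked — namely, the fact that closed boundary-$2$-transitive actions give rise to $2$-by-block-transitive local actions at every level (Corollary~\ref{cor:tree_2bbtrans}), and the concrete description of $\Lambda^2_t(G)$ for the construction at hand (Lemma~\ref{lem:edge_construction:necessary}).
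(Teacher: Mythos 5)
Your proof is correct and follows essentially the same route as the paper: identify $L_t$ via Lemma~\ref{lem:edge_construction:necessary} and then invoke Corollary~\ref{cor:tree_2bbtrans}. The only (harmless) difference is that you apply the corollary directly at $k=2$, whereas the paper applies it to $\Lambda_t = \Lambda^\infty_t$ and then observes that $2$-by-block-transitivity passes to the factor space $F_t/L_t$ since $\Lambda_t \le L_t$; your version is if anything slightly more direct.
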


\begin{proof}
Let $t \in \{0,1\}$.  By Lemma~\ref{lem:edge_construction:necessary} and the definition of $\Lambda_t$, we see that $L_t = \Lambda^2_t \ge \Lambda_t$.  By Corollary~\ref{cor:tree_2bbtrans}, $F_t$ acts $2$-by-block-transitively on $F_t/\Lambda_t$; thus $F_t$ also acts $2$-by-block transitively on the factor space $F_t/L_t$. (In both cases, the blocks are the orbits of $F_t(\omega_t)$, which contains $L_t$.)
\end{proof}

In particular, suppose our construction yields $G \in \mc{H}_T$.  Then for each $t \in \{0,1\}$, either $L_t$ is a point stabilizer for the action of $F_t$ on $\Omega_t$ (equivalently, $\ker\psi_{1-t}$ is transitive on $\Omega_{1-t} \setminus \{\omega_{1-t}\}$) or else $F_t/L_t$ is $2$-by-block-transitive with nontrivial blocks; in the latter case the possibilities for $F_t$ are restricted, since we must witness one of the cases of Theorem~\ref{thm:2bbtrans}.
%
%\begin{ex}\label{ex:intransitive}
%Here is an example of how $L_0$ can fail to be transitive, and hence $G$ fails to be in $\mc{H}_T$.  Let us suppose $F_0$ and $F_1$ are isomorphic as permutation groups.  More precisely, suppose we have an isomorphism $\theta: F_0 \rightarrow F_1$ such that $\theta$ restricts to an isomorphism $\theta'$ from $F_0(\omega_0)$ to $F_1(\omega_1)$.  Choose some surjection $\psi_1: F_1(\omega_1) \rightarrow B$, such that $\ker\psi_1$ acts intransitively on $\Omega_1 \setminus \{\omega_1\}$ and set $\psi_0 = \psi_1\theta'$.  We construct $G$ as in Construction~\ref{const:edge}.  Then given $\omega'_1 \in \Omega_1 \setminus \{\omega_1\}$, the group $\ker\psi_1F_1(\omega_1,\omega'_1)$ is intransitive on $\Omega_1 \setminus \{\omega_1\}$.  Since $F_0$ and $F_1$ are permutationally isomorphic via $\theta$, we have $L_0 = \ker\psi_0F_0(\omega_0,\omega'_0)$ for some $\omega'_0 \in \Omega_0 \setminus \{\omega_0\}$, and the group $L_0$ is intransitive on $\Omega_0 \setminus \{\omega_0\}$.  By Lemma~\ref{lem:edge_construction:necessary}, $G$ is not $3$-type-distance-transitive, and hence by Proposition~\ref{prop:type-distance-trans}(ii), $G$ cannot act $2$-transitively on $\partial T$.
%\end{ex}

Determining whether or not $G$ is $k$-type-distance-transitive for large $k$ is more complicated in general.  Let $t \in \{0,1\}$ and let $\rho$ be a ray with vertices $(v_t,v_{t+1},\dots)$, with $v_t$ of type $t$; let $\omega^+_i$ be the colour of the arc of $T$ from $v_i$ to $v_{i+1}$ and let $\omega^-_i$ be the colour of the arc of $T$ from $v_{i+1}$ to $v_i$.  Let $P = P_{t,k}$ be the path from $v_t$ to $v_{t+k}$.  Then a $P$-portrait assigns to each $v_i$ for $i \ge t$ a group element $h_i$ such that 
\begin{enumerate}[(a)]
\item We have $h_t \in F_t(\omega^+_t)$ and $h_{t+k} \in F_{t+k}(\omega^-_{t+k})$, and for $t < i < t+k$ we have $h_i \in F_i(\omega^+_i,\omega^-_i)$;
\item For $t \le i < t+k$, the following compatibility condition is satisfied:
\[
\psi_{\omega^+_i}(h_i) = \psi_{\omega^-_{i+1}}(h_{i+1}).
\]
\end{enumerate}
Interpreting Corollary~\ref{cor:type-distance-trans} in this context, the criterion for $G$ to be $(k+1)$-type-distance-transitive is as follows: for both types $t$, and for all $\omega^*_1,\omega^*_2 \in \Omega_t \setminus \{\omega^+_t\}$, there is a $P_{t,k}$-portrait $p$ such that $p(v_t)\omega^*_1 = \omega^*_2$.

The following gives a simpler sufficient condition to obtain $G \in \mc{H}_T$; we will see in Example~\ref{ex:ldc_end} however that this condition is not necessary.

\begin{lem}\label{lem:edge_construction:sufficient}
Let $G$ be as in Construction~\ref{const:edge} and suppose that $L'_0$ and $L'_1$ are transitive.  Then $G \in \mc{H}_T$.
\end{lem}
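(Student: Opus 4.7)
The plan is to verify condition (v) of Corollary~\ref{cor:distance_transitivity}: Proposition~\ref{prop:edge_construction}(v) already provides $1$-type-distance-transitivity, so the task reduces to showing that, for each type $t \in \{0,1\}$ and each $k \ge 2$, some path of $k$ vertices in $T$ with initial vertex of type $t$ has pointwise stabilizer in $G$ acting transitively on the set of neighbours of the initial vertex other than the second vertex on the path. I will in fact prove that every such path works.

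Fix such a path $P = (u_0, u_1, \ldots, u_{k-1})$ with $u_0$ of type $t$, and write $\omega^+_i := \mc{L}((u_i, u_{i+1}))$ and $\omega^-_i := \mc{L}((u_i, u_{i-1}))$ for the arc colours along $P$. By Proposition~\ref{prop:edge_construction}(iii), elements of the pointwise stabilizer $G_P$ are parameterised by $P$-portraits $p$, and the induced local action at $u_0$ on $S_{u_1}(u_0,1)$ (identified with $\Omega_t \setminus \{\omega^+_0\}$ via the colouring) is recorded by the set of values $p(u_0)$. For $k = 2$ this set is all of $F_t(\omega^+_0)$ by Proposition~\ref{prop:edge_construction}(iv), and is transitive since $F_t$ is $2$-transitive.

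For $k \ge 3$ I will exploit the hypothesis via a ``mostly trivial'' family of portraits: set $p(u_i)$ to be the identity of $F_{t+i}$ for $2 \le i \le k-1$. Writing $b_i := \psi_{\omega^+_i}(p(u_i)) = \psi_{\omega^-_{i+1}}(p(u_{i+1}))$ for the compatibility labels, this choice forces $b_i = 1$ for $1 \le i \le k-2$. In particular $\psi_{\omega^+_1}(p(u_1)) = 1$, so combined with $p(u_1) \in F_{1-t}(\omega^-_1, \omega^+_1)$ we have $p(u_1) \in F_{1-t}(\omega^-_1, \omega^+_1) \cap R_{\omega^+_1}$; thus $b_0 = \psi_{\omega^-_1}(p(u_1))$ ranges exactly over $K'_{1-t}$ (with defining parameters $\omega = \omega^-_1$, $\omega' = \omega^+_1$), and any such $b_0$ can be realised. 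Via the compatibility $\psi_{\omega^+_0}(p(u_0)) = b_0$ across the edge $(u_0, u_1)$, the set of $p(u_0)$ obtainable from this family of portraits is then precisely $F_t(\omega^+_0) \cap \psi^{-1}_{\omega^+_0}(K'_{1-t})$, which is the conjugate of $L'_t$ determined by the data $(\omega'', \omega, \omega') = (\omega^+_0, \omega^-_1, \omega^+_1)$. The hypothesis now gives transitivity on $\Omega_t \setminus \{\omega^+_0\}$, so $G_P$ acts transitively on $S_{u_1}(u_0,1)$.

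The main conceptual point is the identification of the correct family of portraits: the asymmetric extra condition $\psi_{\omega'}(f) = 1$ in the definition of $K'$ is calibrated exactly to match the compatibility constraint imposed by extending the portrait by identity elements beyond $u_1$, which is precisely why $L'_t$ transitive (rather than only $L_t$ transitive, as in Lemma~\ref{lem:edge_construction:necessary}) is the natural strengthening that yields transitivity of the local action of $G_P$ for paths of arbitrary length. The rest is bookkeeping with colour and type indices. Having verified condition (v) of Corollary~\ref{cor:distance_transitivity} for both values of $t$, we conclude $G \in \mc{H}_T$.
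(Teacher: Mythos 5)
Your proposal is correct and follows essentially the same route as the paper: both arguments verify Corollary~\ref{cor:distance_transitivity}(v) by taking portraits that are the identity beyond the second vertex of the path, observing that the compatibility constraints then let the local action at the second vertex range over a copy of $K'_{1-t}$ and hence the local action at the initial vertex over a copy of $L'_t$, and invoking the transitivity hypothesis together with Proposition~\ref{prop:edge_construction}(iii) and (v). The only cosmetic difference is that the paper phrases this for the pointwise stabilizer of an entire ray rather than for each finite path separately.
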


\begin{proof}
Retain the notation for the ray $\rho$ from above.  We consider $\rho$-portraits $p$ such that $p(v_i)=1$ for $i \ge t+2$; write $h_i = p(v_i)$.  Clearly the trivial local actions $h_{t+2},h_{t+3},\dots$ are all compatible with each other.  For $h_{t+1}$ to be compatible with $h_{t+2}$, we need
\[
h_{t+1} \in F_{1-t}(\omega^-_{t+1},\omega^+_{t+1}); \; \psi_{\omega^+_{t+1}}(h_{t+1}) = 1,
\]
in other words, $h_{t+1}$ can range over a copy of $K'_{1-t}$.  Then for $h_t$ to be compatible with $h_{t+1}$, we need
\[
\psi_{\omega^+_t}(h_t) = \psi_{\omega^-_{t+1}}(h_{t+1}),
\]
which allows $h_t$ to range over a copy of $L'_t$.  Since $L'_t$ is transitive by hypothesis, we know see that for all $\omega^*_1,\omega^*_2 \in \Omega_t \setminus \{\omega^+_t\}$, there is a $\rho$-portrait $p$ such that $p(v_t)\omega^*_1 = \omega^*_2$.  By Proposition~\ref{prop:edge_construction}(iii), all such portraits are witnessed by elements of $G$, with the result that the pointwise stabilizer of $\rho$ in $G$ acts transitively on $S_{v_{t+1}}(v_t,1)$.  Given Proposition~\ref{prop:edge_construction}(v), we now see that $G$ satisfies Corollary~\ref{cor:distance_transitivity}(v) and hence $G \in \mc{H}_T$.
\end{proof}

We conclude this section with two special cases of Construction~\ref{const:edge}.  In these examples, if we treat a vector space or field as a group, it should be read as the additive group of that vector space or field, whereas $\Fb^*$ is the multiplicative group of the field $\Fb$.

\begin{ex}\label{ex:ldc_end}
We will construct examples of $G \in \mc{H}_{F_0,F_1}$ for suitable $(F_0,F_1)$ such that $\Lambda_0$ is strictly smaller than $F_0$, and which do not satisfy Lemma~\ref{lem:edge_construction:sufficient}.  In Construction~\ref{const:edge} we have surjective homomorphisms $\psi_0: F_0(\omega_0) \rightarrow B$ and $\psi_1: F_1(\omega_1) \rightarrow B$; for this example, we will simplify the analysis somewhat by taking $\psi_1$ to be the identity map.

For $t = \{0,1\}$ let $F_t$ be a finite group with a $2$-transitive action on the coset space $\Omega_t = F_t/M_t$; let $d_t = |\Omega_t|$ and let $B = M_1$.  Write $\omega_t = M_t$ considered as a point in $\Omega_t$.  We suppose that there is some surjective homomorphism $\psi_0$ from $M_0$ to $B$, with kernel $R_0$; set $\psi_1 = \mathrm{id}_B$ and $R_1 = \triv$.  To make some transitivity arguments work later, we also suppose that the action of $F_0$ on $F_0/R_0$ is $2$-by-block-transitive (where the blocks are the $M_0$-orbits); this means that we do not need to specify $M_0$ directly, as it can be recovered as the unique largest proper subgroup of $F_0$ containing $R_0$.  One can for example take $(F_0,F_1,B,R_0)$ to be one of the following:
\begin{align*}
&(\mathrm{M}_{11},\Sym(3),\Sym(2),\Alt(6)); \; (\PGaL_3(2),\Sym(3),\Sym(2),C^2_2 \rtimes C_3);\\
&(\PGaL_3(3),\Sym(4),\Sym(3),C^2_3 \rtimes Q_8); \; (\PGL_{n+1}(q),\Fb_{q} \rtimes \Fb^*_{q},\Fb^*_{q},\Fb^n_{q} \rtimes \SL_n(q)),
\end{align*}
where for the last family of examples we take $q$ to be a prime power, $q \neq 2$ and $n+1$ to be coprime to $q-1$.

We then go through the construction of $G = \mathbf{U}(F_0,F_1,\mc{L}^*,\alpha_0,\alpha_1)$ as in Construction~\ref{const:edge}.  The fact that $F_0$ acts $2$-by-block-transitively on $F_0/R_0$ implies that $R_0$ acts transitively on $\Omega_0 \setminus \{\omega_0\} \times B$, and hence also on $\Omega_0 \setminus \{\omega_0\}$.  We then consider the associated groups $L_t,L'_t \le F_t(\omega_t)$.

In the present case, we see that $L_0 \ge L'_0 = R_0$.  Since $R_0$ is transitive on $\Omega_0 \setminus \{\omega_0\}$, we have $L_1 = B$, and we see that $L_t$ acts transitively on $\Omega_t \setminus \{\omega_t\}$ for $t \in \{0,1\}$.  On the other hand, $L'_1$ is trivial.  In particular, $L'_1$ is not transitive on $\Omega_1 \setminus \{\omega_1\}$.

We retain the notation for the ray $\rho = (v_t,v_{t+1},\dots)$ from earlier; let us suppose without loss of generality that $\omega^+_t$ is the trivial coset $M_t$.  To show $G \in \mc{H}_T$ it is sufficient to show the following: for all $\omega^*_1,\omega^*_2 \in \Omega_t \setminus \{\omega^+_t\}$, there is a $\rho$-portrait $p$ such that $p(v_t)\omega^*_1 = \omega^*_2$.  Indeed, we make the stronger claims that if $t=0$ we can take $p(v_t)$ to be any element of $R_0$, whereas if $t=1$, we can take $p(v_t)$ to be any element of $F_1(\omega^+_1)$.  We divide the argument according to the type $t$.
\begin{enumerate}[(i)]
\item If $t=0$, we take $p(v_t) \in R_0$, so that $\psi_{\omega^+_0}(p(v_0))=1$.  This is compatible with trivial local action at $v_1$, so for $i \ge 2$ we set $p(v_i)=1$.
\item If $t=1$, suppose we are given some $h_1 \in F_1(\omega^+_1)$; set $p(v_1) = h_1$ and write $b = \psi_{\omega^+_1}(h_1)$.  The compatibility condition for $p(v_2)$ is that $\psi_{\omega^-_2}(p(v_2)) = b$, in other words, $p(v_2)$ is confined to a left coset of $R_{\omega^-_2}$.  Since the action of $F_0$ on $\Omega_0 \times B$ is $2$-by-block-transitive, we have a transitive action of $R_{\omega^-_2}$ on $(\Omega_0 \setminus \{\omega^-_2\}) \times B$.  We can thus find $h_2 \in F_0$ such that
\[
\psi_{\omega^-_2}(h_2) = b; \; h_2(\omega^+_2,b) = (\omega^+_2,b).
\]
Since $R_{\omega^-_2}$ is normal in $F_0(\omega^-_2)$, in fact $h_2$ fixes $\{\omega^+_2\} \times B$ pointwise, so $\psi_{\omega^+_2}(h_2) = 1$.  We can then set $p(v_2) = h_2$ and $p(v_i) = 1$ for all $i \ge 3$.
\end{enumerate}
In either case, by Proposition~\ref{prop:edge_construction}(iii), there exists $g \in G$ with the given portrait, and we deduce that $G \in \mc{H}_T$.

Finally, we argue that $\Lambda^k_0$ is properly contained in $F_0(\omega)$ for all $k \ge 2$.  Indeed, we have $|F_0(\omega_0):L_0| = d_1-1 > 1$, since the kernel of $\psi_1$ is trivial, and then $\Lambda^2_0 = L_0$ by Lemma~\ref{lem:edge_construction:necessary}; in turn, $\Lambda^k_0 \le \Lambda^2_0$ for all $k \ge 2$.
\end{ex}

%|F_0(\omega)|=12000; |R_0(\omega)|=100; |F_1(\omega)|=5760; |R_1(\omega)|=48

\begin{ex}\label{ex:exceptional}
We will construct an example of the exceptional case (ii) from Theorem~\ref{intro:locally_prosoluble}.

Let $F_t = \PGaL_3(q_t)$ acting on $\Omega_t = P_2(q_t)$, where $q_0=5$ and $q_1=4$, fix $\omega_t \in F_t$ and let $B = \Sym(5)$.  Then for all $t \in \{0,1\}$, we have $B \cong \PGaL_2(q_t)$; as permutation groups, $\PGaL_2(4) = \Sym(P_1(4))$, while $\PGaL_2(5) \hookrightarrow \Sym(P_1(5))$ is an instance of the exceptional embedding of $\Sym(5)$ into $\Sym(6)$.  Hence there exists a surjective homomorphism $\psi_t: F_t(\omega_t) \rightarrow B$; the kernel $R_t = \Fb^2_{q_t} \rtimes \Fb^*_{q_t}$ of $\psi_t$ is the soluble radical of $F_t(\omega_t)$.  (Recall similar observations made in the proof of Lemma~\ref{lem:insoluble_line_index}.)  In particular, since $B$ is insoluble, we note that the group $F_t(\omega_t)$ is insoluble for $t \in \{0,1\}$.  Up to a choice of isomorphism between $\PGaL_2(q_t)$ and $B$, we can think of $\psi_t$ as being specified by the action of $F_t(\omega_t)$ on $P_1(q_t)$ given by Remark~\ref{rem:linear}.

We again follow the construction of the $F_t$-set $\Omega_t \times B$ and the group $G = \mathbf{U}(F_0,F_1,\mc{L}^*,\alpha_0,\alpha_1)$ as in Construction~\ref{const:edge}.  In this case we can describe the groups $K_t,K'_t,L_t,L'_t$ up to conjugacy.  The group $K_t$ corresponds to a point stabilizer of the action of $\PGaL_2(q_t)$ on $P_1(q_t)$: specifically, $K_0$ is a subgroup of $B$ of index $6$ of the form $C_5 \rtimes C_4$, whereas $K_1$ is conjugate to the point stabilizer $\Sym(4)$ of $B$ in its natural action on $5$ points.  For the subgroups $K'_t$, we find that $K'_0 = K_0$, whereas $K'_1$ has index $2$ in $K_1$ and arises as $K_1 \cap \Alt(5) = \Alt(4)$.  Via the homomorphism $\psi_{\omega_t}$ and the actions with which we have equipped $B$, we have an action of $F_t(\omega_t)$ on the projective line $P_1(q_{1-t})$; the group $L_t$ is a point stabilizer of this action, and we have $|L_0:L'_0|=2$ and $L_1 = L'_1$.  Specifically, we find that as abstract groups, $L_0$ is of the form $C^2_5 \rtimes (\SL_2(3) \rtimes C_4)$; $L'_0$ is of the form $C^2_5 \rtimes (\SL_2(3) \rtimes C_2)$; and $L_1 = L'_1$ is of the form $\Fb_{16} \rtimes \GaL_1(16)$.  Note that the groups $L_0,L'_0,L_1$ are all soluble.

One can check that $F_t$ acts $2$-by-block-transitively on the cosets of $L'_t$ for $t \in \{0,1\}$.  More precisely, in the terminology of \cite{Reidkblock}, the action of $F_0$ on $L'_0$ is the exceptional action with block size $10$ of $\PSL_3(5) = \PGaL_3(5)$ displayed in \cite[Table~1]{Reidkblock}, whereas the action of $F_1$ on $L'_1$ is the type QP action with parameters $d_x=d_y=1$ in the sense of \cite[Proposition~3.27]{Reidkblock}.  In particular, this ensures that $L'_t$ acts transitively on $\Omega_t \setminus \{\omega_t\}$, and hence we have $G \in \mc{H}_T$ by Lemma~\ref{lem:edge_construction:sufficient}.

We now argue that $G(x,z)$ is prosoluble, where $x$ and $z$ are any two vertices such that $d(x,z) = 2$.  By Lemma~\ref{lem:edge_construction:necessary}, $G(x,z)$ acts on $S_z(x,1)$ as the soluble group $L_{t(x)}$.  More generally, if $\mathbf{x}$ is any finite collection of vertices such that for all $y \in \mathbf{x}$ there exists $y' \in \mathbf{x}$ with $d(y,y')=2$, then $G(\mathbf{x})$ acts on $S(y,1)$ as a subgroup of $L_{t(y)}$ for all $y \in \mathbf{x}$; thus $G(\mathbf{x})/G_1(\mathbf{x})$ is contained in a finite direct product of copies of $L_0$ and $L_1$, so it is soluble.  In particular, $G_{k-1}(x,z)/G_k(x,z)$ is soluble for all $k \ge 1$, so $G(x,z)$ is prosoluble.

In particular, the rigid stabilizer of every half-tree is prosoluble, so $\Theta_t$ is soluble, and hence $\Theta_t \le O_\infty(F_t(\omega))$ for all $t \in \{0,1\}$.  Actually for this example we get exactly $\Theta_t = O_\infty(F_t(\omega))$.  The kernel of $\psi_{\omega}$ is exactly the soluble residual of $F_t(\omega)$, so given a half-tree $T'$ with root $x$ of type $t$, and $T''$ the tree spanned by all edges of $T$ that are not in $T'$, for all $g \in O_\infty(F_t(\omega))$ we can form a $T''$-portrait $p_g$ such that $p_g(x) = g$ and $p_g(y) = 1$ for all $y \in VT'' \setminus \{x\}$.  We then obtain suitable elements of $G$ to witness that $\Theta_t(G) \ge O_\infty(F_t(\omega))$ via Proposition~\ref{prop:edge_construction}(iii).

It remains to note that $\Lambda_t$ is one of the possibilities listed in Theorem~\ref{intro:locally_prosoluble} for $t=0,1$.  By Lemma~\ref{lem:edge_construction:necessary} we have $\Lambda_t \le L_t$.  On the other hand, we are in a situation where $L'_t$ is transitive for all $t \in \{0,1\}$, so the proof of Lemma~\ref{lem:edge_construction:sufficient} ensures that $\Lambda_t \ge L'_t$.  Thus $\Lambda_1 = L_1 = \Fb_{16} \rtimes \GaL_1(16)$ and $\Lambda_0 = C^2_5 \rtimes A(5)$ where
\[
A(5) \in \{\SL_2(3) \rtimes C_4, \SL_2(3) \rtimes C_2\}.
\]

To summarize, we have obtained a group $G \in \mc{H}_{\PGaL_3(5),\PGaL_3(4)}$ acting on the $(31,21)$-semiregular tree, such that the local actions both have insoluble point stabilizers, so the arc stabilizers are not prosoluble, but such that the stabilizer of any path of length $2$ is prosoluble.
\end{ex}

\newpage

\printindex

\end{document}